\documentclass[12pt]{amsart}
\usepackage{amsmath}
\usepackage{amssymb}
\usepackage{amsfonts}
\usepackage{amsthm}\usepackage{mathrsfs}
\usepackage{comment}
\usepackage[normalem]{ulem}
\usepackage{enumerate}

\usepackage{geometry}
\usepackage{mathtools}
\mathtoolsset{showonlyrefs}
\usepackage{hyperref}
\hypersetup{colorlinks=true,
linktoc=all,linkcolor=black,
citecolor=black}
\usepackage{tikz}
\usepackage{bbm}
\usepackage{mathptmx}

\newcommand{\newsection}[1]{\setcounter{equation}{0} \section{#1}}
\newcommand{\bea}{\begin{eqnarray}}
\newcommand{\eea}{\end{eqnarray}}

\newcommand{\vp}{\varphi}

\newcommand{\clb}{\mathcal{B}}

\newcommand{\cls}{\mathcal{S}}

\newcommand{\scrD}{\mathscr{D}}

\newcommand{\D}{\mathbb{D}}

\newcommand{\raro}{\rightarrow}

\def\textmatrix#1&#2\\#3&#4\\{\bigl({#1 \atop #3}\ {#2 \atop #4}\bigr)}
\def\dispmatrix#1&#2\\#3&#4\\{\left({#1 \atop #3}\ {#2 \atop #4}\right)}
\newcommand{\be}{\begin{equation}}
\newcommand{\ee}{\end{equation}}
\newcommand{\ben}{\begin{eqnarray*}}
\newcommand{\een}{\end{eqnarray*}}

\newcommand{\bi}{\begin{itemize}}
\newcommand{\ei}{\end{itemize}}

\newcommand{\Z}{\mathbb{Z}}
\newcommand{\T}{\mathbb{T}}
\newcommand{\imag}{\mathrm{i}}
\newcommand{\e}{\mathrm{e}}
\newcommand{\Tope}{\mathrm{T}}
\newcommand{\Mop}{\mathrm{M}}
\newcommand{\Pop}{\mathrm{P}}
\newcommand{\Sop}{\mathrm{S}}

\newcommand{\Jop}{\mathrm{J}}
\newcommand{\Nop}{\mathrm{N}}
\newcommand{\Zvar}{Z}

\newcommand{\Berg}{\mathrm{b}}
\newcommand{\re}{\mathrm{Re}}

\newcommand{\C}{\mathbb{C}}
\newcommand{\R}{\mathbb{R}}

\newcommand{\diff}{\mathrm{d}}
\newcommand{\diffs}{\mathrm{ds}}
\newcommand{\diffA}{\mathrm{dA}}

\newcommand\la{{\langle }}
\newcommand\ra{{\rangle}}

\newcommand{\Ordo}{\mathrm{O}}
\newcommand{\ordo}{\mathrm{o}}

\newcommand{\dA}{\mathrm{dA}}

\newcommand{\ima}{\mathrm{i}}

\newcommand{\Grunsky}{\mathrm{G}}
\newcommand{\Bergman}{\mathrm{P}}

\theoremstyle{definition}

\theoremstyle{plain}

\newtheorem{thm}{Theorem}[section]
\newtheorem{cor}[thm]{Corollary}
\newtheorem{lem}[thm]{Lemma}
\newtheorem{prop}[thm]{Proposition}
\theoremstyle{definition}
\newtheorem{defn}[thm]{Definition}
\newtheorem{rem}[thm]{Remark}

\newtheorem{problem}[thm]{Problem}

\numberwithin{equation}{section}

\begin{document}

\title{Dirichlet symbols and the nonlinear wave equation}


\author[Debnath]{Ramlal Debnath}
\address{Department of Mathematics, KTH Royal Institute of Technology,
	Sweden}
\email{ramlaldebnath@gmail.com}

\author[HEDENMALM]{HAAKAN HEDENMALM}
\address{Department of Mathematics and Computer Sciences,
  St Petersburg State University, Russia
\\
Beijing Institute for Mathematical Sciences and Applications,
Huairou District, Beijing 101408, China
}
\email{haakan00@gmail.com}


\subjclass[2020]{30C35, 30C40, 30F60, 30H30, 32A36, 32G15, 35L10,
47B32, 47B35}

\keywords{Conformal maps, univalent functions, asymptotic variance,
Grunsky operator, nonlinear wave equation, Schwarzian derivative}

\begin{abstract}
We study the operator symbols of Dirichlet type introduced in recent
work of Hedenmalm and Shimorin (2020), in connection with a given
contraction on $L^2$ on the unit disk. 
They are always holomorphic functions of two variables on the bidisk.
Such Dirichlet symbols associated with the Grunsky operator of a univalent
function on the disk or exterior disk are of particular significance.
From the work of Hedenmalm and Shimorin,  we know that they are
characterized as the solutions of a certain nonlinear wave equation.
We perform a local analysis of such symbols near the diagonal on the
bidisk, and in so doing, we provide alternative chart coordinates for the
infinite-dimensional manifolds of univalent functions on the disk or on the
exterior disk. Those coordinates allow us to characterize functions of the
form $\log\psi'$ for $\psi$ in the class $\Sigma$ of normalized univalent
functions without touching explicitly the univalence property.
Moreover, the manifold $\Sigma/\C$ extends the universal
Teichm\"uller space of Lipman Bers beyond the quasicircle
boundary setting, allowing for even more fractality.  
The fractality of harmonic measure for the domain associated with
the given univalent function can be studied in terms of the asymptotic
variance introduced by McMullen (2008). The asymptotic variance captures
the $L^2$ average amplitude of the nonlinearity (the pre-Schwarzian
derivative), 
and we find estimates of the asymptotic variance which are 
analogous to earlier work of Hedenmalm and Shimorin (2005, 2007) concerned
with the small exponent integral means spectrum. We introduce the new concept
of Schwarzian asymptotic variance, which measures the Schwarzian derivative
in place of the nonlinearity.
For this new Schwarzian asymptotic variance, we find that the effective
average amplitude of $(1-|z|^2)^4|\Sop(\vp)|^2$ on the unit disk in the
hyperbolic metric sense is at most $9.07735\ldots$, considerably smaller
than the maximum amplitude of $36$. Here, $\Sop(\vp)$ is the Schwarzian
derivative of $\varphi\in\mathscr{S}$, and the analogous statement is
valid for $\psi\in \Sigma$ as well. 
Finally, in our analysis, we place the Schwarzian derivative in the
family of \emph{diagonal series Schwarzian derivatives}, which are
generally speaking more complicated
expressions involving higher derivatives of the given univalent function.
Especially interesting is the next level expression $\Sop_4(\psi)$, which
connects with the Weierstrass elliptic $\wp$ function theory and the Painlev\'e
I equation. 
\end{abstract}

\maketitle


\newsection{Overview}

In Section \ref{sec: 1}, we study the Dirichlet symbols of contractions on
$L^2(\D)$ introduced Hedenmalm and Shimorin  \cite{GAF} in the context of
holomorphic correlations of two copies of the Gaussian analytic function
derived from the Dirichlet space on the unit disk. We show how they relate
in a one-to-one fashion to a Hankel part of the contraction, and obtain a
characterization of these Dirichlet symbols
associated with a contractive multiplication operator in terms of a linear
wave equation. This parallels the Grunsky operator case which is characterized
by a nonlinear wave equation. The following Section \ref{sec:NLWE} is devoted
to the study of that nonlinear wave equation, as well as to related Riccati
equation considered by Dov Aharonov \cite{Dov}. Both equations supply
characterizations of the Dirichlet symbols of Grunsky operators.
Then. beginning in Section \ref{sec:NLWE} and continuing in Section \ref{S3},
we expand the nonlinear wave equation as well as the Riccati equation of
Aharonov along the diagonal in the sense of jets. This quickly leads to
the nonlinearity (pre-Schwarzian) and the Schwarzian derivative, where the
Schwarzian derivative naturally falls in a family of \emph{diagonal series}
Schwarzian derivatives. We develop relations between the various jet
components, where starting from the function $\log\psi'$, understood
as the diagonal restriction of the Dirichlet symbol itself, we can build
up all the other diagonal jet components through a sequence of identities. 
Particularly interesting is how knowing the index-diagonal components gives
all the others from a simple combinatorial argument based on symmetry,
whereas the index-diagonal components get derived from the nonlinear wave
equation or the Riccati equation. The next level Schwarzian derivative,
$\Sop_4(\psi)$, may be expressed in terms of the Schwarzian derivative
$\Sop_2(\psi)=\frac16\Sop(\psi)$ via a nonlinear differential operator which
connects with the Painlev\'e I equation and the Weierstrass differential
equation for the ellipic function $\wp$. In Section \ref{sec:parameter},
we obtain one of the main results, which is a way to parametrize $\Sigma/\C$,
understood as an extended universal Teichm\"uller space, in terms of diagonal
jets.
The resulting characterization of $\Sigma/\C$ does not require any
explicit testing of univalence.
Finally, in Section \ref{sec:asymp}, we take Curtis McMullen's notion of
\emph{asymptotic variance} and based on \cite{geometric zero packing},
we reinterpret it to measure the $L^2$ average
of the nonlinearity amplitude in the hyperbolic plane sense. We proceed to
define a similar \emph{Schwarzian asymptotic variance} as the $L^2$ average
of the Schwarzian derivative amplitude in the hyperbolic plane. We obtain the
upper bound for the averaged squared amplitude $\le9.07735\ldots$,
considerably smaller than the maximal squared amplitude $6^2=36$.

\newsection{Introduction}\label{sec: 1}

\subsection{Basic notation}
We write $\R$ for the real line and $\C$ for the complex
plane. Moreover, we write $\mathbb{C}_{\infty}=\mathbb{C}\cup\{\infty\}$
for the extended complex plane (the Riemann sphere). For a complex
variable $z=x+\ima y \in \mathbb{C}$, let
\[
\diffs(z):=\frac{| \diff z|}{2\pi}, \quad \dA(z):= \frac{\diff x \diff y}{\pi}
\]
denote the normalized arc length and area measures as indicated.
Moreover, we shall write
\[
\varDelta_z:=\frac{1}{4} \left( \frac{\partial}{\partial x^2}+
\frac{\partial}{\partial y^2}\right)
\]
for the normalized Laplacian, and 
\[
\partial_{z}:=\frac{1}{2} \left(\frac{\partial}{\partial x}-\ima
\frac{\partial}{\partial y}\right),\quad 
\bar{\partial}_{z}:=\frac{1}{2} \left(\frac{\partial}{\partial x}+
\ima \frac{\partial}{\partial y}\right)
\]
for the standard complex derivatives; then $\varDelta_z$ factors as
$\varDelta_z=\partial_{z}\bar{\partial}_{z}$. Often, we will drop the
subscript for these differential operators when it is obvious from the
context with respect to which variable they apply. We let $\mathbb{D}$
denote the open unit disk, $\mathbb{T}:=\partial \D$, and $\D_{\e}$
the (punctured) exterior disk:
\[
\D:=\{ z\in \mathbb{C}: |z|<1 \},\quad
\D_{\e}:=\{ z\in \mathbb{C}: |z|>1 \}.
\]
Often, we will add the point infinity to the exterior disk $\D_\e$, in which
case it really becomes a disk. If we do so, we mention it explicitly in
the text.
As for holomorphic functions in $\D_\e$, if the function is bounded near
infinity, it extends holomorphically across $\infty$ (Liouville theorem). 
More generally, we write
\[
\D(z_0,r):=\{ z\in \mathbb{C}: |z-z_0|<r \}
\]
for the open disk of radius $r$ centered at $z_0$. 
Let $\D^2=\{ (z,w)\in \mathbb{C}^2:\; |z|<1,\; |w|<1  \}$ be the open
unit bidisk in $\mathbb{C}^2$. If $\zeta\in\mathbb{C}^2$ and $r>0$,
then $\D^2(\zeta,r)$ denotes the open bidisk in $\mathbb{C}^2$ of radius
$r$ centered at $\zeta$.  
We will find it useful to introduce the sesquilinear forms
$\la \cdot,\cdot\ra_{\mathbb{T}}$ and $\la \cdot,\cdot\ra_{{\Omega}}$,
(where $\Omega=\D$, or $\D_{\e}$) as given by
\[
\la f,g\ra_{\mathbb{T}}:=\int_{\mathbb{T}}f(z)\bar{g}(z)\diff s(z),\quad
\la f,g\ra_{{\Omega}}:=\int_{{\Omega}}f(z)\bar{g}(z)\dA (z),
\]
where in the first case, $f\bar{g}\in L^2(\mathbb{T})$ is required,
and in the second, we need that  $f\bar{g}\in L^2({\Omega})$.
These are standard Lebesgue spaces with respect to the normalized area
measure $\dA$. Here, more generally, for a given complex-valued function
$f$, we denote by $\bar{f}$ the function whose values are the complex
conjugates of $f$. To simplify the notation further, we write
\[
\la f \ra_{\mathbb{C}}=\la f,1 \ra_{\mathbb{C}},\quad \la f
\ra_{{\Omega}}=\la f,1 \ra_{{\Omega}}.
\]
For operators $\Tope$ on a Hilbert space, we let  $\Tope^*$
denote the adjoint, while $\bar{\Tope}$ means the operator defined by
\[
\bar{\Tope}f=\overline{\Tope\bar{f}}.
\]

\subsection{The standard weighted Bergman spaces}
\label{ss:weightedBergman}
For $0<p<+\infty$ and $-1<\alpha<+\infty$, we introduce the
scale of standard weighted Lebesgue spaces $L^p_{\alpha}(\D)$ of
(equivalent classes of) Borel measurable functions $f:\D\raro\mathbb{C}$ with
\[
\|f\|^p_{L^p_{\alpha}(\D)}:=(\alpha+1)\int_{\D} |f(z)|^p(1-|z|^2)^{\alpha} \dA (z)
<+\infty.
\]
Then $L^p_\alpha(\D)$ is a Hilbert space for $p=2$, a Banach space for
$1\le p<+\infty$, and a quasi-Banach space for $0<p<1$. 
We write that $f\in A^p_{\alpha}(\D)$ to indicate that $f$ is holomorphic in
$\D$ and that $f\in L^p_{\alpha}(\D)$. In this case, we will often write
$\|\cdot\|_{A^p_{\alpha}(\D)}$ in place of $\|\cdot\|_{L^p_{\alpha}(\D)}$.
The spaces $A^p_{\alpha}(\D)$ are known as \emph{the standard weighted
Bergman spaces} on $\D$.
For $\alpha=0$, then, we recover the Bergman spaces $A^p_{0}(\D)=A^p(\D)$.
For $\alpha\le -1$, the norm expression does not make sense because of the
factor $(\alpha+1)$, but if it is removed, then the weighted Bergman space
would necessarily be trivial in that case $\alpha\le-1$. However, if we let
$\alpha$ approach $-1$ and keep the factor $(\alpha+1)$, then, in the limit
we have, e.g., for polynomials $f$,
\[
\lim_{\alpha\raro -1^{+}}\|f\|^p_{L^p_{\alpha}(\D)}=\int_{\T}|f|^p\diffs
=\|f\|^p_{H^p(\D)},
\]
where on the right-hand side appears the Hardy space $H^p(\D)$ norm
(quasi-norm if $0<p<1$), given by
\[
\|f\|^p_{H^p(\D)}:=\sup_{0<r<1}\int_{\T} |f(r\zeta)|^p
\diffs(\zeta)<+\infty.
\]
In this sense, we may assert that $H^p(\D)$ arises as the limit of the
spaces $A^p_{\alpha}(\Omega)$ as $\alpha\raro -1^{+}$.


Similarly, we define the standard weighted Bergman space
$A^p_{\alpha}(\D_{\e})$ on $\D_{\e}$, which consists of holomorphic
functions $f:\D_{\e}\raro \mathbb{C}_{\infty}$ with 
\[
\|f\|^p_{A^2_{\alpha}(\D_{\e})}=\int_{\D_e}|f(z)|^p(1-|z|^{-2})^{\alpha}
\frac{\dA (z)}{|z|^{4}}<+\infty.
\]
Let $H^p(\D_{e})$ be the Hardy space on the exterior disk $\D_{\e}$, that is, 
\[
H^p(\D_{e})=\left\{f:\D_{\e}\raro\mathbb{C}_{\infty}: \sup_{1<r<+\infty}
\int_{\partial \D} |f(r\zeta)|^p\diffs(\zeta)  \right\}.
\]
The Hardy space $H^p(\D_{e})$ appears as the limit of the spaces
$A^p_{\alpha}(\D_{\e})$ as $\alpha\raro -1^{+}$.

\subsection{The Bloch space and the Bloch seminorm}
The \emph{Bloch space} consists of those holomorphic functions
$g:\D\raro \mathbb{C}$ that are subject to seminorm boundedness condition
\[
\|g\|_{\clb(\D)}:= \sup_{z\in \D}(1-|z|^2)|g^{\prime}(z)|<+\infty.
\]
Let $\text{Aut}(\D)$ denote the group of bijective sense-preserving
M\"{o}bius automorphisms $\D\to\D$. By direct calculation
\[
\|g\circ \gamma\|_{\clb(\D)}=\|g\|_{\clb(\D)},
\]
for all $\gamma\in \text{Aut}(\D)$ and $g\in\clb(\D)$. This says
that the Bloch seminorm is invariant under all 
M\"{o}bius automorphisms of $\D$. The subspace
\[
\clb_0(\D):=\Big\{g\in \clb(\D):
\lim_{|z|\raro 1^{-}}(1-|z|^2)|g^{\prime}(z)|=0\Big\}
\]
is called the \emph{little Bloch space}.
An immediate observation we can make at this point is that provided
that $g(0)=0$, we have the estimate
\[
|g(z)|\leq \|g\|_{\clb(\D)}\int_0^{|z|}\frac{\diff t}{1-t^2}
=\frac{1}{2}\|g\|_{\clb(\D)}\log \frac{1+|z|}{1-|z|},\quad z\in \D,
\]
which is sharp pointwise.

Similarly, we introduce the Bloch space $\clb(\D_{\e})$ on $\D_{\e}$,
which consists of holomorphic functions $g:\D_{\e}\raro \mathbb{C}_{\infty}$
with
\[
\|g\|_{\clb(\D_{\e})}:=\sup_{z\in \D_{\e}}(|z|^2-1)|g^{\prime}(z)|<+\infty.
\]

\subsection{Dirichlet operator symbols}

For $z\in \D$, let $s_z$ denote the Szeg\H{o} kernel
\[
s_z(\zeta):=\frac{1}{1-\bar{z}\zeta},\qquad \zeta\in \D.
\]
For functions in the Bergman space $A^2(\D)$, taking the inner product
with $s_{z}$ is the same as finding the average
\[
\la f, s_z\ra_{\D}=\int_0^1 f(zt)\diff t,\qquad f\in A^2(\D).
\]
In the definition below, we flip the roles of $z,w$ compared with \cite{GAF}.
We do this to adhere to the notational convention for integral operators
used later on. 

\begin{defn}
Let $\Tope$ be a bounded linear operator on $L^2(\D)$. The
\emph{Dirichlet operator symbol associated with $\Tope$} is the function 
\[
\mathscr{P}[\Tope](z,w):=\la \Tope(\bar{s}_w), s_z \ra_{\D},\qquad (z,w)\in\D^2,
\]
which is holomorphic in the bidisk $\D^2$. The \emph{diagonal part} of the
Dirichlet operator symbol is the diagonal restriction
$\oslash\mathscr{P}[\Tope](z)=\mathscr{P}[\Tope](z,z)$.
\end{defn}

\begin{rem}
Sometimes the associated function
$\mathscr{Q}[\Tope](z,w)=zw\mathscr{P}[\Tope](z,w)$ is more natural to work
with, and then by slight abuse of terminology we refer to it as the
Dirichlet operator symbol of $\Tope$ as well. The same goes for its diagonal
part $\oslash\mathscr{Q}[\Tope]$. 
\end{rem}

As we apply the Cauchy-Schwarz inequality, we arrive at the pointwise estimate
\begin{multline*}
|\mathscr{P}[\Tope](z,w)|\leq \|T\|\|s_w\|_{L^2(\D)}\|s_z\|_{L^2(\D)}
\\
=\|\Tope\|\left(  \log \frac{1}{1-|z|^2}\right)^{\frac{1}{2}}
\left(  \log \frac{1}{1-|w|^2}\right)^{\frac{1}{2}},\quad z,w\in \D.
\end{multline*}
While this is best possible pointwise, perhaps we can do better if we
instead consider various weighted averages.
A case in point is that of estimating the average growth of the diagonal
part of the operator symbol:
\[
\oslash\mathscr{P}[\Tope](z)=\la \Tope(\bar{s}_z), s_z \ra_{\D}, \qquad z\in\D.
\]
If $\Tope=\Mop_{\mu}$, the operator of multiplication by a bounded function
$\mu\in L^{\infty}(\D)$, then we recover the well-known Bergman projection
of $\mu$:
\begin{equation}
\label{bergman projection}
\Bergman \mu(z):=
\oslash\mathscr{P}[\Mop_{\mu}](z)=\la \Mop_{\mu}(\bar{s}_z), s_z \ra=
\int_{\D} \frac{\mu(\xi) }{(1-z\bar{\xi})^2}\dA (\xi), \qquad z\in\D.
\end{equation}
In view of this, we may think of $\oslash\mathscr{P}[\Tope]$ as extending the
notion of the Bergman projection to the setting of general bounded operators
$\Tope:\,L^2(\D)\to L^2(\D)$.
 
In this section, we shall study the functions $Q(z,w)=\mathscr{Q}[\Tope](z,w)$
as holomorphic functions on the bidisk $\D^2$. Our first observation
is the following.

\begin{lem}
\label{lem:Q-1}
If $Q(z,w)=\mathscr{Q}[\Tope](z,w)$ for a bounded operator $\Tope$ on
$L^2(\D)$, then
\[
Q(z,0)=Q(0,w)=0,\qquad z,w\in\D,
\]
and, consequently,
\[
Q(z,w)=\int_{0}^{w}\int_0^z \partial_\xi\partial_\eta Q(\xi,\eta)\,\diff \xi
\diff \eta.
\]
\end{lem}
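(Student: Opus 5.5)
The plan is to read off the vanishing directly from the factor $zw$ in $Q=zw\,\mathscr{P}[\Tope]$, and then obtain the integral formula from the fundamental theorem of calculus applied one variable at a time.

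First I check that $\mathscr{P}[\Tope]$ is a genuine holomorphic function on $\D^2$. For each fixed $w\in\D$, the function $\bar s_w(\zeta)=(1-w\bar\zeta)^{-1}$ is bounded on $\D$, so it lies in $L^2(\D)$. It depends holomorphically on $w$ as an $L^2(\D)$-valued map, because the power series $\sum_n w^n\bar\zeta^n$ converges in $L^2(\D)$ locally uniformly in $w$. Likewise $s_z=\sum_n\bar z^n\zeta^n$ depends antiholomorphically on $z$, and the sesquilinear form conjugates the second slot. It follows that
\[
\mathscr{P}[\Tope](z,w)=\sum_{m,n\ge0}z^m w^n\la \Tope(\bar\zeta^n),\zeta^m\ra_{\D},
\]
where the coefficients are bounded by $\|\Tope\|\,\|\zeta^n\|\,\|\zeta^m\|\le\|\Tope\|$. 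This series converges absolutely and locally uniformly on $\D^2$. In particular $\mathscr{P}[\Tope]$ is finite at every point of $\D^2$, including the coordinate axes. The pointwise estimate already recorded in the paper also gives this: it shows $\mathscr{P}[\Tope](z,0)$ is in fact $0$ times something finite.

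Given finiteness, $Q(z,0)=z\cdot0\cdot\mathscr{P}[\Tope](z,0)=0$, and in the same way $Q(0,w)=0$.

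For the integral representation, I fix $(z,w)\in\D^2$ and integrate along straight segments, which stay inside the convex domain $\D$. Since $Q$ is holomorphic in $\eta$,
\[
Q(z,w)-Q(z,0)=\int_0^w\partial_\eta Q(z,\eta)\,\diff\eta .
\]
Next, $\partial_\eta Q(\xi,\eta)$ is holomorphic in $\xi$, so
\[
\partial_\eta Q(z,\eta)-\partial_\eta Q(0,\eta)=\int_0^z\partial_\xi\partial_\eta Q(\xi,\eta)\,\diff\xi .
\]
Because $Q(0,\eta)\equiv0$, we have $\partial_\eta Q(0,\eta)=0$. Combining this with $Q(z,0)=0$ gives the stated double integral. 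The integrand is jointly continuous, so the order of integration is immaterial.

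There is no real obstacle. The only point needing care is justifying that $\mathscr{P}[\Tope]$ is finite and holomorphic at points with $z=0$ or $w=0$, and the power series expansion above settles this.
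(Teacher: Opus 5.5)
Your proof is correct and follows the paper's route: the vanishing on the coordinate axes comes from the factor $zw$ in $\mathscr{Q}[\Tope]=zw\,\mathscr{P}[\Tope]$, and the double-integral formula is the fundamental theorem of calculus applied in each variable; your power-series argument just makes explicit that $\mathscr{P}[\Tope]$ is finite on the axes, which the paper treats as immediate. Drop the parenthetical appeal to the paper's pointwise estimate, though: since $\|s_w\|^2_{L^2(\D)}=|w|^{-2}\log\frac{1}{1-|w|^2}$, the printed right-hand side is really a bound for $|\mathscr{Q}[\Tope](z,w)|$, and read as a bound for $\mathscr{P}[\Tope]$ it would force $\mathscr{P}[\Tope](z,0)=0$, which already fails for the identity operator, where $\mathscr{P}[\mathrm{I}](0,0)=\la 1,1\ra_\D=1$.
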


\begin{proof}
From the definition of $\mathscr{Q}[\Tope]$ the first property is immediate.
After that, the second property is a consequence of Calculus.
\end{proof}

We now establish that the Dirichlet
operator symbol $\mathscr{Q}[\Tope](z,w)$ for a given bounded operator
$\Tope:L^2(\D)\to L^2(\D)$ captures a piece of $\Tope$, namely the compression
$\Pop\Tope\bar{\Pop}$, where
$\Pop=\Pop_{A^2(\D)}:L^2(\D)\to L^2(\D)$
stands for the orthogonal projection onto $A^2(\D)$, and hence $\bar\Pop$ is
the orthogonal projection onto the space $\mathrm{conj}\,A^2(\D)$ of
complex-conjugates from $A^2(\D)$.

\begin{prop}\label{uniqueness:1}
If $\Tope$ is a bounded operator on $L^2(\D)$, $\Pop=\Pop_{A^2(\D)}$, and
$Q(z,w)=\mathscr{Q}[\Tope](z,w)$, then
\[
\Pop\Tope\bar\Pop f(z)=\int_\D \partial_z\partial_w Q(z,w)f(w)\,\diffA(w),
\qquad z\in\D.
\]
\end{prop}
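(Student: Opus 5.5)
Both sides are determined by their action on a spanning set of reproducing kernels, so the plan is to compute $\partial_z\partial_w Q$ explicitly and check the identity kernel by kernel. With the inner product $\la\cdot,\cdot\ra_\D$ conjugate-linear in its second slot, write
\[
Q(z,w)=zw\,\la \Tope(\bar s_w),s_z\ra_\D .
\]
The key observation is that $\partial_z\big(z\,\bar{s}_z\big)$ is, up to conjugation, the Bergman kernel. Indeed, $z\,s_z(\zeta)$ viewed as a function of $\bar z$ is $\bar z$-antiholomorphic, and
\[
\frac{\partial}{\partial\bar z}\,\frac{\bar z}{1-\bar z\zeta}=\frac{1}{(1-\bar z\zeta)^2}=K_z(\zeta),
\]
the Bergman kernel of $A^2(\D)$ for $\dA$. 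Similarly,
\[
\partial_w\big(w\,\bar s_w(\zeta)\big)=\partial_w\frac{w}{1-w\bar\zeta}=\frac{1}{(1-w\bar\zeta)^2}=\overline{K_{\bar w}(\zeta)}.
\]

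Differentiating under the inner product, which is legitimate because $\Tope$ is bounded and the maps $w\mapsto w\bar s_w$ and $z\mapsto z s_z$ are holomorphic (resp.\ antiholomorphic) $L^2(\D)$-valued maps on $\D$, we obtain
\[
\partial_z\partial_w Q(z,w)=\la \Tope(\bar K_{\bar w}),K_z\ra_\D .
\]
The right-hand side equals $\Pop\big[\Tope(\bar K_{\bar w})\big](z)$ by the reproducing property of $K_z$. We then note that $\bar K_{\bar w}(\zeta)=\overline{K(\zeta,\bar w)}$ is the reproducing kernel for $\bar\Pop$ at the point $\bar w$ in a suitably twisted sense. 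For $f\in L^2(\D)$,
\[
\int_\D f(w)\,\overline{K_{\bar w}(\zeta)}\,\dA(w)=\int_\D \frac{f(w)}{(1-w\bar\zeta)^2}\dA(w)=\overline{\Pop\bar f(\zeta)}\cdot(\ldots)
\]
This needs care: after the substitution $w\mapsto\bar w$ (which preserves $\dA$), the integral becomes $\int \frac{f(\bar u)}{(1-\bar u\bar\zeta)^2}\dA(u)$. This is the conjugate of $\Pop[\bar f(\bar\cdot)](\bar\zeta)$, i.e.\ $\bar\Pop$ applied to $f$ composed with conjugation. I would therefore check carefully whether the intended identity involves $f(w)$ or a reflected version, and adjust by the involution $w\mapsto\bar w$ if needed.

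The remaining step is to exchange $\Tope$ and the integral in $w$. We have
\[
\int_\D\partial_z\partial_wQ(z,w)f(w)\,\dA(w)=\Big\la \Tope\Big(\int_\D f(w)\bar K_{\bar w}\,\dA(w)\Big),K_z\Big\ra_\D ,
\]
where the inner integral is an $L^2(\D)$-valued (Bochner or weak) integral. I would justify the exchange first for $f$ in a dense class, for instance polynomials in $w,\bar w$ or compactly supported bounded functions, for which $\int |f(w)|\,\|K_{\bar w}\|\,\dA(w)<\infty$. The identity then holds with the inner integral equal to $\bar\Pop f$, giving $\la \Tope\bar\Pop f,K_z\ra=\Pop\Tope\bar\Pop f(z)$. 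Finally, I would extend to all of $L^2(\D)$ by boundedness of both sides, as operators into functions evaluated pointwise, with point evaluation continuous on $A^2$.

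The main obstacle is bookkeeping rather than analysis. It lies in getting the conjugations right: the flip of $z,w$ relative to \cite{GAF}, and the precise meaning of $\bar\Pop$ as projection onto $\mathrm{conj}\,A^2(\D)$, so that $\int f(w)(1-w\bar\zeta)^{-2}\dA(w)$ is recognized exactly as $\bar\Pop f(\zeta)$. Checking on monomials, namely $f=\bar w^n$ against $f=w^n$, should fix the conventions.
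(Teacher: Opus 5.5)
Your route is the paper's: differentiate under the inner product so that $\partial_z\partial_wQ(z,w)$ becomes $\Pop\Tope$ applied to a conjugate Bergman kernel indexed by $w$, evaluated at $z$; then integrate against $f(w)$ and pull $\Tope$ out of the $w$-integral. However, the decisive kernel identification is wrong as written, and you leave the resulting doubt about the statement unresolved.

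You correctly compute $\partial_w\big(w\bar s_w(\zeta)\big)=(1-w\bar\zeta)^{-2}$, but you label it $\overline{K_{\bar w}(\zeta)}$. In fact $\overline{K_{\bar w}(\zeta)}=(1-\bar w\bar\zeta)^{-2}$, which is not even holomorphic in $w$. The correct label is $\overline{K_w(\zeta)}$ (the paper's $\bar\Berg_w$), so $\partial_z\partial_wQ(z,w)=\la\Tope\bar K_w,K_z\ra_\D$. The Hermitian symmetry $\overline{K_w(\zeta)}=K_\zeta(w)$ then gives at once
\[
\int_\D f(w)\,\overline{K_w(\zeta)}\,\dA(w)=\int_\D f(w)K_\zeta(w)\,\dA(w)=\la f,\bar K_\zeta\ra_\D=\bar\Pop f(\zeta).
\]
So there is no reflection issue. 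The identity holds verbatim with $f(w)$, and the substitution $w\mapsto\bar w$ is an artifact of the mislabeling. Your tentative ``reflected'' identification is also off: the conjugate of $\Pop[\bar f(\bar\cdot)](\bar\zeta)$ is holomorphic in $\zeta$, whereas the integral above is antiholomorphic. As it stands, your final claim that the inner integral equals $\bar\Pop f$ contradicts your own intermediate computation rather than following from it.

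The paper does not justify exchanging $\Tope$ with the $w$-integral. Your density plan for this works with compactly supported bounded $f$, but not with polynomials in $w,\bar w$, since $\|K_w\|_{L^2(\D)}=(1-|w|^2)^{-1}$ is not $\dA$-integrable over $\D$. Before extending by density, you must also know that $f\mapsto\int_\D\partial_z\partial_wQ(z,w)f(w)\,\dA(w)$ is bounded on $L^2(\D)$. Both points are settled at once, with no limiting argument, by moving $\Tope$ to the other slot:
\[
\partial_z\partial_wQ(z,w)=\la\bar K_w,\Tope^*K_z\ra_\D=\overline{\bar\Pop(\Tope^*K_z)(w)}.
\]
Hence $w\mapsto\partial_z\partial_wQ(z,w)$ lies in $L^2(\D)$ with norm at most $\|\Tope\|(1-|z|^2)^{-1}$, and for every $f\in L^2(\D)$,
\[
\int_\D\partial_z\partial_wQ(z,w)f(w)\,\dA(w)=\la f,\bar\Pop\Tope^*K_z\ra_\D=\la\Tope\bar\Pop f,K_z\ra_\D=\Pop\Tope\bar\Pop f(z).
\]
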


\begin{proof}
We first observe that for $f,g\in L^2(\D)$,
\[
\Pop f(z)=\langle f,\Berg_z\rangle_\D\quad\text{and}\quad
\bar\Pop g(z)=\langle g,\bar \Berg_z\rangle_\D,
\]
for any $z\in\D$, where $\Berg_z(\xi):=(1-\bar z\xi)^{-2}$ is the
Bergman kernel.
A simple calculation gives that
\begin{equation*}
\partial_z\partial_w Q(z,w)=
\partial_z\partial_w \mathscr{Q}[\Tope](z,w)=\langle \Tope
\bar \Berg_w,\Berg_z\rangle_\D=\Pop\Tope[\bar\Berg_w](z),
\end{equation*}
so that
\begin{equation*}
\int_\D \partial_z\partial_w Q(z,w)\,f(w)\diffA(w)=
\int_\D\Pop\Tope[\bar\Berg_w](z)f(w)\diffA(w)
=\Pop\Tope\bar\Pop f(z),
\end{equation*}
as claimed. 
%
\end{proof}

Let $\scrD(\D)$ denote the classical Dirichlet space on the disk $\D$.
It consists of all holomorphic functions $f$ on $\D$ whose derivative
$f^{\prime}$ is square area-integrable, that is, 
\begin{equation}
\int_{\D}|f^{\prime}(z)|^2 \dA(z)<+\infty.
\label{eq:Dir-1}
\end{equation}
The expression \eqref{eq:Dir-1} only defines a Hilbert seminorm on $\scrD(\D)$,
as the constants disappear after differentiation. One way to fix this problem
is to consider the subspace $\scrD_0(\D)$ consisting of all functions
$f\in\scrD(\D)$ that vanish at the origin:
\[
\scrD_0(\D):=\{ f\in \scrD(\D):\, f(0)=0 \}.
\]
The expression \eqref{eq:Dir-1} then defines a Hilbert space norm on
$\scrD_0(\D)$.
We try to characterize the Dirichlet operator symbols associated with
contractions on $L^2(\D)$ via pairs of orthonormal sequences in $\scrD_0(\D)$.
The first installment reads as follows.

\begin{thm}
Let  $\{a_j\}_j$ and $\{b_j\}_j$ be two orthonormal
bases of $\scrD_0(\D)$, while $\{t_j\}_j$ is a sequence of real numbers with
$0\le t_j\le1$. Here, the index runs over all $j\in\Z_{>0}$. Next, we consider
the holomorphic function
\[
Q(z,w)=\sum_{j=1}^{+\infty}t_j a_j(z)b_j(w),\qquad (z,w)\in\D^2.
\]
Then there exists a contraction $\Tope$ on $L^2(\D)$ for which
\[
Q(z,w)=\mathscr{Q}[\Tope](z,w),\qquad z,w\in \D.
\]
\end{thm}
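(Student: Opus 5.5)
The plan is to build the contraction $\Tope$ explicitly from the data, using the unitary correspondence between $\scrD_0(\D)$ and $A^2(\D)$ given by differentiation. The map $f\mapsto f'$ is an isometry of $\scrD_0(\D)$ onto $A^2(\D)$, because $\scrD_0(\D)$ carries the norm \eqref{eq:Dir-1}. Hence $\{a_j'\}_j$ and $\{b_j'\}_j$ are orthonormal bases of $A^2(\D)$, and $\{\bar b_j'\}_j$ is an orthonormal basis of $\mathrm{conj}\,A^2(\D)$. I would then define
\[
\Tope f:=\sum_{j=1}^{+\infty} t_j\,\langle f,\bar b_j'\rangle_\D\, a_j',\qquad f\in L^2(\D).
\]
This is the composition of the orthogonal projection $\bar\Pop$, a diagonal contraction with entries $t_j\in[0,1]$ in the basis $\{\bar b_j'\}_j$, and the unitary $\bar b_j'\mapsto a_j'$ from $\mathrm{conj}\,A^2(\D)$ onto $A^2(\D)$. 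So $\|\Tope\|\le1$, and by Bessel's inequality the series converges in $L^2(\D)$.

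Next I compute $\mathscr{Q}[\Tope](z,w)=zw\langle\Tope\bar s_w,s_z\rangle_\D$ termwise. The key identity is $\langle f,s_z\rangle_\D=\int_0^1 f(zt)\,\diff t$ for $f\in A^2(\D)$. For $f=a_j'$ this gives $z\langle a_j',s_z\rangle_\D=a_j(z)$, using $a_j(0)=0$. By conjugation, $\langle \bar s_w,\bar b_j'\rangle_\D=\overline{\langle s_w,b_j'\rangle_\D}=\langle b_j',s_w\rangle_\D$, and therefore $w\langle\bar s_w,\bar b_j'\rangle_\D=b_j(w)$. Combining these,
\[
\mathscr{Q}[\Tope](z,w)=\sum_j t_j\,\big(w\langle\bar s_w,\bar b_j'\rangle_\D\big)\big(z\langle a_j',s_z\rangle_\D\big)=\sum_j t_j\,a_j(z)b_j(w)=Q(z,w).
\]

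The only delicate point is justifying the interchange of the inner product with the infinite sum, together with convergence of the series defining $Q$. For fixed $z\in\D$ we have $s_z\in L^2(\D)$. The series for $\Tope\bar s_w$ converges in $L^2$, so pairing with $s_z$ commutes with the summation. This also shows that the series for $Q$ converges pointwise; by Cauchy--Schwarz the convergence is in fact locally uniform, since $\sum_j|a_j(z)|^2$ is the reproducing kernel of $\scrD_0(\D)$ on the diagonal and is therefore locally bounded. The same consideration shows that $Q$ is holomorphic on $\D^2$.

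Consistency with Proposition \ref{uniqueness:1} is immediate: $\Pop\Tope\bar\Pop=\Tope$ has integral kernel $\sum_j t_j a_j'(z)b_j'(w)=\partial_z\partial_w Q(z,w)$, as it should.
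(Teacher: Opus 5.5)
Your proof is correct and builds the same operator as the paper, $\Tope f=\sum_j t_j\langle f,\bar b_j'\rangle_\D\,a_j'$. The paper introduces it as the integral operator with kernel $\partial_z\partial_w Q$ and shows it is a contraction by the same Plancherel argument. The only difference is the final identification: the paper checks $\Tope=\Pop\Tope\bar\Pop$ and appeals to Lemma~\ref{lem:Q-1} and Proposition~\ref{uniqueness:1} (matching mixed derivatives plus vanishing on the axes), whereas you compute $\mathscr{Q}[\Tope]$ directly by termwise pairing with $s_z$ and the averaging formula $\langle f,s_z\rangle_\D=\int_0^1 f(zt)\,\diff t$, which is slightly more direct.
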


\begin{proof}
In view of Proposition \ref{uniqueness:1}, we choose as $\Tope$ the operator 
\[
\Tope{f}(z):=\int_{\D} \partial_z\partial_w Q(z,w) f(w) \dA (w),\qquad
z\in\D.
\]
We need to check that $\Tope$ defines a contraction on $L^2(\D)$, and that
$\Tope=\Pop\Tope\bar\Pop$. Then, by Lemma \ref{lem:Q-1} and Proposition
\ref{uniqueness:1}, we must have $Q=\mathscr{Q}[\Tope]$ on $\D^2$.
We calculate that
\[
\partial_z\partial_w Q(z,w)=
\sum_{j=1}^{+\infty}t_j a_j'(z)b_j'(w),\qquad (z,w)\in\D^2,
\]
so that $(\partial_z\partial_w Q)(z,\cdot)\in A^2(\D)$ for $z\in\D$, since
the fact that $\{a_j\}_j$ and $\{b_j\}_j$ are orthonormal bases in
$\scrD_0(\D)$ entails that

\[
\int_\D|\partial_z\partial_w Q(z,w)|^2\diffA(w)=\sum_{j=1}^{+\infty}
t_j^2|a_j'(z)|^2\le \sum_{j=1}^{+\infty}|a_j'(z)|^2=\frac{1}{(1-|z|^2)^2},
\qquad z\in\D.
\]
In the last step, we used the assumption that $0\le t_j\le1$. In view of the
above definition of the operator $\Tope$, It now follows that $\Tope=\Tope\bar
\Pop$ since if $f\in L^2(\D)\ominus \mathrm{conj}\,A^2(\D)$, we get
$\Tope f=0$.
We next verify that $\Tope$ defines a contraction on $L^2(\D)$. To this end,
we calculate that
\begin{multline*}
\Tope f(z)=\int_{\D} \partial_z\partial_w Q(z,w) f(w) \dA (w)
=\int_{\D} \sum_{j=1}^{+\infty}t_j a_j^{\prime}(z)b_j^{\prime}(w)f(w)
\dA (w)
\\
=\sum_{j=1}^{+\infty}t_j a_j^{\prime}(z) \int_{\D}  b_j^{\prime}(w){f(w)} \dA (w)
=\sum_{j=1}^{+\infty}t_j a_j^{\prime}(z) \langle f, \bar b_j^{\prime}\rangle_\D.
\end{multline*}
As $\{b_i^{\prime}\}_{i=1}^{+\infty}$ forms an orthonormal basis for $A^2(\D)$,
the assumption that $f\in L^2(\D)$ entails that the sequence
$\{\langle f,\bar b_j'\rangle_\D\}_j$ is in $\ell^2$, and hence, in a second
step, that $\Tope f=\Pop\Tope f$. Moreover, a Plancherel-type argument
shows that
\begin{multline*}
\|\Tope f\|_{A^2(\D)}^2=\bigg\|
\sum_{j=1}^{+\infty}t_j  \langle f, \bar b_j^{\prime}\rangle_\D
\,a_j^{\prime}\bigg\|^2_{A^2(\D)}=
\sum_{j=1}^{+\infty}t_j^2 \big|\langle f, \bar b_j^{\prime}\rangle_\D\big|^2
\\
\le \sum_{j=1}^{+\infty}\big|\langle f, \bar b_j^{\prime}\rangle_\D\big|^2
=\|\bar\Pop f\|^2_{L^2(\D)}\le \|f\|_{L^2(\D)}^2. 
\end{multline*}
In particular, $\Tope$ acts contractively on $L^2(\D)$. Finally, since
it has already been established that $\Tope=\Tope\bar\Pop$ as well as
$\Tope=\Pop\Tope$, it follows that $\Tope=\Pop\Tope\bar\Pop$ as well.
\end{proof}

While it would appear that the converse of the above theorem fails in general,
it is not so far from being true, as evidenced by the following theorem.
The proof approach is basically analogous to that of Theorem 1.9.3 in
\cite{GAF}.

\begin{thm}
Let $Q:\D^2\raro \mathbb{C}$ be a holomorphic function. If
$Q=\mathscr{Q}[\Tope]$ holds for a contraction $\Tope$ on $L^2(\D)$, then 
\[
Q(z,w)=\lim_{n\raro+\infty} \sum_{j=1}^{n}t_{j,n} a_{j,n}(z)b_{j,n}(w).
\]
where $\{t_{j,n}\}_{j=1}^n\subset [0,1]$ while $\{a_{j,n}\}_{j=1}^{n}$ and
$\{b_{j,n}\}_{j=1}^{n}$ are orthonormal sequences in $\scrD_0(\D)$ for each
$n\in \Z_{>0}$.
\end{thm}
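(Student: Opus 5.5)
By Proposition \ref{uniqueness:1} together with Lemma \ref{lem:Q-1}, the symbol $Q=\mathscr{Q}[\Tope]$ depends only on the compression $\Tope_0:=\Pop\Tope\bar\Pop$. This compression is a contraction, and its kernel is $\partial_z\partial_w Q(z,w)=\langle\Tope\bar\Berg_w,\Berg_z\rangle_\D$. The plan is to approximate $\Tope_0$ by finite rank contractions and read off their singular value decompositions. It is convenient to transfer everything to an operator between Hilbert spaces of holomorphic functions. Define the conjugate-linear unitary $\Jop:A^2(\D)\to\mathrm{conj}\,A^2(\D)$ by $\Jop g=\bar g$. Then $\Sop:=\Tope_0\Jop$ is a conjugate-linear contraction on $A^2(\D)$. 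Alternatively, and more cleanly, consider the bilinear form $(g,h)\mapsto\langle \Tope\bar g, \bar h\rangle$, suitably conjugated, which corresponds to a linear contraction $\Tope_1 g:=\Pop\Tope\bar g$ from $\mathrm{conj}$-space to $A^2(\D)$. For this operator, $\partial_z\partial_w Q(z,w)=\Tope_1[\bar\Berg_w](z)$. Here $\bar\Berg_w=\sum_k (k+1)\bar w^{k}\bar\xi^k\cdot$(conjugated appropriately), and this expansion is holomorphic in $w$.

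For the truncation, let $\Pop_n$ be the orthogonal projection of $A^2(\D)$ onto polynomials of degree $<n$, and let $\bar\Pop_n$ be its conjugate. Set $\Tope_n:=\Pop_n\Tope\bar\Pop_n$, a contraction of rank at most $n$. Its Dirichlet symbol $Q_n=\mathscr{Q}[\Tope_n]$ has kernel $\partial_z\partial_wQ_n(z,w)=\sum_{j,k<n}c_{jk}(j+1)^{1/2}(k+1)^{1/2}z^jw^k$, where $c_{jk}=\langle\Tope\bar e_k,e_j\rangle$ and $e_j=(j+1)^{1/2}z^j$. This is precisely the Taylor truncation of $\partial_z\partial_wQ$ to degrees $<n$ in each variable. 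Consequently $Q_n\to Q$ locally uniformly on $\D^2$, since the Taylor coefficients of $Q$ are bounded by $\|\Tope\|$ times the norms $\|e_j\|$, so that the bidisk series converges locally uniformly.

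Next, apply the singular value decomposition to $\Tope_n$, viewed as a linear map from the $n$-dimensional space $\mathrm{conj}\,\Pop_nA^2$ to $\Pop_nA^2$. This gives $\Tope_n f=\sum_{j=1}^n t_{j,n}\langle f,\bar\beta_{j,n}\rangle \alpha_{j,n}$, with $0\le t_{j,n}\le\|\Tope\|\le1$ and with $\{\alpha_{j,n}\}$, $\{\beta_{j,n}\}$ orthonormal in $A^2(\D)$, consisting of polynomials. Pairing against the kernels yields $\partial_z\partial_wQ_n(z,w)=\Tope_n[\bar\Berg_w](z)=\sum_j t_{j,n}\alpha_{j,n}(z)\beta_{j,n}(w)$, where we use $\langle\bar\Berg_w,\bar\beta\rangle=\beta(w)$ via the reproducing property. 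Now define $a_{j,n}(z)=\int_0^z\alpha_{j,n}$ and $b_{j,n}(w)=\int_0^w\beta_{j,n}$. These are orthonormal in $\scrD_0(\D)$, because differentiation is a unitary map from $\scrD_0(\D)$ onto $A^2(\D)$. By Lemma \ref{lem:Q-1}, applied to $\Tope_n$, integrating gives $Q_n(z,w)=\sum_{j=1}^n t_{j,n}a_{j,n}(z)b_{j,n}(w)$. Combined with $Q_n\to Q$, this proves the claim.

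The main obstacle I expect is purely bookkeeping: tracking the conjugations so that the reproducing identity produces $\beta_{j,n}(w)$ rather than its conjugate, and making sure that the rank-$n$ singular value decomposition is set up between genuinely linear spaces. The convergence step is routine, since $|\langle \Tope\bar e_k,e_j\rangle|\le1$ makes the double power series $\sum c_{jk}\sqrt{(j+1)(k+1)}z^jw^k$ converge absolutely on compact subsets of $\D^2$.
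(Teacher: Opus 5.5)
Your proposal is correct and follows essentially the same route as the paper: compress $\Pop\Tope\bar\Pop$ to $n$-dimensional pieces, take the singular value decomposition, and pass to primitives, which are orthonormal in $\scrD_0(\D)$ because differentiation is unitary onto $A^2(\D)$. The differences are cosmetic. You fix the monomial basis, so that $Q_n$ is literally the Taylor truncation of $Q$, which gives locally uniform rather than merely pointwise convergence. You also perform the SVD directly between $\mathrm{conj}\,\Pop_nA^2(\D)$ and $\Pop_nA^2(\D)$ instead of routing through the isometry $\Jop$.
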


\begin{proof}
Let $\Tope$ be a contraction on $L^2(\D)$ and let $Q=\mathscr{Q}[\Tope]$. It
follows from Proposition \ref{uniqueness:1} that
\[
\Pop \Tope\bar{\Pop} {f}(z)=
\int_{\D} \partial_z\partial_w \mathscr{Q}[T](z,w) {f(z)} \dA (w),\qquad
z\in\D.
\]
Let $\Jop:A^2(\D)\raro\text{conj}\,A^2(\D)$ be the isometric linear map
given by $\Jop e_j=\bar{e}_j$ for $j=1,2,3,\dots$, where
$e_j(z):=j^{-\frac12}z^{j-1}$ denotes the standard orthonormal basis in $A^2(\D)$.
We consider the contraction
$\tilde{\Tope}:=\Pop \Tope\bar\Pop\Jop:\,A^2(\D)\to A^2(\D)$.
If $\{\beta_1,\beta_2,\beta_3,\dots\}$ denotes any orthonormal basis of
$A^2(\D)$ while $\Pi_n$ is the orthogonal projection of  $A^2(\D)$ onto
the $n$-dimensional subspace $\text{span}\{\beta_1,\dots,\beta_n\}$, then,
since $\Pi_n f\to f$ and $\Pi_n g\to g$ in norm as $n\to+\infty$ for fixed
$f,g\in A^2(\D)$, we have that
\[
\la \tilde\Tope f,g \ra_\D=
\lim_{n\raro\infty}
\la\tilde{\Tope}\Pi_n f,\Pi_n g\ra_\D
=\lim_{n\to+\infty}\la \Pi_n \tilde{\Tope}\Pi_n f,g\ra_\D.
\]
As the operator $\Pi_n \tilde{\Tope}\Pi_n$ has finite rank $\le n$, it
admits a singular value decomposition. Hence it can be written in the form
\begin{equation}
\label{eq:SVD}
\Pi_n \tilde{\Tope}\Pi_n f=
\sum_{j=1}^{n} t_{j,n} \la f, u_{j,n}\ra_\D v_{j,n},
\qquad f\in A^2(\D),
\end{equation}
where $t_{j,n}\in [0,1]$, and $\{v_{j,n}\}_{j=1}^{n}$ and $\{u_{j,n}\}_{j=1}^{n}$
are orthonormal sequences in $A^2(\D)$.
If we let $\bar s_w=\Jop r_w$, where $r_w=\Jop^\star\bar{s}_w\in A^2(\D)$,
we obtain that
\begin{multline}
\label{eq:SVD-0}
Q(z,w)=zw\la \Tope\bar{s}_w, s_z \ra=zw\la \Pop\Tope\bar\Pop\bar{s}_w, s_z
\ra_\D=
zw \la\Pop\Tope\bar\Pop\Jop r_w, s_z  \ra_\D
\\
=zw\la \tilde{\Tope}r_w, s_z\ra_\D=
\lim_{n\raro\infty}zw\la  \Pi_n\tilde{\Tope} \Pi_{n}r_w, s_z\ra_\D.
\end{multline}
Moreover, in view of the singular value decomposition \eqref{eq:SVD}, we have
\begin{equation}
\label{eq:SVD-1}
\la  \Pi_n\tilde{\Tope} \Pi_{n}r_w, s_z\ra_\D=
\sum_{j=1}^{n}t_{j,n}\la r_w, u_{j,n}\ra \la v_{j,n}, s_z\ra_\D
=\sum_{j=1}^{n}t_{j,n}\la  \Jop^\star \bar s_w,u_{j,n}\ra_\D
\la v_{j,n}, s_z\ra_\D,
\end{equation}
for all $z,w\in\D$.
For $j=1,\dots,n$, we put
\[
a_{j,n}(z):=z\langle v_{j,n},s_z\rangle_\D,\quad
b_{j,n}(w):=w\langle \Jop^\star \bar s_w,u_{j,n}\rangle_\D=
w\langle \bar s_w,\Jop u_{j,n}\rangle_\D,  
\]
so that $a_{j,n}(0)=b_{j,n}(0)=0$, and
\[
a_{j,n}'(z)=\langle v_{j,n},\Berg_z\rangle_\D=v_{j,n}(z),\quad
b_{j,n}'(w)=
\langle \bar \Berg_w,\Jop u_{j,n}\rangle_\D=\overline{\Jop u_{j,n}(w)}.  
\]
Since $\{u_{j,n}\}_j$ and $\{v_{j,n}\}_j$ are orthonormal in $A^2(\D)$,
and hence that $\{\Jop u_{j,n}\}_j$ is orthonormal in $\mathrm{conj}\,A^2(\D)$,
it follows that $\{a_{j,n}\}_j$ and $\{b_{j,n}\}_j$ are orthonormal in
$\scrD_0(\D)$, and by \eqref{eq:SVD-0} combined with \eqref{eq:SVD-1},
\[
Q(z,w)=\lim_{n\to+\infty}\sum_{j=1}^{n}t_{j,n}\,a_{j,n}(z)b_{j,n}(w),
\]
as claimed.
\end{proof}

\subsection{Dirichlet operator symbols of multiplication operators}

It was mentioned earlier that when $\Tope=\Mop_\mu$, the operator of
multiplication by $\mu\in L^\infty(\D)$, with $\|\mu\|_{L^{\infty}(\D)}\leq 1$,
the diagonal part $\oslash \mathscr{P}[\Mop_\mu]$ agrees with the Bergman
projection $\Pop\mu$ of $\mu$, see \eqref{bergman projection}.
We now try to analyze the structure of the operator symbol
$\mathscr{Q}[\Mop_\mu](z,w)=zw\mathscr{P}[\Mop_\mu](z,w)$
as a function of two variables. As it turns out, it
solves
a linear damped wave equation.

\begin{thm}
Let $\mu\in L^{\infty}(\D)$ with $\|\mu\|_{L^{\infty}(\D)}\leq 1$ and
let $Q:=\mathscr{Q}[M_{\mu}]$.  Then $Q$ has
\[
Q(z,0)=Q(0,w)=0,\qquad z,w\in\D,
\]
while it also solves the linear damped wave equation
\[
\partial_z\partial_w Q(z,w)=
\frac{z^2\partial_z Q(z,w)-w^2\partial_w Q(z,w)}{zw(z-w)},\quad (z,w)\in \D^2.
\]
\end{thm}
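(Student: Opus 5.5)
The plan is to compute $Q=\mathscr{Q}[\Mop_\mu]$ explicitly as an integral against an elementary kernel, check the wave equation on the kernel, and pass it through the integral. First I unwind the definition. Since $\overline{s_w(\xi)}=(1-w\bar\xi)^{-1}$, we get
\[
Q(z,w)=zw\langle \Mop_\mu\bar s_w,s_z\rangle_\D=\int_\D \mu(\xi)\,k_{\bar\xi}(z,w)\,\dA(\xi),\qquad
k_a(z,w):=\frac{zw}{(1-az)(1-aw)},\quad a\in\D .
\]
The boundary conditions $Q(z,0)=Q(0,w)=0$ then follow at once, either because $k_a(z,0)=k_a(0,w)=0$ or directly from Lemma \ref{lem:Q-1}.

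Next I check the wave equation pointwise for each kernel $k_a$ with fixed $a\in\D$. Write $f=(1-az)^{-1}$ and $g=(1-aw)^{-1}$, so that $k_a=zwfg$. Using $\partial_z f=af^2$, one finds
\[
\partial_z k_a=wf^2g,\qquad \partial_wk_a=zfg^2,\qquad \partial_z\partial_wk_a=f^2g^2 .
\]
On the right-hand side we get
\[
\frac{z^2wf^2g-w^2zfg^2}{zw(z-w)}=\frac{fg\,(zf-wg)}{z-w}.
\]
A direct computation gives $zf-wg=(z-w)fg$, so the right-hand side also equals $f^2g^2$. Thus each $k_a$ solves the equation wherever $zw(z-w)\neq0$.

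Finally I pass the identity through the integral. For $(z,w)$ in a compact subset of $\D^2$, the functions $k_{\bar\xi}$ and their derivatives up to mixed second order are bounded uniformly in $\xi\in\D$, because $|1-\bar\xi z|\ge1-|z|$. Since $\mu\in L^\infty$, differentiation under the integral sign is justified. Multiplying the equation through as $zw(z-w)\,\partial_z\partial_wQ=z^2\partial_zQ-w^2\partial_wQ$ gives an identity of holomorphic functions valid on all of $\D^2$.

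The only mild obstacle is how to read the equation on the singular set $\{zw(z-w)=0\}$. There the right-hand side must be understood as its holomorphic extension. That extension exists because the numerator $z^2\partial_zQ-w^2\partial_wQ$ vanishes on $z=w$ by the symmetry $Q(z,w)=Q(w,z)$, and on $z=0$ and on $w=0$ by the boundary conditions. Alternatively, the equation extends by continuity from the dense set where $zw(z-w)\neq0$, since the left-hand side is holomorphic on all of $\D^2$.
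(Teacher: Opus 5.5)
Your proof is correct and is essentially the paper's argument. Both write $Q(z,w)=zw\int_\D \mu(\xi)(1-z\bar\xi)^{-1}(1-w\bar\xi)^{-1}\,\dA(\xi)$, differentiate under the integral sign, and reduce the wave equation to the kernel identity $zf-wg=(z-w)fg$, which is exactly the paper's partial-fraction identity for $(1-z\bar\xi)^{-2}(1-w\bar\xi)^{-2}$; your justification of differentiation under the integral and your treatment of the equation on $\{zw(z-w)=0\}$ fill in details the paper leaves implicit.
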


\begin{proof}
We first note that
\begin{multline*}
Q(z,w)=\mathscr{Q}[M_{\mu}](z,w)
=zw\la M_{\mu}(\bar{s}_w), \; s_z\ra_{\D} 
=zw\int_{\D}  (M_{\mu} \bar{s}_w) (\xi)  \overline{s_z (\xi)} \dA (\xi)
\\
=zw\int_{\D}\frac{\mu(\xi)}{(1-z\bar{\xi})(1-w\bar{\xi})} \dA (\xi),
\qquad (z,w)\in \D^2.
\end{multline*}
In particular, this shows that
\[
Q(z,z)=z^2\int_{\D}\frac{\mu(\xi)}{(1-z\overline{\xi})^2} \dA (\xi)
=z^2\Pop\mu (z),\quad z\in \D,
\]
where $\Pop$ denotes the Bergman projection, cf. equation
\eqref{bergman projection}.
Next, we differentiate $Q(z,w)$ with respect to $z$ and $w$, respectively,
to obtain
\[
\partial_z Q(z,w)=w\la M_{\mu} \bar{\Berg}_z, s_w\ra
=w\int_{\D} \frac{\mu(\xi)}{(1-z\bar{\xi})^2(1-w\bar{\xi})} \dA (\xi).
\]
and
\[
\partial_w Q(z,w)=z\la M_{\mu} \bar{s}_z, \Berg_w\ra
=z\int_{\D} \frac{\mu(\xi)}{(1-z\bar{\xi})(1-w\bar{\xi})^2} \dA (\xi).
\]
From the algebraic identity
\[
\frac{1}{(1-z\bar{\xi})^2(1-w\bar{\xi})^2}
=\frac{1}{z-w}\left( \frac{z}{(1-z\bar{\xi})^2(1-w\bar{\xi})}
-\frac{w}{(1-z\bar{\xi})(1-w\bar{\xi})^2} \right),
\]
which holds for all $(z,w)\in \D^2$ with $z\neq w$, it follows that
\begin{equation}
\label{eq:algid-1}
\int_{\D}\frac{\mu(\xi)}{(1-z\bar\xi)^2(1-w\bar{\xi})^2}\dA(\xi)
=\frac{z^2\partial_zQ(z,w)-w^2\partial_wQ(z,w)}{zw(z-w)}.
\end{equation}
Again, differentiating $Q(z,w)$ with respect to $z$ and $w$ yields
\[
\partial_z \partial_w Q(z,w)=\la M_{\mu} \bar{b}_z,b_w\ra
=\int_{\D}\mu(\xi) \overline{\Berg_z(\xi)}\overline{\Berg_w(\xi)}
\dA (\xi),
\]
which we identify with the left-hand side of \eqref{eq:algid-1}. 
It now follows that $Q$ solves the linear damped wave equation
\[
\partial_z\partial_w Q(z,w)=
\frac{z^2\partial_z Q(z,w)-w^2\partial_w Q(z,w)}{zw(z-w)},\qquad z,w\in \D.
\]
The proof of the theorem is complete.
\end{proof}

The converse of the above theorem does not hold in general. After all,
since the damped wave equation is linear, we may my multiply $Q$ by a scalar
and the equation remains the same. But the equality $Q=\mathscr{Q}[\Mop_\mu]$
is not invariant under scalar multiplication, since $\mu$ is supposed to be
in the unit ball of $L^\infty(\D)$. So to get a necessary and sufficient
condition, we need to take the unit ball condition into account.
Here, we use a result of Coifman, Rochberg, and Weiss \cite{CRW}, which
asserts that the Bergman projection $\Bergman$ maps $L^{\infty}(\D)$ onto
the Bloch space $\clb(\D)$. Thus, $\Bergman L^{\infty}(\D)$ can be identify
with $\clb(\D)$ using the alternative norm 
\[
\|g\|_{\Bergman L^{\infty}(\D)}=\inf\big\{\|\mu\|_{L^{\infty}(\D)} :
\mu\in L^{\infty}(\D)
\quad\text{and}\quad g=\Bergman\mu\big\}.
\]
In this sense, in view of the Hahn-Banach theorem,
$\Bergman L^{\infty}(\D)$ is isometrically isomorphic to the
dual of $A^1(\D)$, the set of all holomorphic and area-integrable functions on
$\D$, with respect to the dual pairing
\[
\la f,g\ra_{\D}:=\lim_{r\raro 1^-} \la f_r,g\ra_{\D},
\]
where $f\in A^1(\D)$,  $f_r(z)=f(rz)$, and $g=P\mu$, for
$\mu\in L^{\infty}(\D)$. It then follows that
\[
\la f,g \ra_{\D}=\la f, P\mu \ra_{\D}=\la f,\mu \ra_{\D}.
\]
The (perhaps non-unique) optimal $\mu\in L^\infty(\D)$ for a given $g\in\Pop
L^\infty(\D)$ with $g=\Pop\mu$ comes from application of the Hahn-Banach
theorem. This argument was used in the analysis of $\Pop L^\infty(\D)$ in
the context of the concept of asymptotic variance in \cite{tail variance},
\cite{geometric zero packing}. 

\begin{thm}
Suppose $Q:\D^2\raro\mathbb{C}$ is a holomorphic function with 
\[
Q(z,0)=Q(0,w)=0,\qquad z,w\in\D,
\]
which solves the linear damped wave equation
\[
\partial_z\partial_w Q(z,w)=
\frac{z^2\partial_z Q(z,w)-w^2\partial_w Q(z,w)}{zw(z-w)},\quad (z,w)\in \D^2.
\]  
Then $Q$ has the form $Q=\mathscr{Q}[\Mop_\mu]$ for some $\mu$ in the closed
unit ball of $L^{\infty}(\D)$ if and only if
\begin{equation}
\label{eq:L}
\Big|\int_{\D} z^{-2}Q(z,z) \overline{f(z)} \dA (z)\Big|\le \|f\|_{A^1(\D)},
\qquad f\in A^1(\D).  
\end{equation}
\end{thm}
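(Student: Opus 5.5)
The plan has two parts: first show that $Q$ is determined by its diagonal through $g(z):=z^{-2}Q(z,z)$; then reduce the membership question to the duality $A^1(\D)^*\cong\Bergman L^\infty(\D)$ described just before the statement.

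\emph{Step 1 (uniqueness).} We show that a holomorphic solution $Q$ of the damped wave equation with $Q(z,0)=Q(0,w)=0$ is uniquely determined by $Q(z,z)$. Expand
\[
Q(z,w)=\sum_{j,k\ge1}c_{j,k}z^jw^k .
\]
Multiplying the equation by $zw(z-w)$ gives
\[
zw(z-w)\partial_z\partial_wQ=z^2\partial_zQ-w^2\partial_wQ ,
\]
and comparing coefficients of $z^{a}w^{b}$ yields
\[
(a-1)b\,c_{a-1,b}-a(b-1)\,c_{a,b-1}=(a-1)c_{a-1,b}-(b-1)c_{a,b-1},
\]
that is,
\[
(a-1)(b-1)\big(c_{a-1,b}-c_{a,b-1}\big)=0 .
\]
Hence along each antidiagonal $j+k=n$ (with $j,k\ge1$) the coefficients $c_{j,k}$ coincide. (Between $(1,n-1)$ and $(2,n-2)$ take $a=2$, $b=n-1\ge2$; all the relevant pairs are covered.) Therefore $c_{j,k}=\gamma_{j+k}$, so $Q$ is fixed by the diagonal coefficients:
\[
Q(z,z)=\sum_n (n-1)\gamma_n z^n .
\]
Equivalently, the space of admissible $Q$ is parametrized linearly by $g$. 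Concretely, $Q(z,w)=zw\,G(z,w)$, where $G$ is the unique holomorphic function with $G(z,w)=\sum_n\gamma_n\sum_{j+k=n}z^{j-1}w^{k-1}$. This $G$ is exactly the divided-difference kernel obtained from $\int\mu(\xi)(1-z\bar\xi)^{-1}(1-w\bar\xi)^{-1}\dA$ when $\mu$ has moments matching $\gamma$.

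\emph{Step 2 (necessity).} If $Q=\mathscr{Q}[\Mop_\mu]$ with $\|\mu\|_\infty\le1$, the computation in the preceding theorem gives $z^{-2}Q(z,z)=\Bergman\mu(z)$. For $f\in A^1(\D)$, the stated dual pairing then gives
\[
\la f,\Bergman\mu\ra_\D=\la f,\mu\ra_\D ,
\]
which is bounded in modulus by $\|f\|_{A^1}$. Taking conjugates gives \eqref{eq:L}. Some care is needed: the pairing must be interpreted as the limit over $f_r$ unless the integral converges absolutely.

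\emph{Step 3 (sufficiency).} Assume \eqref{eq:L}. Then $f\mapsto\overline{\la g,f\ra}$, i.e.\ $f\mapsto\la f,g\ra_\D$, extends to a bounded functional of norm $\le1$ on $A^1(\D)$. Here $g$ is holomorphic in $\D$ because $Q(z,z)$ vanishes to second order at $0$. We use Hahn--Banach to extend the functional to $L^1(\D)$ with norm $\le1$, and represent it by some $\mu\in L^\infty(\D)$ with $\|\mu\|_\infty\le1$. Testing on the monomials $f=z^m$ gives
\[
\la z^m,g\ra=\la z^m,\mu\ra ,
\]
so $g$ and $\Bergman\mu$ have the same Taylor coefficients, hence $g=\Bergman\mu$. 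Now $Q':=\mathscr{Q}[\Mop_\mu]$ satisfies the same boundary conditions and the same wave equation by the preceding theorem, and
\[
Q'(z,z)=z^2\Bergman\mu=Q(z,z).
\]
By Step 1, $Q=Q'$.

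The main obstacle is Step 1, namely verifying that the wave equation forces the antidiagonal constancy of the coefficients. This requires carefully handling the boundary indices and the apparent singularity at $z=w$; the coefficient recursion above is the route I would try first. A minor secondary point is justifying the pairing in \eqref{eq:L} for general $f\in A^1$ by dilation $f_r$, using that $g$ is Bloch.
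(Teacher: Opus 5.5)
Your proof is correct. The duality parts match the paper, but your key structural step takes a different route.

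\textbf{How the paper argues.} It solves the damped wave equation explicitly. After the inversion $\xi=1/z$, $\eta=1/w$, the equation becomes the standard wave equation $\partial_\xi\partial_\eta\{(\xi-\eta)Q(\xi^{-1},\eta^{-1})\}=0$. Hence $Q(z,w)=zw\,\frac{H_1(z)-H_1(w)}{w-z}$ with $H_1'(z)=-z^{-2}Q(z,z)$, which needs a short argument that the primitive does not branch at infinity. Once the duality gives $z^{-2}Q(z,z)=\Bergman\mu$, the paper computes backwards with the explicit primitive $H_1(z)=-z\int_\D\mu(\tau)(1-z\bar\tau)^{-1}\dA(\tau)$ and recovers $\mathscr{Q}[\Mop_\mu]$.

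\textbf{How you argue.} You take only uniqueness from the equation, through the coefficient identity $(a-1)(b-1)(c_{a-1,b}-c_{a,b-1})=0$. I checked this identity, including the linking of consecutive pairs on each antidiagonal. You then let the preceding theorem supply the candidate $\mathscr{Q}[\Mop_\mu]$, so no backward computation is needed.

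\textbf{Comparison.} Your relation $c_{j,k}=\gamma_{j+k}$ is exactly the Taylor-coefficient form of the paper's divided-difference representation, with $H_1(z)=-\sum_n\gamma_n z^{n-1}$. So both routes carry the same structural information.
\begin{itemize}
\item Your version is more elementary and works globally on $\D^2$. It avoids the paper's ``locally in $\D_\e$'' reasoning and the branching-at-infinity discussion.
\item The paper's version writes the general solution in closed form. This makes visible the parallel with $\log\frac{\psi(z)-\psi(w)}{z-w}$ in the nonlinear Grunsky setting.
\end{itemize}

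\textbf{Duality steps.} Your necessity argument (via the dilation pairing) and sufficiency argument (Hahn--Banach extension to $L^1(\D)$, then testing on monomials) agree with the paper's use of $A^1(\D)^*\cong\Bergman L^\infty(\D)$, and you give more detail. One small caution: in the sufficiency direction $g$ is not yet known to be Bloch. There, read the pairing in \eqref{eq:L} as the limit of $\int_\D g(rz)\overline{f(z)}\,\dA(z)$ as $r\to1^-$. On $f=z^m$ this gives $g_m/(m+1)$ directly, which is all your coefficient comparison with $\Bergman\mu$ needs.
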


\begin{proof}
We already checked that $Q=\mathscr{Q}[\Mop_\mu]$ with $\mu$ in the unit ball
of $L^\infty(\D)$ has the indicated properties, so we focus our efforts on the
``if'' part of the assertion. 

Substituting $\xi:=1/z$ and $\eta:=1/w$, the above damped wave equation
simplifies to
\[
\partial_{\xi}\partial_{\eta} Q(\xi^{-1},\eta^{-1})=
\frac{\partial_{\xi}Q(\xi^{-1},\eta^{-1})-
\partial_{\eta}Q(\xi^{-1},\eta^{-1})}{\xi-\eta},
\]
which is seen to be equivalent to the relation
\[
\partial_{\xi}\partial_{\eta} \left\{(\xi-\eta)Q(\xi^{-1},\eta^{-1})\right\}=0,
\]
a standard wave equation. 
Hence, locally in $\D_\e$ we have
\[
(\xi-\eta)Q(\xi^{-1},\eta^{-1})=G_1(\xi)+G_2(\eta),
\]
where $G_1,G_2$ are locally defined holomorphic functions on $\D_{\e}$,
except that we will need to rule out possible branching at infinity.
Letting  $\xi\raro \eta$, we find $G_1(\eta)=-G_2(\eta)$.
Thus, the relation simplifies to
\[
(\xi-\eta)Q(\xi^{-1},\eta^{-1})=G_1(\xi)-G_1(\eta),
\]
that is,
\[
Q(\xi^{-1},\eta^{-1})=\frac{G_1(\xi)-G_1(\eta)}{\xi-\eta}.
\]
Along the diagonal, we find that
\[
G_1^{\prime}(\xi)=Q(\xi^{-1}, \xi^{-1}).
\]
Moreover, since
$Q(\xi^{-1},\xi^{-1})=\Ordo(\xi^{-2})$ holds, no branching at infinity will
occur when we declare that $G_1(\xi)$ is any suitable primitive to
$Q(\xi^{-1},\xi^{-1})$. This way, $G_1(\xi)$ gets to be globally defined on
$\D_\e$, and we write $H_1(z):=G_1(z^{-1})$ for $z\in\D$. By the alluded-to
duality between $A^1(\D)$ and $\Pop L^\infty(\D)$, the integral
bound \eqref{eq:L} entails that $\oslash Q(z)=Q(z,z)$ is of the form
\[
Q(z,z)=z^2\Pop\mu(z),\qquad z\in\D,
\]
for some $\mu$ in the unit ball of $L^\infty(\D)$. But then
\[
H_1'(z)=-z^{-2}G_1'(z^{-1})=-z^{-2}Q(z,z)=-\Pop\mu(z),\qquad z\in\D,
\]
and the formula
\[
H_1(z)=-z\int_\D\frac{\mu(\tau)}{1-z\bar\tau}\,\dA(\tau)
\]
supplies a suitable primitive. Calculating backwards, then, we obtain that
\begin{multline*}
Q(z,w)=\frac{G_1(z^{-1})-G_1(w^{-1})}{z^{-1}-w^{-1}}
=zw\,\frac{H_1(z)-H_1(w)}{w-z}
\\
=\frac{zw}{w-z}\int_\D
\Big(\frac{z}{1-z\bar\tau}-\frac{w}{1-w\bar\tau}\Big)\mu(\tau)\dA(\tau)
=zw\int_\D \frac{\mu(\tau)}{(1-z\bar\tau)(1-w\bar\tau)}\dA(\tau),
\end{multline*}
which we identify with $\mathscr{Q}[\Mop_\mu](z,w)$. This completes the
proof of the theorem.
\end{proof}

\subsection{Conformal mappings: the standard classes $\mathcal{S}$
  and $\Sigma$}
It is a central theme in the theory of conformal mapping to analyze
the local dilation/contraction/rotation of the mapping in question.
It is natural to study such conformal mappings collectively. 
One such collection is the standard class $\cls$ of univalent functions
$\vp:\D\raro \mathbb{C}$ subject to the normalization
$\vp(0)=0$ and $\vp^{\prime}(0)=1$. We consider the function
$h_{\vp}(z):=\log \vp^{\prime}(z)$, which may be referred to as the
\emph{local complex distortion exponent}. As a logarithm, it is fixed by the
requirement that $h_{\vp}(0)=0$. A classical estimate of
$h_{\vp}$ (due to Koebe and Bieberbach) is the inequality
\[
|(1-|z|^2)h_{\vp}^{\prime}(z)-2\bar{z}|=\bigg|
(1-|z|^2)\frac{\vp^{\prime \prime}(z)}{\vp^{\prime}(z)}-2\bar{z}\bigg|\leq 4.
\]
In particular, $h_{\vp}$ is in the Bloch space, with seminorm estimate
$\|h_{\vp}\|_{\clb(\D)}\leq 6$.
On the other hand, Becker's univalence criterion asserts that if
$\vp:\D\raro \mathbb{C}$ is a function which is locally univalent,
that is, $\vp^{\prime}(z)\neq 0$ for all $z\in \D$, with normalization
$\vp(0)=0$ and $\vp'(0)=1$, and in addition,
$\|h_{\vp}\|_{\clb(\D)}\leq 1$, then $\vp$ is necessarily univalent. Note that
the function $h_{\vp}$ makes sense even if $\vp$ is not univalent, as soon as
$\varphi'$ is zero-free.
The Bloch seminorm bound $1$ which appears here is best possible
(see \cite{becker, B_P}). The behavior of $h_{\vp}=\log \vp^{\prime}(z)$
may acquire additional boundary growth if the image domain $\vp(\D)$ is
unbounded, because, after all, the derivative $\vp^{\prime}$ is taken with
respect to the Euclidean structure in the image $\vp(\D)\subset \mathbb{C}$.
To avoid taking such effects into consideration, we can pass to the
univalent function $\psi:\D_{\e}\raro \mathbb{C}_{\infty}$ given by
\[
\psi(\zeta)=\frac{1}{\vp(1/\zeta)}, \quad \zeta\in \D_{\e},
\]
which has $\psi(\zeta)=\zeta+O(1)$ as $\zeta\raro\infty$ and hence is
an element of the class $\Sigma$. As for $\psi$, we know that the complement
of the image domain $\psi(\D)$ is a compact continuum which does not divide
the plane, contains the origin, and has diameter at most 4.
Moreover, any function $\psi\in \Sigma$ has a Laurent series expansion of
the form
\[
\psi(z)=z+b_0+\frac{b_1}{z}+\frac{b_2}{z^2}+\cdots,\quad z\in \mathbb{D}_e.
\]

\subsection{Grunsky operators}

For a given $\psi\in \Sigma$, we can define the 
Grunsky matrix $\Grunsky_{\psi}=(\sqrt{jk}\gamma_{j,k})_{j,k=1}^{+\infty}$,
where $\gamma_{j,k}$ are complex numbers such that
\begin{equation}
\label{eq:log1}
\log\frac{\psi(z)-\psi(w)}{z-w}=\sum_{j,k=1}^{+\infty}\gamma_{j,k}z^{-j}w^{-k}
\end{equation}
holds for $z,w\in\D_{\e}$, $z\neq w$. 
Here, it is implicit that we use an appropriate choice of the logarithm
such that the left-hand side defines a holomorphic function on the
external bidisk $\D_{\e}^2$ with value $\log 1=0$ for $z=w=\infty$. 
The classical \emph{Grunsky inequality} \cite{Grunsky} assert that
the matrix $\Grunsky_{\psi}$ acts contractively on $\ell^2(\mathbb{Z}_{>0})$,
that is,
\begin{equation}\label{eq:grunsky}
\left\lvert \sum_{j,k= 1}^{+\infty} \sqrt{jk}\gamma_{j,k}\alpha_j\beta_k
\right\lvert^2\leq \sum_{j= 1}^{+\infty} |\alpha_j|^2\sum_{k=1}^{+\infty}|\beta_k|^2
\end{equation}
for all sequences $\{\alpha_j\}_{j=1}^{+\infty},
\{\beta_j\}_{j=1}^{+\infty}$ from $\ell^2(\mathbb{Z}_{>0})$. 
By differentiation of both sides of the relation \eqref{eq:log1}, we see that
\begin{equation}\label{eq:log2}
\frac{\psi^{\prime}(z)\psi^{\prime}(w)}{(\psi(z)-\psi(w))^2}
-\frac{1}{(z-w)^2}=
\partial_z\partial_w \log\frac{\psi(z)-\psi(w)}{z-w}=
\sum_{j,k=1}^{+\infty}jk\gamma_{j,k}z^{-j-1}w^{-k-1},
\end{equation}
where $z,w\in \D_{\e}$.
If we have two Laurent series 
\begin{equation}\label{eq:log3}
g(z)=\sum_{n=2}^{+\infty}a_nz^{-n},\quad h(z)=\sum_{n=2}^{+\infty}b_nz^{-n},
\end{equation}
which are finite in the sense that only finitely many coefficients are nonzero,
we calculate that
\begin{multline}\label{eq:log4}
\int_{\D_{\e}} \int_{\D_{\e}}
\left(   \frac{\psi^{\prime}(z)\psi^{\prime}(w)}{(\psi(z)-\psi(w))^2}
-\frac{1}{(z-w)^2}  \right)   \overline{g(z)h(w)} \dA (z) \dA (w)
\\
= \int_{\D_{\e}} \int_{\D_{\e}}\sum_{j,k=1}^{+\infty}jk
\gamma_{j,k}z^{-j-1}w^{-k-1}\sum_{m,n=2}^{+\infty}
\bar{a}_m\bar{b}_n\bar{z}^{-m}\bar{w}^{-n} \dA (z) \dA (w)
\\
=\sum_{j,k=1}^{+\infty}\sum_{m,n=2}^{+\infty}jk\gamma_{j,k}\bar{a}_m\bar{b}_n
\int_{\D_{\e}} z^{-j-1} \bar{z}^{-m}
 \dA (z)
\int_{\D_{\e}} w^{-k-1} \bar{w}^{-n} \dA (w)
\\
=\sum_{j,k=1}^{+\infty} \gamma_{j,k}\bar{a}_{j+1}\bar{b}_{k+1}.
\end{multline}
If in the Grunsky inequality \eqref{eq:grunsky} we agree that
\[
\alpha_n=\frac{\bar{a}_{n+1}}{\sqrt{n}}, \quad \beta_n
=\frac{\bar{b}_{n+1}}{\sqrt{n}},
\]
for all $n=1,2,3,\dots$, we find that the Grunsky inequality expresses that 
\begin{multline}\label{eq:log5}
\left\lvert \int_{\D_{\e}} \int_{\D_{\e}}
\left(   \frac{\psi^{\prime}(z)\psi^{\prime}(w)}{(\psi(z)-\psi(w))^2}
-\frac{1}{(z-w)^2}  \right)   \overline{g(z)h(w)} \dA (z) \dA (w)
\right\rvert^2
\\
\leq\int_{\D_{\e}} \int_{\D_{\e}} |g(z)h(w)|^2 \dA (z) \dA (w).
\end{multline}
In analogy with the Grunsky operator acting on coefficients, we consider
the integral Grunsky operator $\Gamma_{\psi}$ given by
\[
\Gamma_{\psi}f(z)= \int_{\D_{\e}}
\left(   \frac{\psi^{\prime}(z)\psi^{\prime}(w)}{(\psi(z)-\psi(w))^2}
-\frac{1}{(z-w)^2}  \right)   f(w) \dA (w),
\]
and observe that in view of \eqref{eq:log5}, $\Gamma_{\psi}$ acts
contractively $\mathrm{conj}\,A^2(\D_{\e})\to A^2(\D_{\e})$.
Moreover, we have the  inequality
\[
\int_{\D_{\e}}  \left\lvert
\frac{\psi^{\prime}(z)\psi^{\prime}(w)}{(\psi(z)-\psi(w))^2}
-\frac{1}{(z-w)^2}  \right\rvert^2\dA (w)\leq \frac{1}{(|z|^2-1)^2},
\qquad z\in \D_{\e},
\]
and hence the function
\[
w\mapsto \frac{\psi^{\prime}(z)\psi^{\prime}(w)}{(\psi(z)-\psi(w))^2}
-\frac{1}{(z-w)^2}
\]
is in $A^2(\D_{\e})$. It now follows that $\Gamma_{\psi}f=0$ for all
$f\in L^2(\D_{\e})\ominus \mathrm{conj}\,A^2(\D_{\e})$, and hence
$\Gamma_{\psi}:$ $L^2(\D_{\e})\to A^2(\D_{\e})$ defines a contraction.

For the inverse function $\gamma:\D\raro \D_{\e}$, defined by
$\gamma(z)=1/z$, let
\[
U_{\gamma}f(z):=f(\gamma(z))\gamma^{\prime}(z),\quad
\bar{U}_{\gamma}f(z):=f(\gamma(z))\overline{\gamma^{\prime}(z)}
\]
be the associated unitary transformations from $L^2(\D)$ onto $L^2(\D_{\e})$.
We define the Grunsky operator $\Gamma_{\phi}$ on $L^2(\D)$ based on the
Grunsky operator $\Gamma_{\psi}$ on $L^2(\D_{\e})$ via the relationship
\[
\Gamma_{\phi}:=U_{\gamma}\Gamma_{\psi}\bar{U}_{\gamma},
\]
where $\psi(z)=1/\vp(1/z)$.
When we simplify the expressions involved, we find that
\[
\Gamma_{\phi}f(z)=
\int_{\D} \left(
\frac{\phi^{\prime}(z)\phi^{\prime}(w)}{(\phi(z)-\phi(w))^2}-\frac{1}{(z-w)^2}
\right) f(w) \dA (w),\quad f\in L^2(\D),
\]
which acts contractively on $L^2(\D)$ given that the Grunsky operator
$\Gamma_\psi$ is a contraction on $L^2(\D_\e)$.

\subsection{Dirichlet symbols for Grunsky operators}

The Dirichlet symbol associated with the Grunsky operator $\Gamma_{\vp}$
is given by
\[
\mathscr{Q}[\Gamma_{\vp}](z,w)=zw\mathscr{P}[\Gamma_{\vp}](z,w)
=\langle\Gamma_{\vp}\bar s_w,s_z\rangle_\D  ,\qquad
(z,w)\in \D^2.
\]
We can in fact calculate that
\begin{equation}
\mathscr{Q}[\Gamma_{\vp}](z,w)=\langle \Gamma_{\vp}\bar s_w,s_z\rangle_\D
=\log\frac{zw(\vp(z)-\vp(w))}{(z-w)\vp(z)\vp(w)},\qquad (z,w)\in\D^2.
\label{eq:DirsymbGrunsky-0}
\end{equation}

Given that the Grunsky operators are more naturally defined in the context of
the exterior disk $\D_\e$, it makes sense to extend the notion of Dirichlet
symbols to the exterior disk for general bounded operators on $L^2(\D_\e)$ as
well.
So, suppose that $\Tope$ is a bounded linear operator on $L^2(\D_{\e})$.
Then the Dirichlet operator symbol $\mathscr{Q}_\e[\Tope]$ associated with
$\Tope$ on the exterior disk $\D_\e$ is given by much the same expression,
\begin{equation}
\mathscr{Q}_\e[\Tope](z,w):=\la \Tope(\overline{s^\e_w}),
s_z^\e \ra_{\D_{\e}},\qquad
(z,w)\in\D_{e}^2,
\label{eq:Qe-def-1}
\end{equation}
where
\[
s^\e_w(\xi):=\frac{1}{\xi(1-\xi\bar{w})},\qquad \xi,w\in\D_{\e}.
\]
Then $s^\e_w\in A^2(\D_e)$, and it depends conjugate-holomorphically on $w$,
with limit $s^\e_w\to0$ as $|w|\to+\infty$. As a consequence,
$\mathscr{Q}_\e[\Tope]$ is holomorphic in the exterior bidisk $\D_\e^2$,
and vanishes at infinity:
\[
\mathscr{Q}_\e[\Tope](z,\infty)=\mathscr{Q}_\e[\Tope](\infty,w)=0,\qquad
z,w\in\D_\e.  
\]
Moreover, $s^\e_w(\xi)$ has derivative
\[
\bar\partial_w s^\e_w(\xi)=\frac{1}{(1-\bar w\xi)^2}=:\Berg_w^\e(\xi),
\]
the Bergman kernel for $A^2(\D_e)$. This means that if $\Tope=\Gamma_\psi$,
the Grunsky operator associated with the mapping $\psi$ in the class $\Sigma$
of normalized conformal mappings on $\D_\e$, we may calculate that
\begin{equation}
\mathscr{Q}_\e[\Gamma_\psi](z,w)=\log\frac{\psi(z)-\psi(w)}{z-w},\qquad
z,w\in\D_\e.  
\label{eq:DirsymbolGrunsky-1}
\end{equation}
with the understanding that the branch of the logarithm at infinity is such
that the right-hand side vanishes when $z=\infty$ or $w=\infty$. In
particular, then, the diagonal restriction equals
\[
\oslash\mathscr{Q}_\e[\Gamma_{\psi}](z)=\mathscr{Q}_\e[\Gamma_\psi](z,w)=
\log\psi'(z), \qquad z\in \D_{\e}.
\]

\subsection{Asymptotic variance}
\label{ss:asymp}
Let $f:\D\raro \mathbb{C}$ be a holomorphic function. Following McMullen
\cite{mcmullen}, we define its \emph{asymptotic variance} by the formula
\[
\sigma(f)^2:=\limsup_{r\raro 1^{-}}
\frac{1}{\log\frac{1}{1-r^2}}\int_{\mathbb{T}}|f(r\zeta)|^2\diff s(\zeta).
\]
If the boundary behavior of $f$ is similar to that of a Bloch function,
especially in cases which can be described as "dynamical", $\sigma(f)^2$
captures very well the typical boundary growth of the given function $f$.
Moreover, from a probabilistic point of view, it is possible to think of
the evolution of the function  $r\raro f(r\zeta)$ as a sort of Brownian
motion in time $\log \frac{1+r}{1-r}= \log \frac{1}{1-r^2}+\Ordo(1)$ as
$r\to1^-$.
We note that since we do not assume that the analytic function $f$ is in
the Bloch space $\clb(\D)$, it is not for certain that the asymptotic
variance is finite. However, it turns out that the asymptotic variance
is always finite for the diagonal part of a Dirichlet operator symbol
$f(z)=\oslash\mathscr{Q}[\Tope](z)$ (Theorem 1.5.3 in \cite{GAF}), although
these are not always in the Bloch space.
This fact is rather subtle, as holomorphic functions with the growth control
$|f(z)|=\Ordo(\log \frac{1}{1-|z|^2})$ need not have a finite asymptotic
variance, as follows from the work of Abakumov and Doubtsov \cite{A_D}.
Such growth control is of Korenblum type (see \cite{B_L}), and it is
fulfilled always for Bloch functions as well as for the diagonal parts of
Dirichlet symbols of bounded operators.

The concept of asymptotic variance extends naturally to the setting of the
exterior disk $\D_\e$ as well. If
$g:\D_{\e}\raro\mathbb{C}_{\infty}$ is a holomorphic function, then
its (exterior) asymptotic variance is given by
\[
\sigma_\e(g)^2:=\limsup_{R\raro 1^{+}}\frac{1}{\log\frac{1}{R^2-1}}
\int_{\mathbb{T}}|g(R\zeta)|^2\diff s(\zeta).
\]
We now return to the usual context of asymptotic variance on the unit disk
$\D$. Let $0<r<1$ and suppose $f$ is a holomorphic function on $\D$. Let
$f_r(z):=f(rz)$, $z\in \D$. The Littlewood-Paley identity
implies that
\begin{equation*}
\int_{\mathbb{T}} |f(r\zeta)|^2 \diff s(\zeta)=|f(0)|^2
+r^2\int_{\D}|f^{\prime}(rz)|^2 \log \frac{1}{|z|^2} \dA (z).
\end{equation*}
By Taylor expansion, we have that
$\log\frac{1}{t}=1-t+\Ordo(1-t)^2$ as $t\to1^-$, and as we apply it with
$t=|z|^2$, it follows that
\begin{equation}
\limsup_{r\raro 1^{-}}\frac{1}{\log\frac{1}{1-r^2}}
\int_{\mathbb{T}} |f(r\zeta)|^2 \diff s(\zeta)=
\limsup_{r\raro 1^{-}}\frac{1}{\log\frac{1}{1-r^2}}
\int_{\D}|f^{\prime}(rz)|^2 (1-|z|^2) \dA (z).
\label{eq:sigma-1.1}
\end{equation}
We can take the Littlewood-Paley identity one step further, and obtain that
\begin{multline*}
\int_{\mathbb{T}} |f(r\zeta)|^2 \diff s(\zeta)=|f(0)|^2
+r^2|f^{\prime}(0)|^2 \\+r^4\int_{\D}|f^{\prime\prime}(rz)|^2
\left\{\left(1+|z|^2\right)\log
\frac{1}{|z|^2}-2\left(1-|z|^2\right)\right\} \dA (z).
\end{multline*}
This identity comes from Green's formula and the potential theory of the
biharmonic operator, which was important, e.g., in the context of Hele-Shaw
flow on curved surfaces \cite{HedShim1}, \cite{HedPer}, \cite{HedOlof}.
Next, Taylor expansion gives that
\begin{equation*}
\log\frac{1}{t}=-\log t=1-t+\frac{(1-t)^2}{2}+\frac{(1-t)^3}{3}+
\Ordo(1-t)^4,
\end{equation*}
as $t\to1^-$, so that
\begin{multline*}
(1+t)\log\frac{1}{t}-2(1-t)=
(1+t)\bigg(1-t+\frac{(1-t)^2}{2}+\frac{(1-t)^3}{3}+
\Ordo(1-t)^4\bigg)-2(1-t)
\\
=\frac16(1-t)^3+\Ordo(1-t)^4,
\end{multline*}
again as $t\to1^-$. By setting $t=|z|^2$, it now follows that
\begin{multline}
\limsup_{r\raro 1^{-}}\frac{1}{\log\frac{1}{1-r^2}}\int_{\mathbb{T}}
|f(r\zeta)|^2 \diff s(\zeta)
\\
= \limsup_{r\raro 1^{-}}
\frac{1}{\log\frac{1}{1-r^2}}\frac{1}{6}\int_{\D}
|f^{\prime\prime}(rz)|^2 (1-|z|^2)^3 \dA (z).
\label{eq:sigma-1.2}
\end{multline}
Next, in view of the identities \eqref{eq:sigma-1.1} and
\eqref{eq:sigma-1.2}, it becomes natural to introduce, for a holomorphic
function $f:\,\D\to\C$, the higher order asymptotic variances
\[
\sigma_1(f)^2:=\limsup_{r\raro 1^{-}}
\frac{1}{\log\frac{1}{1-r^2}}\int_{\D}
|f(rz)|^2 (1-|z|^2) \dA (z).
\]
and
\[
\sigma_2(f)^2:=\limsup_{r\raro 1^{-}}
\frac{1}{\log\frac{1}{1-r^2}}\,\int_{\D}
|f(rz)|^2 (1-|z|^2)^3 \dA (z).
\]

\begin{prop}
Given a holomorphic function $f:\,\D\to\C$, we have the identity of
asymptotic variances
\[
\sigma(f)^2=\sigma_1(f')^2=\frac16\sigma_2(f'')^2.
\]
\label{prop:asymp1}
\end{prop}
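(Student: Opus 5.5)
The plan is to obtain both identities from the two Littlewood--Paley type formulas displayed just before the statement. The only real work is to justify replacing the exact kernels $\log\frac1{|z|^2}$ and $(1+|z|^2)\log\frac1{|z|^2}-2(1-|z|^2)$ by their leading terms $1-|z|^2$ and $\frac16(1-|z|^2)^3$ inside a $\limsup$ normalized by $\log\frac1{1-r^2}$. I intend to do this by a multiplicative comparison of the kernels rather than an additive error estimate. The reason is that an additive error term such as $\int_\D|f'(rz)|^2\,\Ordo\big((1-|z|^2)^2\big)\,\dA(z)$ is not obviously $\ordo(\log\frac1{1-r^2})$ for a general holomorphic $f$. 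A multiplicative comparison also handles the case where one side is $+\infty$ without extra work.

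For the first identity, fix $\epsilon>0$ and choose $\delta\in(0,1)$ such that
\[
(1-\epsilon)(1-t)\le\log\tfrac1t\le(1+\epsilon)(1-t),\qquad \delta^2\le t<1,
\]
which is possible since $\log\frac1t=(1-t)(1+\Ordo(1-t))$ as $t\to1^-$. I then split the integral $r^2\int_\D|f'(rz)|^2\log\frac1{|z|^2}\,\dA(z)$ into the parts over $|z|<\delta$ and over $\delta\le|z|<1$.

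On the inner disk, $|f'(rz)|\le\sup_{|\xi|\le\delta}|f'(\xi)|$ uniformly in $r<1$, and $\log\frac1{|z|^2}$ is integrable. Hence this part, and likewise the corresponding part of $\int_\D|f'(rz)|^2(1-|z|^2)\,\dA(z)$, is $\Ordo(1)$. The constant term $|f(0)|^2$ is also $\Ordo(1)$. All of these vanish after division by $\log\frac1{1-r^2}\to+\infty$. On the annulus, the two kernels are comparable within the factors $1\pm\epsilon$. Together with $r^2\to1$, this gives
\[
(1-\epsilon)\,\sigma_1(f')^2\le\sigma(f)^2\le(1+\epsilon)\,\sigma_1(f')^2,
\]
which is valid also when either quantity is infinite. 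Letting $\epsilon\to0$ yields $\sigma(f)^2=\sigma_1(f')^2$.

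The second identity follows in exactly the same way from the second-order Littlewood--Paley formula. The extra constant $r^2|f'(0)|^2$ is again $\Ordo(1)$. The kernel $K(t)=(1+t)\log\frac1t-2(1-t)$ satisfies $K(t)=\frac16(1-t)^3(1+\Ordo(1-t))$ as $t\to1^-$, by the Taylor computation given in the text, so it is within a factor $1\pm\epsilon$ of $\frac16(1-t)^3$ on $[\delta^2,1)$. On $|z|<\delta$, $K(|z|^2)$ is integrable (only a logarithmic singularity at $0$) and $f''(rz)$ is uniformly bounded. The same splitting argument then gives $\sigma(f)^2=\frac16\sigma_2(f'')^2$.

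The only step I expect to need care is this uniform comparability near the boundary, combined with the bounded inner contribution. I do not need the sign of $K$ on all of $(0,1)$; only its asymptotic positivity near $t=1$ and its integrability near $t=0$ are used.
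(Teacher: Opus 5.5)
Your proof is correct and follows the same route as the paper, which derives both identities from the two Littlewood--Paley formulas together with the asymptotics $\log\frac1t\sim 1-t$ and $(1+t)\log\frac1t-2(1-t)\sim\frac16(1-t)^3$ as $t\to1^-$. Your multiplicative comparison of the kernels on $\delta\le|z|<1$, with the inner disk contributing only $\Ordo(1)$, makes rigorous the step that the paper passes over with an additive Taylor expansion and the words ``it follows that''.
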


\begin{proof}
This is the assertion of \eqref{eq:sigma-1.1} and \eqref{eq:sigma-1.2}. 
\end{proof}

It is perhaps surprising that we may change the formula for these variances 
and the asymptotics remains the same. 

\begin{prop}
\label{prop:compare-1}  
Given a holomorphic function $f:\D\to\C$, and $\theta$
with $0\le\theta<2$, we have that
\[
\sigma_k(f)^2=\limsup_{r\raro 1^{-}}
\frac{1}{\log\frac{1}{1-r^2}}\int_{\D}
|f(rz)|^2 (1-r^{\theta} |z|^2)^{2k-1} \dA (z),\qquad k=1,2.
\]
\end{prop}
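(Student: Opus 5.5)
We compare the weights $(1-|z|^2)^{2k-1}$ and $(1-r^\theta|z|^2)^{2k-1}$ directly and show that the contribution of their difference is $\ordo(\log\frac{1}{1-r^2})$, or at least that each integral is dominated by the other up to such an error. Write
\[
I_r:=\int_\D |f(rz)|^2(1-|z|^2)^{2k-1}\dA(z),\qquad J_r:=\int_\D |f(rz)|^2(1-r^\theta|z|^2)^{2k-1}\dA(z).
\]
Since $r^\theta\le 1$, we have $J_r\ge I_r$ trivially, so $\sigma_k(f)^2\le\limsup J_r/\log\frac1{1-r^2}$. The work is in the reverse inequality.

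For the upper bound on $J_r$, we split the disk at $|z|^2=\rho$, where $1-\rho$ is comparable to $1-r$; the natural choice is $\rho=r^{2-\theta}$ or something like it. Since $1-r^\theta\asymp 1-r$, on $|z|^2\le \rho$ the two weights satisfy $(1-r^\theta|z|^2)\le (1+C)(1-|z|^2)$. More precisely, $1-r^\theta t\le (1-t)+(1-r^\theta)$, and $1-r^\theta\le \epsilon^{-1}\cdot\epsilon(1-r^\theta)$. I will instead use a rescaling: substitute $z=s\,\xi$ with $s=r^{\theta/2}$. Then
\[
J_r=s^{-2}\int_{|\xi|<s}|f(rs\xi)|^2(1-|\xi|^2)^{2k-1}\dA(\xi)\le s^{-2}\,I'_{rs},
\]
where $I'_{r'}$ is the unweighted-radius analogue $\int_\D|f(r'\xi)|^2(1-|\xi|^2)^{2k-1}\dA$ evaluated at $r'=r^{1+\theta/2}$. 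This reduces the comparison to a change of radius, $r\mapsto r^{1+\theta/2}$, in the original functional, and $s^{-2}\to1$.

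It then remains to check that $\log\frac{1}{1-r'^2}/\log\frac1{1-r^2}\to1$ as $r\to1^-$ with $r'=r^{1+\theta/2}$. This holds because $1-r'^2\sim(1+\theta/2)(1-r^2)$, so the logarithms differ by $\Ordo(1)$. Hence
\[
\limsup_{r\to1^-}\frac{J_r}{\log\frac1{1-r^2}}\le\limsup_{r'\to1^-}\frac{I_{r'}}{\log\frac1{1-r'^2}}=\sigma_k(f)^2,
\]
and combining with the trivial lower bound gives equality. The identical argument works for both $k=1,2$, since only the exponent $2k-1$ of the weight enters.

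The main obstacle is justifying the rescaling identity cleanly. The variable change maps $|z|<1$ onto $|\xi|<s$, and then $1-r^\theta|z|^2=1-|\xi|^2$ exactly. Enlarging the domain to all of $\D$ only increases a nonnegative integral, so the inequality is immediate. The substantive point to verify is that the map $r\mapsto r^{1+\theta/2}$ sweeps out all radii near $1$, so that the limsup over $r'$ coincides with the limsup over $r$. The restriction $\theta<2$ seems unnecessary for this route; any fixed $\theta\ge0$ works. I would double-check whether it is needed elsewhere, perhaps for the paper's intended later application.
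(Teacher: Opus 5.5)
Your strategy is the paper's: the trivial bound $I_r\le J_r$, then rescaling by $s=r^{\theta/2}$ so that $1-r^\theta|z|^2$ becomes $1-|\xi|^2$, enlarging the domain to $\D$, and comparing logarithms. However, the change of variables is done incorrectly, and the error is not cosmetic.

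If $z=s\xi$, then indeed $f(rz)=f(rs\xi)$. But the Jacobian is then $s^{2}$, the domain is $|\xi|<1/s$, and the weight is $1-s^4|\xi|^2$. The factor $s^{-2}$, the domain $|\xi|<s$ and the weight $1-|\xi|^2$ that you wrote belong to the other substitution, $\xi=sz$. That substitution puts $f(rz)=f(r^{1-\theta/2}\xi)$ into the integrand. Write $I_\rho:=\int_\D|f(\rho\xi)|^2(1-|\xi|^2)^{2k-1}\dA(\xi)$. The correct bound is
\[
J_r=r^{-\theta}\int_{|\xi|<r^{\theta/2}}|f(r^{1-\theta/2}\xi)|^2(1-|\xi|^2)^{2k-1}\dA(\xi)\le r^{-\theta}I_{r^{1-\theta/2}}.
\]
So the comparison radius $r^{1-\theta/2}$ is larger than $r$, not smaller. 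Your inequality $J_r\le r^{-\theta}I_{r^{1+\theta/2}}$ is false: for $k=1$ and $f(z)=z$ one has $J_r=r^2(\tfrac12-\tfrac13r^\theta)$, whereas $r^{-\theta}I_{r^{1+\theta/2}}=\tfrac16r^2$.

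With the radius corrected, your argument becomes the paper's proof. But now the hypothesis $\theta<2$ is exactly what makes $r^{1-\theta/2}\to1^-$ and $\log\frac{1}{1-r^{2-\theta}}=\log\frac{1}{1-r^2}+\Ordo(1)$; at $\theta=2$ the radius is identically $1$. Your remark that the restriction is unnecessary is therefore wrong, and not only for this route: the proposition fails for $\theta>2$.

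To see this, take $k=1$ and $f(z)=\sum_m a_mz^m$. Then
\[
J_r=\sum_{m\ge0}|a_m|^2r^{2m}\,\frac{1+(1-r^\theta)(m+1)}{(m+1)(m+2)}.
\]
Now let $f$ be very lacunary: $a_m\neq0$ only for $m=N_j$, with $N_{j+1}\ge\e^{N_j}$ and $|a_{N_j}|^2=(N_j+1)(N_j+2)\log N_j$. One checks that $\sigma_1(f)^2\le1$. On the other hand, evaluating at $1-r^2=\lambda/N_j$ and letting $j\to+\infty$ gives
\[
\limsup_{r\to1^-}\frac{J_r}{\log\frac{1}{1-r^2}}\ge\sup_{\lambda>0}\e^{-\lambda}\big(1+\tfrac{\theta}{2}\lambda\big)=\tfrac{\theta}{2}\,\e^{\frac{2}{\theta}-1}>1\qquad\text{for }\theta>2.
\]
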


\begin{proof}
For $k=1,2$ and $0\le r<1$, let $I_k(r)$ and $J_k(r)$ denote the integrals
\[
I_k(r):=\int_\D|f(rz)|^2(1-|z|^2)^{2k-1}\dA(z), \quad
J_k(r):=\int_\D|f(rz)|^2(1-r^\theta|z|^2)^{2k-1}\dA(z).   
\]
Moreover, since $1-|z|^2\le 1-r^\theta|z|^2$ holds on $\D$, we have the
inequality $I_k(r)\le J_k(r)$. On the other hand, the simple change-of-variables
$\zeta=r^{\theta/2}z$ in the integral defining $J_k(r)$ gives that
\begin{multline}
J_k(r)=\int_\D|f(rz)|^2(1-r^\theta|z|^2)^{2k+1}\dA(z)=
r^{-\epsilon}\int_{\D(0,r^{\theta/2})}
|f(r^{1-\frac12\theta}\zeta)|^2(1-|z|^2)^{2k+1}\dA(z)
\\
\le r^{-\theta}\int_{\D}|f(r^{1-\frac12\theta}\zeta)|^2(1-|z|^2)^{2k+1}\dA(z)=
r^{-\theta}I_{k}(r^{1-\frac12\theta}).   
\end{multline}
The assertion now follows from the asymptotics
\[
\log\frac{1}{1-r^2}=\log\frac{1}{1-r^{2-\theta}}+\Ordo(1)  
\]
as $r\to1^-$. 
\end{proof}


\subsection{ The nonlinearity and the Schwarzian derivative}
If $\vp:\D\raro \mathbb{C}$ is a locally univalent function
(that is, $\vp^{\prime}(z)\neq 0$ holds for all $z\in \D$), then its
pre-Schwarzian derivative or the \emph{nonlinearity} of $\vp$ is given by
\[
\mathrm{N}(\vp)(z)=
h_\vp'(z)=\frac{\vp^{\prime\prime}(z)}{\vp^{\prime}(z)}.
\]
Here, we write $h_\vp=\log\vp'$ as before. The nonlinearity is a
fundamental tool in geometric function theory.
We note the transformation rule
\[
\mathrm{N}(L\circ \vp)=\mathrm{N}(\vp)
\]
for all non-constant affine complex-linear maps $L$. More generally, 
if $f$ is locally univalent on the image domain $\vp(\D)$, then the chain rule
for the nonlinearity reads 
\[
\mathrm{N}(f\circ \vp)=\vp^{\prime} \mathrm{N}(f)\circ \vp +\mathrm{N}(\vp).
\]
Becker's univalency criterion asserts that if $\vp$ is locally univalent
in the unit disk $\D$, and if 
\[
|\mathrm{N}(\vp)(z)|\leq \frac{1}{1-|z|^2},\qquad z\in \D,
\]
then $\vp$ is actually univalent in $\D$. On the other hand, it is well-known
that every univalent analytic function $\vp$ in the unit disk satisfies 
\[
|\mathrm{N}(\vp)(z)|\leq \frac{6}{1-|z|^2},\quad z\in \D.
\]
There is a natural second order nonlinear derivative of $\vp$, called the
\emph{Schwarzian derivative} of $\vp$, and it is given by the expression
\[
\Sop(\vp)(z):=\left( \frac{\vp^{\prime\prime}(z)}{\vp^{\prime}(z)}
\right)^{\prime}-\frac{1}{2}\left( \frac{\vp^{\prime\prime}(z)}{\vp^{\prime}(z)}
\right)^2.
\]
Analogously to how the nonlinearity measures the local deviation from an
complex-linear affine transformation, the Schwarzian derivative $\Sop(\vp)$
measures the local deviation $\vp$ from being a M\"{o}bius map. 
The Schwarzian derivative satisfies the cocycle identity
\begin{equation}
\Sop(f\circ \vp)=(\vp^{\prime})^2\,(\Sop(f))\circ \vp +\Sop(\vp).
\label{eq:cocycle-1}
\end{equation}
It can be checked that $\Sop (m)=0$ holds if and only if $m$ is a M\"obius
transformation $m(z)=(az+b)/(cz+d)$, where $ad-bc\neq 0$.
From the above cocycle identity we see that
\[
\Sop(m\circ\vp)=\Sop(\vp)
\]
holds for all M\"obius transformations $m$. The analogue of Becker's univalency
criterion for the nonlinearity is known as Nehari's theorem \cite{Nehari}
for the Schwarzian derivative, and it asserts that if
\[
|\Sop(\vp)(z)|\leq \frac{2}{(1-|z|^2)^2},\qquad z\in \D,
\]
then $\vp$ must be univalent in $\D$. On the other hand, the optimal
pointwise estimate for the Schwarzian derivative of a univalent function
$\vp\in\mathscr{S}$ is  
\[
|\Sop (f)(z)|\leq \frac{6}{(1-|z|^2)^2},\qquad z\in\D,
\]
so, clearly, there is a gap in our understanding of the growth of the
Schwarzian derivative.

Next, let us relate the nonlinearity and the Schwarzian derivative to
the Dirichlet symbol of the Grunsky operator $\Gamma_\vp$.
We first observe that by \eqref{eq:DirsymbGrunsky-0}, we have that
\begin{equation}
\oslash\mathscr{Q}[\Gamma_\vp](z)=\mathscr{Q}[\Gamma_\vp](z,z)
=z^2\langle\Gamma_\vp(\bar s_z),s_z\rangle_\D=\log\frac{z^2\vp'(z)}{\vp(z)^2},
\end{equation}
and, moreover, that it follows that
\begin{multline}
\partial_z\oslash\mathscr{Q}[\Gamma_\vp](z)=
2\oslash\partial_z\mathscr{Q}[\Gamma_\vp](z)
=2z\langle\Gamma_\vp(\bar s_z),\Berg_z\rangle_\D=
\frac{\vp''(z)}{\vp'(z)}-2\frac{\vp'(z)}{\vp(z)}+\frac{2}{z}
\\
=\mathrm{N}(\vp)(z)-2\frac{\vp'(z)}{\vp(z)}+\frac{2}{z},
\label{eq:NL-1}
\end{multline}
a small correction added to the nonlinearity, 
and that
\begin{equation}
\oslash(\partial_z\partial_w\mathscr{Q}[\Gamma_\vp])(z)
=\langle \Gamma_\vp (\bar\Berg_z),\Berg_z\rangle_\D
=\frac16\mathrm{S}(\vp)(z).
\label{eq:NL-2}
\end{equation}

\section{Grunsky operators and the nonlinear wave equation}
\label{sec:NLWE}

\subsection{Dirichlet operator symbols for Grunsky operators}

Our goal in this section is to characterize Dirichlet symbols for
Grunsky operators by examining their behavior on the diagonal.
In \cite{GAF}, such Dirichlet symbols were characterized by a nonlinear
wave equation.
We localize the nonlinear wave equation in terms of jets along the diagonal.
This gives a sequence of recurrence relations for successive partial
derivatives of the Dirichlet symbol restricted to the diagonal. While the
nonlinear wave equation itself is concerned with a holomorphic function
of two complex variables, the recurrence relations involve only functions
of a single complex variable. 

We recall that the Dirichlet symbol associated with the Grunsky
operator $\Gamma_{\vp}$, where  $\vp\in \mathcal{S}$, is given by
(see \eqref{eq:DirsymbGrunsky-0})
\[
Q(z,w)=\mathscr{Q}[\Gamma_{\vp}](z,w)=
\log\frac{zw(\vp(z)-\vp(w))}{\vp(z)\vp(w)(z-w)},\qquad (z,w)\in \D^2,
\]
with diagonal restriction
\[
\oslash Q(z)=\oslash \mathscr{Q}[\Gamma_{\vp}](z)=
\log \frac{z^2\vp^{\prime}(z)}{(\vp(z))^2},\qquad z\in \D.
\]
Similarly, if $\psi\in \Sigma$, then the exterior Dirichlet symbol is
\[
Q_{\e}(z,w)=\mathscr{Q}_\e[\Gamma_{\psi}](z,w)
=\log \frac{\psi(z)-\psi(w)}{z-w},\qquad (z,w)\in \D_{\e},
\]
and 
\[
\oslash Q_{\e}(z)=\oslash \mathscr{Q}_\e[\Gamma_{\psi}](z)
=\log \psi^{\prime}(z),\qquad z\in \D_{\e}.
\]
Here, as previously, $(\oslash f)(z)=f(z,z)$ denotes the restriction of
$f(z,w)$ to the diagonal. Note that the two Dirichlet symbols are essentially
the same, as if we choose the change-of-variables $z'=1/z$, $w'=1/w$, and
\[
\psi(z)=\frac{1}{\vp(z')},\quad \vp(w)=\frac{1}{\psi(w')},
\]
then
\[
\frac{\psi(z)-\psi(w)}{z-w}=
\frac{z'w'(\vp(z')-\vp(w'))}{(z'-w')\vp(z')\vp(w')}
\]
and hence
\[
Q_\e(z,w)=Q(z',w').
\]
It should be mentioned that we need to presuppose that $\psi$ does not
take on the value $0$ for the above identification of the classes
$\mathscr{S}$ and $\Sigma$ to work. However, by replacing
$\psi$ by $\tilde\psi:=\psi-c$ where $c\in\C\setminus\psi(\D_\e)$
we can make sure that this condition is fulfilled. Moreover, the
Grunsky operator $\Gamma_\psi$ remains unperturbed by this change. 
The analogues of the relations \eqref{eq:NL-1} and \eqref{eq:NL-2} in
the context of the exterior disk $\D_\e$ are
\begin{equation}
\partial_z(\oslash Q_\e)(z)=\partial_z\oslash\mathscr{Q}_\e[\Gamma_\psi](z)=
2\oslash\partial_z\mathscr{Q}_\e[\Gamma_\psi](z)
=\frac{\psi''(z)}{\psi'(z)}
=\mathrm{N}(\psi)(z),\qquad z\in\D_\e,
\label{eq:NL-1'}
\end{equation}
and 
\begin{equation}
\oslash(\partial_z\partial_w Q_\e)(z)=
\oslash(\partial_z\partial_w\mathscr{Q}_\e[\Gamma_\psi])(z)
=\frac16\mathrm{S}(\psi)(z),\qquad z\in\D_\e,
\label{eq:NL-2'}
\end{equation}
where the nonlinearity $\mathrm{N}[\psi]$ and the Schwarzian derivative
$\Sop[\psi]$ are given by the same expressions but in the context of $\D_\e$.
We now take a look at the information content of the diagonal restrictions
$\oslash Q_\e$, $\oslash \partial_z Q_\e$, and $\oslash \partial_z\partial_w
Q_\e$. 

\begin{prop}
Suppose $\psi_1,\psi_2\in\Sigma$, and, for $j=1,2$, put
$Q_{\e,j}(z,w):=\mathscr{Q}_\e[\Gamma_{\psi_j}](z,w)$. Then each of the
following relations are equivalent:

\noindent{$(a)$} We have that $\psi_2(z)=\psi_1(z)+C$ holds for some
constant $C$.

\noindent{$(b)$} The relation $Q_{\e,1}=Q_{\e,2}$ holds on $\D_\e^2$.

\noindent{$(c)$} The relation $\oslash Q_{\e,1}=\oslash Q_{\e,2}$
holds on $\D_\e$.

\noindent{$(d)$} The relation $\oslash(\partial_z Q_{\e,1})=
\oslash(\partial_z Q_{\e,2})$ holds on $\D_\e$. 

\noindent{$(e)$} The relation $\oslash(\partial_z\partial_w Q_{\e,1})=
\oslash(\partial_z\partial_w Q_{\e,2})$ holds on $\D_\e$.
\label{prop:5equiv}
\end{prop}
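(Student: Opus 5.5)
The plan is to prove the chain of implications $(a)\Rightarrow(b)\Rightarrow(c)\Rightarrow(d)\Rightarrow(e)$ and then close the loop with $(e)\Rightarrow(a)$. The forward implications are essentially immediate. If $\psi_2=\psi_1+C$, the difference quotient $(\psi(z)-\psi(w))/(z-w)$ is unchanged. Hence, by \eqref{eq:DirsymbolGrunsky-1}, $Q_{\e,1}=Q_{\e,2}$, with the same branch normalization at infinity, and this gives $(b)$. Restricting to the diagonal gives $(c)$. Differentiating $(c)$ and invoking \eqref{eq:NL-1'}, which identifies $\partial_z(\oslash Q_\e)=2\oslash(\partial_zQ_\e)=\mathrm{N}(\psi)$, gives $(d)$; concretely, $\mathrm{N}(\psi_1)=\mathrm{N}(\psi_2)$. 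For $(d)\Rightarrow(e)$ we use \eqref{eq:NL-2'}, which says $\oslash(\partial_z\partial_wQ_\e)=\frac16\Sop(\psi)$. Since $\Sop(\psi)=\mathrm{N}(\psi)'-\frac12\mathrm{N}(\psi)^2$ is determined by $\mathrm{N}(\psi)$, equality of the nonlinearities forces equality of the Schwarzians, and $(e)$ follows.

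The substantive step is $(e)\Rightarrow(a)$. By \eqref{eq:NL-2'}, hypothesis $(e)$ says $\Sop(\psi_1)=\Sop(\psi_2)$ on $\D_\e$. The classical uniqueness statement for the Schwarzian then shows that $\psi_2=m\circ\psi_1$ for some M\"obius map $m$. One way to see this is to note that locally $f=\psi_2\circ\psi_1^{-1}$ has $\Sop(f)=0$ by the cocycle identity \eqref{eq:cocycle-1}. Alternatively, we can use the linear ODE $y''+\frac12\Sop\,y=0$: both $1/\sqrt{\psi_j'}$ and $\psi_j/\sqrt{\psi_j'}$ solve it, so $\psi_2$ is a ratio of linear combinations of solutions built from $\psi_1$. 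Because $\psi_1$ is univalent, $f=m$ is defined globally on the image $\psi_1(\D_\e)$ by analytic continuation. The identity theorem then gives $\psi_2=m\circ\psi_1$ on all of $\D_\e$.

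It remains to pin down $m$ using the normalization of $\Sigma$. Both $\psi_1$ and $\psi_2$ have a simple pole at $\infty$ with expansion $\zeta+b_0+\Ordo(1/\zeta)$. Since $\psi_1(\zeta)\to\infty$ as $\zeta\to\infty$ and $\psi_2=m\circ\psi_1$ also tends to $\infty$, $m$ must fix $\infty$, so $m(u)=\alpha u+\beta$ is affine. Comparing the leading coefficients in $\psi_2(\zeta)=\alpha\zeta+\Ordo(1)$ gives $\alpha=1$. Hence $\psi_2=\psi_1+C$, which is $(a)$.

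I expect the main care to lie in this last implication, specifically in passing from a local M\"obius relation to a global one. One subtlety is that $\psi_j$ take the value $\infty$ at $\zeta=\infty$, so the Schwarzian argument should be run on $\D_\e$ proper, away from $\infty$, or in the coordinate $1/\zeta$. The ODE approach avoids this issue, since on the simply connected domain $\D_\e\cup\{\infty\}$ (or working with $\vp_j(z)=1/\psi_j(1/z)$ on $\D$, which is simply connected) the solution space is globally two-dimensional. I would therefore transfer to $\vp_j\in\mathcal S$ via $\psi_j-c$, using that the Schwarzian is invariant under M\"obius post-composition. This yields a global M\"obius relation $\vp_2=\tilde m\circ\vp_1$ on $\D$, which translates back to $\psi_2=m\circ\psi_1$.
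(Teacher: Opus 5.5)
Your proof is correct, and its one substantive step, $(e)\Rightarrow(a)$, is the paper's argument. There $f=\psi_2\circ\psi_1^{-1}$ is globally defined on $\psi_1(\D_\e)$, the cocycle identity gives $\Sop(f)=0$, so $f$ is M\"obius, and the normalization at infinity forces $f(u)=u+C$.

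The difference is organizational. The paper shows that $(b)$ implies each of $(c),(d),(e)$ and then proves $(c)\Rightarrow(a)$, $(d)\Rightarrow(a)$ and $(e)\Rightarrow(a)$ separately. The first two are done by elementary means: exponentiating $\log\psi_1'=\log\psi_2'$, and integrating equal logarithmic derivatives to get $\psi_2'=A\psi_1'$ with $A=1$ from $\psi_j'(\infty)=1$. You instead close the cycle $(a)\Rightarrow(b)\Rightarrow(c)\Rightarrow(d)\Rightarrow(e)\Rightarrow(a)$. You get $(c)\Rightarrow(d)$ from \eqref{eq:CR-1} and $(d)\Rightarrow(e)$ from $\Sop(\psi)=\mathrm{N}(\psi)'-\frac12\mathrm{N}(\psi)^2$, so only one reverse implication is needed. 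The trade-off is that $(c)$ and $(d)$ then reach $(a)$ only through M\"obius rigidity of the Schwarzian, whereas the paper's direct arguments for them are more elementary.

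Your worry about passing from a local to a global M\"obius relation, and the proposed detour through $\mathscr{S}$, are unnecessary. The set $\psi_1(\D_\e)$ is connected, though not simply connected. On any connected open set, a locally univalent function with vanishing Schwarzian agrees near one point with the M\"obius map matching its $2$-jet, hence everywhere by the identity theorem.
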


\begin{proof}
Under condition $(a)$, we have that
\[
\frac{\psi_1(z)-\psi_1(w)}{z-w}=\frac{\psi_2(z)-\psi_2(w)}{z-w},
\]
so that, consequently,
\[
Q_{\e,1}(z,w)=\log\frac{\psi_1(z)-\psi_1(w)}{z-w}
=\log\frac{\psi_2(z)-\psi_2(w)}{z-w}=Q_{\e,2}(z,w),
\]
and $(b)$ follows. Clearly, $(b)$ implies all the other conditions $(c)$,
$(d)$, and $(e)$. It now remains to demonstrate that each of the conditions
$(b)$, $(c)$, $(d)$, and $(e)$, implies $(a)$. Moreover, since $(b)$ implies
$(c)$, $(d)$, and $(e)$, it is enough to show that each of the conditions
$(c)$, $(d)$, and $(e)$ implies property $(a)$. We first consider $(c)$, which
in concrete terms says that
\[
\log\psi_1'=\log\psi_2'.
\]
But then we exponentiate both sides, and find that $\psi_1'=\psi_2'$, which
is equivalent to having $\psi_2=\psi_1+C$ for some constant $C$, i.e.,
assertion $(a)$. We then turn to condition $(d)$, which amounts to having
\[
\frac{\psi_1''}{\psi_1'}=\frac{\psi_2''}{\psi_2'}.
\]
The two sides being logarithmic derivatives, the condition is equivalent to
having $\psi_2'=A\psi_1'$ for some constant $A\ne0$. By the normalization
condition for the class $\Sigma$, $\psi_1'(\infty)=\psi_2'(\infty)=1$,
so the only possible constant $A$ is $A=1$. Hence we get $\psi_1'=\psi_2'$
with solution $\psi_2=\psi_1+C$ for a constant $C$, i.e., condition $(a)$.
Finally, we consider condition $(e)$. In concrete terms, it asserts the
equality of Schwarzian derivatives $\Sop(\psi_1)=\Sop(\psi_2)$.
We composition factor $\psi_2=f\circ\psi_1$, where $f:=\psi_2\circ\psi_1^{-1}$.
and apply the cocycle law \eqref{eq:cocycle-1} for the Schwarzian derivative,
which holds on the exterior disk as well:
\begin{equation}
\Sop(\psi_2)=\Sop(f\circ \psi_1)=
(\psi_1^{\prime})^2\,(\Sop(f))\circ \psi_1 +\Sop(\psi_1).
\label{eq:cocycle-2}
\end{equation}
Given the equality $\Sop(\psi_1)=\Sop(\psi_2)$, we are left with
\[
(\psi_1^{\prime})^2\,(\Sop(f))\circ \psi_1=0,
\]
which is the same as having
\[
\Sop(f)=0\quad\text{on}\,\,\,\psi_1(\D_\e).
\]
But then $f$ must actually equal a M\"obius mapping $f(z)=(az+d)/(cz+d)$ with
$ad-bc\ne0$. On the other hand, $f=\psi_2\circ\psi_1^{-1}$ must fix the point
at infinity with $f'(\infty)=1$ given the normalizations imposed on
$\psi_1,\psi_2\in\Sigma$, and the only alternative is the affine mapping
$f(z)=z+C$ for some constant $C$. So, $\psi_2=f\circ\psi_1=\psi_1+C$ must hold,
i.e., condition $(a)$.
The proof is complete.
\end{proof}


\subsection{The nonlinear wave equation}

We base our further analysis on the following theorem, which was obtained
in \cite{GAF}.

\begin{thm}\label{Thm 1}
Let $Q:\D^2\raro\mathbb{C}$ be a holomorphic function. Then there exists
$\vp\in \cls$ such that
\[
Q=\mathscr{Q}[\Gamma_{\vp}]
\]
if and only if

\noindent{$(a)$} $Q(z,0)=0$ and $Q(0,w)=0$ for all $z,w\in \D$, and

\noindent{$(b)$}  $Q$ solves the nonlinear wave equation
\begin{equation*}\label{Eq1}
\partial_z\partial_wQ+(\partial_zQ)(\partial_wQ)
-\frac{z^2\partial_zQ-w^2\partial_wQ}{zw(z-w)}=0.
\end{equation*}
\end{thm}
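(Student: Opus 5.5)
The plan is to pass to the exterior disk. There the nonlinear wave equation linearizes after exponentiation, in the same way the linear damped wave equation became $\partial_\xi\partial_\eta\{(\xi-\eta)Q(\xi^{-1},\eta^{-1})\}=0$ in the proof of the linear characterization above. The necessity direction, (a) and (b) for $Q=\mathscr{Q}[\Gamma_\vp]$, follows by running the same computation backwards. Indeed, (a) is Lemma \ref{lem:Q-1}. For (b), with $Q_\e(\xi,\eta)=Q(1/\xi,1/\eta)=\log\frac{\psi(\xi)-\psi(\eta)}{\xi-\eta}$ and $\psi(\xi)=1/\vp(1/\xi)$, the function $(\xi-\eta)e^{Q_\e}=\psi(\xi)-\psi(\eta)$ is annihilated by $\partial_\xi\partial_\eta$.

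\emph{Key identity.} Put $Q_\e(\xi,\eta):=Q(\xi^{-1},\eta^{-1})$ on $\D_\e^2$, where $\infty$ is included so that $Q_\e(\xi,\infty)=Q_\e(\infty,\eta)=0$ by (a). The substitution $z=1/\xi$, $w=1/\eta$ turns the equation in (b) into
\[
\partial_\xi\partial_\eta Q_\e+(\partial_\xi Q_\e)(\partial_\eta Q_\e)
-\frac{\partial_\xi Q_\e-\partial_\eta Q_\e}{\xi-\eta}=0 .
\]
Set $F(\xi,\eta):=(\xi-\eta)e^{Q_\e(\xi,\eta)}$. A direct differentiation gives
\[
\partial_\xi\partial_\eta F=e^{Q_\e}\big[(\xi-\eta)\big(\partial_\xi\partial_\eta Q_\e+\partial_\xi Q_\e\,\partial_\eta Q_\e\big)+\partial_\eta Q_\e-\partial_\xi Q_\e\big].
\]
Hence (b) is equivalent to $\partial_\xi\partial_\eta F=0$ on the finite part of $\D_\e^2$.

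\emph{Constructing $\psi$.} From $\partial_\xi\partial_\eta F=0$, the function $g(\xi):=\partial_\xi F(\xi,\eta)$ does not depend on $\eta$, and it is a single-valued holomorphic function on $\{|\xi|>1\}$. This avoids the branching issue at infinity. Next, expand at $\xi=\infty$ for fixed $\eta$, writing $Q_\e(\xi,\eta)=a(\eta)\xi^{-1}+\Ordo(\xi^{-2})$ (possible by (a)). Then
\[
g=e^{Q_\e}\big(1+(\xi-\eta)\partial_\xi Q_\e\big)=\big(1+a\xi^{-1}\big)\big(1-a\xi^{-1}\big)+\Ordo(\xi^{-2})=1+\Ordo(\xi^{-2}).
\]
So $g$ has no residue at infinity and admits a single-valued primitive $\psi(\xi)=\xi+b_0+b_1\xi^{-1}+\cdots$. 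Consequently $F(\xi,\eta)=\psi(\xi)+h(\eta)$ for some $h$, and since $F(\eta,\eta)=0$ we get $h=-\psi$. This yields
\[
e^{Q_\e(\xi,\eta)}=\frac{\psi(\xi)-\psi(\eta)}{\xi-\eta}.
\]

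\emph{Univalence and returning to the disk.} Because the exponential never vanishes, $\psi(\xi)\ne\psi(\eta)$ for $\xi\ne\eta$. So $\psi$ is univalent, with no separate test of univalence needed, and $\psi\in\Sigma$. The complement of $\psi(\D_\e)$ is nonempty, so after adding a constant we may assume $0\notin\psi(\D_\e)$; this changes neither $F$'s difference quotient nor the Grunsky operator. Then $\vp(z):=1/\psi(1/z)$ lies in $\cls$. The identity $Q_\e(z,w)=Q(z',w')$ from the beginning of this section gives $e^{Q}=e^{\mathscr{Q}[\Gamma_\vp]}$. Both sides vanish at $z=0$, so the branches of the logarithm match, and $Q=\mathscr{Q}[\Gamma_\vp]$.

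\emph{Main obstacle.} The delicate step is the global, single-valued definition of $\psi$ near $\infty$: excluding a $\log\xi$ term and matching the logarithm branches. I plan to handle this through the residue computation above, which uses exactly the vanishing condition (a).
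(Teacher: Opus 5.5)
Your proof is correct. The paper does not prove this theorem itself. It is quoted from \cite{GAF}, and Theorem~\ref{Thm_A} is presented as an equivalent reformulation without proof. The natural comparison is therefore with the paper's proof of the linear damped-wave characterization of $\mathscr{Q}[\Mop_\mu]$, and your argument is the exact nonlinear analogue of it. It uses the same substitution $\xi=1/z$, $\eta=1/w$, the same reduction to the flat wave equation $\partial_\xi\partial_\eta(\cdot)=0$, and the same splitting $G(\xi)-G(\eta)$ forced by vanishing on the diagonal. The only difference is that the function being annihilated is $(\xi-\eta)e^{Q_\e}$ instead of $(\xi-\eta)Q_\e$.

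The paper's other characterization is the Riccati route of Theorem~\ref{thm:Riccati}. There one linearizes in $z$ alone via $r=\partial_z p/p$, solves a second-order ODE with $w$ as a parameter, and gets univalence from forced cancellation of poles. Your exponentiation instead produces the map $\psi$ and its univalence in one stroke, from $F(\eta,\eta)=0$ and $e^{Q_\e}\neq0$.

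One simplification: the step you single out as the main obstacle is automatic. Since $g=\partial_\xi F(\cdot,\eta_0)$ is the derivative of the single-valued function $F(\cdot,\eta_0)$ on $\{|\xi|>1\}$, no residue can occur. You may simply take $\psi:=F(\cdot,\eta_0)$, so that $F(\xi,\eta)=\psi(\xi)-\psi(\eta)$. The normalization $\psi(\xi)=\xi+\Ordo(1)$ then follows directly from $F(\xi,\eta_0)=(\xi-\eta_0)e^{Q_\e(\xi,\eta_0)}$ together with $Q_\e(\infty,\eta_0)=0$.

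Note also that your sufficiency argument never uses $Q(z,0)=0$. That half of $(a)$ is forced by $Q(0,w)=0$ together with $(b)$.
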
 

It is convenient to transfer the content of the theorem to the setting of
the exterior disk $\D_\e$. The shape of the nonlinear wave equation gets
a little simpler as a result. As it is essentially an equivalent formulation
of the above Theorem \ref{Thm 1}, no separate proof is supplied. 


\begin{thm}\label{Thm_A}
Let $Q_{\e}:\mathbb{D}^2_e\to \mathbb{C}_{\infty}$ be a holomorphic
function. Then there exists a function $\psi$ in the class $\Sigma$ such that
\[
Q_{\e}=\mathscr{Q}[\Gamma_{\psi}]
\]
if and only if 

\noindent{$(a)$} $Q_{\e}(z,\infty)=0$ and $Q_{\e}(\infty,w)=0$ for all
$z,w\in \D_{\e}$, and

\noindent{$(b)$} $Q_{\e}$ solves the nonlinear wave equation
\begin{equation*}\label{Eq2}
\partial_z\partial_wQ_{\e}+(\partial_zQ_{\e})(\partial_wQ_{\e})
-\frac{\partial_zQ_{\e}-\partial_wQ_{\e}}{z-w}=0.
\end{equation*}
\end{thm}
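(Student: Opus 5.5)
The plan is to give a direct argument on $\D_\e^2$ rather than transfer Theorem \ref{Thm 1} through $z'=1/z$, $w'=1/w$, since the exterior form of the equation is simpler. The key substitution is $F:=\e^{Q_\e}$. A direct computation gives
\[
\partial_z\partial_w Q_\e+(\partial_zQ_\e)(\partial_wQ_\e)=\frac{\partial_z\partial_w F}{F},
\qquad
\frac{\partial_zQ_\e-\partial_wQ_\e}{z-w}=\frac{\partial_zF-\partial_wF}{(z-w)F}.
\]
So, wherever $F\ne0$ (which is everywhere, since $F=\e^{Q_\e}$), the nonlinear equation in $(b)$ is equivalent to the linear equation
\[
(z-w)\partial_z\partial_w F=\partial_zF-\partial_wF .
\]
This in turn is equivalent to
\[
\partial_z\partial_w\{(z-w)F\}=0,
\]
because
\[
\partial_z\partial_w\{(z-w)F\}=(z-w)\partial_z\partial_wF-\partial_zF+\partial_wF .
\]
This is the same linearization used in the proof of the characterization of $\mathscr{Q}[\Mop_\mu]$ above.

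\emph{Necessity.} If $Q_\e=\mathscr{Q}_\e[\Gamma_\psi]$ with $\psi\in\Sigma$, then by \eqref{eq:DirsymbolGrunsky-1} we have $(z-w)F=\psi(z)-\psi(w)$. This is a sum of a function of $z$ and a function of $w$, so it is annihilated by $\partial_z\partial_w$, and $(b)$ holds. For $(a)$, expanding the Laurent series gives $(\psi(z)-\psi(w))/(z-w)\to1$ as $z\to\infty$ with $w$ fixed, and likewise as $w\to\infty$. With the chosen branch of the logarithm, $Q_\e$ vanishes on $\{z=\infty\}\cup\{w=\infty\}$.

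\emph{Sufficiency.} Assume $(a)$ and $(b)$, and set $F=\e^{Q_\e}$ and $G(z,w)=(z-w)F(z,w)$.
\begin{enumerate}[(i)]
\item On $\D_\e^2$ (which is simply connected when $\infty$ is included), $\partial_z\partial_wG=0$ gives $G(z,w)=G_1(z)+G_2(w)$, with $G_1,G_2$ holomorphic away from $\infty$.
\item Setting $z=w$ gives $G_1=-G_2$, so $F(z,w)=(G_1(z)-G_1(w))/(z-w)$.
\item Condition $(a)$ says $F(\infty,w)=1$, i.e.\ $(G_1(z)-G_1(w))/z\to1$ as $z\to\infty$. Hence $\psi:=G_1$ is holomorphic on $\D_\e$ with a simple pole at infinity and residue $1$, that is, $\psi(z)=z+b_0+\Ordo(1/z)$.
\item Univalence follows from nonvanishing: $F=\e^{Q_\e}$ never vanishes, so $\psi(z)\ne\psi(w)$ whenever $z\ne w$. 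On the diagonal, $F(z,z)=\psi'(z)\ne0$, which gives local univalence. Hence $\psi\in\Sigma$.
\item Finally, $\log\big((\psi(z)-\psi(w))/(z-w)\big)$ and $Q_\e$ are both logarithms of $F$ on the connected set $\D_\e^2$ and agree at $(\infty,\infty)$, where both vanish. Therefore they coincide, and $Q_\e=\mathscr{Q}_\e[\Gamma_\psi]$.
\end{enumerate}

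The main obstacle is the behaviour at infinity in step (i), namely showing that $G_1$ has no essential singularity or branching there. The decomposition is first obtained on a neighbourhood of $\D_\e^2$ minus the point at infinity in each variable. The idea is to fix $w_0$ finite, say $w_0=2$, and to define $G_1(z):=G(z,w_0)+c$, which is clearly meromorphic at $\infty$ because $G(z,w_0)=(z-w_0)F(z,w_0)$ and $F(\cdot,w_0)$ is holomorphic at $\infty$. Then $G(z,w)-G(z,w_0)$ is independent of $z$, since its $\partial_z$ derivative has vanishing $\partial_w$ derivative and vanishes at $w=w_0$. This settles the global structure without any monodromy issues.
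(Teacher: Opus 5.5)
Your proof is correct, but it does not follow the paper's route. The paper gives no separate proof of Theorem \ref{Thm_A}. It declares the result an equivalent reformulation of Theorem \ref{Thm 1} from \cite{GAF}, transferred by the inversion $z'=1/z$, $w'=1/w$, $\psi(z)=1/\vp(1/z)$: multiplying the disk equation by $(z'w')^2$ gives the exterior equation, and $Q(z,0)=Q(0,w)=0$ becomes $(a)$.

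That route is short, but it rests entirely on \cite{GAF}. It also relies on identifying $\mathscr{S}$ with $\Sigma$, which needs $0\notin\psi(\D_\e)$. The paper repairs this by translating $\psi$, which is harmless since $Q_\e$ is unchanged.

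Your exponential substitution $F=\e^{Q_\e}$ gives a self-contained proof instead. It reduces the nonlinear equation to $\partial_z\partial_w\{(z-w)F\}=0$, which is exactly the standard wave equation the paper reaches for $\mathscr{Q}[\Mop_\mu]$ after inversion. So your argument shows that $\e^{Q_\e}$ solves the linear damped wave equation, in its exterior form. This explains the parallel between the multiplication-operator and Grunsky cases mentioned in the overview. It also makes univalence of $\psi$ literally the statement $\e^{Q_\e}\neq0$ off the diagonal.

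One small correction. In step (i), the appeal to simple connectivity of $\D_\e^2$ with $\infty$ included is not what does the work, since $G=(z-w)F$ has poles along $z=\infty$ and $w=\infty$. The global decomposition comes from your base-point argument with $w_0$. That argument uses only connectedness of the slices $\{|z|>1\}$ and $\{|w|>1\}$. It also shows that $G_1(z)=(z-w_0)F(z,w_0)$ is meromorphic at infinity, which is what step (iii) needs.
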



\subsection{Diagonal analysis of the nonlinear wave equation}
\label{ss:diagonal-nlweq}

We fix a diagonal point $(\alpha,\alpha)\in \mathbb{D}_{\e}^2$, and let
$Q_\e:=\mathscr{Q}_\e[\Gamma_\psi]$ for a normalized mapping $\psi\in\Sigma$.
By Taylor's formula applied to the point $(\alpha,\alpha)$, we find that
\[
Q_\e(\alpha+\xi,\alpha+\eta)
=\sum_{j,k=0}^{+\infty}
\frac{\partial_{z}^j\partial_{w}^kQ_\e(\alpha,\alpha)}{j!k!}\xi^j\eta^k
=\sum_{j,k=0}^{+\infty}\frac{(\oslash\partial_{z}^j\partial_{w}^kQ_\e)
(\alpha)}{j!k!}\xi^j\eta^k,
\]
for all $(\xi,\eta)\in \D^2(0, |\alpha|-1)$.
By taking successive partial derivatives with respect to $z$ and $w$,
we find that for $a,b=0,1,2,\ldots$,
\[
(\partial_{z}^a\partial_w^bQ_\e)(\alpha+\xi,\alpha+\eta)=
\sum_{j,k=0}^{+\infty}\frac{\oslash(\partial_{z}^{j+a}
\partial_{w}^{k+b}Q_\e)(\alpha)}{j!k!}\xi^j\eta^k.
\]
As $Q_\e$ satisfies the nonlinear wave equation of Theorem
\ref{Thm_A}, that is,
\begin{equation*}
\partial_z\partial_wQ_{\e}+(\partial_zQ_{\e})(\partial_wQ_{\e})
-\frac{\partial_zQ_{\e}-\partial_wQ_{\e}}{z-w}=0,
\end{equation*}
what happens if we insert the point $(z,w)=(\alpha+\xi,\alpha+\eta)$, with
$\xi=0$? We arrive at the equation
\begin{multline}
\sum_{k=0}^{+\infty}\frac{\eta^k}{k!}\Big(\oslash(\partial_w^{k+1}Q_\e)(\alpha)-
\oslash(\partial_z\partial_w^k Q_\e)(\alpha)\Big)
\\
-\sum_{k=1}^{+\infty}\frac{\eta^k}{(k-1)!}
\bigg(\oslash(\partial_z\partial_w^k Q_\e)(\alpha)+\sum_{l=0}^{k-1}\binom{k-1}{l}
(\oslash\partial_z\partial_w^l Q_\e)(\alpha)
(\oslash\partial_w^{k-l}Q_\e)(\alpha)\bigg)=0
\end{multline}
with convergence for $\eta\in\D(0,|\alpha|-1)$. Upon identifying Taylor
coefficients, we find for $k=0$ that
\begin{equation}
\oslash(\partial_zQ_\e)(\alpha)=\oslash(\partial_wQ_\e)(\alpha),
\label{eq:condition_0}
\end{equation}
whereas for $k=1,2,3,\ldots$,
\begin{multline}
\oslash(\partial_w^{k+1}Q_\e)(\alpha)-
\oslash(\partial_z\partial_w^k Q_\e)(\alpha)
\\
-k
\bigg(\oslash(\partial_z\partial_w^k Q_\e)(\alpha)+\sum_{l=0}^{k-1}\binom{k-1}{l}
(\oslash\partial_z\partial_w^l Q_\e)(\alpha)
(\oslash\partial_w^{k-l}Q_\e)(\alpha)\bigg)=0,
\label{eq:condition_1}
\end{multline}
or, equivalently, if we suppress indication of the point $\alpha\in\D_\e$,
\begin{equation}
\oslash(\partial_w^{k+1}Q_\e)
-(k+1)\oslash(\partial_z\partial_w^k Q_\e)-k\sum_{l=0}^{k-1}\binom{k-1}{l}
(\oslash\partial_z\partial_w^l Q_\e)
(\oslash\partial_w^{k-l}Q_\e)=0.
\label{eq:condition_2}
\end{equation}
Let us first observe how to understand the relations \eqref{eq:condition_0} and
\eqref{eq:condition_2}, in light of the identity
\begin{equation}
\oslash(\partial_z^j\partial_w^k Q_\e)=\oslash(\partial_z^k\partial_w^j
Q_\e),
\label{eq:symmetry_1}
\end{equation}
which is a consequence of the symmetry relation
\begin{equation}
Q_\e(z,w)=Q_\e(w,z),
\end{equation}
which in turn we read off of \eqref{eq:DirsymbolGrunsky-1}. Note that
\eqref{eq:condition_0} is just the instance $j=1$, $k=0$ of
\eqref{eq:symmetry_1}. 
Using the symmetry \eqref{eq:symmetry_1}, we may rewrite
\eqref{eq:condition_2} more conveniently:
\begin{equation}
\oslash(\partial_z^{k+1}Q_\e)
-(k+1)\oslash(\partial_z^k\partial_w Q_\e)-k\sum_{l=0}^{k-1}\binom{k-1}{l}
(\oslash\partial_z^l\partial_w Q_\e)
(\oslash\partial_z^{k-l}Q_\e)=0.
\label{eq:condition_3}
\end{equation}
As we insert $k=1$ into \eqref{eq:condition_3}, using \eqref{eq:symmetry_1},
we find that
\begin{equation}
\label{eq:k=1}
\oslash(\partial_z^2Q_\e)-2\oslash(\partial_z\partial_wQ_\e)
-(\oslash\partial_zQ_\e)^2=0.
\end{equation}
If we return to the notational convention $\oslash(f)(z)=f(z,z)$ used
earlier, it follows from the bivariate chain rule that
\begin{equation}
\partial_z\oslash Q_\e=\oslash ((\partial_z+\partial_w) Q_\e)
=2\oslash (\partial_z Q_\e),
\label{eq:CR-1}
\end{equation}
and 
\begin{equation}
\partial_z^2\oslash Q_\e=\oslash ((\partial_z+\partial_w)^2 Q_\e)
=2\oslash (\partial_z^2 Q_\e)+2\oslash(\partial_z\partial_w Q_\e),
\label{eq:CR-2}
\end{equation}
so that the relation \eqref{eq:k=1} just asserts that
\begin{equation}
\label{eq:Schwarzian}
\oslash(\partial_z\partial_wQ_\e)-
\frac{1}{6}\partial^2_z\oslash Q_\e+\frac{1}{12}(\partial_z\oslash Q_\e)^2=0.
\end{equation}
In view of \eqref{eq:NL-1'} and \eqref{eq:NL-2'}, it follows that
\eqref{eq:Schwarzian} expresses the relationship
\begin{equation}
\Sop(\psi)-\partial_z\mathrm{N}(\psi)+\frac12\,(\mathrm{N}(\psi))^2=0,
\end{equation}
which may be thought of as a definition of the Schwarzian derivative:
\begin{equation}
\Sop(\psi):=\partial_z\mathrm{N}(\psi)-\frac12\,(\mathrm{N}(\psi))^2
=\bigg(\frac{\psi''}{\psi'}\bigg)'-
\frac12\bigg(\frac{\psi''}{\psi'}\bigg)^2.
\end{equation}
Changing the parameter $k$ to $k+1$ in \eqref{eq:condition_3},
we find the following relationship.

\begin{cor}
{\rm{(Localized nonlinear wave equation)}}   
If $\psi\in\Sigma$ and $Q_\e:=\mathscr{Q}_\e[\Gamma_\psi]$ is the associated
Dirichlet operator symbol, then, for all $k=0,1,2,\ldots$, we have
\begin{equation}
\label{eq:condition_4}
\oslash (\partial_z^{k+2}Q_\e)-
(k+2)\oslash(\partial_z^{k+1}\partial_wQ_\e)
-(k+1)\sum_{l=0}^{k} \binom{k}{l}
(\oslash\partial_z^l\partial_wQ_\e)(\oslash \partial_z^{k+1-l}Q_\e)=0.
\end{equation}
\label{cor:local-1}
\end{cor}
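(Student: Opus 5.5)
The plan is to obtain \eqref{eq:condition_4} as an immediate reindexing of \eqref{eq:condition_3}, which was derived from the nonlinear wave equation of Theorem \ref{Thm_A}. Since the derivation of \eqref{eq:condition_3} only requires that $Q_\e=\mathscr{Q}_\e[\Gamma_\psi]$ satisfy the nonlinear wave equation and the symmetry \eqref{eq:symmetry_1}, the bulk of the work is already done. I would nonetheless retrace the steps briefly to make sure the identification of Taylor coefficients is correct.

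First, I fix $\alpha\in\D_\e$ and substitute $(z,w)=(\alpha,\alpha+\eta)$ into the wave equation, multiplied through by $z-w=-\eta$. The multiplication clears the singular denominator. Then I expand each factor in Taylor series in $\eta$, using the diagonal jets $\oslash(\partial_z^j\partial_w^kQ_\e)(\alpha)$.

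The product term $(\partial_zQ_\e)(\partial_wQ_\e)$ is handled by the Cauchy product of two power series, which gives the binomial sum. Matching the coefficient of $\eta^k$ yields \eqref{eq:condition_1}. The factor $k$ comes from rewriting $1/(k-1)!$ as $k/k!$ after the shift caused by the extra factor $\eta$. Regrouping the two occurrences of $\oslash(\partial_z\partial_w^kQ_\e)$ then gives \eqref{eq:condition_2}.

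Next, I apply the symmetry $Q_\e(z,w)=Q_\e(w,z)$, read off from \eqref{eq:DirsymbolGrunsky-1}, in the form \eqref{eq:symmetry_1}. This swaps the roles of the indices term by term and produces \eqref{eq:condition_3}, valid for $k\ge1$.

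Finally, I replace $k$ by $k+1$ with $k\ge0$. Then $k+1\mapsto k+2$, the prefactor $k$ becomes $k+1$, and the sum $\sum_{l=0}^{k-1}\binom{k-1}{l}$ becomes $\sum_{l=0}^{k}\binom{k}{l}$ with $\oslash\partial_z^{k-l}Q_\e\mapsto\oslash\partial_z^{k+1-l}Q_\e$. This is exactly \eqref{eq:condition_4}. As a sanity check, the case $k=0$ reproduces \eqref{eq:k=1}.

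The only delicate point is the bookkeeping in the coefficient extraction, namely the index shift from multiplying by $\eta$ and the resulting factor $k$. Holomorphy of $Q_\e$ on the bidisk $\D^2((\alpha,\alpha),|\alpha|-1)$ justifies the termwise manipulation of the convergent series.
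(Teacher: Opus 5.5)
Your proposal is correct and is essentially the paper's own derivation. You restrict the nonlinear wave equation of Theorem \ref{Thm_A} to the points $(\alpha,\alpha+\eta)$ and clear the denominator $z-w=-\eta$. Matching Taylor coefficients gives \eqref{eq:condition_1} and \eqref{eq:condition_2}, the symmetry \eqref{eq:symmetry_1} then gives \eqref{eq:condition_3}, and the shift $k\mapsto k+1$ yields \eqref{eq:condition_4}. Your bookkeeping is accurate, including the factor $k$ from $1/(k-1)!=k/k!$ and the $k=0$ check against \eqref{eq:k=1}.
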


\subsection{The Riccati equation considered by Aharonov}
\label{ss:Riccati-Aharonov}
If we let $R_\psi$ denote the function
\[
R_\psi(z,w):=\frac{\psi'(z)}{\psi(z)-\psi(w)}
\]
then
\[
\partial_z Q_\e(z,w)=\partial_z\log\frac{\psi(z)-\psi(w)}{z-w}=
\frac{\psi'(z)}{\psi(z)-\psi(w)}-\frac{1}{z-w}=R_\psi(z,w)-\frac{1}{z-w}.
\]
It was observed e.g. by Aharonov \cite{Dov} that this function solves
the Riccati equation
\[
\partial_z R_\psi(z,w)=\frac{\psi''(z)}{\psi'(z)}\,R_\psi(z,w)-(R_\psi(z,w))^2,
\]
and it is now possible to express this Riccati equation in terms of $Q_\e$
in place of $R_\psi$:
\[
\partial_z^2Q_\e(z,w)+(\partial_zQ_\e(z,w))^2
+\frac{2}{z-w}\big(\partial_zQ_\e-(\oslash\partial_zQ_\e)(z)\big)
-2(\oslash\partial_zQ_\e)(z) \partial_zQ_\e(z,w)=0.
\]
We can use this equation Riccati equation to characterize our operator
symbols, in a fashion similar to the nonlinear wave equation of Theorem
\ref{Thm_A}. Note, however, that the Riccati equation uses values at a
generic point $(z,w)$ as well as diagonal values, so the equation is
nonlocal and hence not a partial differential equation in any usual sense.


\begin{thm}
Let $q:\D_{e}^2\raro\mathbb{C}_{\infty}$ be a holomorphic function. Then 
\[
q(z,w)=Q_\e(z,w)=\mathscr{Q}_\e[\Gamma_{\psi}](z,w)
\]
holds for some univalent function $\psi\in\Sigma$ if and only if 

\noindent $(a)$ $q(\infty,w)=0$ for each $w\in\D_e$, and

\noindent{$(b)$} $q$ solves the Riccati equation
\[
\partial_z^2q(z,w)+(\partial_zq(z,w))^2+
\frac{2}{z-w}\big(\partial_zq(z,w)-(\oslash\partial_zq)(z)\big)
-2(\oslash\partial_zq)(z) \partial_zq(z,w).
\] 
\label{thm:Riccati}
\end{thm}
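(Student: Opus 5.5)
The plan is to follow the pattern of the necessity/sufficiency argument behind Theorem \ref{Thm_A}: the Riccati equation is linearized by a reciprocal substitution, and univalence is read off from holomorphy.

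\emph{Necessity.} If $q=Q_\e=\mathscr{Q}_\e[\Gamma_\psi]$, then $q(\infty,w)=0$ by the stated normalization of the logarithm branch. Moreover $\partial_z q=R_\psi-\frac{1}{z-w}$, and by \eqref{eq:NL-1'} we have $2(\oslash\partial_z q)=\mathrm{N}(\psi)=\psi''/\psi'$. A direct differentiation gives Aharonov's equation $\partial_zR_\psi=\mathrm{N}(\psi)R_\psi-R_\psi^2$. Substituting $R_\psi=\partial_zq+\frac1{z-w}$ yields exactly the displayed equation: the $(z-w)^{-2}$ terms cancel, and the cross terms produce $\frac{2}{z-w}(\partial_zq-\oslash\partial_zq)$.

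\emph{Sufficiency, step 1.} Given $q$, put $h(z):=2(\oslash\partial_zq)(z)$, which is holomorphic on $\D_\e$. Since $q(\cdot,w)$ is holomorphic at $\infty$ and vanishes there, we have $\partial_zq=\Ordo(z^{-2})$, so $h(z)=\Ordo(z^{-2})$ as $z\to\infty$. Hence $H(z):=-\int_z^\infty h$ is single-valued with $H(\infty)=0$. Define $\psi'=e^{H}$ and let $\psi$ be a primitive, so that $\psi(z)=z+b_0+\Ordo(1/z)$ with $\psi'$ zero-free and $\mathrm{N}(\psi)=h$.

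\emph{Sufficiency, step 2.} Set $R(z,w):=\partial_zq(z,w)+\frac{1}{z-w}$. Reversing the computation from the necessity part, the hypothesis becomes $\partial_zR=hR-R^2$ off the diagonal. Near a fixed diagonal point, $R$ has a simple pole in $z$ at $z=w$ with residue $1$, so $u:=1/R$ is holomorphic near the diagonal with $u(w,w)=0$. The function $u$ solves the linear equation $\partial_zu+hu=1$, so $\partial_z(\psi' u)=\psi'$. Integrating in $z$ from $z=w$ gives $\psi'(z)u(z,w)=\psi(z)-\psi(w)$. Thus
\[
\psi'(z)=R(z,w)\big(\psi(z)-\psi(w)\big)
\]
holds near the diagonal, and by analytic continuation on all of $\D_\e^2$ minus the diagonal, both sides being meromorphic there. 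Consequently
\[
\partial_zq=\partial_z\log\frac{\psi(z)-\psi(w)}{z-w}
\]
locally. Integrating in $z$ from $\infty$, where both $q$ and the logarithm (with the normalized branch) vanish, gives $q=\log\frac{\psi(z)-\psi(w)}{z-w}$.

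\emph{Main obstacle: univalence and global single-valuedness.} I expect to settle this through the continued identity above. Since $R$ is holomorphic at every off-diagonal point of $\D_\e^2$, if $\psi(z_0)=\psi(w_0)$ with $z_0\ne w_0$, the right-hand side would vanish at $(z_0,w_0)$ while $\psi'(z_0)\neq0$, which is a contradiction. Hence $\psi$ is univalent and belongs to $\Sigma$. It then also follows that $q=\mathscr{Q}_\e[\Gamma_\psi]$ globally, because $q$ and $\log\frac{\psi(z)-\psi(w)}{z-w}$ are holomorphic on the connected set $\D_\e^2$ and agree near the diagonal.

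There is a secondary point to check in the main step. Points where $R$ vanishes would be poles of $u$, so I keep the $u$-argument local near the diagonal, where $R\neq0$, and let the identity theorem carry it elsewhere.
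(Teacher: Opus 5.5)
Your argument is in substance the paper's, but one step fails as written. The paper substitutes $r=\partial_zp/p$ to get $\partial_z^2p=2(\oslash\partial_zq)\,\partial_zp$ and then cancels the pole at $z=w$. Your reciprocal $u=1/R$ instead gives a first-order equation whose integrating factor $\psi'=e^{H}$ is the paper's $p_2'$, and the condition $u(w,w)=0$ does the work of the pole cancellation; univalence is obtained the same way.

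The failing step is in Step 1, where from $h=\Ordo(z^{-2})$ you conclude that a primitive of $e^{H}$ satisfies $\psi(z)=z+b_0+\Ordo(1/z)$. If $h(z)=h_2z^{-2}+\Ordo(z^{-3})$, then $H(z)=-h_2z^{-1}+\Ordo(z^{-2})$ and $e^{H}=1-h_2z^{-1}+\Ordo(z^{-2})$. So every primitive is $z-h_2\log z+b_0+\Ordo(1/z)$, which is not single-valued on $\D_\e$ unless $h_2=0$. Nothing in your proposal forces $h_2=0$. Writing $q=\sum_{m\ge1,n\ge0}c_{m,n}z^{-m}w^{-n}$, one finds $h_2=-2c_{1,0}$, where $c_{1,0}$ is the $z^{-1}$-coefficient of $q(z,\infty)$, and hypothesis (a) only controls $q(\infty,w)$. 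You need $\psi$ single-valued in two places. First, to continue $\psi'(z)=R(z,w)(\psi(z)-\psi(w))$ by the identity theorem from near the diagonal to all of $\D_\e^2\setminus\{z=w\}$. Second, to conclude $\psi\in\Sigma$, since every $\psi\in\Sigma$ has $\mathrm{N}(\psi)=\Ordo(z^{-3})$.

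The repair uses your own identity. Take a local branch of $\psi$ and derive the identity near a diagonal point as you do. Then continue it in $z$, with $w$ fixed, once around a circle $|z|=\rho$ that avoids $w$. The functions $\psi'=e^{H}$ and $R(\cdot,w)$ come back unchanged, while $\psi(z)$ comes back as $\psi(z)+P$ with $P=\oint e^{H}\,\diff z=-2\pi\mathrm{i}\,h_2$. Hence $P\,R(\cdot,w)\equiv0$, and since $R(\cdot,w)$ has a pole at $w$, $P=0$.

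Alternatively, let $w\to\infty$ in (b). Then $g:=q(\cdot,\infty)$ solves $g''+(g')^2-hg'=0$ with $g(\infty)=0$. A leading term $c_kz^{-k}$ would force $k(k+1)c_k=0$, so $g\equiv0$ and $c_{1,0}=0$.

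Your claim $h=\Ordo(z^{-2})$ itself needs $q$ to be jointly holomorphic at $(\infty,\infty)$. Decay of $\partial_zq(\cdot,w)$ for each fixed $w$ says nothing about the diagonal, so state that reading of the hypothesis explicitly.

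With these points added, the rest of your proof is correct: the local solution, the continuation, univalence from the finiteness of $R$ off the diagonal, and the normalization via (a). The paper's write-up does not address the single-valuedness of $p_2$ explicitly either.
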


\begin{proof}
If $q=Q_\e=\mathscr{Q}_\e[\Gamma_{\psi}]$, then $q$ solves the Riccati
equation in $(b)$ as we already checked, and we also know that
$q(\infty,w)=Q_\e(\infty,w)=0$, so that property $(a)$ holds as well.

Conversely, suppose $q$ is holomorphic and solves the Riccati equation
in $(b)$. We then write
\[
r(z,w):=\partial_zq(z,w)+\frac{1}{z-w},
\]
and check that the Riccati equation for $q$ in $(b)$ simplifies to
\[
\partial_zr(z,w)+(r(z,w))^2-2r(z,w)(\oslash\partial_zq)(z)=0.
\]
As we substitute $r=p^{-1}\partial_zp$, it follows that $p$ solves
the equation 
\begin{equation}
\label{2nd order}
\partial_z^2p(z,w)=2(\oslash\partial_zq)(z) \partial_zp(z,w).
\end{equation}
This is a linear second-order partial differential equation, which reduces
to an homogeneous ordinary differential equation if we keep $w$ fixed.
So, for fixed $w$, the space of solutions is linear of dimension $2$,
so it is spanned by two linearly independent solutions, $p_1(z)$ and
$p_2(z)$. We first check that $p_1(z)=1$ is a solution. Next, we claim that
we must have $p_2'(z)\ne0$ for each finite $z\in\D_\e$. We argue by
contradiction that $p_2'(z_0)=0$ for some finite $z_0\in\D_\e$, and form
$p_3:=p_2-p_2(z_0)p_1$,
which is a solution with vanishing initial value $p_3(z_0)=p_3'(z_0)=0$.
But then, by the uniqueness theorem of ordinary differential equations,
$p_3=0$ identically, so that in fact, $p_2=p_2(z_0)p_1$, which contradicts the
linear independence of $p_1,p_2$. That leaves us with the only possibility that
$p_2'(z)\ne0$ holds for all finite $z\in\D_\e$. 
The general solution to \eqref{2nd order} takes the form
\[
p(z,w)=A(w)p_1(z)+B(w)p_2(z)=A(w)+B(w)p_2(z),
\]
for two holomorphic functions $A$ and $B$, which means that $r$ is given
by
\[
r(z,w)=p(z,w)^{-1}\partial_z p(z,w)=
\frac{B(w)p_2^{\prime}(z)}{A(w)+B(w)p_2(z)},
\]
and hence that
\[
\partial_z q(z,w)=r(z,w)-\frac{1}{z-w}
=\frac{p_2^{\prime}(z)}{\frac{A(w)}{B(w)}+p_2(z)}-\frac{1}{z-w}.
\]
The left-hand side being holomorphic in $\D_\e^2$, this is only possible if
the two poles on the right-hand side cancel completely. This in turn requires
that
\[
p_2(z)=-\frac{A(z)}{B(z)},\qquad z\in\D_\e,
\]
so that in fact
\begin{equation}
\partial_z q(z,w)
=\frac{p_2^{\prime}(z)}{p_2(z)-p_2(z)}-\frac{1}{z-w}.
\label{eq:p_2}
\end{equation}
The function $p_2$ must now be univalent, since if $p_2(z)=p_2(w)$ holds
off the diagonal $z=w$, the right-hand side of \eqref{eq:p_2} automatically
develops poles, while the left-hand side is not allowed to have poles. 
Integrating both sides in \eqref{eq:p_2} gives that
\begin{equation}
q(z,w)=\log \frac{p_2(z)-p_2(w)}{z-w}+C(w),
\label{eq:p_3}
\end{equation}
for some constant $C(w)$. We now implement the condition $(a)$, which
says that $q(\infty,w)=0$. We rewrite \eqref{eq:p_3} in the form
\begin{equation}
\frac{p_2(z)-p_2(w)}{z-w}=\exp\big(-C(w)+q(z,w)\big),
\label{eq:p_4}
\end{equation}
so that
\begin{equation}
\lim_{|z|\to+\infty}\frac{p_2(z)-p_2(w)}{z-w}=\exp(-C(w)).
\label{eq:p_5}
\end{equation}
The condition \eqref{eq:p_5} requires that $p_2(\infty)=\infty$ and
$p_2'(\infty)=\exp(-C(w))$, plus that $C(w)=C$, that is, the constant
does not actually depend on $w$. If we declare that
$\psi:=\exp(C)p_2$, it now follows that
\[
q(z,w)=\log\frac{\psi(z)-\psi(w)}{z-w},
\]
where $\psi:\D_\e\to\C\cup\{\infty\}$ is univalent with
$\psi(\infty)=\infty$ and $\psi'(\infty)=1$, so that $\psi\in\Sigma$.
\end{proof}

We now localize the Riccati equation of Theorem \ref{thm:Riccati}, using
the Taylor expansion
\begin{equation}
\label{eq:localRiccati-1}
\partial_z Q_\e(z,w)=\sum_{j=0}^{+\infty}
\frac{1}{j!}\big(\oslash{\partial_z\partial_w^j Q_\e}(z)\big)(w-z)^j,
\end{equation}
valid when $(z,w)\in\D_\e^2$ and $|w-z|<|z|-1$. We then have
\begin{multline}
\label{eq:localRiccati-2}
\partial_z^2 Q_\e(z,w)=\sum_{j=0}^{+\infty}
\frac{1}{j!}\big(\partial_z(\oslash{\partial_z\partial_w^j Q_\e})(z)\big)
(w-z)^j-\sum_{j=1}^{+\infty}
\frac{1}{(j-1)!}\big(\oslash{\partial_z\partial_w^j Q_\e}(z)\big)
(w-z)^{j-1},
\end{multline}
and
\begin{equation}
\label{eq:localRiccati-3}
\partial_z Q_\e(z,w)-\oslash(\partial_z Q_\e)(z)=\sum_{j=1}^{+\infty}
\frac{1}{j!}\oslash{\partial_z\partial_w^j Q_\e}(z)(w-z)^j,
\end{equation}
so that
\begin{equation}
\label{eq:localRiccati-4}
\frac{1}{z-w}\big(\partial_z Q_\e(z,w)-\oslash(\partial_z Q_\e)(z)\big)
=-\sum_{j=1}^{+\infty}
\frac{1}{j!}\oslash{\partial_z\partial_w^j Q_\e}(z)(w-z)^{j-1}.
\end{equation}
At the same time, simply squaring in \eqref{eq:localRiccati-1} gives that
\begin{equation}
\label{eq:localRiccati-5}
\big(\partial_z Q_\e(z,w)\big)^2=\sum_{j,j'=0}^{+\infty}
\frac{1}{j!(j')!}\big(\oslash{\partial_z\partial_w^j Q_\e}(z)\big)
\big(\oslash{\partial_z\partial_w^{j'} Q_\e}(z)\big)(w-z)^{j+j'}.
\end{equation}

\begin{cor}
{\rm(Localized Riccati equation)}
The diagonal localization of the Riccati equation of Theorem \ref{thm:Riccati}
splits into a sequence of identities indexed by $k=0,1,2,\ldots$.
The equation for $k=0$ reads
\[
\partial_z(\oslash\partial_z Q)(z)-3\oslash(\partial_z\partial_w Q_\e)(z)
-\big(\oslash\partial_z Q_\e(z)\big)^2 =0, 
\]
whereas for $k=1,2,3,\ldots$, it reads
\begin{multline}
\partial_z\oslash(\partial_z\partial_w^k Q_\e)(z)
-\frac{k+3}{k+1}\oslash(\partial_z\partial_w^{k+1}Q_\e)(z)
\\
+\sum_{l=1}^{k-1}
\binom{k}{l}\big(\oslash(\partial_z\partial_w^{k-l}Q_\e)(z)\big)
\big(\oslash(\partial_z\partial_w^{l}Q_\e)(z)\big)=0.
\end{multline}
\label{cor:Riccati}
\end{cor}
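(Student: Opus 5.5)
The plan is to substitute the diagonal Taylor expansions \eqref{eq:localRiccati-1}--\eqref{eq:localRiccati-5} into the Riccati equation of Theorem \ref{thm:Riccati}, which holds for $q=Q_\e$, and then identify coefficients of powers of $w-z$.

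Fix $z\in\D_\e$ (finite) and write $u:=w-z$. All the series converge absolutely and locally uniformly for $|u|<|z|-1$. Abbreviate
\[
c_j(z):=\oslash(\partial_z\partial_w^jQ_\e)(z),\qquad j=0,1,2,\ldots,
\]
so that $c_0=\oslash\partial_zQ_\e$.

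Each term of the Riccati equation expands as follows.
\begin{itemize}
\item By \eqref{eq:localRiccati-2}, after shifting the index in the second sum,
\[
\partial_z^2Q_\e=\sum_{k\ge0}\frac{u^k}{k!}\big(c_k'-c_{k+1}\big).
\]
Here the $z$-derivative of $(w-z)^j$ produces the shifted term.
\item By \eqref{eq:localRiccati-4},
\[
\frac{2}{z-w}\big(\partial_zQ_\e-c_0\big)=-2\sum_{k\ge0}\frac{u^k}{k!}\,\frac{c_{k+1}}{k+1}.
\]
\item By \eqref{eq:localRiccati-5}, regrouped by total degree (Cauchy product),
\[
(\partial_zQ_\e)^2=\sum_{k\ge0}\frac{u^k}{k!}\sum_{l=0}^k\binom{k}{l}c_lc_{k-l}.
\]
\item The last term is
\[
-2c_0\,\partial_zQ_\e=-2\sum_{k\ge0}\frac{u^k}{k!}\,c_0c_k.
\]
\end{itemize}
The left-hand side of the Riccati equation is then a power series in $u$ that vanishes identically on a disk. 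By the uniqueness of power series coefficients, the coefficient of $u^k/k!$ vanishes for each $k$:
\[
c_k'-\Big(1+\frac{2}{k+1}\Big)c_{k+1}+\sum_{l=0}^k\binom{k}{l}c_lc_{k-l}-2c_0c_k=0.
\]

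Finally, simplify this relation. For $k=0$ the quadratic sum equals $c_0^2$. Combined with $-2c_0^2$, this leaves $-c_0^2$, and the coefficient of $c_1$ is $3$. This gives the stated $k=0$ identity. For $k\ge1$, the endpoint terms $l=0$ and $l=k$ of the binomial sum contribute exactly $2c_0c_k$, which cancels against $-2c_0c_k$. The sum then reduces to $l=1,\ldots,k-1$, and the coefficient $1+\frac{2}{k+1}$ equals $\frac{k+3}{k+1}$, which is the claimed identity.

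The only points that need care are bookkeeping rather than conceptual. The main one is the index shift when differentiating $(w-z)^j$ in $z$, where one must not forget the diagonal dependence of $c_j$ on $z$. The other is checking that termwise differentiation and the Cauchy product are legitimate; this holds because all series are power series in $u$ converging on $|u|<|z|-1$.
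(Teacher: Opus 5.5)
Your proposal is correct and follows the paper's own argument: you substitute the diagonal Taylor expansions \eqref{eq:localRiccati-1}--\eqref{eq:localRiccati-5} into the Riccati equation and compare coefficients of $(w-z)^k$. For $k\ge1$, the endpoint terms $l=0$ and $l=k$ of the Cauchy product cancel against $-2c_0c_k$, which is exactly the cancellation the paper mentions but you carry out more explicitly.
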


\begin{proof}
These equations follows from the Riccati equation of Theorem 
\ref{thm:Riccati}, if we insert the Taylor expansions
\eqref{eq:localRiccati-1}, \eqref{eq:localRiccati-2},
\eqref{eq:localRiccati-3}, \eqref{eq:localRiccati-4}, and
\eqref{eq:localRiccati-5}. For $j\ge1$, there is cancellation of terms
corresponding to the indices $k=0$ and $k=j-1$ in the sum.   
\end{proof}

\begin{rem}
\noindent{$(a)$}
Up to aesthetical differences in notation, the Riccati equation of Theorem
\ref{thm:Riccati} and its diagonal localization Corollary \ref{cor:Riccati}
are basically from Aharonov's paper \cite{Dov}.

\noindent{$(b)$} The content of Corollaries \ref{cor:local-1} and
\ref{cor:Riccati} are at a deeper level equivalent.
However, for our purposes, Corollary \ref{cor:Riccati} packages the information
in a more accessible form, as it involves the diagonal restrictions of
expressions that are mixed derivatives of $Q_\e$.
\end{rem}

\section{Diagonal analysis and diagonal series Schwarzian derivatives}
\label{S3}

\subsection{The structure of Dirichlet symbols of Grunsky operators}


To facilitate our analysis of Dirichlet operator symbols, we shall have use
for the following combinatorial lemma.

\begin{lem}
For $n=1,2,3,\ldots$ and $m=0,\ldots,n$, we have that
\[
\sum_{k=0}^{\min\{m,n-m\}}\frac{(-1)^k}{n-k} \binom{n-k}{k}
\binom{n-2k}{m-k}=\frac{1}{n}\big(\delta_{m,0}+\delta_{m,n}\big), 
\]
where the delta is in the sense of Kronecker.
\label{lem:combin-1}
\end{lem}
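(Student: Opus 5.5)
Multiply by $n$ and write $\frac{n}{n-k}\binom{n-k}{k}\binom{n-2k}{m-k}$. Since $\binom{n-k}{k}\binom{n-2k}{m-k}=\frac{(n-k)!}{k!(m-k)!(n-m-k)!}$, the claim is equivalent to
\[
\sum_{k\ge0}(-1)^k\,\frac{n\,(n-k-1)!}{k!\,(m-k)!\,(n-m-k)!}=\delta_{m,0}+\delta_{m,n}.
\]
The plan is to prove this with a generating function in two variables. Put $x^my^{n-m}$ against the left-hand side and sum over $m$. Then the coefficient of $x^my^{n-m}$ in $(x+y)^{n-2k}(xy)^k$ is $\binom{n-2k}{m-k}$, with the convention that it vanishes outside the range. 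So we need the identity of polynomials
\[
\sum_{k=0}^{\lfloor n/2\rfloor}(-1)^k\frac{n}{n-k}\binom{n-k}{k}(x+y)^{n-2k}(xy)^k=x^n+y^n .
\]
The left-hand side is the classical Waring/Lucas expansion of the power sum $p_n=x^n+y^n$ in terms of $e_1=x+y$ and $e_2=xy$. Comparing coefficients of $x^my^{n-m}$ then gives exactly $\delta_{m,0}+\delta_{m,n}$ for every $0\le m\le n$, which is the lemma.

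To prove the Waring identity in a self-contained way, use the logarithmic generating function
\[
-\log\big(1-e_1t+e_2t^2\big)=-\log(1-xt)-\log(1-yt)=\sum_{n\ge1}\frac{x^n+y^n}{n}t^n .
\]
Next expand the left side as $\sum_{j\ge1}\frac1j(e_1t-e_2t^2)^j$, and expand each power binomially: $(e_1t-e_2t^2)^j=\sum_k\binom jk(-1)^ke_2^ke_1^{j-k}t^{j+k}$. Set $n=j+k$ and extract the coefficient of $t^n$. This gives $\sum_k\frac{(-1)^k}{n-k}\binom{n-k}{k}e_1^{n-2k}e_2^k$, which is exactly the left-hand side of the lemma summed against $x^my^{n-m}$, before multiplying by $n$. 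So the identity $\sum_k\frac{(-1)^k}{n-k}\binom{n-k}{k}e_1^{n-2k}e_2^k=\frac{x^n+y^n}{n}$ follows directly, with no need to separate out the factor $n$.

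The main point needing care is the bookkeeping of summation ranges. The upper limit $\min\{m,n-m\}$ in the lemma must match the natural vanishing of $\binom{n-2k}{m-k}$ when $k>m$ or $m-k>n-2k$, and $k\le\lfloor n/2\rfloor$ must be automatic from $j=n-k\ge k$. One also needs $n-k\ge1$, which holds since $j\ge1$. With these checked, comparing the coefficients of $x^my^{n-m}$ on both sides finishes the proof. The right side contributes only for $m=0$ and $m=n$, and the two deltas both fire correctly since $n\ge1$ makes the terms $x^n$ and $y^n$ distinct monomials.
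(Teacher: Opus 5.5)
Your proof is correct, and it takes a different route from the paper.

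The paper checks $m=0$ and $m=n$ by inspection. For $1\le m\le n-1$ it rewrites the summand as $\frac{1}{m!}(-1)^k\binom{m}{k}(n-m-k+1)_{m-1}$ and extends the sum to $0\le k\le m$; the added terms vanish because the Pochhammer product then contains a zero factor. It concludes that the sum is an $m$-th order finite difference of a polynomial of degree $m-1$ in $k$, hence zero.

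You instead form the generating polynomial $\sum_m(\cdot)\,x^my^{n-m}$ for all $m$ at once. You recognize it as Waring's formula for $x^n+y^n$ in terms of $e_1=x+y$ and $e_2=xy$, and derive that formula from $-\log(1-e_1t+e_2t^2)=-\log(1-xt)-\log(1-yt)$. Your range bookkeeping is right: the coefficient of $x^my^{n-m}$ in $(xy)^k(x+y)^{n-2k}$ is $\binom{n-2k}{m-k}$, nonzero exactly when $k\le\min\{m,n-m\}$, and $j=n-k\ge1$ holds throughout.

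The paper's argument is short and purely elementary, but it needs the endpoint case split and the check on the padded terms. Yours is uniform in $m$. More to the point, it explains where the lemma comes from. Put $x=\partial_z$ and $y=\partial_w$; the same polynomial identity becomes the operator identity $\partial_z^n+\partial_w^n=\sum_k(-1)^k\frac{n}{n-k}\binom{n-k}{k}(\partial_z+\partial_w)^{n-2k}(\partial_z\partial_w)^k$. Combined with the chain rule $\partial_z\oslash=\oslash(\partial_z+\partial_w)$ and the symmetry of $q$, this yields Theorem \ref{thm:dz-q-1} directly, without ever extracting the coefficients $C_{m,n}$.
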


\begin{proof}
We first observe that when $m=0$, the left-hand side expression equals
\[
\frac{1}{n} \binom{n}{0}
\binom{n}{0}=\frac{1}{n},
\]
which agrees with the right-hand side. Similarly, when $m=n$, the
left-hand side expression equals
\[
\frac{1}{n} \binom{n}{0}
\binom{n}{n}=\frac{1}{n},
\]
which again agrees with the right-hand side. It remains to deal with the
intermediate range $1\le m\le n-1$. To this end, we observe that in that
range, 
\begin{multline}
\label{eq:Pochh-1}
\sum_{k=0}^{\min\{m,n-m\}}\frac{(-1)^k}{n-k} \binom{n-k}{k}
\binom{n-2k}{m-k}=\sum_{k=0}^{\min\{m,n-m\}}(-1)^k\frac{(n-k)!}{k!(m-k)!(n-m-k)!}, 
\\
=\frac{1}{m!}\sum_{k=0}^{m}(-1)^k\binom{m}{k}
(n-m-k+1)_{m-1},
\end{multline}
because the added terms all must vanish in the sum. Here, we use the standard
Pochhammer symbol notation $(x)_j:=x(x+1)\cdots(x+j-1)$ for an integer
$j=1,2,3,\ldots$, with the added rule that $(x)_0=1$. As a function of $n$,
the expression $(n-m+1)_{m-1}$ is a polynomial of degree $m-1$, and hence
its iterated difference of order $m$ must vanish so that the expression
\eqref{eq:Pochh-1} equals $0$. The proof is complete.
\end{proof}

In terms of notation, we use the expression $\lfloor\cdot\rfloor$ to
denote the integer part of a given real number. We continue to analyze
holomorphic functions of two variables with a symmetry property.

\begin{thm}
Let $q=q(z,w)$ be holomorphic in an open domain $\Omega\subset\C^2$,
and suppose that the symmetry $q(z,w)=q(w,z)$ holds whenever
$(z,w)\in\Omega$ and $(w,z)\in\Omega$. Moreover, let
$\Omega_{\mathrm{diag}}:=\{z\in\C:\,(z,z)\in\Omega\}$ be the diagonal
part of $\Omega$. 
Then, for any $n=1,2,3,\ldots$, we have
\begin{equation*}
\oslash(\partial_z^{n}q)(z)=
\frac{n}{2}\sum_{k=0}^{\lfloor \frac{n}{2} \rfloor}
\frac{(-1)^k}{n-k}\binom{n-k}{k}\,\partial_z^{n-2k}
\oslash(\partial_z^k\partial_w^kq)(z),\qquad z\in\Omega_{\mathrm{diag}}.
\end{equation*}
\label{thm:dz-q-1}
\end{thm}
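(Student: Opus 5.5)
The plan is to expand each term on the right-hand side by the bivariate chain rule, as in \eqref{eq:CR-1} and \eqref{eq:CR-2}, and then let Lemma \ref{lem:combin-1} collapse the resulting double sum. The starting point is the iterated form of the chain rule:
\[
\partial_z^{j}\oslash f=\oslash\big((\partial_z+\partial_w)^{j}f\big),
\]
valid for any holomorphic $f$ on $\Omega$ and any $j\ge0$ on $\Omega_{\mathrm{diag}}$. Applying it with $f=\partial_z^k\partial_w^kq$ and $j=n-2k$, and expanding binomially, we get
\[
\partial_z^{n-2k}\oslash(\partial_z^k\partial_w^kq)
=\sum_{i=0}^{n-2k}\binom{n-2k}{i}\oslash(\partial_z^{k+i}\partial_w^{n-k-i}q).
\]
We reindex with $m:=k+i$, so that $m$ runs over $k\le m\le n-k$. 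Each term then becomes $\binom{n-2k}{m-k}\oslash(\partial_z^{m}\partial_w^{n-m}q)$.

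Next we substitute this into the right-hand side of the claimed identity and interchange the two finite sums. The right-hand side becomes
\[
\frac n2\sum_{m=0}^{n}\Bigg(\sum_{k=0}^{\min\{m,n-m\}}\frac{(-1)^k}{n-k}\binom{n-k}{k}\binom{n-2k}{m-k}\Bigg)\oslash(\partial_z^{m}\partial_w^{n-m}q).
\]
The constraint $k\le\min\{m,n-m\}$ is exactly what $k\le m\le n-k$ gives, and $k\le\lfloor n/2\rfloor$ is automatic. The inner sum is precisely the expression evaluated in Lemma \ref{lem:combin-1}, which equals $\frac1n(\delta_{m,0}+\delta_{m,n})$. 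The right-hand side therefore reduces to
\[
\tfrac12\big(\oslash(\partial_w^nq)+\oslash(\partial_z^nq)\big).
\]

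Finally, we use the symmetry $q(z,w)=q(w,z)$. Differentiating it gives $\oslash(\partial_w^nq)=\oslash(\partial_z^nq)$ on $\Omega_{\mathrm{diag}}$, the analogue of \eqref{eq:symmetry_1}. This requires that near each diagonal point the symmetry holds on a neighbourhood. That is fine: $\Omega$ is open, so a small bidisk around $(z,z)$ lies in $\Omega$, and it is symmetric under the swap. The two halves then combine to $\oslash(\partial_z^nq)$, which completes the argument.

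The only genuine work is the combinatorial identity, and it is supplied by Lemma \ref{lem:combin-1}. The main point needing care is therefore the bookkeeping in the interchange of summation, in particular checking that the ranges of $k$ and $m$ match those in the lemma exactly.
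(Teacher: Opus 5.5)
Your proposal is correct and follows the paper's proof essentially step for step. Both expand $\partial_z^{n-2k}\oslash(\partial_z^k\partial_w^kq)$ via $(\partial_z+\partial_w)^{n-2k}$, reindex and interchange the finite sums so that the coefficients are exactly those evaluated in Lemma \ref{lem:combin-1}, and then use the symmetry of $q$ to merge $\tfrac12\big(\oslash(\partial_z^nq)+\oslash(\partial_w^nq)\big)$ into $\oslash(\partial_z^nq)$. Your remark that a small swap-invariant bidisk about each diagonal point justifies differentiating the symmetry is a point the paper leaves implicit.
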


\begin{proof}
By the bivariate chain rule, we have that
\begin{multline*}
\partial_z^{n-2k}\oslash(\partial_z^k\partial_w^kq)(z)
=\oslash\big((\partial_z+\partial_w)^{n-2k}\partial_z^k\partial_w^kq\big)(z)
\\
=\sum_{j=0}^{n-2k}\binom{n-2k}{j}\oslash\big(
\partial_z^{n-j-k}\partial_w^{j+k} q\big)(z)
\end{multline*}
so that the sum in question reduces to
\begin{multline*}
\frac{n}{2}\sum_{k=0}^{\lfloor \frac{n}{2} \rfloor}
\frac{(-1)^k}{n-k}\binom{n-k}{k}\,\partial_z^{n-2k}
\oslash(\partial_z^k\partial_w^kq)(z)
\\
=
\frac{n}{2}\sum_{k=0}^{\lfloor \frac{n}{2} \rfloor}\sum_{j=0}^{n-2k}
\frac{(-1)^k}{n-k}\binom{n-k}{k}\binom{n-2k}{j}\oslash\big(
\partial_z^{n-j-k}\partial_w^{j+k} q\big)(z)
\\
=\frac{n}{2}\sum_{k=0}^{\lfloor \frac{n}{2} \rfloor}\sum_{m=k}^{n-k}
\frac{(-1)^k}{n-k}\binom{n-k}{k}\binom{n-2k}{m-k}\oslash\big(
\partial_z^{n-m}\partial_w^{m} q\big)(z)
=\sum_{m=0}^{n}C_{m,n}\oslash\big(\partial_z^{n-m}\partial_w^{m} q\big)(z),
\end{multline*}
where the indicated coefficients $C_{m,n}$ are given by
\[
C_{m,n}:=\frac{n}{2}\sum_{k=0}^{\min\{m,n-m\}}
\frac{(-1)^k}{n-k}\binom{n-k}{k}\binom{n-2k}{m-k}=\frac12\big(\delta_{m,0}+
\delta_{m,n}\big),
\]
in accordance with Lemma \ref{lem:combin-1}. 
This gives that
\begin{equation*}
\frac{n}{2}\sum_{k=0}^{\lfloor \frac{n}{2} \rfloor}
\frac{(-1)^k}{n-k}\binom{n-k}{k}\,\partial_z^{n-2k}
\oslash(\partial_z^k\partial_w^kq)(z)
=\frac12\big(\oslash(\partial_z^{n}q)(z)+\oslash(\partial_w^n q)(z)\big),
\end{equation*}
and since the symmetry assumption gives that
\[
\oslash\big(\partial_z^{j}\partial_w^{k} q\big)(z)
=\oslash\big(\partial_z^{k}\partial_w^{j} q\big)(z),  
\]
the assertion of the theorem follows.
\end{proof}

We continue our study of the Dirichlet symbols
$Q_\e=\mathscr{Q}_\e[\Gamma_\psi]$ associated with a univalent mapping
$\psi\in\Sigma$. By applying Theorem \ref{thm:dz-q-1} to
$q(z,w)=\partial_z^j\partial_w^j Q_\e$, we arrive at the following.

\begin{cor}
For integers $n=1,2,3,\ldots$ and $m=0,\ldots,n-1$, we
have the identity
\begin{equation*}
\oslash(\partial_z^{n}\partial_w^m Q_\e)(z)=
\frac{n-m}{2}\sum_{k=m}^{\lfloor \frac{m+n}{2} \rfloor}
\frac{(-1)^{k+m}}{n-k}\binom{n-k}{k-m}\,\partial_z^{n+m-2k}
\oslash(\partial_z^k\partial_w^k Q_\e)(z),\qquad z\in\D_\e.
\end{equation*}
\label{cor:offdiag-1}
\end{cor}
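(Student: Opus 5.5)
The plan is to apply Theorem \ref{thm:dz-q-1} not to $Q_\e$ itself, but to the mixed derivative
\[
q(z,w):=\partial_z^m\partial_w^m Q_\e(z,w),\qquad (z,w)\in\D_\e^2,
\]
with the order of differentiation $N:=n-m$ in place of $n$. The first step is to check the hypotheses. The domain $\Omega=\D_\e^2$ is invariant under $(z,w)\mapsto(w,z)$, and its diagonal part is $\Omega_{\mathrm{diag}}=\D_\e$. The function $q$ is holomorphic there. Since $Q_\e(z,w)=Q_\e(w,z)$ by \eqref{eq:DirsymbolGrunsky-1}, differentiating this identity $m$ times in each variable gives $q(z,w)=q(w,z)$. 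The assumption $0\le m\le n-1$ guarantees $N\ge1$, so Theorem \ref{thm:dz-q-1} applies with $N$ in place of $n$.

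Theorem \ref{thm:dz-q-1} then yields
\[
\oslash(\partial_z^{N}q)(z)=\frac{N}{2}\sum_{j=0}^{\lfloor N/2\rfloor}\frac{(-1)^j}{N-j}\binom{N-j}{j}\,\partial_z^{N-2j}\oslash(\partial_z^{j}\partial_w^{j}q)(z).
\]
On the left-hand side, $\partial_z^N q=\partial_z^{n}\partial_w^m Q_\e$. On the right-hand side, $\partial_z^j\partial_w^jq=\partial_z^{j+m}\partial_w^{j+m}Q_\e$.

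The last step is the reindexing $k:=j+m$, which I would carry out term by term:
\begin{itemize}
\item the range $0\le j\le\lfloor (n-m)/2\rfloor$ becomes $m\le k\le m+\lfloor (n-m)/2\rfloor=\lfloor (n+m)/2\rfloor$;
\item $N-j=n-k$;
\item $\binom{N-j}{j}=\binom{n-k}{k-m}$;
\item $N-2j=n+m-2k$;
\item $(-1)^j=(-1)^{k-m}=(-1)^{k+m}$;
\item the prefactor $N/2$ equals $(n-m)/2$.
\end{itemize}
Substituting these gives exactly the stated identity.

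There is no real obstacle here. The only points needing care are the symmetry of $q$ and the bookkeeping in the change of summation index, especially the upper limit of the sum.
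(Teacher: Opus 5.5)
Your proposal is correct and matches the paper's argument: the paper obtains the corollary by applying Theorem \ref{thm:dz-q-1} to the symmetric function $q=\partial_z^m\partial_w^m Q_\e$ with order $n-m$ in place of $n$. Your reindexing $k=j+m$, including the upper limit $m+\lfloor (n-m)/2\rfloor=\lfloor (n+m)/2\rfloor$ and the symmetry check for $q$, just writes out the bookkeeping the paper leaves implicit.
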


\subsection{Diagonal series of Schwarzian derivatives}

The issue of defining higher Schwarzian derivatives has been considered
earlier in the literature, possibly first by Dov Aharonov \cite{Dov} who
introduced the \emph{Aharonov invariants}
\[
\phi_{k+1}(\psi)(z)=\oslash(\partial_z^k\partial_w Q_\e)(z),\qquad
k=0,1,2,\ldots,
\]
such that $\phi_1(\psi)=\frac12\mathrm{N}(\psi)$ and $\phi_2(\psi)=\frac16
\Sop(\psi)$, recovering the nonlinearity and the Schwarzian derivative.
The Aharonov invariants have been studied further, by, e.g.,
Harmelin and Schippers.
Furthermore, Bertilsson \cite{Bertilsson}, who studied under Carleson,
proposed an alternative approach to the introduction of higher Schwarzians
\cite{Bertilsson}, starting from the identity
\[
(\psi'(z))^{\frac12}\partial_z^2(\psi'(z))^{-\frac12}=-\frac12\Sop(\psi)(z),
\]
which then led way to the sequence of expressions
\[
(\psi'(z))^{\frac{k}2}\partial_z^{k+1}2(\psi'(z))^{-\frac{k}2},\qquad k=1,2,3,\ldots.
\]
For a physical perspective on higher Schwarzian derivatives, we mention
Galajinsky's contribution \cite{Galaj}.

We believe that the following definition is appropriate.

\begin{defn}
The \emph{diagonal series of Schwarzian derivatives} of a
univalent function $\psi\in\Sigma$ is the sequence of expressions  
\[
\Sop_{2n}(\psi)(z):=\oslash(\partial_z^n\partial_w^n Q_\e)(z),\qquad
n=0,1,2,\ldots,
\]
where
\[
Q_\e(z,w)=\mathscr{Q}_\e[\Gamma_\psi](z,w)=\log\frac{\psi(z)-\psi(w)}{z-w}
\]
is the Dirichlet symbol of the associated Grunsky operator $\Gamma_\psi$. 
\end{defn}

\begin{rem}
The first in the diagonal series of Schwarzian derivatives is of course
$\Sop_0(\psi)=\oslash Q_\e=\log\psi'$, while the next is
\[
\Sop_2(\psi)(z)=\oslash(\partial_z\partial_w Q_\e)=\frac16\Sop(\psi),
\]  
where $\Sop(\psi)$ is the usual Schwarzian derivative.
Hence, $\Sop_2(\psi)$ involves the third derivative $\psi'''$ of $\psi$
as the highest degree derivative.
The next one, $\Sop_4(\psi)=\oslash(\partial_z^2\partial_w^2 Q_\e)$, then
involves the fifth derivative $\psi^{(5)}$ of $\psi$, and this is the
typical pattern: 
$\Sop_{2n}(\psi)$ will have $\psi^{(2n+1)}$ as its highest degree derivative of
$\psi$. This is in contrast with the Aharonov invariants $\phi_{k+1}(\psi)$,
in which the highest derivative of $\psi$ in $\psi^{(k+2)}$.
\end{rem}

Corollary \ref{cor:offdiag-1} can now be reformulated in terms of the diagonal
series of Schwarzian derivatives.

\begin{cor}
For integers $n=1,2,3,\ldots$ and $m=0,\ldots,n-1$, we
have the identity
\begin{equation*}
\oslash(\partial_z^{n}\partial_w^m Q_\e)(z)=
\frac{n-m}{2}\sum_{k=m}^{\lfloor \frac{m+n}{2} \rfloor}
\frac{(-1)^{k+m}}{n-k}\binom{n-k}{k-m}\,\partial_z^{n+m-2k}
\Sop_{2k}(\psi)(z),\qquad z\in\D_\e.
\end{equation*}
\label{cor:offdiag-3}
\end{cor}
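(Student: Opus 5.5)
This is a direct rewriting of Corollary \ref{cor:offdiag-1}, so the plan is to prove Corollary \ref{cor:offdiag-1} first and then substitute the definition. The substitution step is immediate. By definition, $\Sop_{2k}(\psi)=\oslash(\partial_z^k\partial_w^k Q_\e)$. Inserting this on the right-hand side of Corollary \ref{cor:offdiag-1}, term by term, gives exactly the stated identity on $\D_\e$. So everything rests on Corollary \ref{cor:offdiag-1}, and I would make that derivation explicit.

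To obtain Corollary \ref{cor:offdiag-1}, apply Theorem \ref{thm:dz-q-1} to the auxiliary function $q:=\partial_z^m\partial_w^m Q_\e$, with $n$ replaced by $n':=n-m\ge1$.

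Symmetry must be checked first. The symbol $Q_\e(z,w)=\log\frac{\psi(z)-\psi(w)}{z-w}$ is symmetric on $\D_\e^2$. Hence the balanced mixed derivative $\partial_z^m\partial_w^m Q_\e$ is symmetric as well, since swapping the variables swaps the operators $\partial_z^m$ and $\partial_w^m$, and these commute.

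Next, identify the left-hand side. Theorem \ref{thm:dz-q-1} produces $\oslash(\partial_z^{n-m}q)$, which equals $\oslash(\partial_z^{n}\partial_w^m Q_\e)$.

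Then identify the right-hand side. Theorem \ref{thm:dz-q-1} produces
\[
\frac{n-m}{2}\sum_{j=0}^{\lfloor (n-m)/2\rfloor}\frac{(-1)^j}{n-m-j}\binom{n-m-j}{j}\partial_z^{n-m-2j}\oslash(\partial_z^{j+m}\partial_w^{j+m}Q_\e).
\]

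Finally, reindex with $k=j+m$. The range of summation becomes $m\le k\le m+\lfloor (n-m)/2\rfloor=\lfloor (n+m)/2\rfloor$. The sign is $(-1)^{k-m}=(-1)^{k+m}$, and the derivative order is $n-m-2(k-m)=n+m-2k$.

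The main obstacle I expect is matching the coefficients. After reindexing, the natural coefficient is $\frac{1}{n-k}\binom{n-k}{k-m}$ with $n-m-j=n-k$. This must be compared carefully with the printed statement, which has $\frac{1}{n-k}\binom{n-k}{k-m}$; it agrees.

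The restriction $m\le n-1$ guarantees $n'\ge1$, so Theorem \ref{thm:dz-q-1} applies and there is no division by zero. The index $k$ stays at most $(n+m)/2<n$, so $n-k>0$ throughout.
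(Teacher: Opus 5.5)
Your proposal is correct and follows the paper's own route. The paper obtains Corollary \ref{cor:offdiag-1} by applying Theorem \ref{thm:dz-q-1} to the symmetric function $\partial_z^m\partial_w^m Q_\e$ with $n$ replaced by $n-m$, then substitutes $\Sop_{2k}(\psi)=\oslash(\partial_z^k\partial_w^kQ_\e)$; your reindexing $k=j+m$, the range $m\le k\le\lfloor\frac{n+m}{2}\rfloor$, the coefficients, and your explicit symmetry check and use of $m\le n-1$ (which the paper leaves implicit) all check out.
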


The proof of Corollary \ref{cor:offdiag-3} actually only uses the symmetry
$Q_\e(z,w)=Q_\e(w,z)$ and not the full strength of the fact that
$Q_\e=\mathscr{Q}_\e[\Gamma_\psi]$ holds, but in return it only applies when
$m<n$. So, how can we
obtain a recursion formula which calculates $\Sop_{2n}(\psi)=\oslash
(\partial_z^n\partial_w^n Q)$ in terms of lower-order diagonal series
Schwarzian derivatives? To this end, we need
to invoke the nonlinear wave equation or alternatively, the Riccati equation,
localized in Corollaries \ref{cor:local-1} and \ref{cor:Riccati}.
For our purposes, the Riccati equation is more informative, so we use
Corollary \ref{cor:Riccati}, with even $k=2j$, $j=1,2,3,\ldots$, while
using the symmetry
$\oslash\partial_z^m\partial_w^n Q_\e=\oslash\partial_z^n\partial_w^m Q_\e$:
\begin{multline}
\partial_z\oslash(\partial_z^{2j}\partial_w Q_\e)(z)
-\frac{2j+3}{2j+1}\oslash(\partial_z^{2j+1}\partial_w Q_\e)(z)
\\
+\sum_{l=1}^{2j-1}
\binom{2j}{l}\big(\oslash(\partial_z^{2j-l}\partial_wQ_\e)(z)\big)
\big(\oslash(\partial_z^{l}\partial_w Q_\e)(z)\big)=0.
\label{eq:Riccati2.01}
\end{multline}
As a first step, we invoke Corollary \ref{cor:offdiag-3} with $n=2j$ and
$m=1$, which gives that
\begin{equation}
\partial_z \oslash(\partial_z^{2j}\partial_w Q_\e)(z)=
(j-\tfrac12)\sum_{k=1}^{j}
\frac{(-1)^{k-1}}{2j-k}\binom{2j-k}{k-1}\,\partial_z^{2j+2-2k}
\Sop_{2k}(\psi)(z).
\label{eq:prep-1.0}
\end{equation}
Next, with $n=2j+1$ and $m=1$, Corollary \ref{cor:offdiag-3} gives that
\begin{multline}
\oslash(\partial_z^{2j+1}\partial_w Q_\e)(z)=
j\sum_{k=1}^{j+1}
\frac{(-1)^{k-1}}{2j+1-k}\binom{2j+1-k}{k-1}\,\partial_z^{2j+2-2k}
\Sop_{2k}(\psi)(z)
\\
=(-1)^{j}\Sop_{2j+2}(\psi)(z)+j\sum_{k=1}^{j}
\frac{(-1)^{k-1}}{2j+1-k}\binom{2j+1-k}{k-1}\,\partial_z^{2j+2-2k}
\Sop_{2k}(\psi)(z),
\label{eq:prep-1.1}
\end{multline}
so that by a combination of \eqref{eq:prep-1.0} and \eqref{eq:prep-1.1}, we
obtain, after some simplification, that
\begin{multline}
\partial_z\oslash(\partial_z^{2j}\partial_w Q_\e)(z)
-\frac{2j+3}{2j+1}\oslash(\partial_z^{2j+1}\partial_w Q_\e)(z)
\\
=
(-1)^{j-1}\frac{2j+3}{2j+1}
\Sop_{2j+2}(\psi)(z)
+\sum_{k=1}^{j}(-1)^{k-1}C_{k,j}\,\partial_z^{2j+2-2k}
\Sop_{2k}(\psi)(z),
\label{eq:prep-1.1'}
\end{multline}
where we introduce the coefficients
\begin{equation}
C_{k,j}:=\frac{j-\frac12}{2j-k}\binom{2j-k}{k-1}
-\frac{j(2j+3)}{(2j+1)(2j+1-k)}\binom{2j+1-k}{k-1}.
\label{eq:prep-1.1''}
\end{equation}

\begin{thm}
Given $\psi\in\Sigma$, for $j=1,2,3,\ldots$, we have the recursion formula
\begin{multline}
\Sop_{2j+2}(\psi)(z)=\frac{2j+1}{2j+3}
\sum_{k=1}^{j}(-1)^{j+k-1}C_{k,j}\partial_z^{2j+2-2k}\Sop_{2k}(\psi)(z)
\\
+(-1)^j\frac{2j+1}{2j+3}
\sum_{l=1}^{2j-1}\binom{2j}{l}\big(\oslash(\partial_z^{2j-l}\partial_w
Q_\e)(z)\big)\big(\oslash(\partial_z^{l}\partial_w
Q_\e)(z)\big)
\end{multline}
where the coefficients $C_{k,j}$ are given by \eqref{eq:prep-1.1''},
and the remaining diagonal restrictions
$\oslash(\partial_z^{2j-l}\partial_w Q_\e)$ and 
$\oslash(\partial_z^{l}Q_\e)$ and $\oslash(\partial_z^{2j+1-l}Q_\e)$
are to be calculated in accordance with Corollary \ref{cor:offdiag-3}. 
\label{thm:recursion-1}
\end{thm}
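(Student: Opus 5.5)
The plan is to read the recursion directly off the localized Riccati identity \eqref{eq:Riccati2.01}, after expressing its linear part through the diagonal series Schwarzian derivatives via \eqref{eq:prep-1.1'}. Everything reduces to checking the three preparatory identities and then solving one linear equation for $\Sop_{2j+2}(\psi)$.

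\textbf{Step 1: the Riccati identity.} Take Corollary \ref{cor:Riccati} with the even index $k=2j$, where $j\ge1$. Then use the symmetry \eqref{eq:symmetry_1}, $\oslash(\partial_z\partial_w^{m}Q_\e)=\oslash(\partial_z^{m}\partial_w Q_\e)$, to rewrite every term. This gives exactly \eqref{eq:Riccati2.01}, with the quadratic sum over $l=1,\dots,2j-1$ left untouched.

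\textbf{Step 2: the linear terms.} Apply Corollary \ref{cor:offdiag-3} with $m=1$; this is legitimate since $1<2j$ and $1<2j+1$.
\begin{itemize}
\item For $n=2j$, the prefactor is $(2j-1)/2=j-\tfrac12$ and the summation runs over $k=1,\dots,j$. Differentiating once in $z$ raises each $\partial_z^{2j+1-2k}$ to $\partial_z^{2j+2-2k}$. This gives \eqref{eq:prep-1.0}.
\item For $n=2j+1$, the prefactor is $j$ and the summation runs over $k=1,\dots,j+1$. Isolate the top term $k=j+1$: it equals $j\cdot\frac{(-1)^{j}}{j}\binom{j}{j}\Sop_{2j+2}(\psi)=(-1)^j\Sop_{2j+2}(\psi)$. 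This gives \eqref{eq:prep-1.1}.
\end{itemize}

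\textbf{Step 3: combine.} Subtract $\frac{2j+3}{2j+1}$ times \eqref{eq:prep-1.1} from \eqref{eq:prep-1.0}. The top term becomes $-\frac{2j+3}{2j+1}(-1)^j\Sop_{2j+2}(\psi)=(-1)^{j-1}\frac{2j+3}{2j+1}\Sop_{2j+2}(\psi)$. The remaining terms, for $k=1,\dots,j$, collect into $(-1)^{k-1}C_{k,j}\,\partial_z^{2j+2-2k}\Sop_{2k}(\psi)$, with $C_{k,j}$ as in \eqref{eq:prep-1.1''}. This is \eqref{eq:prep-1.1'}.

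\textbf{Step 4: solve for $\Sop_{2j+2}(\psi)$.} Substitute \eqref{eq:prep-1.1'} into \eqref{eq:Riccati2.01}, writing $a:=\frac{2j+3}{2j+1}$ and $\Sigma_{\mathrm{quad}}$ for the quadratic sum. This gives
\[
(-1)^{j-1}a\,\Sop_{2j+2}(\psi)+\sum_{k=1}^j(-1)^{k-1}C_{k,j}\,\partial_z^{2j+2-2k}\Sop_{2k}(\psi)+\Sigma_{\mathrm{quad}}=0.
\]
Multiply through by $(-1)^{j}/a$, using $(-1)^{j}(-1)^{j-1}=-1$. The linear terms pick up the sign $(-1)^{j+k-1}$ and the quadratic term picks up $(-1)^j$, each multiplied by $\frac{2j+1}{2j+3}$. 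This is precisely the asserted recursion.

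The remaining factors $\oslash(\partial_z^{l}\partial_w Q_\e)$ with $1\le l\le 2j-1$ are off-index-diagonal for $l\ge2$; for $l=1$ one has directly $\oslash(\partial_z\partial_wQ_\e)=\Sop_2(\psi)$. For $l\ge2$ they are again expressed through Corollary \ref{cor:offdiag-3} with $m=1<n=l$. They therefore involve only $\Sop_{2k}(\psi)$ with $k\le\lfloor (l+1)/2\rfloor\le j$. This is consistent with the statement's instruction to compute them via that corollary, and it shows the formula is a genuine recursion.

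The main obstacle is purely bookkeeping. One must track the signs $(-1)^{k+m}$ and the summation ranges $k=m,\dots,\lfloor\frac{m+n}{2}\rfloor$ in Corollary \ref{cor:offdiag-3}. In particular, the top coefficient in \eqref{eq:prep-1.1} must come out as exactly $(-1)^j$, and the index shift from Corollary \ref{cor:Riccati} (stated with $\partial_w^k$) to \eqref{eq:Riccati2.01} must be valid; this is the case for $k=2j$ because the symmetry is exact.
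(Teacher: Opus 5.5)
Your proposal is correct and follows the paper's route exactly. You specialize the localized Riccati identity (Corollary \ref{cor:Riccati}) to $k=2j$ to get \eqref{eq:Riccati2.01}, rewrite its two linear terms via Corollary \ref{cor:offdiag-3} with $m=1$ (giving \eqref{eq:prep-1.0}, \eqref{eq:prep-1.1} and hence \eqref{eq:prep-1.1'}), and solve for $\Sop_{2j+2}(\psi)$; your sign bookkeeping checks out, and your remark that the quadratic factors involve only $\Sop_{2k}(\psi)$ with $k\le j$ usefully makes explicit why the formula is a genuine recursion.
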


\begin{proof}
The identity results from Riccati identity \eqref{eq:Riccati2.01} if we
implement the formula \eqref{eq:prep-1.1'}. 
\end{proof}

\subsection{Calculating lower order diagonal series Schwarzian derivatives}

For a given normalized conformal mapping $\psi\in\Sigma$, we know that
$\Sop_0(\psi)=\log\psi'$ and that $\Sop_2(\psi)=\frac16\Sop(\psi)$, where
$\Sop(\psi)$ is the usual Schwarzian derivative. It turns out that the
next item in the diagonal series of Schwarzian derivatives, the
expression $\Sop_4(\psi)$, also has a simple clean expression.

\begin{thm}
\label{thm:S4}
We have that
\[
\Sop_4(\psi)(z)=\frac{1}{5}(\Sop_2(\psi))''(z)-\frac{6}{5}(\Sop_2(\psi)(z))^2
=\frac{1}{30}\big((\Sop(\psi))''(z)-(\Sop(\psi))^2\big),
\qquad z\in\D_\e.  
\]
\end{thm}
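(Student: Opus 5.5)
The plan is to specialize the recursion of Theorem \ref{thm:recursion-1} to $j=1$. I will also rederive that case directly from the localized Riccati identity of Corollary \ref{cor:Riccati}, since the derivation of \eqref{eq:prep-1.1'} involved an unchecked ``simplification'' and the coefficient must be confirmed independently.

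\emph{Via the recursion.} With $j=1$, the first sum in Theorem \ref{thm:recursion-1} has only the term $k=1$, with sign $(-1)^{j+k-1}=-1$ and prefactor $\frac{2j+1}{2j+3}=\frac35$. From \eqref{eq:prep-1.1''},
\[
C_{1,1}=\tfrac{1/2}{1}\binom{1}{0}-\tfrac{5}{3\cdot 2}\binom{2}{0}=-\tfrac13 .
\]
So the first sum contributes $\frac15\partial_z^2\Sop_2(\psi)$. The quadratic sum has only $l=1$, giving
\[
-\tfrac35\binom21\big(\oslash(\partial_z\partial_wQ_\e)\big)^2=-\tfrac65\,\Sop_2(\psi)^2 .
\]

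\emph{Direct check via Corollary \ref{cor:Riccati}.} The $k=2$ identity there reads
\[
\partial_z\oslash(\partial_z\partial_w^2Q_\e)-\tfrac53\oslash(\partial_z\partial_w^3Q_\e)+2\big(\oslash\partial_z\partial_wQ_\e\big)^2=0 .
\]
I use the symmetry \eqref{eq:symmetry_1} to turn $\oslash(\partial_z\partial_w^2Q_\e)$ and $\oslash(\partial_z\partial_w^3Q_\e)$ into $\oslash(\partial_z^2\partial_wQ_\e)$ and $\oslash(\partial_z^3\partial_wQ_\e)$. I then evaluate these with Corollary \ref{cor:offdiag-3}:
\begin{itemize}
\item $(n,m)=(2,1)$ gives $\oslash(\partial_z^2\partial_wQ_\e)=\frac12\Sop_2(\psi)'$;
\item $(n,m)=(3,1)$ gives $\oslash(\partial_z^3\partial_wQ_\e)=\frac12\Sop_2(\psi)''-\Sop_4(\psi)$, where the $k=2$ term carries the sign $(-1)^{3}$ and coefficient $\frac{1}{1}\binom11$.
\end{itemize}
Substituting gives
\[
\tfrac12\Sop_2''-\tfrac53\big(\tfrac12\Sop_2''-\Sop_4\big)+2\Sop_2^2=0,
\qquad\text{so}\qquad
\Sop_4=\tfrac15\Sop_2''-\tfrac65\Sop_2^2 ,
\]
which agrees with the recursion.

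Finally, inserting $\Sop_2(\psi)=\frac16\Sop(\psi)$ from \eqref{eq:NL-2'} gives $\frac1{30}\Sop(\psi)''-\frac{6}{5\cdot36}\Sop(\psi)^2=\frac1{30}\big(\Sop(\psi)''-\Sop(\psi)^2\big)$.

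The main obstacle is bookkeeping. One has to get the signs $(-1)^{k+m}$ and the boundary term $k=\lfloor\frac{m+n}{2}\rfloor$ in Corollary \ref{cor:offdiag-3} right, and make sure the Riccati identity for $k=2$ is applied with the mixed-derivative indices placed correctly. Cross-checking the two routes guards against an error in either.
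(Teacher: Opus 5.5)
Your proposal is correct and is essentially the paper's proof: specialize Theorem~\ref{thm:recursion-1} to $j=1$ with $C_{1,1}=-\tfrac13$, then substitute $\Sop_2(\psi)=\tfrac16\Sop(\psi)$. Your cross-check through the $k=2$ case of Corollary~\ref{cor:Riccati} and Corollary~\ref{cor:offdiag-3} is the same computation unpacked, since the recursion at $j=1$ is derived from exactly those two ingredients, and your values $\tfrac12\Sop_2'$ and $\tfrac12\Sop_2''-\Sop_4$ for the off-diagonal jets are right.
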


\begin{proof}
By Theorem \ref{thm:recursion-1} with $j=1$,
\begin{multline}
\Sop_{4}(\psi)(z)=-\frac{3}{5}C_{1,1}\partial_z^{2}\Sop_2(\psi)(z)
-\frac{3}{5}\binom{2}{1}\big(\oslash(\partial_z\partial_w Q_\e)(z)\big)^2
\\
=\frac15 \partial_z^2\Sop_2(\psi)(z)-\frac{6}{5}(\Sop_2(\psi)(z))^2,
\label{eq:beginning-1}
\end{multline}
and the claimed assertion follows.

\end{proof}  

\begin{rem}
It is possible to calculate the diagonal series Schwarzian derivatives
$S_{2k}(\psi)$ for $k>2$ as well.
For instance, by Theorem \ref{thm:recursion-1}, with $j=2$, we have
\begin{multline}
\Sop_{6}(\psi)(z)=\frac{5}{7}
\sum_{k=1}^{2}(-1)^{k+1}C_{k,2}\partial_z^{6-2k}\Sop_{2k}(\psi)(z)
\\
+\frac{5}{7}
\sum_{l=1}^{3}\binom{4}{l}\big(\oslash(\partial_z^{4-l}\partial_w
Q_\e)(z)\big)\big(\oslash(\partial_z^{l}\partial_w
Q_\e)(z)\big)
\end{multline}
where we evaluate using \eqref{eq:prep-1.1''} that $C_{1,2}=-\frac15$ and
$C_{2,2}=-\frac{13}{10}$. Next, using Corollary \ref{cor:offdiag-3} repeatedly, 
we find that
\begin{multline}
\sum_{l=1}^{3}\binom{4}{l}\big(\oslash(\partial_z^{4-l}\partial_w
Q_\e)(z)\big)\big(\oslash(\partial_z^{l}\partial_w
Q_\e)(z)\big)
\\
=2\binom{4}{1}\big(\oslash(\partial_z^{3}\partial_w
Q_\e)(z)\big)\big(\oslash(\partial_z\partial_w
Q_\e)(z)\big)+\binom{4}{2}\big(\oslash(\partial_z^{2}\partial_w
Q_\e)(z)\big)^2
\\
=4\Sop_2(\psi)\bigg(\partial_z^2\Sop_2(\psi)-2\Sop_4(\psi)\bigg)
+\frac{3}{2}(\partial_z\Sop_2(\psi))^2.
\end{multline}
Putting things together, then, we obtain the formula
\begin{multline}
\Sop_{6}(\psi)(z)=\frac{5}{7}
\bigg(-\frac15\,\partial_z^{4}\Sop_{2}(\psi)(z)+\frac{13}{10}
\partial_z^2\Sop_4(\psi)(z)\bigg)
\\
+\frac{5}{7}\Big(4\Sop_2(\psi)\bigg(\partial_z^2\Sop_2(\psi)
-2\Sop_4(\psi)\Big)
+\frac{3}{2}(\partial_z\Sop_2(\psi))^2\bigg)
\\
=-\frac17\,\partial_z^{4}\Sop_{2}(\psi)(z)+\frac{13}{14}
\partial_z^2\Sop_4(\psi)(z)
\\
+\frac{20}{7}\Sop_2(\psi)\Big(\partial_z^2\Sop_2(\psi)
-2
\Sop_4(\psi)\Big)
+\frac{15}{14}(\partial_z\Sop_2(\psi))^2.
\end{multline}
Should we prefer, we may be reduce this expression further so that it only
involves the Schwarzian derivative $\Sop_2(\psi)=\frac16\Sop(\psi)$.
\end{rem}



\subsection{Pointwise bounds on the diagonal series Schwarzian derivatives}
We need to obtain a good pointwise bound for the diagonal series Schwarzian
derivatives. In fact, we shall do this in the context of a general contraction
$\Tope$ on $L^2(\D)$. Our starting point is the definition \eqref{eq:Qe-def-1},
which gives that
\begin{equation}
\partial_z^n\partial_w^n\mathscr{Q}_\e[\Tope](z,w)=\partial_z^n\partial_w^n
\langle \Tope(\overline{s_w^\e}),s_z^\e\rangle_{\D_\e}=
\langle \Tope(\overline{s_w^{\e,n}}),s_z^{\e,n}\rangle_{\D_\e},
\label{eq:QextT-derivn}
\end{equation}
where
\begin{equation}
s_z^{\e,n}(\xi):=\bar\partial_z^ns_z^\e(\xi)
=\bar\partial_z^n\frac{1}{\xi(1-\xi \bar z)}
=\frac{n!\xi^{n-1}}{(1-\xi\bar z)^{n+1}}.
\end{equation}
By \eqref{eq:QextT-derivn}, we get the pointwise bound
\begin{equation}
\big|\partial_z^n\partial_w^n\mathscr{Q}_\e[\Tope](z,w)\big|
\le\|\Tope\|\,\|s_w^{\e,n}\|_{L^2(\D_\e)}\|s_z^{\e,n}\|_{L^2(\D_\e)}\le
\|s_w^{\e,n}\|_{L^2(\D_\e)}\|s_z^{\e,n}\|_{L^2(\D_\e)},
\label{eq:QextT-derivn2}
\end{equation}
where in the second step, we used that $\Tope$ is a contraction. 
We are mainly interested in the values along the diagonal, where the bound is
\begin{equation}
\big|\oslash(\partial_z^n\partial_w^n\mathscr{Q}_\e[\Tope])(z)\big|
\le\|s_z^{\e,n}\|_{L^2(\D_\e)}^2.
\label{eq:QextT-derivn3}
\end{equation}
It remains to calculate the norm of $s_z^{\e,n}$:
\begin{multline}
\|s_z^{\e,n}\|_{L^2(\D_\e)}^2=(n!)^2\int_{\D_\e}
\frac{|\xi|^{2n-2}}{|1-\xi\bar z|^{2n+2}}\dA(\xi)=
\sum_{k=0}^{+\infty}(k+1)_n(k+2)_{n-1}|z|^{-2k-2n-2}
\\
\le\sum_{k=0}^{+\infty}(k+1)_{2n-1}|z|^{-2k-2n-2}=(2n-1)!\,|z|^{2n-2}(|z|^2-1)^{-2n},
\end{multline}
provided that $n=1,2,3,\ldots$ and $z\in\D_\e$. We should mention that the
latter inequality is only asymptotically sharp as $|z|\to1^+$. 
We have obtained the following result.

\begin{thm}
If $\Tope$ is a contraction on $L^2(\D)$, and $n=1,2,3,\ldots$, then the
inequality
\begin{equation}
\big|\oslash(\partial_z^n\partial_w^n\mathscr{Q}_\e[\Tope])(z)\big|
\le\sum_{k=0}^{+\infty}(k+1)_n(k+2)_{n-1}|z|^{-2k-2n-2}
\end{equation}
holds for all $z\in\D_\e$. In particular, if $\psi\in\Sigma$, then
the inequality
\begin{equation}
|\Sop_{2n}(\psi)(z)|\le \sum_{k=0}^{+\infty}(k+1)_n(k+2)_{n-1}|z|^{-2k-2n-2}
\end{equation}
holds for each $z\in\D_\e$ and all $n=1,2,3,\ldots$, where
$\Sop_{2n}(\psi)$ is from the diagonal sequence of Schwarzian derivatives. 
\label{thm:Schwarzian-bound-1}
\end{thm}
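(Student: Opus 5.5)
The bound follows from the Cauchy--Schwarz estimate \eqref{eq:QextT-derivn3} once we compute $\|s_z^{\e,n}\|_{L^2(\D_\e)}^2$ exactly; the contraction $\Tope$ is understood to act on $L^2(\D_\e)$, which is where $\mathscr{Q}_\e[\Tope]$ is defined. The steps are as follows.

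First, differentiation under the inner product is legitimate: $s_z^\e$ depends conjugate-holomorphically on $z$ with values in $A^2(\D_\e)$, so \eqref{eq:QextT-derivn} holds. Since $\Tope$ is a contraction, \eqref{eq:QextT-derivn2} then gives, on the diagonal,
\[
\big|\oslash(\partial_z^n\partial_w^n\mathscr{Q}_\e[\Tope])(z)\big|\le\|s_z^{\e,n}\|^2_{L^2(\D_\e)}.
\]

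Second, we evaluate the norm exactly by expanding in the orthogonal monomial system of $A^2(\D_\e)$. For $|\xi|>1>|1/z|$,
\[
\frac{n!\,\xi^{n-1}}{(1-\xi\bar z)^{n+1}}=(-1)^{n+1}\,n!\,\bar z^{-n-1}\xi^{-2}\,(1-(\xi\bar z)^{-1})^{-n-1}
=(-1)^{n+1}\sum_{k\ge0}(k+1)_n\,\bar z^{-n-1-k}\,\xi^{-2-k}.
\]
Here we use $\binom{n+k}{k}n!=(k+1)_n$. Next, by polar coordinates with $\dA=\diff x\,\diff y/\pi$,
\[
\int_{\D_\e}|\xi|^{-4-2k}\,\dA(\xi)=\frac{1}{k+1}.
\]
Parseval then gives
\[
\|s_z^{\e,n}\|^2=\sum_k\frac{(k+1)_n^2}{k+1}\,|z|^{-2k-2n-2}.
\]
Finally, $(k+1)_n/(k+1)=(k+2)_{n-1}$, which yields exactly the series $\sum_k(k+1)_n(k+2)_{n-1}|z|^{-2k-2n-2}$. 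Convergence for $|z|>1$ is clear, since the coefficients grow only polynomially in $k$.

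Third, we specialize to $\Tope=\Gamma_\psi$ with $\psi\in\Sigma$. Here $\Gamma_\psi$ is a contraction on $L^2(\D_\e)$, as established earlier via the Grunsky inequality \eqref{eq:log5} and the observation that it annihilates $L^2(\D_\e)\ominus\mathrm{conj}\,A^2(\D_\e)$. By \eqref{eq:DirsymbolGrunsky-1}, $\mathscr{Q}_\e[\Gamma_\psi]=Q_\e$, so $\oslash(\partial_z^n\partial_w^nQ_\e)=\Sop_{2n}(\psi)$ by definition, and the second inequality follows.

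The only delicate point is the Laurent expansion and the index bookkeeping in the Pochhammer symbols. Note in particular that the monomials $\xi^{-2-k}$ are orthogonal in $L^2(\D_\e)$ with squared norm $1/(k+1)$ under the normalized area measure. Everything else is a direct application of the preceding display computations.
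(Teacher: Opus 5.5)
Your proposal is correct and follows the paper's argument: you apply Cauchy--Schwarz against $s_z^{\e,n}$ using contractivity, then evaluate $\|s_z^{\e,n}\|^2_{L^2(\D_\e)}$ exactly, and your Laurent expansion in the orthogonal monomials $\xi^{-2-k}$ fills in the computation that the paper's display leaves implicit. You are also right to read the contraction as acting on $L^2(\D_\e)$, since that is where $\mathscr{Q}_\e[\Tope]$ is defined.
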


\begin{proof}
The assertion for general contractions $\Tope$ was established above. For
$\Tope=\Gamma_\psi$, we obtain the estimate for diagonal Schwarzian
derivatives.  
\end{proof}

\begin{rem}
For $n=1$, the estimate of $\Sop_2(\psi)$ is the classical bound
\[
|\Sop_2(\psi)(z)|=\frac16|\Sop(\psi)(z)|\le
\sum_{k=0}^{+\infty}(k+1)|z|^{-2k-4}=\frac{1}{(|z|^2-1)^2},
\qquad z\in\D_\e.  
\]
A family of mappings $\psi$ which achieves equality at any given point
are the airfold (or Joukowski) maps
\[
\psi_\alpha(z)=z+\frac{\alpha^2}{z}
\]
where $\alpha\in\T$. The mapping $\psi_\alpha$ then maps $\D_\e$ onto the
complement of the line segment which connects the two points $\pm2\alpha$.
One can check that the family of airfoil maps $\psi_\alpha$ achieves pointwise
equality in the estimate of Theorem \ref{thm:Schwarzian-bound-1} for $n=2$
as well:
\begin{equation}
|\Sop_{4}(\psi)(z)|\le \sum_{k=0}^{+\infty}(k+1)(k+2)^2|z|^{-2k-6}=2(2|z|^2+1)
(|z|^2-1)^{-4}.   
\end{equation}
The same phenomenon should extend to all $\Sop_{2n}(\psi)$ for
$n=3,4,5,\ldots$ as well.
\end{rem}

\subsection{A tentative Nehari-type theorem for $S_4(\psi)$}
In \cite{Nehari}, Zeev Nehari established the following theorem.

\begin{thm} {\rm(Nehari)}
Suppose that $F:\D_e\to\C$ is holomorphic, with the bound
\[
|F(z)|\le \frac{2}{(|z|^2-1)^2},\qquad z\in\D_\e. 
\]
Then there exists a normalized conformal mapping $\psi\in\Sigma$ with
$\Sop(\psi)=6\Sop_2(\psi)=F$. Here, the constant $2$ is optimal and cannot
be replaced by any larger constant. 
\label{thm:Nehari}
\end{thm}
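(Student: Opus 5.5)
We plan to reduce the statement to the classical Nehari theorem on the unit disk $\D$ through the inversion $u=1/z$ already used to pass between the classes $\mathscr{S}$ and $\Sigma$, and then to prove the disk version by the linear-ODE/Sturm argument.

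\emph{Step 1 (transfer to $\D$).} Put $G(u):=u^{-4}F(1/u)$ for $u\in\D\setminus\{0\}$. Since $(|z|^2-1)^2=(1-|u|^2)^2|u|^{-4}$ when $z=1/u$, the hypothesis becomes
\[
|G(u)|\le \frac{2}{(1-|u|^2)^2},\qquad 0<|u|<1 .
\]
In particular $G$ is bounded near $0$, so the singularity at $u=0$ is removable. Equivalently, $F(z)=\Ordo(|z|^{-4})$ at infinity. By the cocycle identity \eqref{eq:cocycle-1}, applied with the inversion (which has vanishing Schwarzian) and the Möbius map $w\mapsto 1/w$, any $\vp$ on $\D$ and $\psi(z)=1/\vp(1/z)$ satisfy
\[
\Sop(\psi)(z)=u^4\,\Sop(\vp)(u),\qquad u=1/z .
\]
So it suffices to find a univalent $\vp$ on $\D$ with $\Sop(\vp)=G$, $\vp(0)=0$ and $\vp'(0)=1$. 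Then $\vp\neq0$ on $\D\setminus\{0\}$, and $\psi=1/\vp(1/\cdot)$ lies in $\Sigma$ with $\Sop(\psi)=F$.

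\emph{Step 2 (construct $\vp$).} Let $y_1,y_2$ be solutions of $y''+\tfrac12 G\,y=0$ on $\D$ with Wronskian $1$. Then $\vp_0:=y_1/y_2$ is locally univalent and meromorphic, with $\Sop(\vp_0)=G$. Post-composing with a Möbius map, which leaves the Schwarzian unchanged, we arrange that $\vp$ is finite at $0$ with $\vp(0)=0$ and $\vp'(0)=1$. It remains to prove that $\vp$ is injective on $\D$; once it is, it has no poles.

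\emph{Step 3 (univalence, the main step).} Suppose $\vp(a)=\vp(b)$ with $a\ne b$. By automorphism invariance we may move $a,b$ into $(-1,1)$. Indeed, for $\gamma\in\mathrm{Aut}(\D)$ the cocycle gives $\Sop(\vp\circ\gamma)=(\gamma')^2\,\Sop(\vp)\circ\gamma$, and $|\gamma'|(1-|\gamma|^2)^{-1}=(1-|u|^2)^{-1}$, so the bound on $G$ is preserved. Some combination $y=c_1y_1+c_2y_2\not\equiv0$ then vanishes at $a$ and $b$. Multiply $y''+\tfrac12Gy=0$ by $\bar y$, integrate over $[a,b]$ and integrate by parts. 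This gives
\[
\int_a^b|y'(x)|^2\,\diff x=\int_a^b \tfrac12 G(x)|y(x)|^2\,\diff x\le\int_a^b\frac{|y(x)|^2}{(1-x^2)^2}\,\diff x .
\]
The expected main obstacle is the sharp one-dimensional Hardy-type inequality needed to contradict this:
\[
\int_a^b|y'|^2\,\diff x>\int_a^b\frac{|y|^2}{(1-x^2)^2}\,\diff x\qquad\text{whenever } y(a)=y(b)=0,\ y\not\equiv0 .
\]
I would prove it by substituting $y=(1-x^2)^{1/2}g$. The function $(1-x^2)^{1/2}$ solves $v''+(1-x^2)^{-2}v=0$, so after integrating by parts (boundary terms vanish since $g(a)=g(b)=0$) the difference of the two sides becomes $\int_a^b(1-x^2)|g'|^2\,\diff x>0$. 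This contradiction proves univalence.

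\emph{Step 4 (optimality of $2$).} For the sharpness, take the Hille example $\vp_\epsilon(u)=\big(\tfrac{1+u}{1-u}\big)^{\ima\epsilon}$. Its Schwarzian is $2(1+\epsilon^2)(1-u^2)^{-2}$, so $|\Sop(\vp_\epsilon)(u)|\le 2(1+\epsilon^2)(1-|u|^2)^{-2}$. On the other hand $\vp_\epsilon$ is not univalent, since $\tfrac{1+u}{1-u}$ maps $\D$ onto the right half-plane and $\exp(\ima\epsilon\log(\cdot))$ identifies points there whose logarithms differ by $2\pi/\epsilon$ along the real axis. Transport this example to $\D_\e$ by Step 1, taking $F(z)=z^{-4}\,\Sop(\vp_\epsilon)(1/z)$. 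Any $\psi\in\Sigma$ with $\Sop(\psi)=F$ is then a Möbius image of $1/\vp_\epsilon(1/z)$, hence not univalent. Since $\epsilon>0$ is arbitrary, the constant $2$ cannot be replaced by anything larger.
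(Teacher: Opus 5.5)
\textbf{Verdict.} Your argument is correct. The paper gives no proof of this statement: it quotes it from Nehari and uses it as a black box, so there is no internal argument to compare against.

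\textbf{Your route.} What you wrote is the classical proof.
\begin{itemize}
\item The univalence part is Nehari's argument. You reduce a putative coincidence $\vp(a)=\vp(b)$ to a real diameter and take a solution of $y''+\frac12 Gy=0$ vanishing at both points. You then compare it with the positive solution $\sqrt{1-x^2}$ of $v''+(1-x^2)^{-2}v=0$.
\item Sharpness comes from Hille's example $\big(\frac{1+u}{1-u}\big)^{\ima\epsilon}$.
\item Everything is transported to $\D_\e$ by $u=1/z$. You correctly noted that the hypothesis forces $F=\Ordo(|z|^{-4})$, so $G(u)=u^{-4}F(1/u)$ extends holomorphically across $u=0$.
\end{itemize}

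\textbf{Computations checked.} All of the following are right:
\begin{itemize}
\item the transformation law $\Sop(\psi)(z)=z^{-4}\Sop(\vp)(1/z)$;
\item the $\mathrm{Aut}(\D)$-invariance of the bound $2(1-|u|^2)^{-2}$;
\item the identity $\int_a^b|y'|^2\,\diff x-\int_a^b(1-x^2)^{-2}|y|^2\,\diff x=\int_a^b(1-x^2)|g'|^2\,\diff x$ for $y=(1-x^2)^{1/2}g$ with $g(a)=g(b)=0$, which gives the strict inequality;
\item the formula $\Sop(\vp_\epsilon)=2(1+\epsilon^2)(1-u^2)^{-2}$;
\item the fact that any $\psi$ with the same Schwarzian is a M\"obius image of $1/\vp_\epsilon(1/z)$, and hence not univalent.
\end{itemize}

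\textbf{One false side remark.} You claim that once $\vp$ is injective, ``it has no poles.'' This is not true. For example, $\vp(u)=u/(1-2u)$ is normalized, has $\Sop(\vp)=0$, is injective on $\D$, and has a pole at $u=\frac12$. The normalization $\vp(0)=0$, $\vp'(0)=1$ still leaves the freedom $w\mapsto w/(1+cw)$.

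This does not affect your conclusion. For $\psi=1/\vp(1/\cdot)$ to lie in $\Sigma$, you only need $\vp\neq0$ on $\D\setminus\{0\}$, which injectivity gives you; poles of $\vp$ simply become zeros of $\psi$.

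If you want $\vp\in\mathscr{S}$ proper, choose $c=-1/w_0$ with $w_0$ a finite point outside $\vp(\D)$. Such a point exists because the complement of $\vp(\D)$ is a nondegenerate continuum not containing $0$. Then $w\mapsto w/(1+cw)$ removes all poles and keeps the normalization.
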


\begin{rem}
\noindent{$(a)$}
In view of Proposition \ref{prop:5equiv}, the mapping $\psi$ is uniquely
determined by the equation $\Sop(\psi)=6\Sop_2(\psi)=F$ up to additive
constants.

\noindent{$(b)$} If the constant $2$ is replaced by a constant $<2$, then
the conformal mapping $\psi$ extends to a global quasiconformal mapping of
the plane onto itself, and the boundary $\psi(\T)$ is a quasicircle. 
\label{rem:Nehari1}
\end{rem}

\begin{problem}
Find the optimal universal constant $\epsilon_0>0$ such that if $G:\D_\e\to\C$
is holomorphic with
\[
|G(z)|\le \epsilon_0(2|z|^2+1)(|z|^2-1)^{-4}, \qquad z\in\D_\e,
\]
then there exists a normalized conformal mapping $\psi\in\Sigma$ with
$\Sop_4(\psi)=G$.
\label{prob:Nehari}
\end{problem}

It is not obvious that such a positive constant $\epsilon_0$ exists.
Note that in view of Theorem \ref{thm:S4}, we can split the equation
$\Sop_4(\psi)=G$ into two steps: $(a)$ write $F:=\Sop_2(\psi)$, and $(b)$
solve for $F$ in the equation $F''(z)-6(F(z))^2=5G(z)$.

The following result was conveyed to us by Qiyuan (Alex) Gu, a student
at the University of Chicago. 

\begin{thm}
{\rm(Gu)}
Suppose $G:\D_\e\to\C$ is holomorphic with
\[
|G(z)|\le \epsilon_1(2|z|^2+1)(|z|^2-1)^{-4}, \qquad z\in\D_\e,
\]
where $\epsilon_1\le\frac{19}{36}$. Then there exists a holomorphic function
$F:\D_\e\to\C$ which solves the equation $F''(z)-6(F(z))^2=5G(z)$ with
\[
\sup\big\{(|z|^2-1)^2|F(z)|:\,\,z\in\D_\e\big\}\le\frac13.
\]
Finally, if $\epsilon_1<\frac{19}{36}$, then strict inequality holds in
the conclusion.
\label{thm:Gu}
\end{thm}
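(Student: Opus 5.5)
We set this up as a fixed point problem on the closed ball
\[
\mathcal{X}:=\Big\{F\ \text{holomorphic on }\D_\e:\ \|F\|_*:=\sup_{z\in\D_\e}(|z|^2-1)^2|F(z)|\le\tfrac13\Big\},
\]
working near the point at infinity, where $F$ and $G$ should vanish to order $\Ordo(|z|^{-4})$, matching the decay of Schwarzians of maps in $\Sigma$. The equation $F''=6F^2+5G$ is inverted by a double integration from infinity. For a function $H$ on $\D_\e$ with $H(z)=\Ordo(|z|^{-4})$ at infinity, set
\[
\mathcal{I}H(z):=\int_z^{\infty}(\zeta-z)H(\zeta)\,\diff\zeta,
\]
integrated along the radial ray $\zeta=tz$, $t\ge1$. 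Then $(\mathcal{I}H)''=H$. We look for $F$ as a fixed point of
\[
\mathcal{T}F:=\mathcal{I}\big(6F^2+5G\big),
\]
and any such fixed point solves $F''-6F^2=5G$ and is holomorphic.

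The core step is a weighted estimate for $\mathcal{I}$ along rays. Write $|z|=r$ and $\zeta=tz$. If $|H(\zeta)|\le h(|\zeta|)$, then
\[
|\mathcal{I}H(z)|\le r^2\int_1^\infty(t-1)\,h(tr)\,\diff t=\int_r^\infty(s-r)\,h(s)\,\diff s.
\]
We apply this with the two model weights. For $h(s)=(s^2-1)^{-4}$, coming from $F^2$, and for $h(s)=(2s^2+1)(s^2-1)^{-4}$, coming from $G$, we compute the resulting integrals exactly. Both are elementary rational integrals. The aim is bounds of the form
\[
\int_r^\infty (s-r)\frac{\diff s}{(s^2-1)^4}\le\frac{a}{(r^2-1)^2},\qquad
\int_r^\infty (s-r)\frac{(2s^2+1)\,\diff s}{(s^2-1)^4}\le\frac{b}{(r^2-1)^2},
\]
with explicit constants $a,b$ obtained as suprema over $r>1$ of the normalized quantity. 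We expect the suprema to occur as $r\to1^+$, where the integrand is most singular. Near $r=1$ the first integral behaves like $\frac1{16}\int_r^\infty(s-r)(s-1)^{-4}\diff s=\frac{1}{96}(r-1)^{-2}$, and since $(r^2-1)^2\approx4(r-1)^2$ this gives $a=\frac{1}{24}$. The second behaves like three times that, giving $b=\frac18$.

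With these constants, for $\|F\|_*\le M$ we get
\[
\|\mathcal{T}F\|_*\le 6aM^2+5b\,\epsilon_1=\tfrac14 M^2+\tfrac58\epsilon_1 .
\]
Self-mapping of the ball $M=\frac13$ requires $\frac1{36}+\frac58\epsilon_1\le\frac13$, that is $\epsilon_1\le\frac{22}{45}$. This is close to, but not equal to, $\frac{19}{36}$. So the sharp constant must come from sharper exact evaluation of $a$ and $b$: the suprema over $r$ of the exact rational expressions, not just the $r\to1$ asymptotics. It could also come from a cleverer estimate exploiting that $6F^2$ and $G$ may be handled jointly. I would first compute the exact integrals and their suprema. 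The numerology suggests the condition should read $6a\cdot\frac19+5b\,\epsilon_1=\frac13$ with $\epsilon_1=\frac{19}{36}$, which pins down what $a$ and $b$ must be, and I would check this against the exact formulas. This matching of constants is the main obstacle.

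For existence we use the Lipschitz bound
\[
\|\mathcal{T}F_1-\mathcal{T}F_2\|_*\le 6a\,\|F_1+F_2\|_*\,\|F_1-F_2\|_*\le 12aM\,\|F_1-F_2\|_*,
\]
which should be $<1$ at $M=\frac13$. Then Banach's fixed point theorem gives $F$ in the complete metric space $\mathcal{X}$, with locally uniform limits remaining holomorphic. Alternatively, if the contraction constant degenerates, we can use Schauder's theorem via Montel normality. For the final claim, when $\epsilon_1<\frac{19}{36}$ the map $\mathcal{T}$ sends $\mathcal{X}$ into a strictly smaller ball, so the fixed point satisfies strict inequality $\|F\|_*<\frac13$.
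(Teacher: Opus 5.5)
You follow the paper's route. The paper also inverts $F''$ by a double integral from $\infty$ along rays, iterates from $F_0=0$ in the norm $\sup(|z|^2-1)^2|F|$, and uses exactly your constants $a=\frac1{24}$, $b=\frac18$. It gets them for every $r>1$, not only as $r\to1^+$, from $\omega(t)=(t^2-1)^{-2}$: one has $\omega''=24(t^2-1)^{-4}+20(t^2-1)^{-3}=8(2t^2+1)(t^2-1)^{-4}+4(t^2-1)^{-3}$ and $\int_r^{\infty}(t-r)\,\omega''(t)\,\diff t=\omega(r)$. Discarding the positive $(t^2-1)^{-3}$ terms gives both of your inequalities globally. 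Those discarded terms are negligible as $r\to1^+$, so $\frac1{24}$ and $\frac18$ are the exact suprema. Hence the step you call the main obstacle cannot be fixed by evaluating $a,b$ more sharply. The self-map condition is exactly $\frac14M^2+\frac58\epsilon_1\le M$ at $M=\frac13$, i.e.\ $\epsilon_1\le\frac{22}{45}$. With that threshold your proof is complete: the Lipschitz constant is $\frac16$, and strict inequality holds for $\epsilon_1<\frac{22}{45}$.

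No argument can reach $\frac{19}{36}$, because the statement is false for $\frac{22}{45}<\epsilon_1\le\frac{19}{36}$. The paper gets $\frac{19}{36}$ only after rewriting the equation as $F=\mathrm{I}_\infty^2(5G+F^2)$, which drops the factor $6$ in front of $F^2$. Even its resulting bound $\frac{5\epsilon_1}{8}+\frac1{24}\|F\|_2^2$ would give only $\epsilon_1\le\frac{71}{135}$.

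For a counterexample, take $G_0(z)=\epsilon_1(2z^2+1)(z^2-1)^{-4}$. It is admissible because $|2z^2+1|\le2|z|^2+1$ and $|z^2-1|\ge|z|^2-1$. Any solution $F$ with $\sup(|z|^2-1)^2|F|\le\frac13$ is $\Ordo(|z|^{-4})$ at infinity. So it is the unique solution produced by the Laurent recursion in the proof of Theorem \ref{thm:uniq-infty1}. Its coefficients are real, hence $F$ is real on $(1,+\infty)$.

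Put $t=1+x$, $x=\e^{-s}$, $F(t)=x^{-2}y(s)$. Then $|y|\le\frac{1}{3(2+x)^2}<\frac1{12}$, and the equation becomes $\ddot y+5\dot y=6y^2-6y+\frac{15}{16}\epsilon_1+\Ordo(\e^{-s})$. For real $y<\frac1{12}$, the quadratic on the right exceeds its value at $y=\frac1{12}$, which is $\frac{15}{16}(\epsilon_1-\frac{22}{45})>0$. So $\ddot y+5\dot y\ge c_0>0$ for large $s$. Integrating $(\e^{5s}\dot y)'\ge c_0\e^{5s}$ gives $\dot y\ge c_0/10$ eventually, so $y\to+\infty$, a contradiction.

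Thus $\frac{22}{45}$ is sharp. The theorem should read $\frac{22}{45}$ in place of $\frac{19}{36}$ in both the hypothesis and the strict-inequality clause, and your argument proves exactly that corrected version.
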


\begin{proof}
Since the estimates on $F,G$ entail that $F(z)=\Ordo(|z|^{-4}))$ and
$G(z)=\Ordo(|z|^{-6})$ hold at infinity, we may write the equation in the form
\[
F(z)-6\mathrm{I}_\infty^2(F^2)(z)=5\mathrm{I}_\infty^2(G)(z),
\]
where $\mathrm{I}_\infty h(z)=H(z)-H(\infty)$, if $H$ is a primitive to $h$:
$H'(z)=h(z)$. This is a fixed point equation,
\[
F(z)=
\mathrm{I}_\infty^2(5G+F^2)(z),
\]
and hence it may be solved iteratively.
We note that $\mathrm{I}_\infty^2$ can be expressed by an integral,
\[
\mathrm{I}_\infty^2 h(z)=\int_z^\infty (z-\xi)h(\xi)\diff\xi,\qquad z\in\D_\e.
\]
where $h:\D_\e\to\C$ has decay $h(z)=\Ordo(|z|^{-6})$ at infinity.
Let $\omega(t):=(t^2-1)^{-2}$, and observe that
\[
\omega''(t)=-4(t^2-1)^{-3}+24t^2(t^2-1)^{-4}.
\]
so that, consequently,
\begin{equation}
\omega''(t)=\frac{24}{(t^2-1)^4}+\frac{20}{(t^2-1)^3},
\quad
\omega''(t)=8\frac{2t^2+1}{(t^2-1)^{4}}+\frac{4}{(t^2-1)^3}.
\end{equation}
Integrating twice, this gives, for $1<r<+\infty$,
\begin{equation}
24\int_r^{+\infty}\frac{t-r}{(t^2-1)^4}\diff t\le
24\int_r^{+\infty}\frac{t-r}{(t^2-1)^4}\diff t   
+20\int_r^{+\infty}\frac{t-r}{(t^2-1)^4}\diff t=
\omega(r)=\frac{1}{(r^2-1)^2}
\end{equation}
and
\begin{multline}
8\int_r^{+\infty}(t-r)\frac{2t^2+1}{(t^2-1)^4}\diff t\le
8\int_r^{+\infty}(t-r)\frac{2t^2+1}{(t^2-1)^4}\diff t   
\\
+4\int_r^{+\infty}\frac{t-r}{(t^2-1)^4}\diff t=
\omega(r)=\frac{1}{(r^2-1)^2}.
\end{multline}
Using the explicit formula for the integral operator $\mathrm{I}_\infty^2$,
integrating along concentric rays out to infinity, these estimates give
the norm control
\[
\big\|\mathrm{I}_\infty^2(5G+F^2)\big\|_2
\le \frac{5\epsilon_1}{8}+\frac{\|F\|_2^2}{24},
\]
where the norm is of Korenblum type,
\[
\|f\|_j:=\sup\big\{(|z|^2-1)^j|f(z)|:\,\,\,z\in\D_\e\big\}.
\]
We also get the Lipschitz control
\begin{multline}
\big\|\mathrm{I}_\infty^2(5G+F^2)-\mathrm{I}_\infty^2(5G+H^2)\big\|_2
=\big\|\mathrm{I}_\infty^2(F^2-H^2)\big\|_2
\\
\le \frac{1}{24}\|F^2-H^2\|_4\le\frac1{24}(\|F\|_2+\|H\|_2)\|F-H\|_2.
\end{multline}
We now run the iteration $F_{n+1}=\mathrm{I}_\infty^2(5G+F_n^2)$ with
starting point $F_0=0$. The assumption $\epsilon_1\le\frac{19}{36}$ gives that
the iteration converges exponentially rapidly with limit
$F_\infty:=\lim_n F_n$, and $\sup_n\|F_n\|_2\le\frac13$. 
The limit then solves the differential equation
$F_\infty''(z)-6 (F_\infty(z))^2=5G(z)$, and $\|F_\infty\|_2\le\frac13$.
Moreover, if $\epsilon_1<\frac{19}{36}$, we get $\|F_\infty\|_2<\frac13$,
as claimed. 
The proof is complete.
\end{proof}


\begin{cor}
Suppose $G:\D_\e\to\C$ is holomorphic with
\[
|G(z)|\le \epsilon_1(2|z|^2+1)(|z|^2-1)^{-4}, \qquad z\in\D_\e,
\]
where $\epsilon_1\le\frac{19}{36}$. Then there exists a normalized
conformal mapping $\psi\in\Sigma$ with $\Sop_4(\psi)=G$. Moreover, if
$\epsilon_1<\frac{19}{36}$, $\psi$ extends to a global quasiconformal mapping
of the plane onto itself, and the boundary $\psi(\T)$ is a quasicircle.
\label{cor:S4-2}
\end{cor}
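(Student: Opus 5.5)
The plan is to combine Gu's Theorem \ref{thm:Gu} with Nehari's Theorem \ref{thm:Nehari}, and then to identify $\Sop_4(\psi)$ through Theorem \ref{thm:S4}.

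\textbf{Step 1: solve for $F$.} Given $G$ with the stated bound and $\epsilon_1\le\frac{19}{36}$, Theorem \ref{thm:Gu} provides a holomorphic $F:\D_\e\to\C$ such that
\[
F''-6F^2=5G
\quad\text{and}\quad
\sup_{z\in\D_\e}(|z|^2-1)^2|F(z)|\le\tfrac13 .
\]

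\textbf{Step 2: build $\psi$.} Put $\Phi:=6F$. Then
\[
|\Phi(z)|\le\frac{2}{(|z|^2-1)^2},\qquad z\in\D_\e,
\]
which is exactly the hypothesis of Nehari's theorem in the exterior-disk form of Theorem \ref{thm:Nehari}. That theorem yields $\psi\in\Sigma$ with
\[
\Sop(\psi)=6\Sop_2(\psi)=\Phi,
\]
that is, $\Sop_2(\psi)=F$.

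\textbf{Step 3: identify $\Sop_4(\psi)$.} By Theorem \ref{thm:S4},
\[
\Sop_4(\psi)=\tfrac15\,\Sop_2(\psi)''-\tfrac65\,\Sop_2(\psi)^2=\tfrac15\big(F''-6F^2\big)=\tfrac15\cdot 5G=G .
\]
This proves the existence statement.

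\textbf{Step 4: the quasiconformal case.} Suppose $\epsilon_1<\frac{19}{36}$. The strict-inequality clause of Theorem \ref{thm:Gu} gives $\|F\|_2<\frac13$. It follows that
\[
\sup_{z\in\D_\e}(|z|^2-1)^2|\Sop(\psi)(z)|<2 .
\]
The supremum is strictly below $2$, so there is room for a uniform constant $c<2$. Remark \ref{rem:Nehari1}(b) (the Ahlfors--Weill extension) then applies and shows that $\psi$ extends to a quasiconformal self-map of the plane, with $\psi(\T)$ a quasicircle.

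\textbf{Expected obstacle.} I expect the main point needing care to be the sup-norm strictness in Step 4. One must confirm that Gu's iteration really delivers $\|F_\infty\|_2<\frac13$ as a supremum, not merely pointwise strict inequality. It does: the fixed-point bound $\|F\|_2\le\frac{5\epsilon_1}{8}+\frac{\|F\|_2^2}{24}$, taken at the smaller value of $\epsilon_1$, forces the norm below the critical root. A secondary check is that the Nehari theorem, as stated, already lands in $\Sigma$ rather than only giving a univalent map up to Möbius normalization. If needed, one would post-compose with a Möbius map to fix $\infty$ and the derivative there; this leaves the Schwarzian, and hence $\Sop_2$ and $\Sop_4$, unchanged.
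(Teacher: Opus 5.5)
Your proposal is correct and follows exactly the paper's route (Theorem~\ref{thm:Gu} to produce $F$, Theorem~\ref{thm:Nehari} applied to $6F$ to get $\psi\in\Sigma$ with $\Sop_2(\psi)=F$, Theorem~\ref{thm:S4} to get $\Sop_4(\psi)=\frac15(F''-6F^2)=G$, and Remark~\ref{rem:Nehari1}$(b)$ in the strict case), with the details of the paper's one-line proof written out. One caution about your closing aside only: since $F''=5G+6F^2$, the quadratic term in Gu's fixed-point bound should be $\frac14\|F\|_2^2$, not $\frac1{24}\|F\|_2^2$, so those estimates keep $\|F\|_2\le\frac13$ only for $\epsilon_1\le\frac{22}{45}$ (and even with $\frac1{24}$ the relevant root at $\epsilon_1=\frac{19}{36}$ is about $0.3345>\frac13$); you have therefore not re-verified the constant $\frac{19}{36}$, and your proof of the corollary, like the paper's, rests on the statement of Theorem~\ref{thm:Gu} as given.
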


\begin{proof}
The assertion is a consequence of Theorem \ref{thm:Gu} and Theorem
\ref{thm:Nehari} (Nehari's theorem), together with Remark
\ref{rem:Nehari1}$(b)$. 
\end{proof}

\begin{rem}
It follows from Corollary \ref{cor:S4-2} that the optimal universal constant
$\epsilon_0$ of Problem \ref{prob:Nehari} exists and has
$\epsilon_0\ge\frac{19}{36}=0.5277777\ldots$.
\end{rem}

Next, we look into uniqueness issues for the equation
$F''(z)-6(F(z))^2=5G(z)$.

\begin{thm}
Suppose $F,G$ are meromorphic functions on $\D_\e$, with decays
$F(z)=\Ordo(|z|^{-4})$ and $G(z)=\Ordo(|z|^{-6})$ at infinity. If these
functions are connected via the differential equation
$F''(z)-6(F(z))^2=5G(z)$, then $F$ is determined by $G$, and vice versa.
\label{thm:uniq-infty1}
\end{thm}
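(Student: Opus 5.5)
The direction ``$F$ determines $G$'' is immediate: $G=\frac15(F''-6F^2)$ is computed directly from $F$. The substance lies in the converse. Suppose $F_1,F_2$ are meromorphic on $\D_\e$ with $F_j(z)=\Ordo(|z|^{-4})$ at infinity, and both satisfy $F_j''-6F_j^2=5G$ for the same $G$. Put $D:=F_1-F_2$. Subtracting the two equations gives the linear equation
\[
D''=6(F_1+F_2)D=:V D,
\]
where the potential $V:=6(F_1+F_2)$ has $V(z)=\Ordo(|z|^{-4})$ near infinity, and $D(z)=\Ordo(|z|^{-4})$ there as well. I intend to show that $D\equiv0$ near infinity. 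The identity theorem then gives $D\equiv0$ on the connected set $\D_\e$ minus the poles, so $F_1=F_2$.

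To work locally at infinity I would use the coordinate $\zeta=1/z$ and the integral formulation from the proof of Theorem \ref{thm:Gu}. Both $D$ and $D''=VD=\Ordo(|z|^{-8})$ decay, so $D$ and $D'$ vanish at infinity. Integrating twice along rays therefore gives the fixed point identity
\[
D(z)=\mathrm{I}_\infty^2(VD)(z)=\int_z^\infty(z-\xi)V(\xi)D(\xi)\,\diff\xi ,
\]
valid for $|z|\ge R$ with $R$ large enough that $F_1,F_2$ have no poles in $\{|z|\ge R\}$.

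Next I set $M(r):=\sup_{|z|\ge r}|z|^4|D(z)|$, which is finite by hypothesis, and estimate the right-hand side. Integrating radially outward, $|z-\xi|\le|\xi|$ and $|V(\xi)|\le C|\xi|^{-4}$, so
\[
|D(z)|\le C M(r)\int_{|z|}^\infty t\cdot t^{-4}\cdot t^{-4}\,\diff t = C' M(r)\,|z|^{-6}.
\]
Hence $|z|^4|D(z)|\le C'M(r)r^{-2}$ for $|z|\ge r$, that is, $M(r)\le C'r^{-2}M(r)$. Choosing $r$ so large that $C'r^{-2}<1$ forces $M(r)=0$, so $D\equiv0$ on $\{|z|\ge r\}$.

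An alternative route is via Laurent coefficients at infinity: the recursion $n(n+1)d_n=\sum(\text{products of } v_j \text{ and } d_k \text{ with } k<n)$ determines $d_n$ inductively and yields $d_n=0$. The contraction argument is cleaner, however.

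The main point requiring care is justifying the integral representation: one needs $D'(\infty)=0$ and the correct normalization of the primitives. I expect this to follow from the stated decay (Cauchy estimates give $D'=\Ordo(|z|^{-5})$), and from the single-valuedness of the primitives in a punctured neighbourhood of $\infty$, which holds because the integrands decay faster than $|z|^{-1}$ and so have zero residue.
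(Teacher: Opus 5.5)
Your argument is correct, but it takes a different route from the paper's. The paper works with the nonlinear equation itself. It expands $F=\sum_{n\ge4}f_nz^{-n}$ and $G=\sum_{n\ge6}g_nz^{-n}$ at infinity and compares coefficients in $F''=5G+6F^2$, which gives $(n-2)(n-1)f_{n-2}=5g_n+6\sum_{k=4}^{n-4}f_kf_{n-k}$. The quadratic term involves only $f_k$ with $k\le n-4$, so every $f_k$ is obtained recursively from the $g_n$, and the identity theorem finishes. You instead linearize by subtracting two solutions, getting $D''=VD$ with $V=6(F_1+F_2)=\Ordo(|z|^{-4})$. You then kill $D$ with an a priori estimate for $\mathrm{I}_\infty^2$ on $\{|z|\ge r\}$. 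This is the uniqueness counterpart of the fixed-point scheme in Theorem \ref{thm:Gu}, made unconditional by taking $r$ large instead of imposing a smallness hypothesis. The paper's route is constructive: it actually computes the Laurent coefficients of $F$ from those of $G$, and produces a formal solution for any admissible $G$, though convergence is not addressed. Your route is softer but quantitative. It also makes visible that only the $\Ordo(|z|^{-4})$ decay of the two candidate solutions $F_1,F_2$ matters; the decay of $G$ is in any case implied by that of $F$. Your alternative Laurent recursion for $D$ is simply the linearization of the paper's recursion. One small slip: with $\mathrm{I}_\infty h=H-H(\infty)$ one has $\mathrm{I}_\infty^2h(z)=\int_z^\infty(\xi-z)h(\xi)\,\diff\xi$, not $(z-\xi)$. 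The paper contains the same sign error. Read literally, your identity would say $D=-D$. Since your estimate only uses $|z-\xi|$, the argument is unaffected, and the correct representation follows from one integration by parts along the ray, using $D(\infty)=0$ and $(\xi-z)D'(\xi)\to0$.
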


\begin{proof}
Clearly, the relation $F''(z)-6(F(z))^2=5G(z)$ determines $G$ if $F$ is given,
so we concentrate our effort on showing that $F$ is determined by $G$ as
well.  

By the given decay, the point at infinity is a regular point for both
functions. Hence they both have convergent Laurent expansions near infinity,
\[
F(z)=\sum_{n=4}^{+\infty}f_nz^{-n},\quad
G(z)=\sum_{n=6}^{+\infty}g_nz^{-n},
\]
Since $F''(z)=5G(z)+6(F(z))^2$, we may expand both sides in Laurent series,
\begin{equation}
\sum_{n=6}^{+\infty}(n-2)(n-1)\,f_{n-2}\,z^{-n}=5\sum_{n=6}^{+\infty}g_nz^{-n}
+6\sum_{n=8}^{+\infty}\sum_{k=4}^{n-4}f_{k}f_{n-k}z^{-n}, 
\end{equation}
which gives the equations of coefficients
\begin{equation}
(n-2)(n-1)f_{n-2}=5g_n,\qquad n=6,7,
\label{eq:rec-1}
\end{equation}
and
\begin{equation}
(n-2)(n-1)f_{n-2}=5g_n+\sum_{k=4}^{n-4}f_kf_{n-k},\qquad n=8,9,10,\ldots.
\label{eq:rec-2}
\end{equation}
The first equation gives us the coefficient relations $f_4=\frac14 g_6$ and
$f_5=\frac16 g_7$. Knowing already $f_4,f_5$, the second equation then gives us
$f_6,f_7$ in terms of $f_4,f_5$ and $g_8,g_9$. Armed with $f_k$ for
$k=4,5,6,7$, we then obtain from \eqref{eq:rec-2} the coefficients $f_k$
for all $k=4,\ldots,12$. Proceeding iteratively, we recursively obtain all the
coefficients $f_k$ for $k=4,5,7,\ldots$. 
Finally, given that both $F$ and $G$ are meromorphic functions, they are
uniquely determined by their Laurent coefficients at infinity. Hence $F$
is fully determined by $G$.
\end{proof}

\begin{cor}
Given that $\psi\in\Sigma$ and a holomorphic $G:\D_\e\to\C$ with decay
$G(z)=\Ordo(|z|^{-6})$ at infinity, the equation $\Sop_4(\psi)=G$, if it has
a solution, determines $\psi$ up to an additive constant.
\end{cor}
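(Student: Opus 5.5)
The plan is to reduce the statement to the uniqueness result of Theorem \ref{thm:uniq-infty1} combined with Proposition \ref{prop:5equiv}. Suppose $\psi_1,\psi_2\in\Sigma$ both solve $\Sop_4(\psi_j)=G$. We set $F_j:=\Sop_2(\psi_j)=\frac16\Sop(\psi_j)$ for $j=1,2$. By Theorem \ref{thm:S4}, the identity $\Sop_4(\psi_j)=\frac15 F_j''-\frac65F_j^2$ holds, so each $F_j$ satisfies $F_j''-6F_j^2=5G$ on $\D_\e$.

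To invoke Theorem \ref{thm:uniq-infty1}, we need the decay $F_j(z)=\Ordo(|z|^{-4})$ at infinity. This follows from the Laurent expansion $\psi(z)=z+b_0+b_1z^{-1}+\cdots$ near infinity. Since $\psi'(z)=1-b_1z^{-2}+\Ordo(z^{-3})$, we get $\mathrm{N}(\psi)=\psi''/\psi'=2b_1z^{-3}+\Ordo(z^{-4})$. Then
\[
\Sop(\psi)=\mathrm{N}(\psi)'-\tfrac12\mathrm{N}(\psi)^2=-6b_1z^{-4}+\Ordo(z^{-5}).
\]
Alternatively, the bound $|\Sop_2(\psi)(z)|\le(|z|^2-1)^{-2}$ from Theorem \ref{thm:Schwarzian-bound-1} gives the decay directly. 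The decay of $G$ is part of the hypothesis, and $F_j,G$ are holomorphic on $\D_\e$, hence meromorphic.

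Theorem \ref{thm:uniq-infty1} then yields $F_1=F_2$, that is, $\Sop_2(\psi_1)=\Sop_2(\psi_2)$. In the notation of Proposition \ref{prop:5equiv}, this reads $\oslash(\partial_z\partial_wQ_{\e,1})=\oslash(\partial_z\partial_wQ_{\e,2})$, which is condition $(e)$. The implication $(e)\Rightarrow(a)$ of that proposition gives $\psi_2=\psi_1+C$ for some constant $C$. Conversely, an additive constant does not change $Q_\e$, so it does not change $\Sop_4$; hence the solution is determined exactly up to additive constants.

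The only point requiring care is the decay verification needed to apply Theorem \ref{thm:uniq-infty1}, and the Laurent expansion settles it immediately. Everything else is a direct chaining of the cited results.
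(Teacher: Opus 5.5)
Your proposal is correct and follows essentially the same route as the paper. Both set $F=\Sop_2(\psi)$, use Theorem~\ref{thm:S4} to get $F''-6F^2=5G$, secure the decay $F(z)=\Ordo(|z|^{-4})$, apply Theorem~\ref{thm:uniq-infty1}, and conclude with Proposition~\ref{prop:5equiv}; the only difference is that the paper gets the decay of $F$ (and of $G$, which is thus automatic once a solution exists) from Theorem~\ref{thm:Schwarzian-bound-1}, while you also check it directly from the Laurent expansion and take the decay of $G$ from the hypothesis, which is equally valid.
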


\begin{proof}
We declare that $F:=\Sop_2(\psi)$, so that in view of Theorem \ref{thm:S4},
$F$ solves the differential equation $F''(z)-6(F(z))^2=5G(z)$.
Moreover, given the estimates of Theorem \ref{thm:Schwarzian-bound-1},
with $n=1,2$, we have the decay $F(z)=\Ordo(|z|^{-4})$ and
$G(z)=\Ordo(|z|^{-6})$ at infinity. This puts us in the setting of
Theorem \ref{thm:uniq-infty1}, so that  $F$ is uniquely determined by $G$.
It remains to use Proposition \ref{prop:5equiv} to obtain that the equation
$\Sop_2(\psi)=F$ determines $\psi\in\Sigma$ uniquely up to an additive
constant. The proof is complete.
\end{proof}

Second-order nonlinear differential equations typically have a two-dimensional
manifold of solutions (i.e., the general solution will involve two free
parameters). Why that is not the case in Theorem \ref{thm:uniq-infty1} is
because of the decay data at infinity. It is then a natural question to ask
for the full manifold of solutions. We carry this out first for vanishing
right-hand side $G=0$. Interestingly, the Weierstrass elliptic function makes
an appearance. 

\begin{thm}
Suppose $F:\D_\e\to\C$ is a meromorphic solution to the nonlinear
differential equation $F''-6(F)^2=0$. Then one of the following will occur:

\noindent{$(a)$} $F(z)\equiv0$.

\noindent{$(b)$} $F(z)=(z+\gamma_0)^{-2}$ for some point $\gamma_0\in\C$. 

\noindent{$(c)$} $F(z)=\wp(z+\gamma_0,\Lambda)$ for some point $\gamma_0\in\C$,
where $\wp(z,\Lambda)$ is the Weierstrass elliptic function associated with a
hexagonal lattice $\Lambda$.
\label{thm:secondordereq}
\end{thm}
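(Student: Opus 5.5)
The plan is to integrate the autonomous second-order equation once, reducing it to a first-order equation of Weierstrass type, and then to classify the meromorphic solutions of that first-order equation. Multiplying $F''=6F^2$ by $F'$ gives $\frac{\diff}{\diff z}\big((F')^2-4F^3\big)=0$. Hence on $\D_\e$ (connected, with $F$ meromorphic) we have
\[
(F')^2=4F^3-g_3
\]
for some constant, which I call $-g_3$. This is the Weierstrass differential equation with invariants $g_2=0$ and $g_3$.

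The cases then split according to $F$ and $g_3$.

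First, if $F$ is constant, then $F''=0$ forces $F^2=0$, which is case $(a)$.

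Second, suppose $F$ is nonconstant and $g_3=0$. Then $(F')^2=4F^3$. Near a point where $F\ne0$ we write $F=u^{-2}$ locally, and the equation becomes $(u')^2=1$. So $u=\pm(z+\gamma_0)$ and $F=(z+\gamma_0)^{-2}$ on an open set. The identity theorem then extends this to all of $\D_\e$, giving case $(b)$.

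Third, suppose $F$ is nonconstant and $g_3\ne0$. The cubic $4X^3-g_3$ then has distinct roots, so there is a lattice $\Lambda$ with $g_2(\Lambda)=0$ and $g_3(\Lambda)=g_3$. Such a lattice is equianharmonic: the vanishing of $g_2=60\sum'\omega^{-4}$ singles out, up to scaling, the lattice $\Z+\e^{2\pi\ima/3}\Z$, which the statement calls ``hexagonal''. Pick $z_0\in\D_\e$ with $F(z_0)$ finite and $F'(z_0)\ne0$; such a point exists since $F$ is nonconstant. Because $\wp(\cdot,\Lambda)$ is surjective onto $\C$ and $(\wp')^2=4\wp^3-g_3$, there is $c$ with $\wp(c)=F(z_0)$ and $\wp'(c)=\pm F'(z_0)$. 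Replacing $c$ by $-c$ if necessary, we get the right sign, using that $\wp$ is even and $\wp'$ is odd. Set $\gamma_0:=c-z_0$. Then $F$ and $\wp(\cdot+\gamma_0)$ both solve the second-order equation $F''=6F^2$ with the same Cauchy data at $z_0$. Holomorphic ODE uniqueness makes them agree near $z_0$, and the identity theorem makes them agree on all of $\D_\e$. This is case $(c)$.

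The main obstacle I expect is the existence and identification of the lattice with prescribed invariants $(g_2,g_3)=(0,g_3)$. This is the classical uniformization theorem for elliptic curves, that is, surjectivity of the $j$-invariant with $j=0$. I would simply cite it, together with a remark on why $g_2=0$ forces the hexagonal shape: the rotation by $\e^{2\pi\ima/3}$ multiplies $g_2$ by $\e^{-8\pi\ima/3}\ne1$, so the rotation-invariant lattice has $g_2=0$, and $j$ determines the lattice up to homothety. A minor technical point is ensuring the ODE uniqueness argument works at a regular point of $F$. That is handled by choosing $z_0$ away from poles, and, in case $(c)$, away from the discrete set of zeros of $F'$.
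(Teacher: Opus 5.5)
Your proposal is correct and follows the paper's route: the first integral $(F')^2-4F^3=\mathrm{const}$, the split according to whether the constant vanishes, the substitution $F=u^{-2}$ giving $(z+\gamma_0)^{-2}$ in the degenerate case, and identification with $\wp(\cdot+\gamma_0,\Lambda)$ for a lattice with $g_2=0$ otherwise. In the last case you match the full Cauchy data $(F(z_0),F'(z_0))$, using surjectivity of $\wp$ and the parity of $\wp,\wp'$, and then invoke uniqueness for the second-order equation $F''=6F^2$; this makes rigorous the step the paper simply calls classical, which cannot rest on the value $F(z_0)$ alone because the first-order equation does not fix the sign of $F'$.
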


\begin{rem}
Here, we may understand that the two free complex parameters are the
translation parameter $\gamma_0$ and the dilation-rotation constant
associated with the hexagonal lattice $\Lambda$.   
\end{rem}

\begin{proof}[Proof of Theorem \ref{thm:secondordereq}]
We first verify that the claimed solutions are indeed solutions. Under $(a)$,
if $F=0$, the equation is clearly fulfilled. Moreover, under $(b)$,
if instead $F(z)=(z+\gamma_0)^{-2}$,
then $F$ is clearly meromorphic in $\C$ as a rational function.
Moreover, we calculate that $F''(z)=6(z+\gamma_0)^{-4}$, while
$6(F(z))^2=6(z+\gamma_0)^{-4}$, so that $F''(z)-6(F(z))^2=0$ holds.
Finally, under $(c)$, the indicated solution $F(z)=\wp(z+\gamma_0,\Lambda)$
solves the Weierstrass elliptic equation
\[
(F'(z))^2-4(F(z))^3=\mathrm{g}_2 F(z)+\mathrm{g}_3,
\]
where the coefficients $\mathrm{g}_2,\mathrm{g}_3$ are given by the
Eisenstein series
\[
\mathrm{g}_2=-60\sum_{\lambda\in\Lambda\setminus\{0\}}\frac{1}{\lambda^4},
\quad \mathrm{g}_3=-140\sum_{\lambda\in\Lambda\setminus\{0\}}\frac{1}{\lambda^6}.
\]
Here, we point out the explicit series representation of the Weierstrass
elliptic function 
\[
\wp(z,\Lambda):=\frac{1}{z^2}+\sum_{\lambda\in\Lambda\setminus\{0\}}
\bigg(\frac{1}{(z-\lambda)^2}-\frac{1}{\lambda^2}\bigg), 
\]
which is clearly meromorphic in $\C$. 
In the given setting, because $\Lambda$ is an hexagonal lattice, we know that
$\mathrm{g}_2=0$. Differentiating the Weierstrass elliptic equation with
$\mathrm{g}_2=0$ gives that
\[
2F'(z)F''(z)-12F(z)F'(z)=0,
\]
and since the derivative $F'$ does not vanish identically, we may divide by
it:
\[
F''(z)-6F(z)=0,
\]
which is our given differential equation.

We turn to the harder part, which is to show that the given list of solutions
is complete. We multiply the differential equation by $2F'$, and get
$2F'F''-12F^2F'=0$. Taking primitives on both sides, we find that
\[
(F')^2-4(F)^3=C,
\]
where $C$ is a constant.

\noindent{\sc Case I}: $C=0$. Then the equation reads
$(F')^2=4(F)^3$, which is a first-order nonlinear autonomous differential
equation. As such, it is specified uniquely by an initial value $F(z_0)$
for a given point $z_0\in\D_\e$. If $F(z_0)=0$, the only solution is then
$F(z)\equiv0$. 
If $F(z_0)\ne0$, we can form the square root $\sqrt{F}$ at least in a
neighborhood of $z_0$, and extend it to all of $\D_\e$ if we accept some
branching between the zeros of $F$. The differential equation $(F')^2=4(F)^3$
then gives that 
\[
\diff \frac{1}{\sqrt{F}}=\pm\diff z,
\]
so that for some constant $c\in\C$, 
\[
\frac{1}{\sqrt{F}}=c\pm z \quad\Longleftrightarrow\quad F=(c\pm z)^{-2}.
\]
In case the ``$\pm$'' sign is a ``$+$'', we simply choose $\gamma_0=c$,
so that $F=(z+\gamma_0)^{-2}$. 
If instead the ``$\pm$'' sign is a ``$-$'', we choose $\gamma_0=-c$, so that
$F=(c-z)^{-2}=(-\gamma_0-z)^{-2}=(z+\gamma_0)^{-2}$. 

\noindent{\sc Case II}: $C\ne0$. The differential equation is
$(F')^2-4(F)^3-C=0$, where $C$ is a constant. Again, this is a first-order
autonomous nonlinear differential equation, which by general theory is
determined by an initial value $F(z_0)$ at a given point $z_0\in\D_e$.
This is a classical ordinary differential equation with general solution
\[
F(z)=\wp(z+\gamma_0,\Lambda),
\]
where $\wp$ is the Weierstrass elliptic function for he lattice $\Lambda$,
and $\gamma_0\in\C$ is a translation parameter. The lattice is special
because the in the Weierstrass elliptic equation, we have
$\mathrm{g}_2=0$ and $\mathrm{g}_3=C$. The condition $\mathrm{g}_2=0$ forces
the $\Lambda$ to be a hexagonal lattice.
The proof is complete.
\end{proof}



\subsection{The $\Sop_4$-equation with more general right-hand side}

As mentioned earlier in the context of Nehari's theorem (Theorem
\ref{thm:Nehari}), it is natural to study the equation
$\Sop_4(\psi)=G$ for some given holomorphic function $G$ which does not
necessarily obey the decay condition at infinity of Theorem
\ref{thm:uniq-infty1}. Putting $F=\Sop_2(\psi)$, we are then led by
Theorem \ref{thm:S4} to first try and solve the differential equation
\begin{equation}
F''(z)-6(F(z))^2=5\,G(z).
\label{eq:Pain-1}
\end{equation}
If we relax the decay requirements on $G$ and just ask for a meromorphic
solution $\psi$, for a given meromorphic $G$, we arrive at interesting
questions. We would then search for
meromorphic solutions $F$ to the above differential equation, and in a
second step try to find a meromorphic $\psi$ with
\[
\Sop_2(\psi)=\frac16\bigg(\frac{\psi''}{\psi'}\bigg)'-
\frac{1}{12}\bigg(\frac{\psi''}{\psi'}\bigg)^2=F.
\]
The latter equation is of Riccati type, which is why in principle we know
how to solve it once we found $F$. If $G$ is allowed a moducum of growth at
infinity, we might consider, e.g., $G=z/5$.  In this case, the equation
\eqref{eq:Pain-1} is called the \emph{Painlev\'e I equation}, with deep roots
in Integrable Systems and Hamiltonian Dynamics. But already the case when $G$
is constant is interesting.

\begin{thm}
The solutions to the second order equation $F''(z)-6(F(z))^2=\alpha$, where
$\alpha\in\C\setminus\{0\}$ is a constant, are of the form
$F(z)=\wp(z+\gamma_0,\Lambda)$, where $\wp$ is the
Weierstrass elliptic function associated with a lattice $\Lambda$ (which
usually has two $\R$-linearly independent generators, but may degenerate
to a single generator) with the parameter choices
\[
\mathrm{g}_2 =-60 \sum_{\lambda\in\Lambda\setminus\{0\}}\frac{1}{\lambda^4}=2\alpha,
\qquad
\mathrm{g}_3=-140\sum_{\lambda\in\Lambda\setminus\{0\}}\frac{1}{\lambda^6}\in\C.
\]
\end{thm}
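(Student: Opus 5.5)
Multiplying the equation $F''-6F^2=\alpha$ by $2F'$ gives $2F'F''-12F^2F'-2\alpha F'=0$, an exact derivative. Integrating once, exactly as in the proof of Theorem \ref{thm:secondordereq}, we get
\[
(F')^2-4F^3-2\alpha F=C
\]
for some constant $C\in\C$. This has the shape of the Weierstrass equation $(F')^2-4F^3=\mathrm{g}_2F+\mathrm{g}_3$ in the sign conventions of the paper, with $\mathrm{g}_2=2\alpha$ and $\mathrm{g}_3=C$. The converse is checked by differentiation: if $F=\wp(\cdot+\gamma_0,\Lambda)$ with $\mathrm{g}_2=2\alpha$, then $2F'F''=12F^2F'+2\alpha F'$. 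Since $\wp'\not\equiv0$, we may divide by $2F'$ and recover $F''-6F^2=\alpha$. So the solutions listed in the statement are indeed solutions.

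For completeness we must show every meromorphic solution arises this way. First, $F$ cannot be constant. A constant $c$ would need $-6c^2=\alpha$; that is possible algebraically, so this case must be addressed. Constants $c$ with $6c^2=-\alpha$ correspond to the degenerate situation where the cubic $4x^3+2\alpha x+C$ has a double root at $c$. I would treat them as the limiting degenerate lattice case, or note them separately if the paper's convention excludes them.

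For nonconstant $F$, we use the first-order autonomous equation $(F')^2=P(F)$ with $P(x)=4x^3+2\alpha x+C$. Locally, near a point where $F'\ne0$, the inverse function satisfies $\diff z=\diff F/\sqrt{P(F)}$, so $z+\mathrm{const}$ is an elliptic integral of $F$. By the uniformization theorem for the Weierstrass equation (every pair $(\mathrm{g}_2,\mathrm{g}_3)$ with nonzero discriminant $\mathrm{g}_2^3-27\mathrm{g}_3^2$ is realized by a unique lattice), $\wp(\cdot,\Lambda)$ solves the same equation. By the local uniqueness theorem for ODEs at a point where $P(F(z_0))\ne0$, after choosing $\gamma_0$ with $\wp(z_0+\gamma_0)=F(z_0)$ and matching the sign of $\wp'$, we conclude that $F=\wp(\cdot+\gamma_0,\Lambda)$ near $z_0$. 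Meromorphic continuation then extends the identity to all of $\D_\e$.

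The main obstacle is the degenerate case where the discriminant vanishes, $(2\alpha)^3=27C^2$, so that $P$ has a repeated root $e$. Since $\alpha\ne0$, the root is double rather than triple. In this case the Weierstrass function degenerates to a trigonometric form with a single period, namely
\[
F(z)=e+\frac{3e}{\sin^2\!\big(\sqrt{-3e}\,(z+\gamma_0)\big)}\quad\text{(up to normalization)},
\]
or to the constant solution $F\equiv e$. Here I would integrate $\diff F/\big((F-e)\sqrt{4(F-e')}\big)$ explicitly to get this formula, and interpret it as $\wp$ for a lattice with one generator, which is what the parenthetical remark in the statement allows.
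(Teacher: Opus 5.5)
Your route is the paper's: the first integral $(F')^2-4F^3=2\alpha F+C$, then identification with a translate of $\wp$ having $\mathrm{g}_2=2\alpha$ and $\mathrm{g}_3=C$. The paper only asserts that identification. Your uniqueness step fills that in: choose $z_0$ with $F'(z_0)\neq0$, pick $\gamma_0$ with $\wp(z_0+\gamma_0)=F(z_0)$ and $\wp'(z_0+\gamma_0)=F'(z_0)$ (possible because $\wp$ is even and onto $\C$), then apply ODE uniqueness and the identity theorem. Your separate handling of the degenerate discriminant is also something the paper omits.

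The constant solutions are where you stop short, and the conclusion is sharper than you state. $F\equiv\pm\sqrt{-\alpha/6}$ solves $F''-6F^2=\alpha$, but it cannot be treated as a ``limiting degenerate lattice case.'' Every $\wp(\cdot+\gamma_0,\Lambda)$ with $\gamma_0\in\C$, for one generator or two, has double poles on $-\gamma_0+\Lambda$ and so is nonconstant. A constant appears only as the limit of translates with $\gamma_0\to\infty$ transversally to a one-generator lattice. These two constants are therefore genuine counterexamples to the statement as written. It has to be amended to list them separately, just as Theorem~\ref{thm:secondordereq}(a) lists $F\equiv0$; the paper's proof overlooks them as well.

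There are also two sign slips, both from using the standard discriminant formula with the paper's sign-reversed invariants $\mathrm{g}_2=-60\sum\lambda^{-4}$, $\mathrm{g}_3=-140\sum\lambda^{-6}$.
\begin{itemize}
\item The cubic $4x^3+2\alpha x+C$ has a repeated root iff $(2\alpha)^3+27C^2=0$, not $(2\alpha)^3=27C^2$. For example, $\alpha=-6$ and $C=8$ give the double root $1$.
\item The one-period solution is $F(z)=e-3e/\sin^2\big(\sqrt{-3e}\,(z+\gamma_0)\big)$, with a minus sign. Only this sign gives the leading term $(z+\gamma_0)^{-2}$ at the pole that $F''=6F^2+\alpha$ forces.
\end{itemize}
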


\begin{proof}
We multiply the differential equation by $2F'$ and obtain that
$2F'F''-12(F)^2F'=2\alpha F'$. Forming the primitive of both sides of
this equation, we find that
\[
(F'(z))^2-4(F(z))^3=2\alpha F+C,
\]
where $C$ is an integration constant. This autonomous first-order equation is
associated with the Weierstrass elliptic function, and the solution is of the
form $F(z)=\wp(z+\gamma_0,\Lambda)$, where the lattice is given implicitly by
\[
\sum_{\lambda\in\Lambda\setminus\{0\}}\frac{1}{\lambda^4}=-\frac{\alpha}{30},
\qquad
\sum_{\lambda\in\Lambda\setminus\{0\}}\frac{1}{\lambda^6}=-\frac{C}{140}.
\]
The proof is complete.
\end{proof}
  
\subsection{The diagonal Schwarzian derivative $\Sop_4$ and compositions}

By \eqref{eq:cocycle-2}, the rescaled Schwarzian derivative
$\Sop_2=\frac16\Sop$ enjoys the composition rule
\[
\Sop_2(f\circ\psi)=(\psi')^2(\Sop_2(f)\circ\psi)+\Sop_2(\psi),
\]
if $f$ and $\psi$ are (locally) conformal mappings. Is there a corresponding
rule which applies in the context of $\Sop_4(\psi)$? 

\begin{thm}
If $f$ and $\psi$ are locally conformal, then
\[
\Sop_4(f\circ \psi)=
[(\Sop_4(f))\circ \psi](\psi^{\prime})^4+
S_4(\psi)+[(S_2(f))^{\prime}\circ \psi]
(\psi^{\prime})^2\psi^{\prime \prime}+
[(S_2(f))\circ \psi] (\psi^{\prime\prime})^2.
\]
\end{thm}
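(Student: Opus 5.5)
The plan is to avoid working with the two-variable symbol $Q_\e$ of the composition. Instead I would reduce everything to the one-variable identity of Theorem \ref{thm:S4}, $\Sop_4=\frac15\Sop_2''-\frac65\Sop_2^2$, together with the cocycle rule for $\Sop_2=\frac16\Sop$. One point needs settling first: Theorem \ref{thm:S4} was derived for $\psi\in\Sigma$, but both sides are universal differential polynomials in $\psi',\ldots,\psi^{(5)}$ and $1/\psi'$. For locally conformal maps I would therefore take $\Sop_4:=\frac15\Sop_2''-\frac65\Sop_2^2$ as the definition. Equivalently, a local expansion of $\log\frac{\psi(z)-\psi(w)}{z-w}$ near the diagonal gives the same formula, since the Riccati localization of Corollary \ref{cor:Riccati} is purely local.

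Write $A:=\Sop_2(f)\circ\psi$. The cocycle rule gives
\[
\Sop_2(f\circ\psi)=(\psi')^2A+\Sop_2(\psi).
\]
I would differentiate this twice, using $A'=(\Sop_2(f))'\circ\psi\cdot\psi'$ and the chain rule again for $A''$. This should give
\[
\Sop_2(f\circ\psi)''=(\psi')^4(\Sop_2(f))''\circ\psi+5(\psi')^2\psi''\,(\Sop_2(f))'\circ\psi+\big(2(\psi'')^2+2\psi'\psi'''\big)A+\Sop_2(\psi)''.
\]
Squaring the cocycle identity gives
\[
\Sop_2(f\circ\psi)^2=(\psi')^4A^2+2(\psi')^2A\,\Sop_2(\psi)+\Sop_2(\psi)^2.
\]

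Next I would form $\frac15(\cdot)''-\frac65(\cdot)^2$ and group the terms. The $(\psi')^4$ terms combine to $(\psi')^4\,\Sop_4(f)\circ\psi$. The pure-$\psi$ terms combine to $\Sop_4(\psi)$. The $(\Sop_2 f)'$ term picks up the coefficient $\frac15\cdot5=1$. What remains is $A$ multiplied by
\[
\frac25(\psi'')^2+\frac25\psi'\psi'''-\frac{12}{5}(\psi')^2\Sop_2(\psi).
\]
The main (though minor) obstacle is checking that this bracket collapses. Using $(\psi')^2\Sop_2(\psi)=\frac16\big(\psi'\psi'''-\frac32(\psi'')^2\big)$, the $\psi'\psi'''$ terms should cancel, and the $(\psi'')^2$ coefficient should come out to $\frac25+\frac35=1$. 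The bracket would then reduce to $(\psi'')^2$, which is exactly the last term in the claimed formula.
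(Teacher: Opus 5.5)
Your proposal is correct and is essentially the paper's argument: the paper's one-line proof also derives the rule from Theorem~\ref{thm:S4}, the cocycle identity for $\Sop_2$, and the chain rule, and your explicit expansion checks out, including the coefficient $5$ on the $(\Sop_2 f)'$ term and the collapse of the bracket to $(\psi'')^2$. Your remark that Theorem~\ref{thm:S4} is a universal differential identity, and so applies to arbitrary locally conformal maps and not only to $\psi\in\Sigma$, fills in a point the paper leaves implicit.
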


\begin{proof}
This identity follows from Theorem \ref{thm:S4}, the above composition rule
for the Schwarz\-ian derivative $\Sop_2$, and the usual chain rule.
\end{proof}

\section{Parametrization of the manifold $\Sigma/\C$}
\label{sec:parameter}

\subsection{The nonlinear wave equation and parametrization of $\Sigma/\C$}

According to the Theorem \ref{Thm_A}, together with Proposition
\ref{prop:5equiv}, the holomorphic functions $Q_\e:\D_\e^2\to\C$ with the
properties $(a)$ and $(b)$ of Theorem \ref{Thm_A} define uniquely (up to
an additive constant) a normalized conformal mapping $\psi\in\Sigma$ such that
$Q_\e=\mathscr{Q}_\e[\Gamma_\psi]$. So, in a sense, such holomorphic solutions
to the nonlinear wave equation provide a way to parametrize the manifold
$\Sigma/\C$. By the way, $\Sigma/\C$ may be viewed as a large universal
Teichm\"uller space, the usual universal Teichm\"uller space $\mathscr{T}_1$
in the sense of Lipman Bers \cite{Bers}
consisting of a ``smooth `` portion of $\Sigma/\C$ (the elements $\psi+C\in
\Sigma/\C$ with the property that $\psi(\T)$ is a quasicircle). Please note
that if $\psi$ is such that $\psi+C\in\Sigma/\C$ is in the universal
Teichm\"uller space $\mathscr{T}_1$, then $\psi$ extends to a quasi-conformal
homoeomorphism of the plane $\C$ with
$\bar\partial_z\psi(z)=\mu(z)\partial_z\psi(z)$, where the \emph{Beltrami
coefficient} $\mu\in L^\infty(\C)$ with $\mathrm{supp}\,\mu\subset\bar\D$ and
$\sup|\mu|<1$. But then the formula for $Q_\e$ extends too, and it solves the
differential equation
\[
\bar\partial_z Q_\e(z,w)=\mu(z)\partial_z Q_\e(z)+\frac{\mu(z)}{z-w}.
\]

The basic underlying idea of the diagonal localization of the nonlinear
wave equation in Proposition \ref{cor:local-1} is the equivalence
\[
Q_\e \longleftrightarrow \{Q_{j,k}\}_{j,k},\qquad Q_{j,k}:=\oslash(\partial^j
\partial_w^k Q_\e),  
\]
where we reduce the complexity of a holomorphic function of two complex
variables to that of a sequence of holomorphic functions of a single variable.
From this perpective, the functions $Q_{j,k}$ act as coordinates of the manifold
$\Sigma/\C$. A natural question then is how to determine when our coordinates
come from a point of $\Sigma/\C$.

\begin{problem}
Given a doubly indexed sequence $Q_{j,k}$, $j,k=0,1,2,\ldots$, of
holomorphic functions on $\D_\e$, can we tell when it comes from a
Dirichlet symbol $Q_\e=\mathscr{Q}_\e[\Gamma_\psi]$, for $\psi\in\Sigma$?
\end{problem}

To answer this problem, we need to introduce some concepts.
 
\begin{defn}
A sequence $\{Q_{j,k}\}_{j,k}$ of holomorphic functions on $\D_\e$ is
\emph{symmetric} if $Q_{j,k}=Q_{k,j}$ holds for $j,k=0,1,2,\ldots$.
\end{defn}

\begin{defn}
A sequence $\{Q_{j,k}\}_{j,k}$ of holomorphic functions on $\D_\e$ form a
\emph{consistent system} if
\[
Q_{j,k}'(z)=Q_{j+1,k}(z)+Q_{j,k+1}(z),\qquad z\in\D_\e,
\]
holds for $j,k=0,1,2,\ldots$.
\end{defn}

The consistent system property derives from the bivariate chain rule:
\[
\partial_z\oslash(\partial_z^j\partial_w^k Q_\e)(z)=
\oslash\big((\partial_z+\partial_w)\partial_z^j\partial_w^k Q_\e\big)(z)
=\oslash(\partial_z^{j+1}\partial_w^k Q_\e\big)(z)+
\oslash(\partial_z^{j}\partial_w^{k+1} Q_\e\big)(z).
\]
  
\begin{defn}
A symmetric sequence $\{Q_{j,k}\}_{j,k}$ of holomorphic functions on $\D_\e$
is said to solve the \emph{jet nonlinear wave equation} if
\[
Q_{k+2,0}-(k+2)Q_{k+1,1}-(k+1)\sum_{l=0}^{k}\binom{k}{l}Q_{l,1}(z)Q_{k+1-l,0}=0  
\]
holds for each $k=0,1,2,\ldots$. 
\end{defn}

\begin{defn}
A symmetric sequence $\{Q_{j,k}\}_{j,k}$ is said to solve the \emph{jet Riccati
equation} if 
\[
Q_{1,0}'(z)-3Q_{1,1}(z)-\big(Q_{1,0}(z)\big)^2 =0, 
\]
and if, for $k=1,2,3,\ldots$, we have
\begin{equation}
Q_{k,1}'(z)-\frac{k+3}{k+1}Q_{k+1,1}(z)
+\sum_{l=1}^{k-1}
\binom{k}{l}Q_{k-l,1}(z)Q_{l,1}(z)=0.
\end{equation}
\end{defn}

We know from the symmetry of $Q_\e=\mathscr{Q}_\e[\Gamma_\psi]$ that if
\[
Q_{j,k}=\oslash(\partial_z^j\partial_w^k Q_\e),
\qquad j,k=0,1,2,\ldots,  
\]  
then $\{Q_{j,k}\}_{j,k}$ constitutes a symmetric and consistent system
of holomorphic functions on $\D_\e$, which solves the jet nonlinear wave
equation as well as the jet Riccati equation, by Corollaries \ref{cor:local-1}
and \ref{cor:Riccati}. Moreover, the property $(a)$ of
Theorem \ref{Thm_A} ensures that
\[
|Q_\e(z,w)|=\Ordo(|zw|^{-1})
\]
as $z$ or $w$ approaches infinity, or both. 
This then leads to the decay condition
\begin{equation}
|Q_{j,k}(z)|=\Ordo(|z|^{-j-k-2})
\label{eq:decay-j+k}
\end{equation}
as $|z|\to+\infty$. What about putting all of this together? For a given
$\alpha\in\D_\e$, we might form the sum
the function
\begin{equation}
Q^{\langle\alpha\rangle}(z,w):=\sum_{j,k=0}^{+\infty}\frac{Q_{j,k}(\alpha)}{j!k!}
(z-\alpha)^j(w-\alpha)^k,  
\label{eq:Qalpha}
\end{equation}
which should hopefully represent the tentative function $Q_\e(z,w)$.
Since the domain of convergence is given in terms of the domain of
holomorphicity, the series is expected to converge for
$z,w\in\D(\alpha,|\alpha|-1)$. In particular, we would need a condition of
the form
\begin{equation}
\frac{|Q_{j,k}(\alpha)|}{j!k!}=\Ordo\big((1+\epsilon)^{j+k}|\alpha|^{-j-k}\big),
\label{eq:growth-1.001}
\end{equation}
for each $\alpha\in\D_\e$ and each $\epsilon>0$. We note that by the
consistency of the system $\{Q_{j,k}\}_{j,k}$, the function defined by
\eqref{eq:Qalpha} actually does not depend on the parameter $\alpha$. Indeed,
we then calculate that
\begin{multline}
\partial_\alpha Q^{\langle\alpha\rangle}(z,w)=
\sum_{j,k=0}^{+\infty}\frac{Q_{j,k}'(\alpha)}{j!k!}
(z-\alpha)^j(w-\alpha)^k
\\
-\sum_{j,k=0}^{+\infty}\frac{Q_{j,k}(\alpha)}{j!k!}
\Big(j (z-\alpha)^{j-1}(w-\alpha)^k+k(z-\alpha)^j(a-\alpha)^{k-1}\Big)=0.
\label{eq:Qalpha-2}
\end{multline}
Also, clearly, $Q^{\langle\alpha\rangle}(z,z)=Q_{0,0}(z)$.
By letting $\alpha$ approach infinity in any given direction, while using the
fact that the expression is constant in $\alpha$, we realize that
the formula \eqref{eq:Qalpha} defines a holomorphic function of two variables
in the product of half-planes $\mathbb{H}_\theta\times\mathbb{H}_\theta\subset
\C^2$, where we use the notation
\[
\mathbb{H}_\theta:=\big\{z\in\C:\,\re(\e^{\imag\theta}z)>1\big\},
\]
for real values of $\theta$. While this defines an interesting exercise in
the Hartogs extension phenomenon, it is not enough to know that a function
is holomorphic the domain
\[
\bigcup_{\theta\in\R} (\mathbb{H}_\theta\times\mathbb{H}_\theta)\subset\C^2
\]
to be able to deduce that the function extends holomorphically to $\D_\e^2$.
An interesting counterexample is the function $q(z,w)=(z+w)^{-2}$, which
has the appropriate diagonal decay
$q(z,z)=(2z)^{-2}$, is holomorphic and bounded in each
$\mathbb{H}_\theta\times\mathbb{H}_\theta$ for $\theta\in\R$, but has a
singularity along the alternative diagonal $z+w=0$.
This means that we should find another expansion than Taylor's formula
\eqref{eq:Qalpha} to define $Q_\e$ out of the sequence of functions $Q_{j,k}$
for $j,k=0,1,2,\ldots$. After all, we know that $Q_\e$ has a bivariate
Laurent expansion (or bivariate Taylor series at infinity)
\begin{equation}
Q_\e(z,w)=\sum_{m,n=1}^{+\infty}\gamma_{m,n}z^{-m}w^{-n}, \qquad (z,w)\in\D_\e^2,
\label{eq:Laurent-1}
\end{equation}
so the natural condition on the coefficients $\gamma_{m,n}$ which is
equivalent to convergence of the Laurent series in $\D_\e^2$ reads
\begin{equation}
|\gamma_{m,n}|=\Ordo(1+\epsilon)^{m+n}
\label{eq:importantgrowth-1}
\end{equation}
uniformly in $j,k$ for each $\epsilon>0$.
Of course we know much more about these so-called \emph{Grunsky coefficients}
$\gamma_{m,n}$,
but for now we are only interested in this minimal condition
\eqref{eq:importantgrowth-1} which makes $Q_\e$ holomorphic on $\D_\e^2$. 
Differentiation of \eqref{eq:Laurent-1} gives that
\begin{equation}
\partial_z^j\partial_w^k Q_\e(z,w)=
(-1)^{j+k}\sum_{m,n=1}^{+\infty}(m)_j(n)_k \gamma_{m,n}z^{-m-j}w^{-n-k},
\qquad (z,w)\in\D_\e^2,
\label{eq:Laurent-2}
\end{equation}
where we recall the Pochhammer notation $(x)_j=x(x+1)\cdots(x+j-1)$. Diagonal
restriction in \eqref{eq:Laurent-2} then gives that
\begin{equation}
Q_{j,k}(z)=\oslash\big(\partial_z^j\partial_w^k Q_\e\big)(z)=
(-1)^{j+k}\sum_{m,n=1}^{+\infty}(m)_j(n)_k \gamma_{m,n}z^{-m-n-j-k},
\qquad z\in\D_\e,
\label{eq:Laurent-3}
\end{equation}
and consequently, we have that
\begin{equation}
\frac{(-1)^{j+k}}{j!k!} z^{j+k+2}Q_{j,k}(z)=
\sum_{m,n=1}^{+\infty}\frac{(m)_j(n)_k}{j!k!}\gamma_{m,n}z^{-m-n+2},
\qquad z\in\D_\e.
\label{eq:Laurent-4}
\end{equation}
We now introduce the finite difference operators $\diff_{\langle j\rangle}$ and
$\diff_{\langle k\rangle}$, acting on functions $f(j,k)$ via
\[
\diff_{\langle j\rangle}f(j,k):=f(j+1,k)-f(j,k),\qquad
\diff_{\langle k\rangle}f(j,k):=f(j,k+1)-f(j,k),  
\]
and note that for $a,b=0,1,2,\ldots$,
\[
\diff^a_{\langle j\rangle}\frac{(m)_j}{j!}=\frac{(m)_j(m-a)_a}{(j+a)!},
\qquad
\diff^b_{\langle k\rangle}\frac{(n)_k}{k!}=\frac{(n)_k(n-b)_b}{(j+b)!}.
\]
We apply these operations one ofter the other to both sides of
the equality \eqref{eq:Laurent-4}, which gives that
\begin{multline}
\diff^a_{\langle j\rangle}\diff^b_{\langle k\rangle}
\bigg(\frac{(-1)^{j+k}}{j!k!} z^{j+k+2}Q_{j,k}(z)\bigg)
\\
=\sum_{m=a+1}^{+\infty}\sum_{n=b+1}^{+\infty}
\frac{(m)_j(m-a)_a(n)_k(n-b)_b}{(j+a)!(k+b)!}\gamma_{m,n}z^{-m-n+2},
\qquad z\in\D_\e.
\label{eq:Laurent-4'}
\end{multline}
Taking the limit as $|z|\to+\infty$, we obtain, after simplification, that
\begin{equation}
\gamma_{m,n}=
\lim_{|z|\to+\infty}  \diff^{m-1}_{\langle j\rangle}\diff^{n-1}_{\langle k\rangle}
\bigg(\frac{(-1)^{j+k}}{j!k!} z^{j+k+m+n}Q_{j,k}(z)\bigg),
\label{eq:Laurent-5}
\end{equation}
and apparently it does not really matter at which point $(j,k)\in(\Z_{\ge0})^2$
the right-hand side is evaluated, the limit must be the same. 

\begin{thm}
A doubly indexed sequence $\{Q_{j,k}\}_{j,k}$, where $j,k=0,1,2,\ldots$,
consisting of holomorphic functions on $\D_\e$, constitute the coordinates
of a point $\psi+C\in\Sigma/\C$ if and only if the following conditions
$(a),(b),(c),(d)$ or, alternatively, $(a),(b),(c'),(d)$, are all fulfilled,
where

\noindent{$(a)$}  Each $Q_{j,k}$ has the decay
\begin{equation}
|Q_{j,k}(z)|=\Ordo(|z|^{-j-k-2})
\end{equation}
as $|z|\to+\infty$.

\noindent {$(b)$} The sequence is symmetric and forms a consistent system.

\noindent{$(c)$} The sequence solves the jet nonlinear wave equation.

\noindent{$(c')$} The sequence solves the jet Riccati equation.

\noindent{$(d)$} The coefficient sequence $\{\gamma_{m,n}\}_{m,n}$ given by
the relation \eqref{eq:Laurent-5} is independent of the parameter values of
$(j,k)\in(\Z_{\ge0})^2$, and the growth condition \eqref{eq:importantgrowth-1}
is fulfilled uniformly for each $\epsilon>0$.

\label{thm:characterizationofsymbols}
\end{thm}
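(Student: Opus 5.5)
Necessity is bookkeeping. Suppose $Q_{j,k}=\oslash(\partial_z^j\partial_w^kQ_\e)$ with $Q_\e=\mathscr{Q}_\e[\Gamma_\psi]$. Then $(a)$ is \eqref{eq:decay-j+k}. Property $(b)$ follows from the symmetry $Q_\e(z,w)=Q_\e(w,z)$ and the bivariate chain rule. Properties $(c)$ and $(c')$ are Corollaries \ref{cor:local-1} and \ref{cor:Riccati}. For $(d)$, we use the Grunsky expansion \eqref{eq:Laurent-1}. The computation \eqref{eq:Laurent-3}--\eqref{eq:Laurent-5} shows that the limit equals $\gamma_{m,n}$ for every $(j,k)$. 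The growth bound holds because \eqref{eq:Laurent-1} converges on $\D_\e^2$; in fact $|\gamma_{m,n}|\le (mn)^{-1/2}$ by the Grunsky inequality.

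For sufficiency, the plan is to build $Q_\e$ directly from the $\gamma_{m,n}$ given by $(d)$. Set
\[
q(z,w):=\sum_{m,n\ge1}\gamma_{m,n}z^{-m}w^{-n}.
\]
By the growth condition this converges locally uniformly on $\D_\e^2$, so $q$ is holomorphic there, and $q(z,\infty)=q(\infty,w)=0$. Symmetry of the system gives $\gamma_{m,n}=\gamma_{n,m}$: evaluate \eqref{eq:Laurent-5} at $(j,k)$ and $(k,j)$, use $Q_{j,k}=Q_{k,j}$, and apply the independence in $(d)$. Hence $q$ is symmetric.

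The key step is to show $\oslash(\partial_z^j\partial_w^k q)=Q_{j,k}$ for all $j,k$. First I would use consistency in $(b)$ and $(a)$ to see that each $Q_{j,k}$ has a Laurent expansion at infinity of the form $(-1)^{j+k}\sum_N c^{(j,k)}_N z^{-N-j-k}$, with $N\ge2$. The independence in $(d)$, unwound through the finite-difference identities preceding \eqref{eq:Laurent-5}, should force $c^{(j,k)}_N=\sum_{m+n=N}(m)_j(n)_k\gamma_{m,n}$. To get this, I would invert the triangular relation between the differences $\diff^a_{\langle j\rangle}\diff^b_{\langle k\rangle}$ and the coefficients; this is the identity \eqref{eq:Laurent-4'} read backwards, order by order in $z^{-1}$. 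Granting that, \eqref{eq:Laurent-3} applied to $q$ shows $Q_{j,k}$ and $\oslash(\partial_z^j\partial_w^kq)$ have equal Laurent series near infinity. Both are holomorphic on $\D_\e$ (the latter since $q$ is), so they agree on all of $\D_\e$. In particular $Q_{0,0}=\oslash q$.

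Now $q$ is holomorphic on $\D_\e^2$ and its full jet along the diagonal satisfies the jet nonlinear wave equation (under $(c)$), or the jet Riccati equation (under $(c')$). Under $(c)$, the left side of the nonlinear wave equation of Theorem \ref{Thm_A}, multiplied by $(z-w)$, is holomorphic on $\D_\e^2$. Its Taylor coefficients at diagonal points vanish by the computation leading to Corollary \ref{cor:local-1}, read in reverse. By the identity theorem it vanishes on $\D_\e^2$, since this set is connected and the diagonal jets determine the function. Theorem \ref{Thm_A} then gives $\psi\in\Sigma$ with $q=\mathscr{Q}_\e[\Gamma_\psi]$, and Proposition \ref{prop:5equiv} makes $\psi+C$ unique.

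Under $(c')$, the same argument goes through the Riccati equation of Theorem \ref{thm:Riccati}, using Corollary \ref{cor:Riccati} in reverse. There is one gap: the jet Riccati equation only constrains the mixed jets $Q_{k,1}$, i.e.\ the expansion of $\partial_zq$ in $w$ at fixed $z$. I expect this is still enough, because the Riccati expression at $(z,w)$ is holomorphic in $w$ near $z$, and its $w$-expansion coefficients at $w=z$ are exactly the jet identities; analytic continuation in $w$ then finishes.

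The main obstacle I anticipate is the coefficient-recovery step: showing that $(a)$, $(b)$ and the independence in $(d)$ really pin down every Laurent coefficient of every $Q_{j,k}$ in terms of the $\gamma_{m,n}$. The point of difficulty is making rigorous that the limits in \eqref{eq:Laurent-5} capture all lower-order coefficients, and not just the leading one.
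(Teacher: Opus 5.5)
Your architecture matches the paper's: build $Q_\circledast=\sum\gamma_{m,n}z^{-m}w^{-n}$, show that its diagonal jets are the $Q_{j,k}$, then apply Theorem \ref{Thm_A} or Theorem \ref{thm:Riccati} together with Proposition \ref{prop:5equiv}. Necessity, the symmetry $\gamma_{m,n}=\gamma_{n,m}$, and the two ``read in reverse'' steps are fine; this includes your treatment of the Riccati case via the $w$-expansion at fixed $z$.

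The gap is the step you flag yourself, and the route you propose for it cannot work. Condition $(d)$ consists only of limits, and limits see only top-order information. Using $(a)$ alone, write $Q_{j,k}(z)=(-1)^{j+k}j!\,k!\sum_{N\ge2}\tilde c_N(j,k)\,z^{-N-j-k}$. Then the quantity whose limit is taken in \eqref{eq:Laurent-5} equals $\sum_{N}\big(\diff^{m-1}_{\langle j\rangle}\diff^{n-1}_{\langle k\rangle}\tilde c_N\big)(j,k)\,z^{m+n-N}$. So existence and $(j,k)$-independence of all the limits say precisely two things: $\diff^{a}_{\langle j\rangle}\diff^{b}_{\langle k\rangle}\tilde c_N\equiv0$ for $a+b\ge N-1$, and $\diff^{a}_{\langle j\rangle}\diff^{b}_{\langle k\rangle}\tilde c_N\equiv\gamma_{a+1,b+1}$ for $a+b=N-2$. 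In other words, $\tilde c_N$ is a polynomial in $(j,k)$ of total degree at most $N-2$ whose top-order differences are prescribed. Its lower-order part multiplies negative powers of $z$ and is invisible to $(d)$.

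For instance, replace $Q_{j,k}$ by $Q_{j,k}+\epsilon(-1)^{j+k}j!\,k!\,z^{-j-k-3}$. The added term contributes the $(j,k)$-independent quantity $\epsilon z^{m+n-3}$. The differences annihilate it unless $m=n=1$, and in that case it tends to $0$. So the perturbation preserves $(a)$, the symmetry, and every limit in $(d)$. Hence no triangular inversion of $(d)$ can yield $c^{(j,k)}_N=\sum_{m+n=N}(m)_j(n)_k\gamma_{m,n}$.

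The missing ingredient is consistency. Your sufficiency argument never really uses it, since the Laurent form already follows from $(a)$; note that the perturbation above violates consistency. The paper works with the difference $R_{j,k}=Q_{j,k}-\oslash(\partial_z^j\partial_w^kQ_\circledast)=\sum_{l\ge0}r_{j,k}(l)z^{-j-k-l-2}$ and sets $\rho_l(j,k)=\frac{(-1)^{j+k}}{j!\,k!}r_{j,k}(l)$.
\begin{itemize}
\item By $(d)$, applied to both systems with the same $\gamma_{m,n}$, $\rho_l$ is a polynomial of degree at most $l-1$.
\item Consistency of both systems translates into the Euler-type identity $(j+1)\diff_{\langle j\rangle}\rho_l+(k+1)\diff_{\langle k\rangle}\rho_l=l\,\rho_l$.
\item Applying $\diff^a_{\langle j\rangle}\diff^b_{\langle k\rangle}$ to this identity gives $(l-a-b)\diff^a_{\langle j\rangle}\diff^b_{\langle k\rangle}\rho_l=(j+1+a)\diff^{a+1}_{\langle j\rangle}\diff^b_{\langle k\rangle}\rho_l+(k+1+b)\diff^{a}_{\langle j\rangle}\diff^{b+1}_{\langle k\rangle}\rho_l$.
\item If $a+b=d$ is the degree of $\rho_l$, the right side vanishes, so $(l-d)\diff^a_{\langle j\rangle}\diff^b_{\langle k\rangle}\rho_l=0$. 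Since $d<l$, the top-degree part of $\rho_l$ vanishes.
\item Descending in $d$ gives $\rho_l\equiv0$, hence $R_{j,k}\equiv0$.
\end{itemize}
Once this step is supplied, the rest of your argument goes through.
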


\begin{proof}
Given $\psi\in\Sigma$, knowing $\psi+C\in\Sigma/\C$ is equivalent to
knowing the Dirichlet symbol $Q_\e=\mathscr{Q}_\e[\Gamma_\psi]$ as a
holomorphic function on $\D_\e^2$, by Proposition \ref{prop:5equiv}. 
On the other hand, a function $Q_\e$ is such a Dirichlet symbol if and only
if it has properties $(a)$ and $(b)$ of Theorem \ref{Thm_A}. 

{\sc Step I}: \emph{The forward implication}.
We now show how the point $\psi+C\in\Sigma/\C$ gives rise to a coordinate
sequence $\{Q_{j,k}\}_{j,k}$ with the listed properties $(a),(b),(c),(c'),(d)$. 
We first form $Q_\e=\mathscr{Q}_\e[\Gamma_\psi]$ and define
\[
Q_{j,k}(z):=\oslash(\partial_z^j\partial_w^k Q_\e)(z),\qquad z\in\D_\e.
\]
By the symmetry of $Q_\e$, the sequence is symmetric, and it also has
to form a consistent system, by the bivariate chain rule, which settles
$(b)$. Since property $(b)$ of Theorem \ref{Thm_A} demands that $Q_\e$
should solve the nonlinear wave equation, the localized version of the
wave equation follows as well, which is the assertion of Corollary
\ref{cor:local-1}, and hence property $(c)$ follows.
In the same vein, by Corollary \ref{cor:Riccati}, property $(c')$ follows as
well. 
Next, by the property $(a)$ of Theorem \ref{Thm_A}, we obtain
the claimed property $(a)$ listed above.
We have also seen that $Q_\e$ has the Laurent expansion \eqref{eq:Laurent-1},
where the coefficients $\gamma_{m,n}$ enjoy the property \eqref{eq:Laurent-5},
and that they must have the growth bound \eqref{eq:importantgrowth-1}.
From this, property $(d)$ follows. The implication that we get the properties
$(a),(b),(c),(c'),(d)$ from the given point $\psi+C\in\Sigma/\C$ is now
finished.

{\sc Step II}: \emph{The reverse implication}.
We turn to the reverse implication, that the properties $(a),(b),(c),(d)$,
or alternatively, the properties $(a),(b),(c'),(d)$, determine
a unique point $\psi+C\in\Sigma/\C$. In view of property $(d)$, we have
well-defined coefficients $\gamma_{m,n}$, which allows us to form the bivariate
Laurent series
\[
Q_\circledast(z,w):=\sum_{m,n=1}^{+\infty}\gamma_{m,n}z^{-m}w^{-n},
\qquad (z,w)\in\D_\e,
\]
which converges and defines a holomorphic function on $\D_\e^2$. By the
symmetry $Q_{j,k}=Q_{k,j}$ we see that $\gamma_{m,n}=\gamma_{n,m}$ holds as well,
and hence the function $Q_\circledast$ is symmetric:
$Q_\circledast(z,w)=Q_\circledast(w,z)$.
Clearly,
$Q_\circledast(z,\infty)=Q_\circledast(\infty,w)=0$ holds for all $z,w\in\D_\e$,
so that property $(a)$ of Theorem \ref{Thm_A} is fulfilled with
$Q_\e=Q_\circledast$.
We should connect this function $Q_\circledast$ with our coordinates
$\{Q_{j,k}\}_{j,k}$, and show that with $Q_\e=Q_\circledast$, the nonlinear
wave equation of property $(b)$ of Theorem \ref{Thm_A} is fulfilled as well.
Theorem \ref{Thm_A} will then guarantee the existence of a function
$\psi\in \Sigma$ such that $Q_\circledast=Q_\e=\mathscr{Q}_\e[\Gamma_\psi]$
holds, and by Proposition \ref{prop:5equiv}, this $\psi$ is unique up to an
additive constant, corresponding to $\psi+C\in\Sigma/\C$. 
As a first step to connect the function $Q_\circledast$ with the coordinate
functions $Q_{j,k}$, we introduce the functions $Q^\circledast_{j,k}$ via the
formula
\[
Q^\circledast_{j,k}(z)=\oslash(\partial_z^j\partial_w^k Q_\circledast)(z)
\]
for $j,k=0,1,2,\ldots$. We would expect to have that $Q^\circledast_{j,k}=Q_{j,k}$,
but this needs to be demonstrated. 

{\sc Step II$a$}: \emph{Verification of the equality
$Q^\circledast_{j,k}=Q_{j,k}$}.
Looking back at the derivation of
\eqref{eq:Laurent-5}, using $Q_\circledast$ in place of the tentative $Q_\e$,
we find that
\begin{equation}
\gamma_{m,n}=
\lim_{|z|\to+\infty}  \diff^{m-1}_{\langle j\rangle}\diff^{n-1}_{\langle k\rangle}
\bigg(\frac{(-1)^{j+k}}{j!k!} z^{j+k+m+n}Q^\circledast_{j,k}(z)\bigg)
\label{eq:Laurent-6}
\end{equation}
holds, irrespectively of the point $(j,k)$. On the other hand, by the
property $(d)$,
\begin{equation}
\gamma_{m,n}=
\lim_{|z|\to+\infty}  \diff^{m-1}_{\langle j\rangle}\diff^{n-1}_{\langle k\rangle}
\bigg(\frac{(-1)^{j+k}}{j!k!} z^{j+k+m+n}Q_{j,k}(z)\bigg)
\label{eq:Laurent-7}
\end{equation}
also holds, irrespectively of the point $(j,k)$. Forming the difference
function $R_{j,k}:=Q_{j,k}-Q^\circledast_{j,k}$, we then find that
\begin{equation}
\lim_{|z|\to+\infty}  \diff^{m-1}_{\langle j\rangle}\diff^{n-1}_{\langle k\rangle}
\bigg(\frac{(-1)^{j+k}}{j!k!} z^{j+k+m+n}R_{j,k}(z)\bigg)=0
\label{eq:Laurent-8}
\end{equation}
holds irrespectively of the point $(j,k)$. Knowing the shape of $R_{j,k}(z)$
near infinity, we expand
\[
R_{j,k}(z)=\sum_{l=0}^{+\infty}r_{j,k}(l) z^{-j-k-l-2},
\]
and obtain from \eqref{eq:Laurent-8} that
\begin{equation}
\lim_{|z|\to+\infty} \sum_{l=0}^{+\infty}z^{m+n-l-2}
\diff^{m-1}_{\langle j\rangle}\diff^{n-1}_{\langle k\rangle}
\bigg(\frac{(-1)^{j+k}}{j!k!} r_{j,k}(l) \bigg)=0, 
\label{eq:Laurent-9}
\end{equation}
which is the same as knowing that
\begin{equation}
\diff^{m-1}_{\langle j\rangle}\diff^{n-1}_{\langle k\rangle}
\bigg(\frac{(-1)^{j+k}}{j!k!} r_{j,k}(l) \bigg)=0,\qquad l=0,\ldots,m+n-2. 
\label{eq:Laurent-10}
\end{equation}
In this equation, we consider first $(m,n)=(1,1)$, which gives that
\begin{equation}
\frac{(-1)^{j+k}}{j!k!} r_{j,k}(0)=0, 
\label{eq:Laurent-10'}
\end{equation}
so that $r_{j,k}(0)=0$. Next, to simplify the notation, we write
\[
\rho_{l}(j,k):=\frac{(-1)^{j+k}}{j!k!} r_{j,k}(l),
\]
switching the roles of the indices and variables. The equation
\eqref{eq:Laurent-10} now asserts that
\begin{equation}
\diff^{m-1}_{\langle j\rangle}\diff^{n-1}_{\langle k\rangle}
\big(\rho_l(j,k)\big)=0,\qquad l=0,\ldots,m+n-2. 
\label{eq:Laurent-11}
\end{equation}
This equation effectively says that the expression $\rho_l(j,k)$ is
a bivariate polynomial of $(j,k)$ with degree $\le l-1$.
We need to implement the consistency aspect of property $(b)$.
Then it follows that
\begin{equation}
R'_{j,k}(z)=R_{j+1,k}(z)+R_{j,k+1}(z),
\label{eq:consistent-1}
\end{equation}
by property $(b)$ and the consistency of the system $Q^\circledast_{j,k}$, which
is automatic. In terms of the coefficients $\rho_l(j,k)$, the relation
\eqref{eq:consistent-1} may be written as
\begin{equation}
(j+1)\diff_{\langle j\rangle}\rho_l(j,k)+(k+1)\diff_{\langle k\rangle}\rho_l(j,k)
=l\,\rho_l(j,k).
\label{eq:consistent-2}
\end{equation}
We apply the operation $\diff_{\langle j\rangle}^{m'-1}\diff_{\langle k\rangle}^{n'-1}$
to both sides of \eqref{eq:consistent-2}, which gives
the identity
\begin{multline}
(j+m)\diff^{m'}_{\langle j\rangle}\diff^{n'-1}_{\langle k\rangle}\rho_l(j,k)
+(k+n')\diff^{m-1}_{\langle j\rangle}\diff^{n'}_{\langle k\rangle}\rho_l(j,k)
\\
=(l+2-m'-n')\,\diff^{m'-1}_{\langle j\rangle}\diff^{n'-1}_{\langle k\rangle}\rho_l(j,k).
\label{eq:consistent-3}
\end{multline}
By \eqref{eq:Laurent-11}, we find that with $(m',n')$ such that $m'+n'-1=l$,
the left-hand side of \eqref{eq:consistent-3} vanishes. On the other hand,
the right-hand side equals
$\diff^{m'-1}_{\langle j\rangle}\diff_{\langle k\rangle}^{n'-1}\rho_l(j,k)$,
which is automatically a constant, so by \eqref{eq:consistent-3}, that constant
must be $0$, for all such $(m',n')$ with $m'-n'-1=l$. It follows then that
the polynomial $\rho_l(j,k)$ actually has degree $\le l-2$. By iterating this
argument, we find finally that $\rho_l(j,k)\equiv0$, so that $r_{j,k}(l)\equiv0$
and $R_{j,k}(z)\equiv0$. 

{Step II$b$}: \emph{Under condition $(c)$, the function $Q_\circledast$
solves the nonlinear wave equation}. We need to show that the function
$Q_\e=Q_\circledast$ solves the nonlinear wave equation of property $(b)$
in Theorem \ref{Thm_A}. But this is a consequence of property $(c)$ and
of Taylor's formula, as implemented in Subsection \ref{ss:diagonal-nlweq},
given the symmetry of $Q_{j,k}$ stated under $(b)$ and the already established
fact that $Q_{j,k}=Q^\circledast_{j,k}
=\oslash(\partial_z^j\partial_w^k Q_\circledast)$. By the nonlinear wave equation
characterization of Dirichlet symbols of Theorem \ref{Thm_A}, then, the fact
that the function $Q_\e=Q_\circledast$ has the properties $(a)$ and $(b)$
of Theorem \ref{Thm_A} ensures the existence of a normalized conformal mapping
$\psi\in\Sigma$ with 
$Q_\e=Q_\circledast=\mathscr{Q}_\e[\Gamma_\psi]$. Finally, by Proposition
\ref{prop:5equiv}, the mapping $\psi$ is unique up to additive constants,
that is, $\psi+C\in\Sigma/\C$ is unique.

{Step II$c$}: \emph{Under condition $(c')$, the function $Q_\circledast$
solves the Riccati equation}. We need to check that the function
$q=Q_\e=Q_\circledast$ has the properties $(a)$ and $(b)$ of Theorem
\ref{thm:Riccati}. We already checked that property $(a)$ of Theorem
\ref{thm:Riccati} holds, since it forms a part of property $(a)$ of Theorem
\ref{Thm_A}, which we already discussed. By the Taylor expansion method of
Subsection \ref{ss:Riccati-Aharonov}, it follows from the symmetry of property
$(a)$ and the jet Riccati equation property of $(c')$ that $q=Q_\e=Q_\circledast$
solves the Riccati equation of Theorem \ref{thm:Riccati}$(b)$, given that we
already know that $Q^{\circledast}_{j,k}=
\oslash(\partial_z^j\partial_w^k Q_\circledast)
=Q_{j,k}$. By the Riccati equation characterization of the Dirichlet
symbols of Theorem \ref{thm:Riccati}, then, the fact
that the function $q=Q_\e=Q_\circledast$ has the properties $(a)$ and $(b)$
of Theorem \ref{Thm_A} ensures the existence of a normalized conformal mapping
$\psi\in\Sigma$ with $q=Q_\e=Q_\circledast=\mathscr{Q}_\e[\Gamma_\psi]$.
Finally, by Proposition \ref{prop:5equiv}, the mapping $\psi$ is unique up
to additive constants, that is, $\psi+C\in\Sigma/\C$ is unique.

The proof of the theorem is complete.
\end{proof}

\begin{rem}
The point with Theorem \ref{thm:characterizationofsymbols} is that it supplies
us with a constructive way to build elements of the space $\Sigma/\C$.
We then use the relations of Corollary \ref{cor:offdiag-3}, for $j>k$,
\begin{equation}
Q_{j,k}(z)=Q_{k,j}(z)=
\frac{j-k}{2}\sum_{l=k}^{\lfloor \frac{j+k}{2} \rfloor}
\frac{(-1)^{l+k}}{j-l}\binom{j-l}{l-k}\,Q_{l,l}^{(j+k-2l)}(z),\qquad z\in\D_\e.
\label{eq:Qjk-001}
\end{equation}
and the localized Riccati equation, which gives first
\begin{equation}
Q_{1,1}(z)=\frac13 Q_{1,0}'(z)-\frac13(Q_{1,0}(z))^2
\label{eq:Qjk-002}
\end{equation}
and then, for $j=1,2,3,\ldots$, 
\begin{multline}
Q_{j+1,j+1}(z)=\frac{2j+1}{2j+3}
\sum_{l=1}^{j}(-1)^{j+l-1}C_{l,j}Q^{(2j+2-2l)}_{l,l}(z)
\\
+(-1)^j\frac{2j+1}{2j+3}
\sum_{l=1}^{2j-1}\binom{2j}{l}Q_{2j-l,1}(z)Q_{l,1}(z),
\label{eq:Qjk-003}
\end{multline}
where the coefficients $C_{l,j}$ are given by \eqref{eq:prep-1.1''}.  
Starting with an input function $Q_{0,0}$, the equation \eqref{eq:Qjk-001}
gives us $Q_{1,0}$ and $Q_{0,1}$, and then \eqref{eq:Qjk-002} gives us $Q_{1,1}$. 
We then return to the relation \eqref{eq:Qjk-001} to give us $Q_{j,k}$
for $j+k\le3$. Next, we use \eqref{eq:Qjk-003} for $j=1$ to get $Q_{2,2}$.
After that, we use \eqref{eq:Qjk-001} to get $Q_{j,k}$ for $j+k\le5$.
Following this algorithm, we obtain successively all the $Q_{j,k}$ for
$j,k=0,1,2,\ldots$, given the first function $Q_{0,0}$.
From this point of view, the only condition in Theorem
\ref{thm:characterizationofsymbols}
which says that we started with $Q_{0,0}=\log\psi'$
for a $\psi\in\Sigma$ is, apart from the decay condition $(a)$, the condition
$(d)$ which actually only asks for the existence of the limits
\eqref{eq:Laurent-5} irrespectively of the point $(j,k)$,
combined with the mild growth bound \eqref{eq:importantgrowth-1}. 
\end{rem}

\section{Notions of asymptotic variance}
\label{sec:asymp}

\subsection{Overview}
In this section, we study the notion of asymptotic variance
introduced in the context of dynamics and thermodynamical formalism by
Curtis McMullen \cite{mcmullen}. For the unit disk $\D$, we explained how this
works for a given holomorphic function in Subsection \ref{ss:asymp}.
Here, we shall work with the exterior disk $\D_\e$ instead, so the notions
need some slight modification. Perhaps more interestingly, we also introduce
a \emph{Schwarzian asymptotic variance}. Since the Schwarzian derivative has
the geometric interpretation of measuring how far locally the given conformal
mapping is from being a M\"obius mapping, gives us a measure of the average
local distance to the M\"obius mappings.

\subsection{The exterior disk asymptotic variances
 $\sigma_\e^2,\sigma_{\e,1}^2,\sigma_{\e,2}^2$}
In analogy with the notions on the disk $\D$ introduced in
Subsection \ref{ss:asymp}, we define the following asymptotic variances of
a holomorphic function $g:\D_\e\to\C$ which is bounded at infinity:
\begin{equation}
\sigma_\e(g)^2:=\limsup_{R\to1^+}\frac{1}{\log\frac{1}{R^2-1}}
\int_{\T}|g(R\zeta)|^2\diff s(\zeta),
\end{equation}
and
\begin{equation}
\sigma_{\e,1}(g)^2:=\limsup_{R\to1^+}\frac{1}{\log\frac{1}{R^2-1}}
\int_{\D_\e}|g(Rz)|^2(1-|z|^{-2})|z|^{-4}\dA(z),
\end{equation}
while
\begin{equation}
\sigma_{\e,2}(g)^2:=\limsup_{R\to1^+}\frac{1}{\log\frac{1}{R^2-1}}
\int_{\D_\e}|g(Rz)|^2(1-|z|^{-2})^3|z|^{-4}\dA(z),
\end{equation}

The analogue of Proposition \ref{prop:asymp1} reads as follows.

\begin{prop}
\label{prop:asymp2}
Given a holomorphic function $g:\,\D\to\C$, which is bounded at infinity,
and if $f(z)=g(1/z)$ which is then holomorphic in $\D$, we have the identity
of asymptotic variances
\[
\sigma_\e(g)^2=\sigma(f)^2=\sigma_{\e,1}(g')^2=\sigma_1(f')^2=
\frac16\sigma_{\e,2}(g'')^2=\frac16\sigma_2(f'')^2.
\]
\end{prop}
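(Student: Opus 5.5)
The plan is to reduce everything to Proposition \ref{prop:asymp1} via the inversion $z\mapsto 1/z$, which interchanges $\D$ and $\D_\e$. (Here $g$ is holomorphic on $\D_\e$ and bounded at infinity, so $f(z)=g(1/z)$ is holomorphic on $\D$.) I will establish three separate equalities, $\sigma_\e(g)^2=\sigma(f)^2$, $\sigma_{\e,1}(g')^2=\sigma_1(f')^2$ and $\sigma_{\e,2}(g'')^2=\sigma_2(f'')^2$, and then chain them with $\sigma(f)^2=\sigma_1(f')^2=\frac16\sigma_2(f'')^2$ from Proposition \ref{prop:asymp1}.

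\emph{First equality.} For $R>1$ put $r=1/R$. Then $\int_\T|g(R\zeta)|^2\diffs(\zeta)=\int_\T|f(r\bar\zeta)|^2\diffs(\zeta)=\int_\T|f(r\zeta)|^2\diffs(\zeta)$. For the normalizations,
\[
\log\frac1{R^2-1}=\log\frac{r^2}{1-r^2}=\log\frac1{1-r^2}+\Ordo(1)
\]
as $R\to1^+$. Since both normalizations tend to $+\infty$, the $\limsup$'s coincide.

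\emph{Area variances.} I would not compare $g'$ with $f'$ pointwise. The cleaner route is to redo the Littlewood--Paley argument directly on $\D_\e$. Apply the Littlewood--Paley identity to $f_r$ and substitute $\xi=1/z$, using $\dA(z)=|\xi|^{-4}\dA(\xi)$ and $g'(\xi)=-\xi^{-2}f'(1/\xi)$, i.e.\ $|f'(rz)|^2=|\xi|^4|g'(R\xi)|^2$. Since $g(R\xi)=f(r/\xi)$, differentiating in $\xi$ gives $|g'(R\xi)|$ with an extra factor $R^{2}$ compared to the chain-rule expression for $f'(rz)$, so the resulting identity is
\[
\int_\T|g(R\zeta)|^2\diffs=|g(\infty)|^2+R^{2}\int_{\D_\e}|g'(R\xi)|^2\log|\xi|^2\,\frac{\dA(\xi)}{|\xi|^4}\cdot(\text{factor }1+\Ordo(R-1)).
\]
Next, $\log|\xi|^2=-\log(1/|\xi|^2)=(1-|\xi|^{-2})+\Ordo((1-|\xi|^{-2})^2)$. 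Exactly as in \eqref{eq:sigma-1.1}, the error weight contributes only $\Ordo(1)$ after dividing by $\log\frac1{R^2-1}$; this is justified by the same Taylor comparison used on $\D$, now with $t=|\xi|^{-2}$. That gives $\sigma_\e(g)^2=\sigma_{\e,1}(g')^2$.

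\emph{Second-order variance.} Similarly, transport the biharmonic identity with weight $(1+t)\log\frac1t-2(1-t)=\frac16(1-t)^3+\Ordo((1-t)^4)$ at $t=|\xi|^{-2}$. The problem is that $f''$ does not transform cleanly into $g''$ (it picks up a $g'$ term), so I would instead derive the biharmonic Green identity directly on $\D_\e$ for $g$, using the Laurent expansion $g=\sum_n c_n\xi^{-n}$. Both sides diagonalize in the monomials $\xi^{-n}$, which reduces the comparison to an explicit one-variable computation of radial integrals:
\[
\int_{\D_\e}|\xi|^{-2n-4}(1-|\xi|^{-2})^3|\xi|^{-4}\,\dA(\xi)\quad\text{versus}\quad \frac{6}{n^2(n+1)^2}\cdots.
\]
These are Beta integrals, and the asymptotic ratio $6$ holds up to $\Ordo(1/n)$ corrections. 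Those corrections are harmless, because the $\log\frac1{R^2-1}$ normalization only sees the tail $n\sim (R-1)^{-1}$ in an averaged way, a Tauberian-free comparison since all terms are nonnegative. This yields $\sigma_\e(g)^2=\frac16\sigma_{\e,2}(g'')^2$.

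\emph{Combining.} Chaining these with the first equality and Proposition \ref{prop:asymp1} gives the full string of identities. I expect the main obstacle to be the bookkeeping of the weights $|\xi|^{-4}$ and the Jacobian factors when transferring the second-order identity, because of the non-covariance of $g''$ under inversion. The coefficientwise computation is meant to sidestep this, and I would verify that the ratio of weights tends to the constant $6$ uniformly enough to pass to the $\limsup$.
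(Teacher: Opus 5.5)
Your proposal is correct. For $\sigma_\e(g)^2=\sigma(f)^2$ and for the first-order variance it follows the paper: change variables $\xi=1/z$, then compare weights by Taylor expansion. At second order you take a genuinely different route.

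\textbf{Second-order step: your route versus the paper's.} The paper transfers $f''$ to $g''$ by the chain rule, $f''(z)=z^{-4}g''(1/z)+2z^{-3}g'(1/z)$. It then removes the cross term using $(1\pm\epsilon)|a|^2+(1\pm\epsilon^{-1})|b|^2$ on the annulus $1<|z|<2$, arguing that the $g'$-integral with weight $(1-|z|^{-2})^3$ is negligible next to the first-order variance, and finally lets $\epsilon\to0$. You avoid the non-covariance of $g''$ by working intrinsically on $\D_\e$. With $g=\sum_{n\ge0}c_n\xi^{-n}$, one has $\int_\T|g(R\zeta)|^2\diffs(\zeta)=\sum_n|c_n|^2R^{-2n}$, while the Beta integral gives
$\frac16\int_{\D_\e}|g''(R\xi)|^2(1-|\xi|^{-2})^3|\xi|^{-4}\dA(\xi)=R^{-4}\sum_{n\ge1}\lambda_n|c_n|^2R^{-2n}$, where $\lambda_n=\frac{n^2(n+1)^2}{(n+3)(n+4)(n+5)(n+6)}\in[0,1]$ and $1-\lambda_n=\Ordo(1/n)$.

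Your route gives an exact, self-contained comparison. It needs neither the $\epsilon$-splitting nor any control of the $g'$ term, and it handles an infinite $\limsup$ with no extra work. The paper's route reuses Proposition \ref{prop:asymp1}, so it only needs a perturbative estimate.

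\textbf{Justifying the tail comparison.} Your stated reason, that the normalization ``only sees $n\sim(R-1)^{-1}$'', is not accurate: all scales $1\ll n\lesssim (R-1)^{-1}$ can contribute. The correct and easy argument is a head/tail split. For fixed $N$, the head $\sum_{n\le N}|c_n|^2R^{-2n}$ stays bounded. On $n>N$ the multipliers lie in $[1-C/N,1]$. Divide by $\log\frac1{R^2-1}$, let $R\to1^+$ and then $N\to\infty$; the limsups agree whether finite or infinite.

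\textbf{Slips in your first-order step.}
\begin{itemize}
\item The transported Littlewood--Paley identity is exactly $\int_\T|g(R\zeta)|^2\diffs(\zeta)=|g(\infty)|^2+R^2\int_{\D_\e}|g'(R\xi)|^2\log|\xi|^2\,\dA(\xi)$. There is no factor $|\xi|^{-4}$, because the Jacobian cancels the $|\xi|^4$ coming from $f'$, and there is no extra factor $1+\Ordo(R-1)$.
\item The $|\xi|^{-4}$ in the definition of $\sigma_{\e,1}$ must therefore be compared separately. This is harmless, since it changes the weight only by $\Ordo((1-|\xi|^{-2})^2)$ near $\T$; the paper's proof passes over the same point.
\item The weight error must be $\ordo(1)$ after dividing by $\log\frac1{R^2-1}$, not merely $\Ordo(1)$.
\end{itemize}
The cleanest fix is to run the first-order step coefficientwise too. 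There the multiplier is $\frac{n^2}{(n+2)(n+3)}R^{-2}$, and the same head/tail argument gives $\sigma_{\e,1}(g')^2=\sigma_\e(g)^2$. Combined with $\sigma_\e(g)^2=\sigma(f)^2$ and Proposition \ref{prop:asymp1}, this closes the whole chain of six equalities.
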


\begin{proof}
We set $f(z):=g(1/z)$, so that $f$ gets to be holomorphic on $\D$. Then
by the chain rule, $f'(z)=-z^{-2}g'(1/z)$ and
$f''(z)=z^{-4}g''(1/z)-2z^{-3}g'(1/z)$, so that with $r=1/R$ we get
first
\[
\int_\D|f'(rz)|^2(1-|z|^2)\dA(z)=R^{4}\int_{\D_\e}|g'(Rz)|^2(1-|z|^{-2})\dA(z),
\]
which in a second step leads to the equality
$\sigma_{1}(f)^2=\sigma_{\e,1}(g)^2$. Since we easily verify that 
$\sigma(f)^2=\sigma_\e(g)^2$, the equality $\sigma_\e(g)^2=\sigma_{\e,1}(g')^2$
follows from Proposition \ref{prop:asymp1}. Secondly, we get
\[
\int_\D|f''(rz)|^2(1-|z|^2)^{3}\dA(z)=R^6\int_{\D_\e}\big|R z^2 g''(Rz)-2z g'(Rz)
\big|^{2}(1-|z|^{-2})^3\dA(z),  
\]
and if we apply the following general estimate for $0<\epsilon<+\infty$,
\[
(1-\epsilon)|a|^2+(1-\epsilon^{-1})|b|^2\le
|a+b|^2\le(1+\epsilon)|a|^2+(1+\epsilon^{-1})|b|^2,\qquad a,b\in\C,
\]
we find that
\begin{multline}
(1-\epsilon)R^2\int_{1<|z|<2}
|z^2 g''(Rz)|^{2}(1-|z|^{-2})^3\dA(z)
\\
+(1-\epsilon^{-1})
\int_{1<|z|<2}\big|2z g'(Rz)\big|^{2}(1-|z|^{-2})^3\dA(z)
\\
\le
\int_{1<|z|<2}\big|R z^2 g''(Rz)-2z g'(Rz)
\big|^{2}(1-|z|^{-2})^3\dA(z)
\\
\le (1+\epsilon)R^2\int_{1<|z|<2}
|z^2 g''(Rz)|^{2}(1-|z|^{-2})^3\dA(z)
\\
+(1+\epsilon^{-1})
\int_{1<|z|<2}\big|2z g'(Rz)\big|^{2}(1-|z|^{-2})^3\dA(z).
\end{multline}
Since as $R\to1^+$, 
\begin{multline}
\int_{1<|z|<2}\big|2z g'(Rz)\big|^{2}(1-|z|^{-2})^3\dA(z)\le
16  \int_{1<|z|<2}\big|g'(Rz)\big|^{2}(1-|z|^{-2})^3\dA(z)
\\
=\ordo
\bigg(\int_{\D_\e}|g(Rz)|^2(1-|z|^{-2})|z|^{-4}\dA(z)\bigg)
\end{multline}
where the right-hand side is controlled by the asymptotic variance
$\sigma_{\e,1}(g')^2=\sigma_{\e}(g)^2$, we see from the above estimations that
$\sigma_{\e,2}(g'')=\sigma_2(f)$, by letting $\epsilon\to0$.
The assertion of the proposition now follows from Proposition \ref{prop:asymp1}.
\end{proof}

\subsection{Approximate asymptotic variance of arbitrary degree}

In the context of the unit disk $\D$ and a given holomorphic function
$f:\D\to\C$, we introduce the \emph{approximate asymptotic variance
of order $j\in\Z_{>0}$}, as given by
\begin{equation}
\sigma_{j}^{\langle r\rangle}(f)^2:=\frac{1}{\log\frac{1}{1-r^2}}
\int_\D|f(rz)|^2(1-|z|^2)^{2j-1}\dA(z) 
\label{eq:sigma-j-r}
\end{equation}
so that the asymptotic variance of order $j\in\Z_{>0}$ is the limit
\begin{equation}
\sigma_j(f)^2:=\limsup_{r\to1^-}\sigma_{j}^{\langle r\rangle}(f)^2.
\end{equation}
This agrees with the concept as developed in Subsection \ref{ss:asymp} for
$j=1,2$, and extends it to general order $j=1,2,3,\ldots$.
It is possible to extend the differentiation rule of Proposition
\ref{prop:asymp1} extends to higher order as well, but it is easier to do it
under the assumption of growth control. To this end, Proposition 4.7 from
\cite{IMS} comes in handy.
To formulate it, we need the notation of the classical weighted Bergman
spaces $A^2_\alpha(\D)$ on the disk $\D$ introduced in
\eqref{ss:weightedBergman} for $-1<\alpha<+\infty$, with limit case
$A^2_{-1}(\D)=H^2(\D)$ at the parameter edge $\alpha=-1$.

\begin{prop}
\label{prop:handyestimate-1}
Suppose that $f\in A^2_\alpha(\D)$ for some $\alpha$ with
$-1\le \alpha<+\infty$. For each $n=1,2,3,\ldots$, we then have the
estimate
\begin{equation}
0\le (\alpha+2)_{2n}\|f\|^2_{A^2_\alpha(\D)}-\|f^{(n)}\|^2_{A^2_{\alpha+2n}(\D)}
\le C_1(\alpha)\,n^2(\alpha+2)_{2n}\|f\|^2_{A^2_{\alpha+1}(\D)}  
\end{equation}
for some positive constant $C_1(\alpha)$. 
\end{prop}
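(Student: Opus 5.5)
Since both quantities are diagonal in the monomial basis, I would reduce the statement to an inequality between scalar weights. For $f(z)=\sum_{k\ge0}a_kz^k$, write
\[
\|f\|^2_{A^2_\alpha(\D)}=\sum_k |a_k|^2 w_\alpha(k),\qquad
w_\alpha(k):=(\alpha+1)\int_\D|z|^{2k}(1-|z|^2)^\alpha\dA(z)=\frac{\Gamma(k+1)\Gamma(\alpha+2)}{\Gamma(k+\alpha+2)},
\]
with $w_{-1}(k)=1$ in the Hardy limit. Also $f^{(n)}(z)=\sum_{k\ge n}a_k\frac{k!}{(k-n)!}z^{k-n}$. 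Hence
\[
\|f^{(n)}\|^2_{A^2_{\alpha+2n}}=\sum_{k\ge n}|a_k|^2\Big(\frac{k!}{(k-n)!}\Big)^2\frac{(k-n)!\,\Gamma(\alpha+2n+2)}{\Gamma(k+n+\alpha+2)}.
\]
Dividing by $(\alpha+2)_{2n}=\Gamma(\alpha+2n+2)/\Gamma(\alpha+2)$, the $k$-th coefficient of $f^{(n)}$ relative to that of $f$ becomes the ratio
\[
\lambda_k:=\frac{\|\text{term}\|_{\alpha+2n}^2}{(\alpha+2)_{2n}\,w_\alpha(k)}=\frac{k!}{(k-n)!}\cdot\frac{\Gamma(k+\alpha+2)}{\Gamma(k+n+\alpha+2)}=\prod_{i=0}^{n-1}\frac{k-i}{k+\alpha+2+i}
\]
for $k\ge n$, and $\lambda_k=0$ for $k<n$. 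Each factor lies in $[0,1)$ because $\alpha+2>0$. So $0\le\lambda_k\le1$, and the left inequality follows immediately after summing.

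For the right inequality I must show, for every $k$,
\[
(1-\lambda_k)\,w_\alpha(k)\le C_1(\alpha)\,n^2\,w_{\alpha+1}(k).
\]
Now $w_{\alpha+1}(k)/w_\alpha(k)=(\alpha+2)/(k+\alpha+2)$, so the target reduces to
\[
1-\lambda_k\le C\,\frac{n^2}{k+\alpha+2}.
\]
For $k\le n^2$ this is trivial: the left side is at most $1$ and the right side is at least $C n^2/(n^2+\alpha+2)$, which is bounded below provided $C$ is chosen depending on $\alpha$. For $k>n^2$ (so in particular $k\ge n$), I would use $1-\prod(1-x_i)\le\sum x_i$ with
\[
x_i=1-\frac{k-i}{k+\alpha+2+i}=\frac{2i+\alpha+2}{k+\alpha+2+i}\le\frac{2i+\alpha+2}{k+\alpha+2}.
\]
Summing over $i=0,\dots,n-1$ gives at most $(n^2+(\alpha+1)n)/(k+\alpha+2)\le(\alpha+2)n^2/(k+\alpha+2)$.

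I expect no real obstacle; the main care point is the Hardy endpoint $\alpha=-1$. There $w_{-1}\equiv1$, the same formulas hold with $\alpha+2=1$, and $\|\cdot\|_{A^2_0}$ is the ordinary Bergman norm. One should check the normalization $(\alpha+1)$ in the definition of $A^2_\alpha$ so that the Gamma-function formula for $w_\alpha$ is exactly right. Then $C_1(\alpha)=\max(\alpha+2,\,\text{const}\cdot(1+\alpha+2))$ works, and the result follows by summing over $k$ with the weights $|a_k|^2 w_\alpha(k)$.
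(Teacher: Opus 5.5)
Your proof is correct, but it takes a different route from the paper, which gives no argument and simply quotes Proposition~4.7 of Hedenmalm--Shimorin (the reference IMS).

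You use the radial symmetry of the weights. Both $(\alpha+2)_{2n}\|f\|^2_{A^2_\alpha(\D)}-\|f^{(n)}\|^2_{A^2_{\alpha+2n}(\D)}$ and $\|f\|^2_{A^2_{\alpha+1}(\D)}$ are diagonal in the monomials. So everything reduces to the scalar inequality $1-\lambda_k\le C_1(\alpha)\,n^2(\alpha+2)/(k+\alpha+2)$ for the explicit multiplier $\lambda_k=\prod_{i=0}^{n-1}(k-i)/(k+\alpha+2+i)$. Your formulas for $w_\alpha(k)$, $\lambda_k$ and $w_{\alpha+1}(k)/w_\alpha(k)$ are right, and the Hardy endpoint $\alpha=-1$ is handled uniformly.

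The citation keeps the paper short and ties the estimate to the norm-expansion machinery of that reference. It does leave the reader to check that the quoted statement matches the present normalization, including $\alpha=-1$. Your computation is self-contained and produces an explicit constant.

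In fact it gives the best constant. Your bound $1-\lambda_k\le(n^2+(\alpha+1)n)/(k+\alpha+2)\le(\alpha+2)n^2/(k+\alpha+2)$ holds for every $k\ge n$, not only for $k>n^2$. For $k\le n-1$, where $\lambda_k=0$, one has $k+\alpha+2\le n+\alpha+1\le(\alpha+2)n^2$, since $(\alpha+2)n^2-n-\alpha-1=(n-1)\big((\alpha+2)(n+1)-1\big)\ge0$. So the case split at $k=n^2$ is unnecessary, and $C_1(\alpha)=1$ works for all $\alpha\ge-1$.

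This is optimal: for $n=1$ one has $1-\lambda_k=(\alpha+2)/(k+\alpha+2)$ for every $k$, so the inequality with $C_1=1$ is then an identity.

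Your closing formula for $C_1(\alpha)$ conflates your auxiliary $C$, which already absorbs the factor $\alpha+2$, with $C_1$. This is harmless, since any larger constant also works.
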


\begin{proof}
This is a special instance of Proposition 4.7 in \cite{IMS}.  
\end{proof}

Since, as a matter of definition, the identity
\begin{equation}
\sigma_j^{\langle r\rangle}(f)^2=
\frac{\|f_r\|^2_{A^2_{2j-1}(\D)}}{2j\log\frac{1}{1-r^2}}
\end{equation}
holds with $f_r(z)=f(rz)$, the proposition gives corresponding relations
for the asymptotic variances.

\begin{cor}
\label{cor:sigma-comparison}
We have that
\begin{equation}
\sigma_j^{\langle r\rangle}(f)^2=\frac{1}{(2j)_{2n}}\sigma_{j+n}^{\la r\ra}(f^{(n)})^2
+\ordo(1),  
\end{equation}
provided that $\sigma_{j+\frac12}^{\langle r\rangle}(f)=\ordo(1)$ as $r\to1^-$. 
\end{cor}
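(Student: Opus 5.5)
Apply Proposition \ref{prop:handyestimate-1} to the dilate $f_r(z)=f(rz)$ with $\alpha=2j-1$. It gives
\[
0\le (2j+1)_{2n}\|f_r\|^2_{A^2_{2j-1}(\D)}-\|f_r^{(n)}\|^2_{A^2_{2j-1+2n}(\D)}
\le C_1(2j-1)\,n^2(2j+1)_{2n}\|f_r\|^2_{A^2_{2j}(\D)}.
\]
Since $f_r^{(n)}(z)=r^nf^{(n)}(rz)$, we have $\|f_r^{(n)}\|^2_{A^2_{2(j+n)-1}}=r^{2n}\|(f^{(n)})_r\|^2_{A^2_{2(j+n)-1}}$, and the factor $r^{2n}\to1$ costs nothing in the limit.

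Next, divide everything by $\log\frac{1}{1-r^2}$ and use the defining identity
\[
\sigma_j^{\langle r\rangle}(f)^2=\frac{\|f_r\|^2_{A^2_{2j-1}}}{2j\log\frac1{1-r^2}},
\]
together with its analogue at level $j+n$ for $f^{(n)}$. The left-hand expression then becomes
\[
2j\,(2j+1)_{2n}\,\sigma_j^{\langle r\rangle}(f)^2-2(j+n)\,r^{2n}\,\sigma_{j+n}^{\langle r\rangle}(f^{(n)})^2.
\]
The constants combine via $2j(2j+1)_{2n}=(2j)_{2n+1}=(2j)_{2n}(2j+2n)$, so after dividing by $2(j+n)$ and then by $(2j)_{2n}$ we arrive at
\[
\sigma_j^{\langle r\rangle}(f)^2-\frac{r^{2n}}{(2j)_{2n}}\,\sigma_{j+n}^{\langle r\rangle}(f^{(n)})^2
=\Ordo\Big(\frac{\|f_r\|^2_{A^2_{2j}}}{\log\frac1{1-r^2}}\Big).
\]
This is exactly the normalization in the statement. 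I would double-check this Pochhammer bookkeeping against the stated $1/(2j)_{2n}$, since that is where an off-by-one could hide.

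For the error term, interpret $\sigma_{j+\frac12}^{\langle r\rangle}(f)^2$ through the same formula with $2j-1$ replaced by $2j$, namely $\|f_r\|^2_{A^2_{2j}}/((2j+1)\log\frac1{1-r^2})$. Under this reading, the hypothesis $\sigma_{j+\frac12}^{\langle r\rangle}(f)=\ordo(1)$ says precisely that the right-hand side is $\ordo(1)$.

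The remaining issue is the factor $r^{2n}$ in front of $\sigma_{j+n}^{\langle r\rangle}(f^{(n)})^2$. Write $r^{2n}=1-(1-r^{2n})$. The correction is then $(1-r^{2n})\,\sigma_{j+n}^{\langle r\rangle}(f^{(n)})^2$, and by the two-sided inequality above this is bounded by a constant times $(1-r^{2n})\,\sigma_j^{\langle r\rangle}(f)^2$.

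I expect this last term to be the main obstacle, because it is only $\ordo(1)$ if $\sigma_j^{\langle r\rangle}(f)^2$ stays bounded. That boundedness is not literally part of the hypothesis. I would try to deduce it from the $\ordo(1)$ assumption at level $j+\frac12$, using the elementary comparison of $A^2_{2j-1}$ and $A^2_{2j}$ norms of dilates: the weights differ by a single factor $1-|z|^2$, and one can pass to a slightly larger radius, as in the change-of-variables argument of Proposition \ref{prop:compare-1}. Failing that, I would treat finiteness of $\sigma_j(f)$ as an implicit standing assumption, which suffices for the $\ordo(1)$ conclusion.
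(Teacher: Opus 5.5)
Your argument is the paper's own: apply Proposition~\ref{prop:handyestimate-1} to $f_r$ with $\alpha=2j-1$ and divide by $\log\frac{1}{1-r^2}$. Your Pochhammer bookkeeping $2j(2j+1)_{2n}=(2j)_{2n}(2j+2n)$ is correct and gives the stated constant $1/(2j)_{2n}$, consistent with $(2)_2=6$ in Proposition~\ref{prop:asymp1}. The paper says nothing about the factor $r^{2n}$, so on that point you are more careful than the source.

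Your last step, however, aims at the wrong target. Boundedness of $\sigma_j^{\langle r\rangle}(f)$ does not follow from the hypothesis. Taylor coefficients with $|a_k|^2\asymp k^{2j}/\log^2 (k+1)$ give $\sigma_{j+\frac12}^{\langle r\rangle}(f)\to0$ while $\sigma_j^{\langle r\rangle}(f)\to+\infty$, and the corollary still holds for such $f$. So you should neither try to prove boundedness nor add it as a standing assumption.

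What you actually need is only $(1-r)\sigma_j^{\langle r\rangle}(f)^2=\ordo(1)$, and your change-of-radius idea delivers exactly this. Substitute $\zeta=\sqrt r\,z$. For $|\zeta|<\sqrt r$ we have $0<1-|\zeta|^2/r\le 1-|\zeta|^2$ and $1-|\zeta|^2>1-r$, so
\[
\int_\D|f(rz)|^2(1-|z|^2)^{2j-1}\dA(z)\le\frac{1}{r(1-r)}\int_\D|f(\sqrt r\,\zeta)|^2(1-|\zeta|^2)^{2j}\dA(\zeta),
\]
that is,
\[
(1-r)\,\sigma_j^{\langle r\rangle}(f)^2\le\frac{\log\frac{1}{1-r}}{r\log\frac{1}{1-r^2}}\,\sigma_{j+\frac12}^{\langle\sqrt r\rangle}(f)^2=\ordo(1),\qquad r\to1^-.
\]
Now combine this with two facts: the left inequality of the proposition, which gives $r^{2n}\sigma_{j+n}^{\langle r\rangle}(f^{(n)})^2\le(2j)_{2n}\sigma_j^{\langle r\rangle}(f)^2$, and the elementary bound $1-r^{2n}\le 2n(1-r)$. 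Together they yield
\[
(1-r^{2n})\,\sigma_{j+n}^{\langle r\rangle}(f^{(n)})^2\le\frac{2n\,(2j)_{2n}}{r^{2n}}\,(1-r)\,\sigma_j^{\langle r\rangle}(f)^2=\ordo(1).
\]
This removes the factor $r^{2n}$ and completes your proof under exactly the stated hypothesis.
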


\begin{rem}
Here, the approximate variance $\sigma^{\langle r\rangle}_{j+\frac12}(f)^2$ is given
by the same formula even though $j+\frac12$ is a half-integer.
The assumption, which amounts to additional growth control on $f$, can
be removed, as in Proposition \ref{prop:asymp1}.
\end{rem}


We may apply the standard estimates of Hilbert space theory, such as the
Cauchy-Schwarz inequality.

\begin{lem}
For holomorphic $f,g:\D\to\C$, we have the estimate
\begin{equation}
\sigma_{m+n}^{\langle r\rangle}(fg)^2\le \sigma_{2m}^{\langle r\rangle}(f^2)\,
\sigma_{2n}^{\langle r\rangle}(g^2),
\end{equation}
provided that $m,n\in\frac{1}{2}\Z_{>0}$ are allowed to be
half-integers with $m+n\in\Z$.
\label{lem:Cauchy-Schwarz-1001}
\end{lem}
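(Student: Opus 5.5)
This is Cauchy--Schwarz in $L^2$ with respect to the measure $(1-|z|^2)^{2(m+n)-1}\dA(z)$, after splitting the weight. First write out the left-hand side from the definition \eqref{eq:sigma-j-r}, with $j=m+n$:
\[
\sigma_{m+n}^{\langle r\rangle}(fg)^2=\frac{1}{\log\frac{1}{1-r^2}}\int_\D|f(rz)|\,|g(rz)|\,(1-|z|^2)^{2m+2n-1}\,|f(rz)|\,|g(rz)|\,\dA(z).
\]
Next split the weight as
\[
(1-|z|^2)^{2m+2n-1}=(1-|z|^2)^{2m-\frac12}\,(1-|z|^2)^{2n-\frac12}.
\]
Then apply the Cauchy--Schwarz inequality to the pair
\[
|f(rz)|^2(1-|z|^2)^{2m-\frac12}\quad\text{and}\quad |g(rz)|^2(1-|z|^2)^{2n-\frac12}.
\]
This gives
\[
\int_\D|f(rz)g(rz)|^2(1-|z|^2)^{2m+2n-1}\dA\le\Big(\int_\D|f(rz)|^4(1-|z|^2)^{4m-1}\dA\Big)^{\frac12}\Big(\int_\D|g(rz)|^4(1-|z|^2)^{4n-1}\dA\Big)^{\frac12}.
\]

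The exponents are exactly right. Since $2(2m)-1=4m-1$, the first factor on the right equals
\[
\Big(\log\tfrac{1}{1-r^2}\Big)^{\frac12}\,\sigma_{2m}^{\langle r\rangle}(f^2),
\]
where $\sigma^{\langle r\rangle}_{2m}(f^2)$ is understood as the nonnegative square root of the approximate variance of $f^2$, again defined by \eqref{eq:sigma-j-r} with $j=2m$. The same holds for $g$ with $4n-1=2(2n)-1$. Dividing both sides by $\log\frac{1}{1-r^2}$ yields the claimed inequality. The hypothesis $m,n\in\frac12\Z_{>0}$ guarantees $2m,2n\in\Z_{>0}$, so the right-hand variances are of integer order. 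The condition $m+n\in\Z$ makes the left-hand side an integer-order variance, although the formula would make sense regardless.

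There is no real obstacle; the only points to check are bookkeeping. One is the normalization convention: \eqref{eq:sigma-j-r} carries no $(2j)$ factor, so no constants appear. The other is that all integrals may be infinite, in which case the inequality is trivial. The weight exponents $4m-1$ and $4n-1$ exceed $-1$ since $m,n\ge\frac12$, so the integrals are well defined for $0<r<1$, where $f(rz)$ and $g(rz)$ are bounded on $\D$ and hence the integrals are in fact finite.
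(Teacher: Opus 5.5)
Your proof is correct and is the same argument as the paper's, whose entire proof is the remark that the estimate follows from the Cauchy--Schwarz inequality. Your splitting $(1-|z|^2)^{2m+2n-1}=(1-|z|^2)^{2m-\frac12}(1-|z|^2)^{2n-\frac12}$ supplies the details, and the resulting exponents $4m-1=2(2m)-1$ and $4n-1=2(2n)-1$ match the definitions of $\sigma^{\langle r\rangle}_{2m}(f^2)$ and $\sigma^{\langle r\rangle}_{2n}(g^2)$ exactly, with no stray constants.
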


\begin{proof}
This follows from the Cauchy-Schwarz inequality.
\end{proof}

There is a version of the estimate of Lemma \ref{lem:Cauchy-Schwarz-1001}
which applies when $m>0$ and $n=0$, with $g=1$. 

\begin{lem}
Suppose $f$ is holomorphic in $\D$ with $f(z)=\Ordo(1-|z|^2)^{-m}$ uniformly
in $\D$, for some $m\in\Z_{>0}$. It then follows that for $j,k=1,2,3,\ldots$, 
\[
j<k\,\,\,\Longrightarrow\,\,\,
\sigma_{jm}^{\langle r\rangle}(f^j)^k\le\sigma_{km}^{\langle r\rangle}(f^k)^j
+\ordo(1)
\]
holds as $r\to1^-$. 
\label{lem:moments}
\end{lem}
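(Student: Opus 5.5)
The plan is to apply Hölder's inequality in the weighted measure and then use the growth hypothesis to show that the normalizing factors are harmless. Set $g:=f^j$ and write
\[
\int_\D|f(rz)|^{2j}(1-|z|^2)^{2jm-1}\dA(z)
=\int_\D|f(rz)|^{2j}(1-|z|^2)^{2jm}\,\frac{\dA(z)}{1-|z|^2}.
\]
The measure $\diff\nu:=(1-|z|^2)^{-1}\dA$ has infinite mass, so Hölder cannot be applied directly. Instead I would truncate to the disk $\D(0,\rho)$ with $\rho$ close to $1$, chosen in terms of $r$, say $1-\rho^2=(1-r^2)^{A}$ for a fixed large $A$, or simply $\rho=r$. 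On this disk the measure $\nu$ has mass $\log\frac{1}{1-\rho^2}$, which is comparable to $\log\frac1{1-r^2}$.

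Next I apply Hölder with exponents $k/j$ and $k/(k-j)$ to $F:=|f(rz)|^{2j}(1-|z|^2)^{2jm}$, noting that
\[
F^{k/j}=|f(rz)|^{2k}(1-|z|^2)^{2km}.
\]
This gives
\[
\int_{\D(0,\rho)}F\,\diff\nu\le\Big(\int_{\D(0,\rho)}F^{k/j}\diff\nu\Big)^{j/k}\nu(\D(0,\rho))^{1-j/k}.
\]
I then divide by $\log\frac1{1-r^2}$ and raise both sides to the power $k$. If $\nu(\D(0,\rho))=\log\frac1{1-r^2}(1+\ordo(1))$, the right-hand side becomes $\sigma_{km}^{\langle r\rangle}(f^k)^{j}(1+\ordo(1))$, using that $\int_{\D(0,\rho)}F^{k/j}\diff\nu\le \int_\D F^{k/j}\diff\nu$.

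The remaining task, which I expect to be the main obstacle, is to control the tail $\int_{\rho<|z|<1}F\,\diff\nu$. This is exactly where the hypothesis $f(z)=\Ordo(1-|z|^2)^{-m}$ enters. It gives
\[
|f(rz)|^{2j}(1-|z|^2)^{2jm}\le C\Big(\frac{1-|z|^2}{1-r^2|z|^2}\Big)^{2jm}.
\]
With $\rho=r$, the tail over $r<|z|<1$ is bounded by a constant times
\[
\int_{r}^1\Big(\frac{1-t^2}{1-r^2t^2}\Big)^{2jm}\frac{t\,\diff t}{1-t^2}=\Ordo(1),
\]
as seen by substituting $1-t^2=(1-r^2)s$. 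Meanwhile $\nu(\D(0,r))=\log\frac1{1-r^2}$ exactly. Hence the tail is $\Ordo(1)$, and after normalization it is $\ordo(1)$.

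Finally, the hypothesis also makes both approximate variances $\Ordo(1)$: the same bound shows
\[
\int_\D F\,\diff\nu\lesssim \log\frac{1}{1-r^2}.
\]
Consequently the additive $\ordo(1)$ errors survive raising to the powers $k$ and $j/k$, since the map $x\mapsto x^{p}$ is uniformly continuous on bounded sets. This yields $\sigma_{jm}^{\langle r\rangle}(f^j)^k\le\sigma_{km}^{\langle r\rangle}(f^k)^j+\ordo(1)$ as $r\to1^-$.
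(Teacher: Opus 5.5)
Your proof is correct. It follows the paper's strategy (H\"older with exponents $k/j$ and $k/(k-j)$, with the growth bound absorbing an $\Ordo(1)$ error), but it handles the infinite mass of $\dA(z)/(1-|z|^2)$ differently.

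The paper applies H\"older on all of $\D$ against the finite measure $\dA(z)/(1-r^2|z|^2)$, whose total mass is $r^{-2}\log\frac{1}{1-r^2}$. This puts the modified weight $(1-r^2|z|^2)^{\frac kj-1}(1-|z|^2)^{2km-\frac kj}$ into the $k$-th moment integral. The paper then needs a mean value estimate for $(1-r^2|z|^2)^{\alpha}-(1-|z|^2)^{\alpha}$, with the splitting $\alpha=l+\alpha_1$, together with the growth bound. These show that the modified integral differs from $\int_\D|f(rz)|^{2k}(1-|z|^2)^{2km-1}\dA(z)$ by $\Ordo(1)$.

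You keep the original measure and restrict it to $\D(0,r)$, where its mass is exactly $\log\frac{1}{1-r^2}$. H\"older then yields precisely the $k$-th moment integrand. The only error is the annulus $r<|z|<1$, and the substitution $1-t^2=(1-r^2)s$ shows it is at most a constant times $\int_0^1 s^{2jm-1}\,\diff s=\frac{1}{2jm}$.

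Your route is somewhat more transparent, since it needs no weight comparison. You also state explicitly that both approximate variances are $\Ordo(1)$, which is needed to pass the additive error through the powers $j/k$ and $k$. The paper leaves this implicit, even though it also needs it to absorb its factor $r^{-2(k-j)}$. The paper's route avoids splitting the domain but pays for that with the weight comparison.
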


\begin{rem}
Since we think of the asymptotic variances as square amplitude averages,
the inequality just corresponds to the usual moment inequality
$\mathbb{E}(|X|^j)^k\le\mathbb{E}(|X|^k)^j$ for $j<k$ from probability theory. 
\end{rem}

\begin{proof}
The approximate asymptotic variance is, as a matter of definition, given by
\begin{equation}
\sigma_{jm}^{\langle r\rangle}(f^j)^2=
\frac{1}{\log\frac{1}{1-r^2}}\int_{\D}|f(rz)|^{2j}(1-|z|^2)^{2jm-1}\diffA(z).
\end{equation}
By H\"older's inequality, we have that
\begin{multline}
\int_\D|f(rz)|^{2j}(1-|z|^2)^{2jm-1}\dA(z)\le\bigg\{
\int_\D|f(rz)|^{2k}(1-r^2|z|^2)^{\frac{k}{j}-1}
(1-|z|^2)^{2km-\frac{k}{j}}\dA(z)\bigg\}^{\frac{j}{k}}
\\
\times\bigg\{\int_\D\frac{\dA(z)}{1-r^2|z|^2}\bigg\}^{1-\frac{j}{k}},
\end{multline}
and by elementary calculus, for $0<\alpha<+\infty$, which we split as
$\alpha=l+\alpha_1$, where $l\in\Z_{\ge0}$ and $0<\alpha_1\le1$, we have
\begin{multline}
0<(1-r^2|z|^2)^{\alpha}-(1-|z|^2)^{\alpha}=(1-r^2)\alpha|z|^2
(1-\theta|z|^2)^{\alpha-1}
\\
\le \alpha\, (1-r^2)(1-r^2|z|^2)^l\,(1-|z|^2)^{\alpha_1-1},
\end{multline}
where $\theta$ denotes a quantity with $r^2<\theta<1$. We apply this with
$\alpha:=\frac{k}{j}-1>0$, and using the given bound
$|f(z)|=\Ordo(1-|z|^2)^{-m}$, we may obtain that
\begin{multline}
\int_\D|f(rz)|^{2k}(1-r^2|z|^2)^{\frac{k}{j}-1}
(1-|z|^2)^{2km-\frac{k}{j}}\dA(z)
\\
=\int_\D|f(rz)|^{2k}(1-|z|^2)^{2km-1}\dA(z)+\Ordo(1),
\end{multline}
as $r\to1^-$. The assertion now follows from this.
\end{proof}

At times, we shall have use of the parallelogram law.

\begin{prop}
{\rm(Parallelogram law)} Suppose $0<r<1$ and that $a_k,b_k\in\C$ for
$k=1,\ldots,n$, with $a_1\bar b_1+\cdots+a_n\bar b_n=0$. For two
holomorphic functions $f,g$ on $\D$, we have, for $j=1,2,3,\ldots$,
\[
\sum_{k=1}^{n}\sigma_j^{\la r\ra}(a_kf+b_kg)^2=\sum_{k=1}^{n}\big(|a_k|^2
\sigma_j^{\la r\ra}(f)^2+|b_k|^2\sigma_j^{\la r\ra}(g)^2\big).
\]  
\label{prop:parallel}
\end{prop}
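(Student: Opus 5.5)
The identity is a statement about a fixed weighted $L^2$ norm, so the plan is to expand the squared moduli pointwise and let the orthogonality hypothesis kill the cross terms. By definition \eqref{eq:sigma-j-r},
\[
\sigma_j^{\la r\ra}(h)^2=\frac{1}{\log\frac{1}{1-r^2}}\int_\D|h(rz)|^2(1-|z|^2)^{2j-1}\dA(z)
\]
for any holomorphic $h$. This is the squared norm of $h_r$ in the Hilbert space $L^2(\D,(1-|z|^2)^{2j-1}\dA)$, rescaled by a positive constant depending only on $r$ and $j$. Since $0<r<1$, the function $h_r$ is bounded on $\D$, so all integrals are finite. We may therefore work with the sesquilinear form
\[
B(f,g):=\int_\D f(rz)\overline{g(rz)}(1-|z|^2)^{2j-1}\dA(z).
\]

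First, expand each term. For each $k$ we have, pointwise in $z$,
\[
|a_kf+b_kg|^2=|a_k|^2|f|^2+|b_k|^2|g|^2+2\,\re\big(a_k\bar b_k f\bar g\big).
\]
Integrating against the weight gives
\[
\sigma_j^{\la r\ra}(a_kf+b_kg)^2=|a_k|^2\sigma_j^{\la r\ra}(f)^2+|b_k|^2\sigma_j^{\la r\ra}(g)^2+\frac{2\,\re\big(a_k\bar b_k\,B(f,g)\big)}{\log\frac{1}{1-r^2}}.
\]

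Second, sum over $k=1,\ldots,n$. The cross terms combine, by linearity of the real part, into
\[
\frac{2\,\re\big((a_1\bar b_1+\cdots+a_n\bar b_n)B(f,g)\big)}{\log\frac{1}{1-r^2}},
\]
which vanishes because $a_1\bar b_1+\cdots+a_n\bar b_n=0$ by hypothesis. What remains is exactly the asserted identity.

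There is no real obstacle here. The only point to note is that the coefficient of $B(f,g)$ after summation is the single complex number $\sum_k a_k\bar b_k$, so the hypothesis removes the cross term without any orthogonality assumption on $f$ and $g$. For instance, the classical parallelogram law is the case $n=2$, $(a_1,b_1)=(1,1)$, $(a_2,b_2)=(1,-1)$.
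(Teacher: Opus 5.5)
Your proof is correct and follows the same route as the paper, whose proof is the one-line remark that the identity follows by expanding the inner product. You carry out that expansion, and the summed cross term $2\,\re\big((\sum_k a_k\bar b_k)\int_\D f(rz)\overline{g(rz)}(1-|z|^2)^{2j-1}\dA(z)\big)$ vanishes by the hypothesis $\sum_k a_k\bar b_k=0$.
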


\begin{proof}
This follows by expanding the inner product.    
\end{proof}  
\subsection{Weighted Bergman spaces on the bidisk  $\D^2$}

We need a specific family of weighted Bergman $L^2$ spaces on the bidisk.  
For parameters $\alpha,\alpha',\beta\in\R$ with $-1<\alpha,\alpha'<+\infty)$,
let $A^2_{\alpha,\alpha';\beta}(\D^2)$ denote the weighted Bergman space on $\D^2$
that consists of all holomorphic functions
$f:\mathbb{\D}^2\rightarrow\mathbb{C}$ such that
\[
\|f\|^2_{A^2_{\alpha,\alpha';\beta}(\mathbb{\D}^2)}:=\int_{\D}\int_{\D} |f(z,w)|^2
|z-w|^{2\beta}\dA_\alpha(z)\dA_{\alpha'}(w)<+\infty,
\]
where we recall the notation $\dA_\alpha(z)=(\alpha+1)(1-|z|^2)^\alpha$
and the same for $\dA_{\alpha'}$.
We also consider the limit case $\alpha=-1$, and write $A^2_{-1,\alpha';0}(\D^2)$
for the Hilbert space of holomorphic functions $f:\D^2\raro\mathbb{C}$ such
that
\[
\|f\|_{A^2_{-1,\alpha';0}(\mathbb{\D}^2)}:=\limsup_{r\to1^-}\int_{\D}\int_{\mathbb{T}}
|f(rz,w)|^2\diff s(z)\dA_{\alpha'}(w)<+\infty.
\]
The limit case $\alpha=\alpha'=-1$ also makes sense as the Hardy space
$H^2(\D^2)$:
\[
\|f\|_{A^2_{-1,-1;0}(\mathbb{\D}^2)}:=\limsup_{r\to1^-}\int_{\D}\int_{\mathbb{T}}
|f(rz,rw)|^2\diff s(z)\diff s(w)<+\infty.
\]
In \cite{IMS, AMS, HHSSAS}, Hedenmalm, Shimorin, and partially Sola developed
an expansion of the norm of functions in
$A^2_{\alpha,0;\beta}(\mathbb{\D}^2)$ was developed in terms of a diagonal
norm expansion, somewhat analogous to the Taylor expansion along the diagonal.
This technique is quite useful, and in \cite{IMS, AMS, HHSSAS},
it was used to estimate from above the so-called
\emph{universal integral means spectrum of conformal mappings} \cite{AMS}.
Here, we will instead use it to estimate our various notions of asymptotic
variance.
We recall the basic result of \cite{IMS} (the roles of the variables $z,w$
are reversed). It is only concerned with $\alpha'=0$, but ought to be part of
a general norm expansion formula valid for all $\alpha,\alpha',\beta$. 

\begin{thm} {\rm (Hedenmalm, Shimorin)}
If $f\in A^2_{\alpha,0;\beta}(\mathbb{\D}^2)$, then we have the following norm
expansion
\begin{equation}\label{diagonal restriction}
\|f\|^2_{A^2_{\alpha,0;\beta}(\D^2)}=
\sum_{n=0}^{+\infty}\frac{1}{\sigma(\alpha, \beta+n)}
\Big\|\sum_{k=0}^{n}a_{k,n}\partial_z^{n-k}
\oslash[\partial_w^kf]\Big\|^2_{A^2_{\alpha+2\beta+2n+2}(\D)},
\end{equation}
where
the constants are 
\[
\frac{1}{\sigma(\alpha, \beta)}=
\frac{1}{1+\beta}\frac{\Gamma(\alpha+2)\Gamma(\alpha+2\beta +3)}
{\Gamma(\alpha+\beta+2)\Gamma(\alpha+\beta+3)},
\]
and 
\[
a_{k,n}=\frac{(-1)^{n-k}}{k!(n-k)!}
\frac{(\beta+k+2)_{n-k}}{(\alpha+2\beta+n+k+3)_{n-k}}.
\]
\label{thm:HS-1}
\end{thm}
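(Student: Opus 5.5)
The plan is to organize $A^2_{\alpha,0;\beta}(\D^2)$ by order of vanishing along the diagonal. For $n=0,1,2,\ldots$, let $\mathcal{H}_n$ be the closed subspace of those $f$ with $\oslash(\partial_w^kf)=0$ for $k<n$. Equivalently, these are the $f$ of the form $f(z,w)=(w-z)^ng(z,w)$ with $g$ holomorphic on $\D^2$. Since
\[
\|(w-z)^ng\|_{A^2_{\alpha,0;\beta}(\D^2)}=\|g\|_{A^2_{\alpha,0;\beta+n}(\D^2)},
\]
division by $(w-z)^n$ is a unitary map $\mathcal{H}_n\to A^2_{\alpha,0;\beta+n}(\D^2)$. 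It therefore suffices to prove the single-level statement: for every $\beta$ in the admissible range,
\begin{equation*}
\|f\|^2_{A^2_{\alpha,0;\beta}(\D^2)}=\frac{1}{\sigma(\alpha,\beta)}\|\oslash f\|^2_{A^2_{\alpha+2\beta+2}(\D)}+\|f-\mathrm{E}_\beta\oslash f\|^2_{A^2_{\alpha,0;\beta}(\D^2)},
\end{equation*}
where $\mathrm{E}_\beta h$ is the minimal-norm holomorphic extension of $h$ off the diagonal. Then $f-\mathrm{E}_\beta\oslash f\in\mathcal{H}_1$, and dividing by $(w-z)$ and iterating with $\beta$ replaced by $\beta+1,\beta+2,\ldots$ produces the series, with the $n$-th term being $\sigma(\alpha,\beta+n)^{-1}\|\oslash g_n\|^2_{A^2_{\alpha+2\beta+2n+2}(\D)}$. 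Here $g_n$ is the $n$-th quotient after the successive subtractions. Convergence of the remainders to $0$ follows because $\bigcap_n\mathcal{H}_n=\{0\}$: a holomorphic function vanishing to infinite order on the diagonal is identically zero.

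For the single-level statement, I would identify $(\mathcal{H}_1)^\perp$ and the restriction map on it using reproducing-kernel test functions. The candidates are $K_\lambda(z,w)=(1-z\bar\lambda)^{-a}(1-w\bar\lambda)^{-b}$ with $a+b=\alpha+2\beta+4$. The claim to establish is that $\oslash:(\mathcal{H}_1)^\perp\to A^2_{\alpha+2\beta+2}(\D)$ is, up to the factor $\sigma(\alpha,\beta)^{-1/2}$, an isometry. The cleanest route is Möbius covariance. The weight $|z-w|^{2\beta}(1-|z|^2)^\alpha$ transforms compatibly under the diagonal action $(z,w)\mapsto(\gamma z,\gamma w)$ once we use the appropriate multiplier $(\gamma')^{s}$, where $s=(\alpha+2\beta+4)/2$. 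The same multiplier makes the $A^2_{\alpha+2\beta+2}(\D)$ norm invariant, which matches that weight shift. This reduces the computation of the norm of the point evaluation $h\mapsto h(0)$ to the origin. There the minimal extension of the constant $1$ is computed directly in polar coordinates. The resulting extremal integral is a product of Beta integrals, and it should produce exactly $\Gamma(\alpha+2)\Gamma(\alpha+2\beta+3)/((1+\beta)\Gamma(\alpha+\beta+2)\Gamma(\alpha+\beta+3))$. This yields $\sigma(\alpha,\beta)$.

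Finally, the coefficients $a_{k,n}$ must be identified. The $n$-th level quantity $\oslash g_n$ is a linear differential expression in the diagonal jets $\oslash(\partial_w^kf)$, $k\le n$. It is characterized by two properties: it kills the minimal extensions $\mathrm{E}_\beta,\ldots$ produced at lower levels, and it equals $n!^{-1}\oslash(\partial_w^n f)$ modulo lower terms on $\mathcal{H}_n$. I would determine it by applying the ansatz $\sum_k a_{k,n}\partial_z^{n-k}\oslash\partial_w^k$ to the kernels $K_\lambda$ with $b$ varying, and then imposing orthogonality. This gives a triangular linear system whose solution is a ratio of Pochhammer symbols. It is then checked against the stated $a_{k,n}$ by a Chu--Vandermonde-type identity, in the spirit of Lemma \ref{lem:combin-1}.

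This last identification of $a_{k,n}$, together with keeping track of the exponent $a+b$ of the kernels through the successive divisions by $(w-z)$, is the main obstacle. The first thing I would try is to verify the formula at $n=1,2$ against the test kernels and then induct on $n$.
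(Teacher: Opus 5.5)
The paper gives no proof of this theorem; it is quoted from \cite{IMS}. Your outline therefore has to stand on its own. Its architecture is sound: the filtration $\mathcal{H}_n$, the unitary division by $(w-z)^n$, the single-level identity, and $\bigcap_n\mathcal{H}_n=\{0\}$ together give $\|f\|^2=\sum_n\sigma(\alpha,\beta+n)^{-1}\|\oslash g_n\|^2$. The two things that make the statement what it is, the value of $\sigma(\alpha,\beta)$ and the coefficients $a_{k,n}$, are asserted rather than derived. For the $a_{k,n}$, the test family you propose is the wrong one.

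Two points need tightening first. Since $|\gamma(z)-\gamma(w)|^2=|\gamma'(z)||\gamma'(w)||z-w|^2$, the unitary on $A^2_{\alpha,0;\beta}(\D^2)$ is $f\mapsto f(\gamma z,\gamma w)\,\gamma'(z)^{(\alpha+\beta+2)/2}\gamma'(w)^{(\beta+2)/2}$; only its diagonal restriction is $(\gamma')^{s}$. Apply it to the constant $1$, which is orthogonal to $\mathcal{H}_1$ by rotation invariance. This shows that $\mathcal{H}_1^\perp$ is the closed span of $K^{(a_0,b_0)}_\lambda(z,w):=(1-z\bar\lambda)^{-a_0}(1-w\bar\lambda)^{-b_0}$ with exactly $a_0=\alpha+\beta+2$ and $b_0=\beta+2$. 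It also gives $\langle f,K^{(a_0,b_0)}_\lambda\rangle=\|1\|^2f(\lambda,\lambda)$. No other split of $\alpha+2\beta+4$ lies in $\mathcal{H}_1^\perp$, because the difference of two such kernels with $\lambda\neq0$ is a nonzero element of $\mathcal{H}_1$. Hence $1/\sigma(\alpha,\beta)=\|1\|^2=\int_\D\int_\D|z-w|^{2\beta}\dA_\alpha(z)\dA(w)$. This is not a product of Beta integrals: after the substitution $w=(z-u)/(1-\bar zu)$ it becomes $\frac{(\alpha+1)\Gamma(\alpha+2\beta+3)}{\beta+1}\sum_{k\ge0}\frac{(\beta+1)_k(\beta+2)_k}{k!\,\Gamma(\alpha+2\beta+4+k)}$, which Gauss's ${}_2F_1(\cdot\,;1)$ formula sums to the stated value.

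The real gap is the identification of $a_{k,n}$. Set $L_n:=\sum_k a_{k,n}\partial_z^{n-k}\oslash\partial_w^k$. What $L_n$ must annihilate is $\mathcal{H}_n^\perp=\bigoplus_{j<n}(w-z)^j\,\overline{\mathrm{span}}\{K^{(a_0+j,b_0+j)}_\lambda\}$. The identity $\bar\lambda(w-z)=(1-z\bar\lambda)-(1-w\bar\lambda)$ places this inside the closed span of the $K^{(a_0+i,b_0+j)}_\lambda$ with $i,j\ge0$ and $i+j\le n-1$. So both exponents shift, and the total exponent grows with the level.

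Kernels with the same total exponent but a different split are not in $\mathcal{H}_n^\perp$, and the stated $L_n$ does not kill them. For instance, with $\alpha=\beta=0$ and $n=2$, one gets $L_2K^{(3,1)}_\lambda=\frac12\bar\lambda^2(1-z\bar\lambda)^{-6}\neq0$. So imposing orthogonality against ``$K_\lambda$ with $b$ varying'' at fixed $a+b$ would contradict the theorem.

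With the correct family, no triangular system is needed. One has $L_nK^{(a,b)}_\lambda=\bar\lambda^n(1-z\bar\lambda)^{-a-b-n}\sum_k a_{k,n}(b)_k(a+b+k)_{n-k}$. Take $(a,b)=(a_0+i,b_0+j)$. Then $(b_0+j)_k(b_0+k)_{n-k}$ is a constant times $(b_0+k)_j$. Also $(a_0+b_0+i+j+k)_{n-k}/(a_0+b_0+n+k-1)_{n-k}$ is a constant times $(a_0+b_0+i+j+k)_{n-1-i-j}$. The sum is therefore a constant multiple of $\sum_k(-1)^{n-k}\binom nk P(k)$ with $\deg P=n-1-i<n$. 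That is an $n$-th finite difference, so it vanishes; this is the mechanism of Lemma \ref{lem:combin-1}. Annihilating a spanning set suffices, since $f\mapsto L_nf(z)$ is continuous.

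Finally, $a_{n,n}=1/n!$, and every $\partial_z^{n-k}\oslash\partial_w^k$ with $k<n$ vanishes on $\mathcal{H}_n$, so on $\mathcal{H}_n$ the normalization is exact, not ``modulo lower terms''. This gives $L_nf=L_nP_{\mathcal{H}_n}f=\frac1{n!}\oslash\partial_w^nP_{\mathcal{H}_n}f=\oslash g_n$, which is exactly the $n$-th term of the expansion.
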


Theorem \ref{thm:HS-1} applies also in the limit case $\alpha=-1$, 
with the same proof. If we plug in $(\alpha,\alpha',\beta)=(-1,0,0)$,
we obtain the norm expansion
\begin{equation}
\label{estimate:1}
\|f\|^2_{A^2_{-1,0;0}(\D^2)}=\sum_{n=0}^{+\infty}\frac{(2n+1)!}{((n+1)!)^2}
\bigg\Vert  \sum_{k=0}^n \frac{(-1)^{n-k}(k+2)_{n-k}}{k!(n-k)!(n+k+2)_{n-k}}
\partial_z^{n-k}\oslash[\partial_w^{k}f] \bigg\Vert^2_{A^2_{2n+1}(\D)}.
\end{equation}

\subsection{Study of the asymptotic variance of $h_\psi=\log\psi'$}

It is because the diagonal norm expansion was developed in the context of
the bidisk $\D^2$ in \cite{IMS, AMS, HHSSAS} that we will return to working
with the Grunsky operator $\Gamma_\vp$ for $\vp\in\mathscr{S}$. 
The following lemma is well-known.

\begin{lem}
Let $\varphi\in \cls$ and let $Q=\mathscr{Q}[\Gamma_{\varphi}]$. Then
\begin{equation*}
\int_{\D} \lvert \partial_w Q(z,w) \rvert^2\dA (w)\leq \log
\frac{1}{1-|z|^2},\quad z\in \D.
\end{equation*}
\label{lem:basic-999}
\end{lem}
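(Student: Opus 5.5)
Since $Q=\mathscr{Q}[\Gamma_\vp]$ is the Dirichlet symbol of a contraction, I will write $\partial_w Q$ as an inner product against a Bergman kernel and then apply Cauchy--Schwarz together with the contractivity of $\Gamma_\vp$.

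By definition, $Q(z,w)=zw\langle\Gamma_\vp\bar s_w,s_z\rangle_\D$. From the integral formula for $\Gamma_\vp$ and the symmetry of its kernel $K(\xi,\eta)=\frac{\vp'(\xi)\vp'(\eta)}{(\vp(\xi)-\vp(\eta))^2}-\frac{1}{(\xi-\eta)^2}$, the roles of $z$ and $w$ can be interchanged. The $w$-derivative then falls on a reproducing-type kernel: just as $\partial_z Q(z,w)=w\langle M\bar{\Berg}_z,s_w\rangle$ in the multiplication operator case, here
\[
\partial_w Q(z,w)=\langle \Gamma_\vp(\bar s^{\sharp}_z),\Berg_w\rangle_\D=\Pop\Gamma_\vp(\bar s^\sharp_z)(w)=\Gamma_\vp(\bar s^\sharp_z)(w),
\]
where $s^\sharp_z(\xi)=z\,s_z(\xi)$. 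The last equality uses that $\Gamma_\vp$ maps into $A^2(\D)$, as established earlier for the Grunsky operator. Before going on, I will check the normalization directly: $\partial_w(w\,s_w)$ should produce the Bergman kernel $\Berg_w$, since $\partial_w\frac{w}{1-w\bar\xi}=\frac{1}{(1-w\bar\xi)^2}$. This matches the computation in Proposition~\ref{uniqueness:1}, with one variable left undifferentiated.

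Given this identity, the lemma follows at once. The function $w\mapsto\partial_wQ(z,w)$ equals $\Gamma_\vp f_z$ with $f_z=\overline{z s_z}$, so
\[
\int_\D|\partial_wQ(z,w)|^2\dA(w)=\|\Gamma_\vp f_z\|^2_{L^2(\D)}\le\|f_z\|^2_{L^2(\D)}=|z|^2\|s_z\|^2_{L^2(\D)}.
\]
The Szeg\H{o} kernel norm is computed from $\|s_z\|^2=\sum_{k\ge0}|z|^{2k}/(k+1)$, which gives $|z|^2\|s_z\|^2=\log\frac{1}{1-|z|^2}$. This is the claimed bound.

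The main obstacle is verifying the first identity carefully. I need to confirm that differentiation under the integral in $w$ is legitimate and that the Grunsky kernel is symmetric, so that the $w$-dependence sits in the second slot exactly as written. The justification is that $s_w$ depends antiholomorphically and smoothly on $w$ as an element of $L^2$, so differentiating $\langle \Gamma_\vp \bar s_w,\, z s_z\rangle$ in $w$ is differentiation of a holomorphic $L^2$-valued map. The alternative route avoids symmetry altogether: write $\partial_wQ(z,w)=\langle\Gamma_\vp\bar\Berg_w,\,zs_z\rangle$ and use $\Gamma_\vp^*=\bar\Gamma_\vp$, which again follows from the symmetry of the kernel, to move the operator onto $zs_z$. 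Either version reduces the lemma to the contractivity of $\Gamma_\vp$.
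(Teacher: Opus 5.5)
This is essentially the paper's proof: it too uses the symmetry $Q(z,w)=Q(w,z)$, read off directly from the explicit logarithmic formula for $\mathscr{Q}[\Gamma_\vp]$, to obtain $\partial_wQ(z,w)=z\,\Gamma_\vp(\bar s_z)(w)$, and then concludes from the contractivity of $\Gamma_\vp$ and $|z|^2\|s_z\|^2_{L^2(\D)}=\log\frac{1}{1-|z|^2}$. There is one harmless conjugation slip: the input function should be $f_z=z\,\bar s_z$ rather than $\overline{z s_z}=\bar z\,\bar s_z$, and in your alternative route the identity is $\partial_wQ(z,w)=z\langle\Gamma_\vp\bar\Berg_w,s_z\rangle_\D$, not $\langle\Gamma_\vp\bar\Berg_w,zs_z\rangle_\D$; since the two differ only by the unimodular factor $z/\bar z$, the norm bound is unaffected.
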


\begin{proof}
Since
\[
Q(z,w)=Q(w,z)=\mathscr{Q}[\Gamma_\vp](w,z)=
zw\la \Gamma_{\vp}(\bar{s}_z),  s_w \ra,
\]
it follows that
\[
\partial_wQ(z,w)=z\la \Gamma_{\vp}(\bar{s}_z), \Berg_w \ra
= z\Gamma_{\vp}(\bar{s}_z)(w),
\]
where $\Berg_z(\xi)=(1-\xi\bar{z})^{-2}$ is the Bergman kernel of $A^2(\D)$.
Consequently, since $\Gamma_\vp$ is contractive, we have
\[
\int_{\D} \lvert \partial_w Q(z,w) \rvert^2\dA (w)=
|z|^2\|\Gamma_{\vp}(\bar{s}_z)\|_{A^2(\D)}^2\leq |z|^2
\|\bar{s}_z\|_{A^2(\D)}^2=\log \frac{1}{1-|z|^2},\qquad z\in \D.
\]
This completes the proof.
\end{proof}

Let $0<r<1$. It then follows from Lemma \ref{lem:basic-999} that
\[
\int_{\D} \lvert  \partial_wQ(rz,w) \rvert^2\dA (w)=\int_{\D}\lvert
\partial_wQ(rz,w) \rvert^2\dA (w)\leq\log \frac{1}{1-r^2},\qquad z\in\D.
\]
By the fact that the dilation $f\mapsto f_r$, $f_r(z):=f(rz)$, acts
contractively on the Bergman space $A^2(\D)$, we conclude from this that
\[
\int_{\D} \lvert(\partial_wQ)(rz,rw) \rvert^2\dA (w) 
\le \int_{\D} \lvert  \partial_wQ(rz,w) \rvert^2\dA (w) \le \log
\frac{1}{1-r^2},\qquad z\in\D.
\]
Finally, 
by integration over the circle $\T$, we find that
\begin{equation}
\label{inequality:1}
\int_{\mathbb{T}}\int_{\D} \big\lvert
(\partial_wQ)(rz,rw)\big\rvert^2\dA (w)
\diff s(z)\leq 
\log \frac{1}{1-r^2},
\end{equation}
where we write as usual $g_r(z)=g(rz)$. 
We are now in a position to apply the norm expansion \eqref{estimate:1}
on the left-hand side, with
$f(z,w)=g(rz)(\partial_w Q)(rz,rw)=r^{-1}\partial_w(g(rz)Q(rz,rw))$.
To simplify the notation, we agree to 
$Q_r(z,w):=Q(rz,rw)$,  and observe that \eqref{inequality:1} now gives that
\begin{multline}
\label{estimate:111}
\sum_{n=0}^{+\infty}\frac{(2n+1)!}{((n+1)!)^2}
\bigg\Vert  \sum_{k=0}^n \frac{(-1)^{k}(k+2)_{n-k}}{k!(n-k)!(n+k+2)_{n-k}}
\partial_z^{n-k}\big(
\oslash[\partial_w^{k+1}Q_r]\big)\bigg\Vert^2_{A^2_{2n+1}(\D)}
\\
\leq r^2
\log \frac{1}{1-r^2}.
\end{multline}
We expand the higher-order derivative using the 
bivariate chain rule:
\begin{equation}
\partial_z^{n-k}\big(
\oslash (\partial_w^{k+1}Q_r)\big)
=\oslash\big((\partial_z+\partial_w)^{n-k}\partial_w^{k+1}Q_r\big)
=\sum_{l=0}^{n-k}\binom{n-k}{l}
\oslash\big(\partial_z^{n-k-l}\partial_w^{l+k+1}Q_r\big)
\end{equation}
so that the expression whose norm we can control in \eqref{estimate:111} is
\begin{multline}\label{eq:123}
\sum_{k=0}^n\sum_{l=0}^{n-k}
\frac{(-1)^{k}(k+2)_{n-k}}{k!(n-k)!(n+k+2)_{n-k}}
\binom{n-k}{l}\oslash\big(\partial_z^{n-k-l}\partial_w^{l+k+1}Q_r\big)
\\
=\sum_{k=0}^n\sum_{l=0}^{n-k}
\frac{(-1)^{k}(k+2)_{n-k}}{l!k!(n+k+2)_{n-k}(n-k-l)!}
\oslash\big(\partial_z^{n-k-l}\partial_w^{l+k+1}Q_r\big)
\\
=\sum_{m=0}^n \frac{(-1)^{m} (n+1)[(n-m+1)_{m}]^2}{m!(m+1)!(n+2)_{n}}
\oslash\big(\partial_z^{n-m}\partial_w^{m+1}Q_r\big).
\end{multline}
The above equality follows from the combinatorial identity
(see \cite{GAF}, p. 28)
\[
\sum_{k,l\geq 0:\;k+l=m} \frac{(-1)^{k}(k+2)_{n-k}}{l!k!(n-m)!(n+k+2)_{n-k}}
=\frac{(-1)^m(n+1)[(n-m+1)_m]^2}{m!(m+1)!(n+2)_n}.
\]
In view of \eqref{estimate:111} and \eqref{eq:123}, we obtain
\begin{multline}
\label{eq:101}
\sum_{n=0}^{+\infty} \frac{(n+1)^3r^{2n}}{(2n+1)!} \int_{\D}
\bigg\lvert   \sum_{m=0}^{n} \frac{(-1)^m[(n-m+1)_m]^2}{m!(m+1)!}
\big(\oslash\partial_z^{n-m}\partial_w^{m+1}Q_r\big)(rz) \bigg\rvert^2
\\
\times(1-|z|^2)^{2n+1}\dA (z)
\leq\frac12\,\log \frac{1}{1-r^2}.
\end{multline}
Using only the first few terms, we have
\begin{equation}
\label{eq:101.1}
\sum_{n=0}^{N} \frac{(n+1)^3}{(2n+1)!}
\,\sigma_{n+1}^{\langle r\rangle}(\Zvar_{n+1})^2
\leq\frac12+\ordo(1),
\end{equation}
as $r\to1^-$, where we introduce the \emph{approximate asymptotic variance}
of order $j$, given by
\begin{equation}
\sigma_{j}^{\langle r\rangle}(f)^2:=\frac{1}{\log\frac{1}{1-r^2}}
\int_\D|f(rz)|^2(1-|z|^2)^{2j-1}\dA(z) 
\label{eq:sigma-j-r}
\end{equation}
and
\begin{equation}
\Zvar_{n+1}(z):=\sum_{m=0}^{n} \frac{(-1)^{m+n}[(n-m+1)_m]^2}{m!(m+1)!}
\big(\oslash\partial_z^{n-m}\partial_w^{m+1}Q\big)(z).
\end{equation}
Note that we replaced the powers $r^{2n}$ by $1$ because we want to
simplify the notation and after all consider the limit as $r\to1^-$.
Using the symmetry $Q(z,w)=Q(w,z)$, we find that
\[
\Zvar_1(z)=\oslash(\partial_wQ)(z)=
\frac12\mathrm{N}(\vp)(z)-\mathrm{L}(\vp)(z),
\]
that
\[
\Zvar_2(z)=-\oslash(\partial_z\partial_wQ)(z)+
\frac{1}{2}(\oslash\partial_w^2Q)(z)=
\frac18(\mathrm{N}(\vp)(z))^2-\frac12(\mathrm{L}(\vp))'(z),
\]
and that
\[
\Zvar_3(z)=-\oslash(\partial_z^2\partial_w Q)(z)
+\frac13\oslash(\partial_w^3 Q)(z)=
\frac16\mathrm{N}(\vp)(z)(\mathrm{N}(\vp))'(z)
-\frac{1}{3}(\mathrm{L}(\vp))''(z),
\]
where we recall the notation $\mathrm{N}(\vp)=\vp''/\vp'$ for the nonlinearity,
and we introduce the notation
\[
\mathrm{L}(\vp)(z):=\frac{\vp'(z)}{\vp(z)}-\frac{1}{z}. 
\]
The terms involving $\mathrm{L}(\vp)(z)$ should be considered as inessential
contributions, they do not alter the asymptotic variance calculation.
Apparently, they are the price for working with $\D$ in place of the exterior
disk $\D_\e$. Generally speaking, we may write
\[
\Zvar_{n+1}(z)=\Zvar_{n+1}^{\mathrm{form}}(z)-\frac{1}{n+1}\,
(\mathrm{L}(\vp))^{(n)}(z),
\]
and since
\[
\int_\D \big|(\mathrm{L}(\vp))^{(n)}(rz)\big|^2(1-|z|^2)^{2n+1}\dA(z)=\Ordo(1)
\]
holds as $r\to1^+$, we can repeat the argument used in Proposition
\ref{prop:asymp2} to reduce \eqref{eq:101.1} to the following form:
\begin{equation}
\label{eq:101.1}
\sum_{n=0}^{N} \frac{(n+1)^3}{(2n+1)!}
\,\sigma_{n+1}^{\langle r\rangle}\big(\Zvar_{n+1}^{\mathrm{form}}\big)^2
\leq\frac12+\ordo(1),
\end{equation}
as $r\to1^-$. The advantage is that $\Zvar_{n+1}^{\mathrm{form}}(z)$ is then a
homogeneous $\vp$-form of degree $n+1$ in the sense of \cite{IMS} and
\cite{AMS}. In fact, an effort was made to calculate these forms
$\Zvar_{n+1}^{\mathrm{form}}(z)$ for modestly big $n$ in \cite{AMS}. The relation
is given by
\begin{equation}
\Zvar_{n+1}^{\mathrm{form}}(z)=\frac{(n+2)_n}{n+1}\Lambda_{n+1}(z),\qquad
n=0,1,2,\ldots,
\end{equation}
so that the list of computed forms is
\begin{equation}
\Zvar_{1}^{\mathrm{form}}(z)=\frac12\Nop(\vp)(z),\quad
\Zvar_{2}^{\mathrm{form}}(z)=\frac18\big(\Nop(\vp)(z)\big)^2,\quad
\Zvar_{3}^{\mathrm{form}}(z)=
\frac1{12}\,\big(\Nop(\vp)^2\big)'(z),
\end{equation}
to which we may add, in view of \cite{AMS}, 
\begin{equation}
\Zvar_{4}^{\mathrm{form}}(z)=
\frac1{16}\big(\Nop(\vp)^2\big)''(z)-\frac{7}{24}\Sop(\vp)(z)^2,
\end{equation}
and 
\begin{equation}
\Zvar_{5}^{\mathrm{form}}(z)=
\frac{4}{5}\,\big(\Zvar_{4}^{\mathrm{form}}\big)'(z)=
\frac1{20}\big(\Nop(\vp)^2\big)'''(z)-\frac{7}{30}\big(\Sop(\vp)^2\big)'(z).
\end{equation}
We will stop here and consider $N=4$ only. It is of course possible to continue
with bigger values of $N$, but the expressions tend to get unwieldy.
We write out the estimate \eqref{eq:101.1} with $N=4$ explicitly: 
\begin{equation}
\label{eq:101.1'}
\sigma_{1}^{\langle r\rangle}(\Zvar_{1})^2
+\frac{2^3}{3!}
\,\sigma_{2}^{\langle r\rangle}(\Zvar_{2})^2
+\frac{3^3}{5!}
\,\sigma_{3}^{\langle r\rangle}(\Zvar_{3})^2
+\frac{4^3}{7!}
\,\sigma_{4}^{\langle r\rangle}(\Zvar_{4})^2
+\frac{5^3}{9!}
\,\sigma_{5}^{\langle r\rangle}(\Zvar_{5})^2
\leq\frac12+\ordo(1),
\end{equation}
as $r\to1^-$. Next, we use the facts that $\Zvar_3^{\mathrm{form}}(z)=\frac23
(\Zvar_2^{\mathrm{form}})'(z)$ and $\Zvar_5^{\mathrm{form}}(z)
=\frac{4}{5}(\Zvar_4^{\mathrm{form}})'(z)$ together with the rule of Corollary
\ref{cor:sigma-comparison} to simplify \eqref{eq:101.1'} to
\begin{multline}
\label{eq:101.1''}
\sigma_{1}^{\langle r\rangle}(\Zvar_{1}^{\mathrm{form}})^2
+\bigg(\frac{2^3}{3!}+\frac{2^23^3(4)_2}{5!3^2}\bigg)
\,\sigma_{2}^{\langle r\rangle}(\Zvar_{2}^{\mathrm{form}})^2
\\
+\bigg(\frac{4^3}{7!}+\frac{5^34^2(8)_2}{9!5^2}\bigg)
\,\sigma_{4}^{\langle r\rangle}(\Zvar_{4}^{\mathrm{form}})^2
\leq\frac12+\ordo(1).
\end{multline}
Inserting the known expressions for $\Zvar_{j}^{\mathrm{form}}$ for $j=1,2,4$,
we find that
\begin{equation}
\label{eq:101.1'''}
\frac14\,\sigma_{1}^{\langle r\rangle}(\Nop(\vp))^2
+\frac{5}{96}
\,\sigma_{2}^{\langle r\rangle}\big(\Nop(\vp)^2\big)^2
+\frac{9}{7!\,16}
\,\sigma_{4}^{\langle r\rangle}\big((\Nop(\vp)^2)''-\tfrac{14}{3}\Sop(\vp)^2\big)^2
\leq\frac12+\ordo(1).
\end{equation}
After multiplication by $4$ both left and right, we finally obtain
\begin{equation}
\label{eq:101.1''''}
\frac12\,\sigma_{1}^{\langle r\rangle}(\Nop(\vp))^2
+\frac{5}{48}
\,\sigma_{2}^{\langle r\rangle}\big(\Nop(\vp)^2\big)^2
+\frac{1}{1120}
\,\sigma_{4}^{\langle r\rangle}\big((\Nop(\vp)^2)''-\tfrac{14}{3}\Sop(\vp)^2\big)^2
\leq1+\ordo(1).
\end{equation}
By Lemma \ref{lem:moments} on successive moments, we know that
$\sigma_1^{\la r\ra}(N(\vp))^4\le\sigma_2^{\la r\ra}(N(\vp)^2)^2+\ordo(1)$,
and hence \eqref{eq:101.1''''} entails that
\begin{equation}
\label{eq:101.1''''.1}
\frac12\,\sigma_{1}^{\langle r\rangle}(\Nop(\vp))^2
+\frac{5}{48}
\,\sigma_{1}^{\langle r\rangle}\big(\Nop(\vp)\big)^4
\leq1+\ordo(1)
\end{equation}
simply by scrapping the third nonnegative term. All limits are as $r\to1^-$.
From this it is easy to derive the following estimate.

\begin{thm}
We have the universal bound
\[
\sigma(h_\vp)^2=\sigma_1(\Nop(\vp))^2\le \frac{2}{5}(\sqrt{96}-6),
\]
for each $\vp\in\mathscr{S}$, where $h_\vp=\log\vp'=\oslash[\Gamma_\vp]$,
with nonlinearity
$\Nop(\vp)=h_\vp'=\vp''/\vp'$. 
\label{thm:sigma-bound1}
\end{thm}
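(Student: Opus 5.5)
The plan is to pass to the limit $r\to1^-$ in \eqref{eq:101.1''''.1} and then solve a quadratic inequality. Write $x_r:=\sigma_1^{\langle r\rangle}(\Nop(\vp))^2\ge0$. Then \eqref{eq:101.1''''.1} says that
\[
\tfrac12 x_r+\tfrac{5}{48}x_r^2\le 1+\ordo(1),\qquad r\to1^-.
\]
The function $p(x):=\frac12x+\frac5{48}x^2$ is continuous and strictly increasing on $[0,+\infty)$. So if we take a sequence $r_k\to1^-$ realizing the $\limsup$ that defines $x:=\sigma_1(\Nop(\vp))^2$, we obtain $p(x)\le1$. If $x$ were $+\infty$, then $p(x_{r_k})\to+\infty$, which contradicts the bound; hence $x$ is finite.

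The inequality $p(x)\le1$ reads $5x^2+24x-48\le0$. Its positive root is
\[
x_+=\frac{-24+\sqrt{576+960}}{10}=\frac{-12+\sqrt{384}}{5}=\frac{2}{5}\big(\sqrt{96}-6\big),
\]
using $\sqrt{384}=2\sqrt{96}$. Since $x\ge0$, this gives $x\le\frac25(\sqrt{96}-6)$.

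It remains to identify $x$ with $\sigma(h_\vp)^2$. This follows from Proposition \ref{prop:asymp1}, since $h_\vp'=\Nop(\vp)$ gives $\sigma(h_\vp)^2=\sigma_1(h_\vp')^2=\sigma_1(\Nop(\vp))^2$.

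The step most in need of justification is the passage to \eqref{eq:101.1''''.1}. It requires the moment comparison of Lemma \ref{lem:moments}, applied with $f=\Nop(\vp)$, $m=1$, $j=1$, $k=2$. That lemma needs the growth bound $\Nop(\vp)(z)=\Ordo((1-|z|^2)^{-1})$, which is the classical estimate $|\Nop(\vp)|\le 6/(1-|z|^2)$ for $\vp\in\mathscr S$. The passage also requires discarding the nonnegative third term in \eqref{eq:101.1''''}. Both steps were carried out before the statement, so the proof reduces to the elementary algebra above.
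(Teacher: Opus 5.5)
Your proposal is correct and follows the paper's own route. The paper likewise obtains \eqref{eq:101.1''''.1} by dropping the nonnegative third term of \eqref{eq:101.1''''} and applying Lemma~\ref{lem:moments} with $|\Nop(\vp)|\le 6/(1-|z|^2)$. It then solves the quadratic inequality $5x^2+24x-48\le 0$ as $r\to1^-$ and identifies $\sigma_1(\Nop(\vp))^2$ with $\sigma(h_\vp)^2$ via Proposition~\ref{prop:asymp1}; your handling of the $\limsup$ through the monotonicity of $p$, and your algebra for the root, are fine.
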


\begin{rem}
While this universal bound $\frac{2}{5}(\sqrt{96}-6)=1.51918\ldots$
improves a lot upon the general bound $2$ for arbitrary contractions on
$L^2(\D)$ \cite{GAF} (see also \cite{HanQiuWang} for a different proof),
it of course uses a lot of the structure of $\mathscr{Q}[\Gamma_\vp]$
beyond the fact that $\Gamma_\vp$ is a contraction.
We should mention that
this upper bound agrees with the hypothesis that the small exponent
universal integral means spectrum determines the optimal bound for the
asymptotic variance $\sigma(h_\vp)^2$ via taking the second derivative of the
universal spectrum at the exponent $0$. After all, the small exponent integral
means spectrum was studied by Hedenmalm an Shimorin in \cite{IMS, AMS}. 
On the other hand, even for general contractions, it is not clear that
the upper bound $2$ is sharp, as construction of examples so far only
reaches up to $\approx1.7208$. For contractive multiplication operators
$\Mop_\mu$, with $\mu$ in the unit ball of $L^\infty(\D)$, it was shown in
\cite{geometric zero packing} that with $h=\oslash\mathscr{Q}[\Mop_\mu]$,
we actually have the very strong bound $\sigma(h)^2\le 1-\epsilon_0$
for some small but positive constant $\epsilon_0$. 
Here, we should mention the related work by Oleg Ivrii \cite{Ivrii}
(some additional supportive arguments are supplied in \cite{Hedenmalm} well).
\end{rem}

In view of Proposition \ref{prop:asymp2}, Theorem \ref{thm:sigma-bound1}
has the following counterpart for the class $\Sigma$ of normalized
conformal mappings of the exterior disk $\D_\e$. 

\begin{cor}
We have the universal bound
\[
\sigma_\e(h_\psi)^2=\sigma_{\e,1}(\Nop(\psi))^2\le \frac{2}{5}(\sqrt{96}-6),
\]
for each $\psi\in\Sigma$ with $h_\psi=\log\psi'=\oslash[\Gamma_\psi]$,
and nonlinearity
$\Nop(\psi)=h_\psi'=\psi''/\psi'$. 
\end{cor}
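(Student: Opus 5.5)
The plan is to transfer Theorem \ref{thm:sigma-bound1} from $\D$ to $\D_\e$ via the inversion $z\mapsto 1/z$, using Proposition \ref{prop:asymp2} to identify the relevant asymptotic variances. Given $\psi\in\Sigma$, I first translate by a constant: pick $c\in\C\setminus\psi(\D_\e)$ and replace $\psi$ by $\tilde\psi:=\psi-c$. This leaves $h_\psi=\log\psi'$ and $\Nop(\psi)$ unchanged, and $\tilde\psi$ omits $0$. Then $\vp(z):=1/\tilde\psi(1/z)$ belongs to $\mathscr{S}$. Differentiating gives
\[
\vp'(z)=\frac{\tilde\psi'(1/z)}{z^2\tilde\psi(1/z)^2},
\qquad\text{so}\qquad
h_\vp(z)=h_\psi(1/z)-2\log\big(z\,\tilde\psi(1/z)\big).
\]
The correction term $k(z):=-2\log(z\tilde\psi(1/z))$ is holomorphic in $\D$ and vanishes at $0$, because $z\tilde\psi(1/z)\to1$ as $z\to0$. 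This is the disk counterpart of the inessential $\mathrm{L}(\vp)$ terms met earlier.

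Next I set $g:=h_\psi$ and $f(z):=g(1/z)$, which is holomorphic in $\D$ since $h_\psi$ is bounded at infinity. Proposition \ref{prop:asymp2} then gives
\[
\sigma_\e(h_\psi)^2=\sigma(f)^2=\sigma_{\e,1}(\Nop(\psi))^2.
\]
It therefore suffices to show $\sigma(f)^2=\sigma(h_\vp)^2$, and then to invoke Theorem \ref{thm:sigma-bound1}.

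To compare $\sigma(f+k)$ with $\sigma(f)$, I will show $\sigma(k)=0$. I would argue this via $k'(z)=-2\big(\vp'(z)/\vp(z)-1/z\big)=-2\mathrm{L}(\vp)(z)$, using the bound already asserted in the text:
\[
\int_\D|\mathrm{L}(\vp)(rz)|^2(1-|z|^2)\,\dA(z)=\Ordo(1).
\]
This bound holds because $\mathrm{L}(\vp)(z)=-z\,\tilde\psi'(1/z)/\tilde\psi(1/z)\cdot z^{-2}\ldots$ behaves boundedly near $\T$: indeed $\tilde\psi'/\tilde\psi$ is bounded on $\{|\zeta|\le 2\}\cap\overline{\D_\e}$ up to the boundary in an $L^2$-area sense, by the Grunsky/area theorem. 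If that direct argument is delicate, I fall back on the Dirichlet-space fact that $\log(\tilde\psi(\zeta)/\zeta)$ has finite Dirichlet-type integral weighted by $(|\zeta|^2-1)$. Either way, the numerator in $\sigma_1(k')^2$ stays bounded while the denominator $\log\frac{1}{1-r^2}\to\infty$, so $\sigma(k)^2=\sigma_1(k')^2=0$ by Proposition \ref{prop:asymp1}.

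With $\sigma(k)=0$, the elementary inequality $|a+b|^2\le(1+\epsilon)|a|^2+(1+\epsilon^{-1})|b|^2$ (already used in the proof of Proposition \ref{prop:asymp2}), applied in both directions with $\epsilon\to0$, gives $\sigma(h_\vp)^2=\sigma(f)^2$. Theorem \ref{thm:sigma-bound1} then yields $\sigma(f)^2\le\frac25(\sqrt{96}-6)$, which is the claim.

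The main obstacle is the step $\sigma(k)=0$, i.e. showing the $\mathrm{L}(\vp)$ correction is negligible. I would simply invoke the $\Ordo(1)$ estimate for $(\mathrm{L}(\vp))^{(n)}$ with $n=0$ stated in the preceding discussion. Failing that, I would note that $\mathrm{L}(\vp)=\partial_z\log(\vp(z)/z)$, and that $\log(\vp(z)/z)=\oslash$-type data whose derivative lies in $A^2$-weighted spaces with bounded norm, since $z\mapsto Q(z,w)$ at $w=0$ is controlled by Lemma \ref{lem:basic-999}.
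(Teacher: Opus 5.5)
Your argument is correct and follows the paper's route: you transfer Theorem~\ref{thm:sigma-bound1} to $\D_\e$ by inversion and Proposition~\ref{prop:asymp2}, and you usefully make explicit the discrepancy $h_\vp(z)-h_\psi(1/z)=2\log(\vp(z)/z)$ (so in fact $k'=+2\mathrm{L}(\vp)$), which the paper passes over and which is negligible by the $\Ordo(1)$ bound on $\mathrm{L}(\vp)$ stated in the text. Your own justifications of that bound do not work, however: the Grunsky/area theorem and Lemma~\ref{lem:basic-999} only see $\Gamma_\psi$, which is unchanged under $\psi\mapsto\psi-c$ (and $Q(z,0)\equiv0$), whereas $\log(\vp(z)/z)$ depends on the omitted point $c$. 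The actual reason is that $\log(\vp(z)/z)\in H^2$ for every $\vp\in\mathscr{S}$ (e.g.\ by Baernstein's theorem that it lies in $\mathrm{BMOA}$, or by the sharp bound $\sum_n|c_n|^2\le 2\pi^2/3$ on its Taylor coefficients), and this gives $\sigma(k)=0$ at once.
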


\subsection{Asymptotic variance for the Schwarzian derivative}
 
Here, we shall study the higher asymptotic variance $\sigma_2(\Sop(\vp))^2$
when $\vp\in\mathscr{S}$. We believe that this is the first time this
higher asymptotic variance has been considered and then applied to the
Schwarzian derivative. As the Schwarzian derivative measures the local
distance to the best M\"obius approximant at the given point, its asymptotic
variance then asserts how that distance behaves on average. 
We first recall the following inequality (see, e.g., \cite{IMS}), which
amounts to the famous Gr\"onwall area theorem: 
\begin{equation}
\label{eq:33}
\int_{\D} |\partial_{z}\partial_{w}  Q(z,w)|^2\dA (w)\leq
\frac{1}{(1-|z|^2)^2},\qquad z\in\D,
\end{equation}
where we write $Q=\mathscr{Q}[\Gamma_\vp]$.
For $0<r<1$, it follows from \eqref{eq:33} that
\begin{multline}
\label{eq:77}
\int_{\D^2} |(\partial_{z}\partial_{w} Q)(rz,w)|^2(1-|z|^2)\dA (z)\dA (w)
\\
\leq 
\int_\D\frac{1-|z|^2}{(1-r^2|z|^2)^2}\dA(z)
=
\frac{1}{r^2}\log \frac{1}{1-r^2}-1.
\end{multline}	
Moreover, since the dilation $f\mapsto f_r$ acts contractively with respect to
the norm of $A^2(\D)$, it follows from \eqref{eq:77} that
\begin{equation}
\label{eq:77'}
\int_{\D^2} |(\partial_{z}\partial_{w} Q)(rz,rw)|^2(1-|z|^2)\dA (z)\dA (w)
\le
\frac{1}{r^2}\log \frac{1}{1-r^2}-1,
\end{equation}
which we may think of as the norm estimate
\begin{equation}
\label{eq:77''}
\big\|\partial_{z}\partial_{w} Q_r\big\|^2_{A^2_{1,0;0}(\D^2)}\le
2r^2\log \frac{1}{1-r^2}-2r^4,
\end{equation}
where we recall the notation $Q_r(z,w)=Q(rz,rw)$.

\begin{rem}
From the perspective of studying the growth behavior as $r\to1^-$,
the inequality \eqref{eq:77'} is practically another copy of the inequality
\eqref{inequality:1}. Let us see why this is so. By the Littlewood-Paley
identity, we know that \eqref{inequality:1} asserts that
\begin{multline}
\int_\D|\partial_w Q(0,rw)|^2\dA(w)+
r^2\int_{\D^2}\big|(\partial_z\partial_w Q)(rz,rw)\big|^2\log\frac{1}{|z|^2}
\dA(z)\dA(w)
\\
=\int_\T\int_\D\big|(\partial_w Q)(rz,rw)\big|^2\dA(w)\diff s(z)
\le\log\frac{1}{1-r^2}.
\end{multline}
Next, dropping the first term on the left-hand side as well as using that
by Taylor expansion, $1-|z|^2\le\log\frac{1}{|z|^2}$, it follows that
\begin{equation}
\int_{\D^2}\big|(\partial_z\partial_w Q)(rz,rw)\big|^2(1-|z|^2)
\dA(z)\dA(w)
\le r^{-2}\log\frac{1}{1-r^2},
\end{equation}
which is essentially the estimate \eqref{eq:77'}.
\end{rem}

Inspired by the above remark, we use Proposition \ref{prop:handyestimate-1}
to obtain from \eqref{eq:77'} that for $l,l'=0,1,2,\ldots$,
\begin{multline}
\label{eq:77'-1}
\int_{\D^2} \big|(\partial_{z}^{l+1}\partial_{w}^{l'+1} Q_r(z,w)\big|^2
(1-|z|^2)^{2l+1}(1-|w|^2)^{2l'}\dA (z)\dA (w)
\\
\le (2l+1)!(2l')!\bigg(r^2\log \frac{1}{1-r^2}-r^4\bigg).
\end{multline}
Due to the shape of the diagonal expansion of Theorem \ref{thm:HS-1}, we are
only able to apply that theorem when $l'=0$. 
With $(\alpha,\beta)=(l+1,0)$, the norm expansion
asserts that for general $f\in A^2_{l+1,0}(\D^2)$,
\begin{multline}
\|f\|^2_{A^2_{2l+1,0;0}(\D^2)}=
\sum_{n=0}^{+\infty}\frac{(2l+2)!(2l+2n+3)!}{(n+1)(2l+n+2)!(2l+n+3)!}
\\
\times\bigg\Vert  \sum_{k=0}^n\frac{(-1)^{n-k}(k+2)_{n-k}}{k!(n-k)!(2l+n+k+4)_{n-k}}
\partial_z^{n-k}\oslash[\partial_w^{k}f] \bigg\Vert^2_{A^2_{2l+2n+3}(\D)}.
\end{multline}
We apply this identity to $f(z,w)=\partial_z^{l+1}\partial_w Q_r(z,w)$, which
together with \eqref{eq:77''} gives that
\begin{multline}
\big\|\partial_z^{l+1}\partial_w Q_r\big\|^2_{A^2_{2l+1,0;0}(\D^2)}
=
\sum_{n=0}^{+\infty}\frac{(2l+2)!(2l+2n+3)!}{(n+1)(2l+n+2)!(2l+n+3)!}
\big\Vert Y_{n,l,r}\big\Vert^2_{A^2_{2l+2n+3}(\D)}
\\
\le(2l+2)!\,\bigg(r^2\log\frac{1}{1-r^2}-r^4\bigg),
\end{multline}
where
\begin{equation}
Y_{l,n}(z):=\sum_{k=0}^n \frac{(-1)^{n-k}(k+2)_{n-k}}{k!(n-k)!(2l+n+k+4)_{n-k}}
\partial_z^{n-k}\oslash[\partial_z^{l+1}\partial_w^{k+1}Q](z)
\end{equation}
and $Y_{l,n,r}(z):=r^{l+n+2}Y_{l,n}(rz)$. We settle for the first few terms of
this expansion, and replace weighted Bergman norms by approximate variances: 
\begin{equation}
\label{eq:Nterms-1}
\sum_{n=0}^{N}\frac{(2l+2n+3)!(l+n+2)}{(n+1)(2l+n+2)!(2l+n+3)!}
\sigma_{l+n+2}^{\langle r\rangle}(Y_{l,n})^2
\le\frac12+\ordo(1),
\end{equation}
as $r\to1^-$.
If we only care about approximate variances of order $\le5$, which is what we
did to obtain \eqref{eq:101.1''''}, we should choose $N:=3-l$ and require
that $l\le3$.
For $l=1$, we get three terms,
\begin{equation}
\label{eq:Nterms-3}
\frac{1}{8}
\sigma_{3}^{\langle r\rangle}(Y_{1,0})^2+\frac{7}{60}
\sigma_{4}^{\langle r\rangle}(Y_{1,1})^2+\frac{1}{6}\sigma_5^{\la r\ra}(Y_{1,2})^2
\le\frac12+\ordo(1),
\end{equation}
and for $l=0$, we get four terms:
\begin{equation}
\label{eq:Nterms-4}
\sigma_{2}^{\langle r\rangle}(Y_{0,0})^2+
\frac{5}{4}
\sigma_{3}^{\langle r\rangle}(Y_{0,1})^2
+\frac{7}{3}\sigma_{4}^{\langle r\rangle}(Y_{0,2})^2
+\frac{21}{4}\sigma_{5}^{\langle r\rangle}(Y_{0,3})^2
\le\frac12+\ordo(1),
\end{equation}
all as $r\to1^-$. Here, a calculation reveals that
\begin{equation}
Y_{l,0}=\oslash(\partial_z^{l+1}\partial_w Q),
\end{equation}
while
\begin{equation}
Y_{l,1}=-\frac{2}{2l+5}\partial_z\oslash\big(\partial_z^{l+1}\partial_w Q\big)
+\oslash\big(\partial_z^{l+1}\partial_w^2 Q\big),  
\end{equation}
and 
\begin{equation}
Y_{l,2}=\frac{3}{(2l+6)(2l+7)}
\partial_z^2\oslash\big(\partial_z^{l+1}\partial_w Q\big)
-\frac{3}{2l+7}\,\partial_z\oslash\big(\partial_z^{l+1}\partial_w^2 Q\big)
+\frac12\,\oslash(\partial_z^{l+1}\partial_w^3 Q).
\end{equation}
whereas, finally,
\begin{multline}
Y_{0,3}=-\frac{1}{126}\partial_z^3\oslash(\partial_z\partial_w Q)
+\frac{1}{12}\partial_z^2\oslash\big(\partial_z\partial_w^2 Q)
-\frac29\partial_z\oslash(\partial_z\partial_w^3 Q)
+\frac16\oslash\partial_z\partial_w^4Q
\\
=\frac{15}{252}\bigg(\frac{1}{5}\oslash \partial_z^4\partial_w Q-
\frac{1}{3}\oslash\partial_z^3\partial_w^2 Q\bigg).
\end{multline}
If we write, as we did in the context of $\psi\in\Sigma$,
\[
\Sop_2(\vp)=\frac{1}{6}\Sop(\vp)=\oslash(\partial_z\partial_w Q),
\quad
\Sop_4(\vp)=\oslash(\partial_z^2\partial_w^2Q),
\]
then $\Sop(\vp)$ is the usual Schwarzian derivative of $\vp$, while
$\Sop_4(\vp)$ is given in analogy with Theorem \ref{thm:S4}:
\[
\Sop_4(\vp)=\frac{1}{5}\Sop_2(\vp)''-\frac{6}{5}\Sop_2(\vp)^2=
\frac{1}{30}\big(\Sop(\vp)''-(\Sop(\vp))^2\big).  
\]
Based on this, we easily calculate all the relevant functions $Y_{n,l}$
in terms of our conformal mapping $\vp\in\mathscr{S}$:
\begin{equation}
Y_{0,0}=\frac16\Sop(\vp),\quad
Y_{0,1}=\frac1{60}\Sop(\vp)',\quad
Y_{0,2}=\frac{1}{840}\big(\Sop(\vp)''+14(\Sop(\vp))^2\big),
\end{equation}
and, likewise,
\begin{equation}
Y_{0,3}=\frac{1}{15120}\big(\Sop(\vp)'''+14\,(\Sop(\vp)^2)'\big),\quad
Y_{1,0}=\frac{1}{12}\Sop(\vp)',
\end{equation}
while
\begin{equation}
Y_{1,1}=\frac{1}{105}\Sop(\vp)''-\frac{1}{30}(\Sop(\vp))^2,\quad
Y_{1,2}=\frac{1}{1440}\Sop(\vp)'''+\frac{1}{360}(\Sop(\vp)^2)',
\end{equation}
Putting things together, we find that
\eqref{eq:Nterms-3} reads
\begin{multline}
\label{eq:Nterms-4.1.0}
\frac{1}{8}\sigma_{3}^{\langle r\rangle}(\tfrac{1}{12}\Sop(\vp)')^2+
\frac{7}{60}
\sigma_{4}^{\langle r\rangle}(\tfrac{1}{105}\Sop(\vp)''-\tfrac{1}{30}\Sop(\vp)^2)^2
\\
+\frac{1}{6}\sigma_{5}^{\langle r\rangle}\big(\tfrac{1}{1440}\Sop(\vp)'''
+\tfrac{1}{360}(\Sop(\vp)^2)'\big)^2
\le\frac12+\ordo(1),
\end{multline}
all taken as $r\to1^-$. Finally, \eqref{eq:Nterms-4} reads, again as $r\to1^-$,
\begin{multline}
\label{eq:Nterms-4.1001}
\sigma_{2}^{\langle r\rangle}(\tfrac16\Sop(\vp))^2+
\frac{5}{4}
\sigma_{3}^{\langle r\rangle}(\tfrac{1}{60}\Sop(\vp)')^2
+\frac{7}{3}\sigma_{4}^{\langle r\rangle}\big(\tfrac{1}{840}
(\Sop(\vp)''+14\Sop(\vp)^2)\big)^2
\\
+\frac{21}{4}\sigma_{5}^{\langle r\rangle}\big(\tfrac{1}{15120}(\Sop(\vp)'''+
14(\Sop(\vp)^2)')\big)^2
\le\frac12+\ordo(1).
\end{multline}
We now apply Corollary \ref{cor:sigma-comparison} shorten the above
expressions. Applied to
\eqref{eq:Nterms-4.1.0}, we find that
\begin{multline}
\label{eq:Nterms-4.1.0007}
\frac{5}{4}\sigma_{2}^{\langle r\rangle}(\tfrac{1}{6}\Sop(\vp))^2+
\frac{2}{2625}
\sigma_{4}^{\langle r\rangle}(\tfrac{1}{6}\Sop(\vp)''-\tfrac{7}{2}\Sop(\vp)^2)^2
\\
+\frac{1}{2400}\sigma_{4}^{\langle r\rangle}\big(\tfrac{1}{6}\Sop(\vp)''
+\tfrac{2}{3}\Sop(\vp)^2\big)^2
\le1+\ordo(1),
\end{multline}
all as $r\to1^-$. Finally, applied to \eqref{eq:Nterms-4.1001}, we get 
\begin{multline}
\label{eq:Nterms-4.100101}
2\,\sigma_{2}^{\langle r\rangle}(\tfrac16\Sop(\vp))^2+
\frac{1}{2}\,
\sigma_{2}^{\langle r\rangle}(\tfrac{1}{6}\Sop(\vp))^2
+\frac{14}{3}\sigma_{4}^{\langle r\rangle}\big(\tfrac{1}{840}
(\Sop(\vp)''+14\Sop(\vp)^2)\big)^2
\\
+756\,
\sigma_{4}^{\langle r\rangle}\big(\tfrac{1}{15120}(\Sop(\vp)''+
14\Sop(\vp)^2)\big)^2
\le1+\ordo(1).
\end{multline}
as $r\to1^-$. Simplifying 
further, we obtain
\begin{equation}
\label{eq:Nterms-4.100101.1}
\frac{5}{2}\,\sigma_{2}^{\langle r\rangle}(\tfrac16\Sop(\vp))^2
+\frac{1}{2800}\sigma_{4}^{\langle r\rangle}\big(\tfrac{1}{6}
\Sop(\vp)''+\tfrac{7}{3}\Sop(\vp)^2\big)^2
\le1+\ordo(1).
\end{equation}
We will work with \eqref{eq:Nterms-4.1.0007} and
\eqref{eq:Nterms-4.100101.1}. In fact, we multiply
\eqref{eq:Nterms-4.1.0007} by ${45}$, and we multiply
\eqref{eq:Nterms-4.100101.1} by ${129}$, and then add up:
\begin{multline}
\label{eq:Nterms-4.1.0007-sum}
{45}\bigg(
\frac{5}{4}\sigma_{2}^{\langle r\rangle}(\tfrac{1}{6}\Sop(\vp))^2+
\frac{2}{2625}
\sigma_{4}^{\langle r\rangle}(\tfrac{1}{6}\Sop(\vp)''-\tfrac{7}{2}\Sop(\vp)^2)^2
\\
+\frac{1}{2400}\sigma_{4}^{\langle r\rangle}\big(\tfrac{1}{6}\Sop(\vp)''
+\tfrac{2}{3}\Sop(\vp)^2\big)^2\bigg)
\\
+{129}
\bigg(\frac{5}{2}\,\sigma_{2}^{\langle r\rangle}(\tfrac16\Sop(\vp))^2
+\frac{1}{2800}\sigma_{4}^{\langle r\rangle}\big(\tfrac{1}{6}
\Sop(\vp)''+\tfrac{7}{3}\Sop(\vp)^2\big)^2\bigg)
\le174+\ordo(1),
\end{multline}
We check that the condition of Proposition \ref{prop:parallel} is fulfilled
in the left-hand side expression with $f=\Sop(\vp)''$ and $g=\Sop(\vp)^2$,
so that we may apply the parallelogram law, and obtain
\begin{multline}
\label{eq:Nterms-4.1.0007-sum2}
\frac{1515}{4}\sigma_{2}^{\langle r\rangle}(\tfrac{1}{6}\Sop(\vp))^2+
{45}\bigg(
\frac{2}{2625}
\sigma_{4}^{\langle r\rangle}(\tfrac{1}{6}\Sop(\vp)'')^2+
\frac{2\times49}{4\times 2625}\,\sigma_{4}^{\langle r\rangle}(\Sop(\vp)^2)^2
\\
+\frac{1}{2400}\sigma_{4}^{\langle r\rangle}\big(\tfrac{1}{6}\Sop(\vp)''\big)^2
+\frac{4}{9\times2400}
\,\sigma^{\la r\ra}_4\big(\Sop(\vp)^2\big)^2\bigg)
\\
+{129}
\bigg(\frac{1}{2800}\sigma_{4}^{\langle r\rangle}\big(\tfrac{1}{6}
\Sop(\vp)''\big)^2+\frac{49}{9\times2800}\,\sigma_4^{\la r\ra}
\big(\Sop(\vp)^2\big)^2\bigg)
\le174+\ordo(1).
\end{multline}
We implement Corollary \ref{cor:sigma-comparison} and clean up the expression:
\begin{multline}
\label{eq:Nterms-4.1.0007-sum3}
\frac{1515}{4}\sigma_{2}^{\langle r\rangle}(\tfrac{1}{6}\Sop(\vp))^2+
{45}\bigg(
\frac{16}{25}\,
\sigma_{2}^{\langle r\rangle}(\tfrac{1}{6}\Sop(\vp))^2+
\frac{2\times49}{4\times 2625}\,\sigma_{4}^{\langle r\rangle}(\Sop(\vp)^2)^2
\\
+\frac{7}{20}\sigma_{2}^{\langle r\rangle}\big(\tfrac{1}{6}\Sop(\vp)\big)^2
+\frac{1}{5400}
\,\sigma^{\la r\ra}_4\big(\Sop(\vp)^2\big)^2\bigg)
\\
+{129}
\bigg(\frac{3}{10}\sigma_{2}^{\langle r\rangle}\big(\tfrac{1}{6}
\Sop(\vp)\big)^2+\frac{7}{3600}\,\sigma_4^{\la r\ra}
\big(\Sop(\vp)^2\big)^2\bigg)
\le174+\ordo(1).
\end{multline}
so that
\begin{equation}
\label{eq:Nterms-4.1.0007-sum4}
462\,\sigma_{2}^{\langle r\rangle}(\tfrac{1}{6}\Sop(\vp))^2+
\frac{201}{288}\,\sigma_4^{\la r\ra}\big(\Sop(\vp)^2\big)^2
\le174+\ordo(1).
\end{equation}
Finally, in \eqref{eq:Nterms-4.1.0007-sum4}, we may apply the moment
inequality of Lemma \ref{lem:moments}, to the effect that
\begin{equation}
\label{eq:Nterms-4.1.0007-sum6}
462\,
\sigma_{2}^{\langle r\rangle}(\tfrac{1}{6}\Sop(\vp))^2+
\frac{1809}{2}\,
\sigma_{2}^{\langle r\rangle}(\tfrac16\Sop(\vp))^4
\le174+\ordo(1).
\end{equation}
Solving the quadratic equation, we find that
\begin{equation}
\label{eq:Nterms-4.1.0007-sum7}
\sigma_{2}^{\langle r\rangle}(\tfrac{1}{6}\Sop(\vp))^2\le
\frac{12\sqrt{5854}-462}{1809}+\ordo(1)=0.2521488\ldots+\ordo(1).
\end{equation}

\begin{thm}
We have the universal bound
\[
\frac1{36}\sigma_2(\Sop(\vp))^2=\sigma_2(\Sop_2(\vp))\le
\frac{12\sqrt{5854}-462}{1809}+\ordo(1)=0.2521488\dots,
\]

for all $\vp\in\mathscr{S}$, where $\Sop(\varphi)$ denotes the
Schwarzian derivative of $\vp$.
\end{thm}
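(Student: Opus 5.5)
The bound is essentially the terminal step of the chain \eqref{eq:Nterms-4.1.0007}--\eqref{eq:Nterms-4.1.0007-sum7}. The plan is to make that chain rigorous and then pass to the $\limsup$ as $r\to1^-$.

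\textbf{Step 1: two inequalities from the Gr\"onwall bound.} Start from the area bound \eqref{eq:33}. Dilation gives the bidisk estimate \eqref{eq:77'}, and Proposition \ref{prop:handyestimate-1} then upgrades it to \eqref{eq:77'-1}. Apply the Hedenmalm--Shimorin diagonal norm expansion (Theorem \ref{thm:HS-1}) with $(\alpha,\beta)=(2l+1,0)$ to $f=\partial_z^{l+1}\partial_w Q_r$, for $l=0$ and $l=1$. Truncate at total order $5$ and discard the nonnegative tail. This gives \eqref{eq:Nterms-3} and \eqref{eq:Nterms-4}.

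\textbf{Step 2: rewrite in terms of $\Sop(\vp)$.} Express the diagonal jets $Y_{l,n}$ through $\Sop(\vp)$. To do this:
\begin{itemize}
\item use the symmetry reduction of Corollary \ref{cor:offdiag-3}, transferred to $\D$;
\item use the identity $\Sop_4=\frac15\Sop_2''-\frac65\Sop_2^2$ (the disk analogue of Theorem \ref{thm:S4});
\item absorb the correction terms $\mathrm{L}(\vp)^{(n)}$ as $\Ordo(1)$ contributions, by the $\epsilon$-splitting argument of Proposition \ref{prop:asymp2}.
\end{itemize}
Then reduce all derivatives of $\Sop(\vp)$ to lower orders via Corollary \ref{cor:sigma-comparison}. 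Its growth hypothesis holds because $|\Sop(\vp)|\le 6(1-|z|^2)^{-2}$.

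\textbf{Step 3: combine and decouple.} Form the positive combination $45\times$\eqref{eq:Nterms-4.1.0007} $+\,129\times$\eqref{eq:Nterms-4.100101.1}. The weights are chosen so that the cross terms between $\Sop(\vp)''$ and $\Sop(\vp)^2$ cancel. One must verify the orthogonality condition $\sum a_k\bar b_k=0$ of Proposition \ref{prop:parallel} for the three pairs $(\tfrac16,-\tfrac72)$, $(\tfrac16,\tfrac23)$, $(\tfrac16,\tfrac73)$ with the stated weights. After that, the parallelogram law decouples the two functions and yields \eqref{eq:Nterms-4.1.0007-sum4}.

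\textbf{Step 4: close with a quadratic.} Apply Lemma \ref{lem:moments} with $f=\tfrac16\Sop(\vp)$, $m=2$, $j=1$, $k=2$. This gives $\sigma_2^{\la r\ra}(f)^4\le\sigma_4^{\la r\ra}(f^2)^2+\ordo(1)$, where $\sigma_4^{\la r\ra}(\Sop(\vp)^2)^2=36^2\,\sigma_4^{\la r\ra}(f^2)^2$. This produces the quadratic inequality $462x+\frac{1809}{2}x^2\le174+\ordo(1)$ in $x=\sigma_2^{\la r\ra}(\frac16\Sop(\vp))^2$. Since the left side is increasing for $x\ge0$, $x$ is at most the positive root plus $\ordo(1)$. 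Taking $\limsup_{r\to1^-}$ gives the claim. (The statement's bound should read $\sigma_2(\Sop_2(\vp))^2$, and the "$+\ordo(1)$" there is inert.)

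The main obstacle is Steps 2–3: the exact bookkeeping of the rational constants. This includes the Corollary \ref{cor:sigma-comparison} factors $1/(2j)_{2n}$ and the orthogonality check that makes the cross terms vanish. Any slip changes the final numbers. I would verify each coefficient symbolically, in particular the combination weights $45$ and $129$, the resulting constant $1515/4$, and the final $462$ and $201/288$.
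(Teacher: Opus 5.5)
Your route is the paper's own chain, and Steps 1, 2 and 4 are sound. Your correction $(\alpha,\beta)=(2l+1,0)$ is the right one. Also, no $\mathrm{L}(\vp)$ corrections actually arise here, since only mixed derivatives $\partial_z^a\partial_w^bQ$ with $a,b\ge1$ enter.

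The argument breaks in Step 3, at exactly the constants you postpone checking. The pair $(\tfrac16,-\tfrac72)$ is a mis-transcription of $Y_{1,1}$:
\[
Y_{1,1}=\tfrac1{105}\Sop(\vp)''-\tfrac1{30}\Sop(\vp)^2=\tfrac{2}{35}\big(\tfrac16\Sop(\vp)''-\tfrac7{12}\Sop(\vp)^2\big).
\]
So the middle term of \eqref{eq:Nterms-4.1.0007} must be $\tfrac{2}{2625}\sigma_4^{\la r\ra}(\tfrac16\Sop(\vp)''-\tfrac7{12}\Sop(\vp)^2)^2$. In terms of $\Sop_2=\tfrac16\Sop$, the three combinations are $\Sop_2''-21\Sop_2^2$, $\Sop_2''+24\Sop_2^2$ and $\Sop_2''+84\Sop_2^2$; the value $-\tfrac72=-\tfrac{21}{6}$ divides by $6$ instead of $36$.

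With the correct pair, the cross-term coefficient $\sum_k c_ka_kb_k$ for the weights $45,129$ is
\[
45\big(-\tfrac1{13500}+\tfrac1{21600}\big)+129\cdot\tfrac1{7200}=\tfrac1{60}\neq0.
\]
Hence Proposition \ref{prop:parallel} does not apply, and \eqref{eq:Nterms-4.1.0007-sum4} is unfounded. Separately, even granting $-\tfrac72$, the coefficient of $\sigma_4^{\la r\ra}(\Sop(\vp)^2)^2$ after combining is $\tfrac{21}{50}+\tfrac1{120}+\tfrac{301}{1200}=\tfrac{163}{240}$, not $\tfrac{201}{288}$. The paper's displayed chain contains both slips, so following it verbatim does not help.

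With the correct coefficients, cancelling the cross term forces the weights into the ratio $5:1$. Write $X:=\sigma_2^{\la r\ra}(\tfrac16\Sop(\vp))^2$ and use $\sigma_4^{\la r\ra}((\tfrac16\Sop(\vp))'')^2=840X+\ordo(1)$. Then five times the corrected \eqref{eq:Nterms-4.1.0007} plus \eqref{eq:Nterms-4.100101.1} gives $14X+\tfrac1{240}\sigma_4^{\la r\ra}(\Sop(\vp)^2)^2\le6+\ordo(1)$. Lemma \ref{lem:moments} turns this into $27X^2+70X\le30+\ordo(1)$, that is, $X\le(\sqrt{8140}-70)/54\approx0.3745$.

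Keeping the cross term and bounding it by Cauchy--Schwarz, instead of eliminating it, only brings this to about $0.373$. So \eqref{eq:Nterms-3} and \eqref{eq:Nterms-4} by themselves cannot yield $0.2521488$.

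Once corrected, your outline proves only $\tfrac1{36}\sigma_2(\Sop(\vp))^2\le(\sqrt{8140}-70)/54$. Reaching the stated constant would need genuinely new input, such as more terms of the diagonal expansion, other values of $l$, or coupling with the nonlinearity estimate. Your proposal does not supply any of these.
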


\begin{proof}
The assertion is a direct consequence of the above estimates.
\end{proof}

\begin{rem}
This means that the hyperbolic average amplitude of
$(1-|z|^2)^4|\Sop(\varphi)|^2$ can be understood to be at most
$36\times0.2521488\ldots=9.07735\ldots$, which is substantially smaller
than the maximal amplitude 36.
\end{rem}
  
\begin{cor}
We have the universal bound
$\frac1{36}\sigma_{2,\e}(\Sop(\psi))^2=\sigma_2(\Sop_2(\psi))
\le0.2521488\ldots$,
for all $\vp\in\Sigma$, where $\Sop(\psi)$ denotes the
Schwarzian derivative of $\psi$.
\end{cor}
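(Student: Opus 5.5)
The plan is to transfer the disk result, the preceding Theorem, to the exterior disk by the inversion that links $\mathscr S$ and $\Sigma$.

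Given $\psi\in\Sigma$, first translate $\psi$ by a constant $c\notin\psi(\D_\e)$. This changes neither $\Sop(\psi)$ nor the Grunsky operator. So we may assume $0\notin\psi(\D_\e)$, and then put $\vp(z):=1/\psi(1/z)$. By the discussion following the definition of $\Sigma$, $\vp$ is univalent on $\D$ with $\vp(0)=0$, $\vp'(0)=1$, so $\vp\in\mathscr S$.

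The next step relates the two Schwarzians. Write $\psi=m\circ\vp\circ\gamma$ with the M\"obius maps $m(\zeta)=1/\zeta$ and $\gamma(z)=1/z$. By the cocycle identity \eqref{eq:cocycle-1}, and since $\Sop(m)=\Sop(\gamma)=0$,
\[
\Sop(\psi)(z)=\gamma'(z)^2\,\Sop(\vp)(1/z)=z^{-4}\,\Sop(\vp)(1/z),\qquad z\in\D_\e .
\]
Equivalently, with $f:=\Sop(\vp)$ and $g:=\Sop(\psi)$, we have $f(\zeta)=\zeta^{-4}g(1/\zeta)$.

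Now compare the variances. Use the substitution $\zeta=1/z$ and $r=1/R$, exactly as in the proof of Proposition \ref{prop:asymp2}. There $\dA$ transforms with the Jacobian $|z|^{-4}$, $1-|\zeta|^2=|z|^{-2}(|z|^2-1)$, and $\log\frac1{1-r^2}=\log\frac1{R^2-1}+\Ordo(1)$ as $R\to1^+$. Carrying this out, $\int_\D|f(r\zeta)|^2(1-|\zeta|^2)^3\dA(\zeta)$ becomes
\[
\int_{\D_\e}|g(Rz)|^2\,R^{8}|z|^{8}(1-|z|^{-2})^3|z|^{-4}\,\dA(z).
\]
This agrees with the integral defining $\sigma_{\e,2}(g)^2$ except for the factor $|z|^8R^8$.

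Split this integral over the annulus $1<|z|<2$ and over its exterior. On the exterior region, $g=\Sop(\psi)$ is bounded (indeed $\Ordo(|z|^{-4})$ near $\infty$ by Theorem \ref{thm:Schwarzian-bound-1}), so that part contributes $\Ordo(1)$ in either integral and disappears after dividing by the logarithm. On the annulus, the only mass that matters as $R\to1^+$ comes from $|z|$ close to $1$, where the factor $|z|^8R^8$ tends to $1$. This gives $\sigma_{\e,2}(\Sop(\psi))^2=\sigma_2(\Sop(\vp))^2$.

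Making the annulus step rigorous is the one technical point. Fix $\delta>0$ and split further at $|z|=1+\delta$. On $1<|z|<1+\delta$ the factor lies between $1$ and $(1+\delta)^8R^8$. On $1+\delta<|z|<2$ the pointwise bound $|g(Rz)|\le 6(R^2|z|^2-1)^{-2}$ makes the integral $\Ordo_\delta(1)$. Letting $R\to1^+$ and then $\delta\to0$ gives equality of the $\limsup$'s. The identity $\Sop_2=\frac16\Sop$ then passes the bound $0.2521488\ldots$ from $\vp$ directly to $\psi$, which proves the corollary.
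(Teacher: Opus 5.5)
Your proposal is correct and follows the same route as the paper, which only sketches the argument and leaves the details to the reader. Möbius invariance of the Schwarzian gives $\Sop(\vp)(\zeta)=\zeta^{-4}\Sop(\psi)(1/\zeta)$ with no lower-order cross term, so the change of variables from Proposition \ref{prop:asymp2} goes through even more easily; for the stated upper bound alone you only need $\sigma_{\e,2}(\Sop(\psi))^2\le\sigma_2(\Sop(\vp))^2$, which already follows from $R^8|z|^8\ge1$ without splitting the domain.
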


\begin{proof}
By the M\"obius invariance of the Schwarzian derivative, the calculation
is even easier than that of Proposition \ref{prop:asymp2}. We leave the
details to the reader.  
\end{proof}

\smallskip

\noindent\textbf{Acknowledgement:} 
Debnath acknowledges support by the Verg Foundation. 
Hedenmalm acknowledges support from the Ministry of Science and Higher
Education of the Russian Federation under agreement 075-15-2025-013, as
well as the hospitality of BIMSA.

\end{document}